\newif\ifPDF
\tikzset{EdgeStyle/.style = {->}}
\tikzset{LabelStyle/.style= {fill=yellow}}
\theoremstyle{definition}
\newtheorem{definition}{Definition}[section]
\theoremstyle{remark}
\newtheorem{remark}[definition]{Remark}
\theoremstyle{plain}
\newtheorem{thm}[definition]{Theorem}
\newtheorem{theorem}[definition]{Theorem}
\newtheorem{prop}[definition]{Proposition}
\newtheorem{lemma}[definition]{Lemma}
\newtheorem{corol}[definition]{Corollary}
\newtheorem{claim}[definition]{Claim}
\newcommand{\tcb}{\textcolor{blue}}
\newcommand{\tcr}{\textcolor{red}}
\def\wt{\widetilde}
\def\ol{\overline}
\def\ov{\overline}
\def\wh{\widehat}
\newcommand{\N}{\mathbb N}
\newcommand{\Z}{\mathbb Z}
\newcommand{\om}{\omega}
\newcommand{\be}{\begin{equation}}
\newcommand{\ee}{\end{equation}}
\newcommand{\ba}{\begin{aligned}}
\newcommand{\ea}{\end{aligned}}
\newcommand{\mc}{\mathcal}
\newcommand{\al}{\alpha}
\newcommand{\De}{\Delta}
\newcommand{\R}{{\mathbb R}}
\numberwithin{equation}{section}
\newcommand{\ignore}[1]{}
\begin{document}

\hfill{\normalsize{\textit{Dedicated to the memory of
Anatoly Vershik}}}
\\

%\title[Reducible generalized Bratteli diagrams] {Wealthiness of the set of invariant measures on  \\ 
%reducible generalized Bratteli diagrams}

\title[Inverse limit method for generalized Bratteli diagrams] %
%\title[Reducible generalized Bratteli diagrams] %{Dynamics and invariant measures on \\
%reducible generalized Bratteli diagrams}
{Inverse limit method for generalized Bratteli diagrams and invariant measures}

\author{Sergey Bezuglyi}
%    Address of record for the research reported here
\address{Department of Mathematics,
University of Iowa, Iowa City, IA 52242-1419
USA}
\email{sergii-bezuglyi@uiowa.edu}

\author{Olena Karpel}
%    Address of record for the research reported here
\address{AGH University of Krakow, Faculty of Applied Mathematics, al. Adama Mickiewicza~30, 30-059 Krak\'ow, Poland \&
B. Verkin Institute for Low Temperature Physics and Engineering,
47~Nauky Ave., Kharkiv, 61103, Ukraine}

\email{okarpel@agh.edu.pl}

\author{Jan Kwiatkowski}
%    Address of record for the research reported here

\address{Faculty of Mathematics and Computer Science, Nicolaus Copernicus University, ul. Chopina 12/18, 87-100 Toru\'n, Poland}

\email{jkwiat@mat.umk.pl}

\author{Marcin Wata} 
%    Address of record for the research reported here
\address{Faculty of Economic Sciences and Management, Nicolaus Copernicus University, ul. Chopina 12/18, 87-100 Toru\'n, Poland}

\email{marcin.wata@umk.pl}

\subjclass[2020]{37A05, 37B05, 37A40, 54H05, 05C60}

\keywords{Borel dynamical systems, Pascal graph, 
Bratteli-Vershik model, tail invariant measures }

\begin{abstract}
Generalized Bratteli diagrams with a countable set of vertices in every  
level are models for aperiodic Borel automorphisms.  
This paper is devoted to the description of all ergodic 
probability tail invariant measures on the path spaces of 
generalized Bratteli diagrams. Such measures can be 
identified with inverse limits of infinite-dimensional 
simplices associated with levels in generalized Bratteli diagrams. 
Though this method is general, we apply it to several classes of 
reducible generalized Bratteli diagrams. In particular, we explicitly describe all ergodic tail 
invariant probability measures for (i) the infinite Pascal 
graph and give the formulas for the 
values of such measures on cylinder sets, (ii) generalized Bratteli diagrams formed by a countable set of odometers,
(iii) reducible generalized Bratteli diagrams with uncountable
set of ergodic tail invariant probability measures. 
We also consider the method of measure extension by tail invariance from subdiagrams. 
We discuss the properties of the Vershik map defined on reducible generalized
Bratteli diagrams.

\end{abstract}

\maketitle

\tableofcontents

%%%%%
\section{Introduction}\label{sect Intr}
%%%%%
\textbf{Motivation and main results.} This paper is focused on the study of Borel dynamical 
systems that are realized on the path space of \textit{generalized 
Bratteli diagrams}. The notion of a generalized Bratteli 
diagram is a natural extension of the notion of a 
standard Bratteli diagram: we consider the Bratteli diagrams
in which each level consists of infinitely countably many vertices. 
The structure of such diagrams is completely determined by a 
sequence of countably infinite incidence matrices.

Because we refer to the notion of a Bratteli diagram 
practically in every paragraph, we give here a very concise definition of this object. A \textit{Bratteli diagram} is a countable 
graph $B = (V,E)$ whose vertices $V$ and edges $E$ are partitioned into subsets $V = \bigcup_n V_n$, $E = \bigcup_n
E_n$, where every $V_n$ is finite (for a standard diagram)
or countably infinite (for a generalized Bratteli diagram) and $E_n$
is the set of edges connecting vertices of levels $V_n$
and $V_{n+1}$. It is required that the set of incoming edges is finite for every vertex. The set $E_n$ determines the 
\textit{incidence matrix} 
$F_n = (f_{vw}^{(n)})$ where $f_{vw}^{(n)}$ is the number of 
edges connecting $v \in V_{n+1}$ and $w\in V_n$. For every diagram $B$, we consider 
the path space $X_B$ that is formed by infinite sequences of concatenating edges. Two infinite paths are called 
\textit{tail equivalent} if they coincide 
below some level. This defines the \textit{tail equivalence relation} $\mathcal R$, a dynamical system on $X_B$. 
The question about the existence and description of all 
$\mathcal R$-invariant measures is one of the most important
problems of the theory of Bratteli diagrams. 
More detailed definitions and facts related to generalized 
Bratteli diagrams can be found in 
Section \ref{sect_GBD}. 

Discrete combinatorial structures have been used in 
ergodic theory for many decades, e.g. in the papers
\cite{Katok2003}, \cite{Krieger1976}, \cite{Vershik1981}, \cite{Vershik1982}, and others. 
Such structures are useful, in particular, 
for the construction of various 
approximations of a transformation. Bratteli diagrams became  
a key tool in dynamics after the paper \cite{HPS1992} and the following series of papers 
\cite{GiordanoPutnamSkau1995}, \cite{GiordanoPutnamSkau2004}, \cite{GPS2008}, \cite{GPS2010}, 
\cite{DurandHostSkau1999}. 

Bratteli diagrams' role in \textit{Cantor and Borel dynamics} is crucial. The reason is that 
every homeomorphism of a Cantor set (and every aperiodic 
automorphism of a standard Borel space) can be realized
as an \textit{adic} transformation, called the \textit{Vershik map}, acting on the path space of a standard Bratteli diagram
(generalized Bratteli diagram, respectively). This means that all 
properties of a Cantor or Borel dynamical system $(X, T)$ can be seen on a corresponding Bratteli diagram. In particular, tail invariant measures on a Bratteli diagram are exactly the $T$-invariant measures. 
In this connection, we refer to the papers 
\cite{HPS1992}, \cite{Medynets2006}, 
\cite{DownarowiczKarpel_2019}, 
\cite{Shimomura2020}, 
\cite{BezuglyiDooleyKwiatkowski2006} where these results and
numerous applications have been proved.  
The reader can find a comprehensive exposition of this 
subject also in the recent books \cite{Putnam2018}, 
\cite{DurandPerrin2022} and surveys \cite{Durand2010}, 
\cite{BezuglyiKarpel2016}. 
The existing literature on Bratteli diagrams, 
corresponding to dynamical systems, invariant path-space measures, and other areas used in the paper
is very extensive. We refer, in particular,  to  \cite{Bratteli1972}, 
\cite{Kechris1995}, \cite{Nadkarni1995}, \cite{Kitchens1998},
\cite{Vershik1981} and other fundamental works cited 
below in the text where the reader can see the basic ideas and 
methods. 

The idea to work with a realization of a transformation on 
the path space of a Bratteli diagram has proved to be very 
useful and productive. This approach allowed one to 
classify minimal homeomorphisms of a Cantor set up to orbit 
equivalence \cite{GiordanoPutnamSkau1995}, 
\cite{GlasnerWeiss1995}. Furthermore, it turns out that the
structure of a Bratteli diagram makes it possible to see 
distinctly several important invariants of a transformation.
They are, in particular, the set of minimal components,
the support of every ergodic measure $\mu$, the values of 
the measure $\mu$ on clopen sets, etc. As was mentioned above, Bratteli diagrams can be studied independently of their relations to transformations defined
on a Cantor or Borel space if we use the tail equivalence relation as a prototype of a dynamical system. In fact, the class of such dynamical systems is wider than that generated 
by transformations because there are Bratteli diagrams that
do not support continuous Vershik maps, see 
\cite{Medynets2006}, \cite{BezuglyiKwiatkowskiYassawi2014}. 
We do not know whether there are generalized Bratteli diagrams 
that do not support a Borel Vershik map. 

The main theme of this paper is an explicit or 
algorithmic description of probability 
tail invariant measures on the path space of 
generalized Bratteli diagrams. This work was initiated in 
\cite{BezuglyiJorgensenKarpelSanadhya2023} where we were mostly 
interested in irreducible generalized Bratteli diagrams. The current
paper contains several intriguing examples of reducible Bratteli diagrams with uncountably many probability ergodic measures. 
We discuss in detail the method based on the study of inverse limits of infinite-dimensional simplices and show how probability measures can be found by
this method. Then we apply this approach to some classes of generalized Bratteli diagrams and find all ergodic tail 
invariant probability measures. In particular, we explicitly describe all ergodic tail 
invariant probability measures for (i) the infinite Pascal 
graph and give the formulas for the 
values of such measures on cylinder sets; (ii) generalized Bratteli diagrams formed by a countable set of odometers;
(iii) reducible generalized Bratteli diagrams with uncountable
set of ergodic tail invariant probability measures.
Two other topics, traditional for the study of dynamics on a Bratteli diagram, are considered in the paper. They are: (a)
the properties of the corresponding
Vershik maps and (b) the measure extensions from subdiagrams.
\vskip 0.3cm

\textbf{Generalized Bratteli diagrams.}
Why do we need generalized Bratteli diagrams with countable 
levels? The following result explains one obvious reason to study such diagrams. In 
\cite{BezuglyiDooleyKwiatkowski2006}, it is 
proved that every aperiodic Borel automorphism of an uncountable standard Borel space admits a realization as a Vershik map on the 
path space of a \textit{generalized Bratteli diagram}. 
A recent result in this direction was obtained in 
\cite{BezuglyiJorgensenSanadhya2024}, where the authors proved that there is a wide class of substitution dynamical systems on countable alphabets that 
can be realized as Vershik maps acting on stationary generalized Bratteli 
diagrams. We also refer to related recent works \cite{Manibo_Rust_Walton_2022}, \cite{Manibo2023}, \cite{Frettloh_Garber_Manibo_2022} where substitutions 
are considered on a compact alphabet. 
Among other possible applications of generalized 
Bratteli diagrams, we can mention Markov chains with 
countable sets of states, random walks, 
iterated function systems
\cite{https://doi.org/10.48550/arxiv.2210.14059}, 
harmonic analysis on the path space
of generalized Bratteli diagrams \cite{Bezuglyi_Jorgensen_2021}, etc. 

It is worth noting that the difference between standard 
and generalized Bratteli diagrams is essential. Even though their definitions are very similar, these two 
classes of diagrams represent different kinds of dynamics:
Cantor dynamics for standard diagrams and Borel dynamics 
for generalized Bratteli diagrams. In particular, the path space 
of a standard Bratteli diagram is compact and, for a generalized 
diagram, it is a zero-dimensional 
Polish space (non-locally compact, in general). This 
difference lies in the base of various phenomena that distinguish the
corresponding dynamical systems. For example, there are 
generalized Bratteli diagrams that do not admit probability 
tail invariant measures. There are stationary generalized Bratteli diagrams with uncountably many ergodic invariant probability measures.
Also, we note that the Vershik map 
cannot be made continuous for a class of generalized Bratteli
diagrams. More results of this kind can be found in 
\cite{BezuglyiJorgensenKarpelSanadhya2023} and in the present
paper. 

Our main results are related to finding ergodic tail invariant 
measures on the path space $X_B$ of a generalized Bratteli diagram $B$.
These measures can also be viewed as ergodic measures invariant for the corresponding Vershik maps. 
Remembering that every aperiodic Borel automorphism can be 
represented as a Vershik map, our results are about ergodic 
invariant measures of aperiodic Borel automorphisms of standard Borel spaces. 

This circle of problems is 
traditional for the study of dynamical systems. There are many well-developed methods in this direction and impressive 
achievements. In particular, for a stationary Bratteli 
diagram (standard or generalized) one can use the Perron-Frobenius theory to define a tail invariant measure,
see \cite{BezuglyiKwiatkowskiMedynetsSolomyak2010} for 
the standard case and 
\cite{BezuglyiJorgensenKarpelSanadhya2023} for generalized
diagrams. Another approach is based on the measure 
extension procedure from a subdiagram \cite{BezuglyiKarpelKwiatkowski2015}, \cite{AdamskaBKK2017}, \cite{BezuglyiKarpelKwiatkowski2024}. If $\ol B$ is 
a subdiagram of a Bratteli diagram $B$ and $\nu$ is a 
probability measure on the path space $X_{\ol B}$, then 
$\nu$ can be extended to a measure $\wh \nu$ 
(finite or infinite) supported
by $\mathcal R(X_{\ol B})$, the smallest tail invariant set
containing $X_{\ol B}$. 

A substantial part of our paper is devoted to the study of ergodic 
probability tail invariant measures on the 
\textit{infinite  Pascal graph} $B=(V, E)$ and
the Vershik map for some natural orders on $B$.
The Pascal graph is one of the most popular graphs related to dynamical systems. It has been extensively studied in
the context of Cantor dynamics, see the references in 
Section \ref{sect Bratteli-Pascal}. We quote 
\cite{Vershik2011}:
``Transformations generated by classical graded 
graphs, such as the ordinary and multidimensional
Pascal graphs, the Young graph, the graph of walks in Weyl 
chambers, etc., provide examples
of combinatorial origin of the new, very interesting class of adic transformations.''

We consider the infinite Pascal graph where the $n$-th level is formed by the vertices  $\ol s$:
$$
V_{n} = \{\overline{s}= (s_{1}, s_{2}, \ldots ) : 
\sum_{i = 1}^{\infty}s_{i} = n , \ s_i \in \N_0\}.
$$
The set $E_n$ of edges between the vertices of 
levels $V_{n}$ and $V_{n+1}$ is determined by the following property:
for $\ol s \in V_n$, $\ol t \in V_{n+1}$, the set $E(\ol s, 
\ol t)$ of edges between $\ol s$ and $\ol t$ consists of exactly one edge if
and only if $\ol t = \ol s + \ol e^{(i)}$ for some $i 
\in \N$ where $\{e^{(i)} : I \in \N\}$ is the standard basis. 

Another class of generalized Bratteli diagrams in our focus is 
the class of reducible diagrams whose incidence matrices are
triangular. Even for stationary Bratteli diagrams, such diagrams 
have remarkable properties; we study them in our paper.  
\vskip 0.3cm

\textbf{Inverse limit method.}
In this paper, we consider and systematically apply a new approach that is based on identifying measures with inverse limits. 
We consider in this article a method (called the ``inverse limit method'') that gives the possibility to describe the set of all invariant probability measures of a generalized Bratteli diagram. This method is a non-trivial generalization of a similar approach 
developed for standard Bratteli diagrams. 
In \cite{BezuglyiKwiatkowskiMedynetsSolomyak2010}, we proved 
that, for a classical Bratteli diagram with the sequence of incidence stochastic matrices $(F_n)$, every tail invariant measure $\mu$ was completely 
determined by a sequence of non-negative probability 
vectors $(\ol q^{(n)})$ such
that $F_n^T \ol q^{(n+1)} = \ol q^{(n)}$ for all $n \geq 1$
($F_n^T$ is the transpose matrix). 
In other words, we prove that
the set $M_1(\mc R)$ of all probability tail invariant 
measures on $X_B$ is identified with the inverse limit
of the sets $(\De_1^{(n)}, F_n^T)$:
$$
M_1(\mc R) = \varprojlim_{n\to \infty} (\De_1^{(n)}, F_n^T)
$$
where $\De_1^{(n)}$ is the finite-dimensional simplex 
indexed by the vertices of the $n$-th level. 

If we apply a similar approach to generalized Bratteli diagrams, then we come across the following difficulties. 
Firstly, the infinite-dimensional simplex $\De_1^{(n)}$ is not closed so we should work with the set
$\De^{(n)} = \{\ol x = \langle x_i \rangle : \sum_{i \in V_n}
x_i \leq 1\}$. Secondly, 
when we consider the intersection of the convex sets
$\Delta^{(n,\infty)} =\bigcap_{m = 1}^{\infty}\Delta^{(n,m)}$, 
it can be empty where 
$\Delta^{(n,m)} := {G^{(n,m)}}^T(\Delta^{(n + m)})$ and ${G^{(n,m)}}^T = F_{n}^{T}\cdot  \ldots \cdot  
F_{n + m - 1}^{T}$.

To avoid these obstacles, we take $cl(\Delta^{(n,m)})$, the closure of the set 
$\Delta^{(n,m)}$ in $\De^{(n)}$. Since  
$cl(\Delta^{(n,m)}) \supset cl(\Delta^{(n,m+1)})$, 
we get the closed nonempty set 
$$
\Delta^{(n,\infty,cl)} := 
\bigcap_{m = 1}^{\infty}cl(\Delta^{(n,m)}), n \in \N. 
$$
In general, the sequence  
$\{ (\Delta^{(n,\infty,cl)}, F_{n}^{T}) \}$ 
does not form the inverse limit.  
But if there exists a sequence of probability vectors 
$\{\ol q^{(n)}\}$, where $\ol q^{(n)} \in 
\Delta^{(n,\infty,cl)}$, 
such that $F_n^T (\ol q^{(n+1)}) = \ol q^{(n)}$ for all $n$, 
then this sequence produces a probability tail invariant measure. 

The next problem is to describe the elements of the set
$\Delta^{(n,\infty,cl)}$. We prove that 
$$
\Delta^{(n,\infty,cl)} = 
\left\{ \int_{L^{(n)}} \overline{z} \; d\mu(\ol z)\ :\ \mu 
\in M_1(L^{(n)}) \right\},  
$$
where 
$$ 
L^{(n)} = \{ \ol x \in \Delta^{(n)} : 
\ol x = \lim_{m\to \infty}
\ol g_{v_{m}},\ v_{m} \in V_{n + m} \}, \quad \overline{g}_{v} = {G^{(n,m)}}^T(\ol e_{v}^{(n + m)}). 
$$ 
Finally, we show how all vectors from $L^{(n)}$ can be found
explicitly. 

In this paper, we apply the inverse limit method to several classes 
of generalized Bratteli diagrams. 
\vskip 0.2cm

\textbf{More about tail invariant measures.}
Discussing the methods of finding tail invariant measures (finite
or infinite) on a generalized Bratteli diagram, we should mention two other approaches to this problem apart from the inverse limit method.

The first approach is based on the Perron-Frobenius theory. 
For a stationary Bratteli diagram 
with the incidence matrix $F$ (finite or infinite), we find the 
Perron eigenvalue 
$\lambda$ and a positive eigenvector $\xi$ satisfying $F^T \xi =
\lambda \xi$. Then this data allows one to determine a tail invariant
measure $\mu$ on a cylinder set $[\ol e]$ corresponding to a finite path ending at a vertex $v \in V_n$ by setting 
$\mu([\ol e]) = \lambda^{-n} \xi_v$.
The measure $\mu$ can be finite or infinite depending on the 
summability of the entries of the vector $\xi$.
This method works only for diagrams with finite Perron eigenvalue
and is not universal (though very useful), see 
\cite{BezuglyiKwiatkowskiMedynetsSolomyak2010},
and \cite{BezuglyiJorgensen2022}, and
\cite{BezuglyiJorgensenKarpelSanadhya2023} where the reader can find
numerous applications. The case of reducible stationary Bratteli diagrams is not covered by the discussed method.

Another method to construct a tail invariant measure is based on 
the procedure called \textit{measure extension} from a subdiagram. 
Let $\ol B$ be a subdiagram of a generalized Bratteli diagram $B$. 
Take a probability tail invariant measure $\nu$ on the path space $X_{\ol B}$ of the subdiagram $\ol B$. Consider the set 
$\wh X_{\ol B} = \mathcal R(X_{\ol B})$, the smallest tail invariant
set containing $X_{\ol B}$. Extend the measure $\nu$ by tail invariance to the set $\mathcal R(X_{\ol B})$. We obtain in such a way a new tail invariant measure $\wh\nu$ on the diagram $B$. The 
main difficulty consists of finding out whether the extended measure $\wh\nu$ is finite or infinite.
This method works very well for many classes of diagrams, see 
\cite{BezuglyiKwiatkowskiMedynetsSolomyak2013},
\cite{AdamskaBKK2017}, \cite{BezuglyiKarpelKwiatkowski2024}, and the results of this paper below.

\vskip 0.3cm

\textbf{Outline of the paper.}
We describe our main results and outline of the paper.
In Section \ref{sect prel}, we consider infinite-dimensional simplices and prove some facts about their extreme points 
in the spirit of Krein-Milman theory. These results will be 
used in the next sections. 
Section \ref{sect_GBD} contains the basic definitions of the objects 
related to (generalized) Bratteli diagrams. Among them, we 
mention, first of all, the notions of tail invariant measures,
the Vershik map, stationary and bounded size Bratteli diagrams, 
vertex and edge subdiagrams, and measure extension from a 
subdiagram. The definitions and results from this section are used in the other sections. 
Section \ref{sect_inverse limits} contains the 
details of the inverse limit method that is briefly described above.
The second part of the paper, Sections \ref{sect Bratteli-Pascal} 
- \ref{ssect measures on B_infty}, are devoted 
to some applications of the inverse limit method to several classes
of generalized Bratteli diagrams. For these classes,  
we completely describe the set of ergodic   
tail invariant probability measures. We note that we focus 
mostly on examples of reducible generalized Bratteli diagrams. 
In Section \ref{sect Bratteli-Pascal}, 
the infinite Pascal graph whose vertices are indexed ether by $\N$ or $\Z$ is studied. We show that there are 
uncountably many ergodic tail invariant probability measures 
on the infinite Pascal graph and give explicit formulas for
the values of such measures on cylinder sets. In Section 
\ref{sect bounded size}, we deal with a bounded size Bratteli 
diagram (the incidence matrices are banded matrices). 
In particular, we consider
a generalized Bratteli diagram $B_k$ with the entries of incidence 
matrices defined by the formula:
$$
f_{vw}^{(n)} = \begin{cases}
    1, & |v-w| \leq k \\
    0, &  \mbox{otherwise},
\end{cases}  \ \ w\in V_n, v \in V_{n+1},
$$ 
and prove that there is no probability tail invariant 
measure on the diagram $B_k$. More examples are given in 
Section \ref{sect examples}. There we discuss the 
connections of our results about ergodic measures 
with substitution dynamical systems 
on a countable alphabet and with the diagram obtained as 
the union of infinitely many odometers as subdiagrams, each of the odometers is connected
with its right neighbor.  For this
class of diagrams, we give a criterion for the existence of probability tail invariant measures and describe the set of all 
ergodic measures explicitly.  We show that all ergodic probability
tail invariant measures are extensions of measures sitting 
on odometers.
Section \ref{ssect measures on B_infty} 
is devoted to an amazing example of a ``triangular'' generalized
stationary Bratteli diagram $B_\infty$ and its subdiagrams. 
The incidence matrix $F$ of $B_\infty$ is the 0-1 low-triangular matrix whose non-zero entries equal 1. 
This diagram has 
amazing properties and unexpected connections with other 
notions like completely monotonic sequences and contains 
interesting subdiagrams. We prove that the diagram $B_\infty$  supports uncountably many ergodic probability tail invariant measures and describe them explicitly. We also consider triangular 
standard Bratteli subdiagrams and find their internal 
tail invariant probability measures. 
The final Section \ref{sect VershikMapPascal} 
discusses the Vershik map on the infinite
Pascal graph and the diagram $B_\infty$ with respect to some natural orders.

\section{Preliminaries on convex sets and invariant 
measures}\label{sect prel}

In this section, we discuss some known and new facts 
about the structure of convex sets generated by 
infinite-dimensional positive vectors. These results will 
be used below in our study of tail invariant measures on 
the path space of a generalized Bratteli diagram. 

\subsection{Infinite-dimensional convex sets} 
We use the standard notation $\mathbb N, \Z, \R,$  
$\mathbb{N}_0 = \N \cup \{0\}$ for the sets of numbers, $\mathbb R_+$ denotes non-negative reals. Let $V$ be an infinite countable set. We prefer to work 
with $V$ (not $\N$ or $\Z$) because the Bratteli diagrams 
considered in this paper have to have vertices enumerated 
by countable 
sets of a rather complicated structure. Nevertheless, 
we will need to use a bijection $a: V \to \N$ that 
identifies the elements of $V$ with natural numbers. 
This function $a(v)$ is chosen and fixed for this paper. 

Let $\mathbb{R}^{V}$ be a linear space of infinite vectors 
$\overline{x} = \langle x_{v} \in \mathbb{R} : \ v \in V
\rangle$. With some abuse of terminology, we will use the
word ``sequence'' considering the element $\overline{x}$
of $\mathbb{R}^{V}$. 
% \tcr{Can we say ``sequence'' here?}

Let $I = [0,1]$. Consider the subset $I^{V}$ of  
$\mathbb{R}^{V}$, that is 
$$
I^{V} = \{\overline{x}  =\langle x_{v} \rangle \in \ 
\mathbb{R}^{V} : 0 \leq \ x_{v} \leq 1 \}.
$$
For $\overline{x} \in I^{V}$,  define 
\begin{equation}\label{eq_r1'}
|\overline{x}| = \sum_{v \in V}{\frac{1}
{2^{a(v)}}  |x_{v}|} 
\end{equation}
and set
\be\label{eq_dist d}
d(\overline{x},\overline{y}) : = |\overline{x} - \overline{y}| = 
\sum_{v \in V}\frac{1}{2^{a(v)}} 
|x_{v} - y_{v}|,
\ee
where $\overline{x} = \langle x_{v} \rangle$ and 
$\overline{y} = \langle y_{v} \rangle$ are in $I^V$. 
Then $(I^V, d)$ is a compact metric space. 

We remark that a sequence $\{\overline{x}^{(k)}\} = 
\{\langle x_{v}^{(k)}\rangle \}$ converges to
a vector $\ol x = \langle x_v \rangle$ if and only if 
$x_{v}^{(k)} \rightarrow  x_{v}$ as $k \rightarrow \infty$ 
for every $v \in V$. 

The set $I^{V}$ contains vectors
$\ol e^{(u)}, u \in V$, such that  $\ol e^{(u)} = \langle  
e_{v}^{(u)} \rangle,\ e_{v}^{(u)}= 1$ if $v = u$ and
$e_{v}^{(u)} = 0$ if $v \neq u$. Then, for any $\ol x \in 
I^V$, we can write 
$$\overline{x} = \sum_{v \in V} x_{v}\ol e^{(v)}.
$$

For a vector $\overline{x} = \langle x_{v} : v \in V 
\rangle \in I^{V}$, we define the vectors 
${\overline{x}}^{[s]}, s=1, 2, \ldots $ as follows:  

${\overline{x}}^{[s]} = \langle x_{v}^{\lbrack s\rbrack} 
\rangle$, where $x_{v}^{[s]} = x_{v}$ if $a(v) \leq s$ and 
$x_{v}^{[s]} =0$ if $a(v) > s$.

\begin{lemma} Let $\{\alpha_k : k \in K\}$ be a sequence 
of non-negative numbers such that $\sum_{k \in K} \al_k =1$, 
where $K$ is a subset of $\N$. Then, for
${\overline{x}}^{(k)}, {\overline{y}}^{(k)} \in I^{V}$,  the following inequality holds:
\begin{equation}\label{e1}
d\left(\sum_{k \in K} \alpha_{k}  {\overline{x}}^{(k)}, \sum_{k \in K} \alpha_{k}  {\overline{y}}^{(k)}\right) \leq \sum_{k \in K} \alpha_{k}  d({\overline{x}}^{(k)}, {\overline{y}}^{(k)}).     
\end{equation}
\end{lemma}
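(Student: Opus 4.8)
The plan is to reduce the claimed inequality to the triangle inequality for the finitely-many-terms case plus the fact that $d$ itself is built from a convergent weighted sum of absolute values, so everything commutes with the convex combination coordinatewise. First I would unwind the definition of $d$ on the left-hand side: writing $\ol x^{(k)} = \langle x_v^{(k)}\rangle$ and $\ol y^{(k)} = \langle y_v^{(k)}\rangle$, the vector $\sum_{k\in K}\alpha_k \ol x^{(k)}$ has $v$-th coordinate $\sum_{k\in K}\alpha_k x_v^{(k)}$ (this sum converges absolutely since $0\le x_v^{(k)}\le 1$ and $\sum_k\alpha_k=1$, so the combination lies in $I^V$ and $d$ is well defined on it). Hence
\begin{equation}\label{eq_plan1}
d\left(\sum_{k\in K}\alpha_k\ol x^{(k)},\ \sum_{k\in K}\alpha_k\ol y^{(k)}\right)
= \sum_{v\in V}\frac{1}{2^{a(v)}}\left|\sum_{k\in K}\alpha_k\bigl(x_v^{(k)}-y_v^{(k)}\bigr)\right|.
\end{equation}

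Next I would apply, for each fixed $v$, the ordinary triangle inequality for the (possibly countable) series $\sum_{k\in K}\alpha_k(x_v^{(k)}-y_v^{(k)})$, giving $\bigl|\sum_{k}\alpha_k(x_v^{(k)}-y_v^{(k)})\bigr|\le \sum_k\alpha_k|x_v^{(k)}-y_v^{(k)}|$. Substituting into \eqref{eq_plan1} and then interchanging the order of the two non-negative summations (justified by Tonelli/the Fubini theorem for non-negative terms, since all summands are $\ge 0$), I get
\begin{equation}\label{eq_plan2}
d\left(\sum_{k\in K}\alpha_k\ol x^{(k)},\ \sum_{k\in K}\alpha_k\ol y^{(k)}\right)
\le \sum_{k\in K}\alpha_k\sum_{v\in V}\frac{1}{2^{a(v)}}\bigl|x_v^{(k)}-y_v^{(k)}\bigr|
= \sum_{k\in K}\alpha_k\, d\bigl(\ol x^{(k)},\ol y^{(k)}\bigr),
\end{equation}
which is exactly \eqref{e1}.

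The only genuinely delicate point — and the one I would state carefully — is the exchange of summation over $k\in K$ and over $v\in V$; this is legitimate precisely because after the per-coordinate triangle inequality every term $\alpha_k 2^{-a(v)}|x_v^{(k)}-y_v^{(k)}|$ is non-negative, so the double sum has a well-defined value in $[0,\infty]$ independent of the order, and in fact it is bounded by $\sum_k\alpha_k\cdot 1 = 1$, so it is finite. Everything else is bookkeeping with the definition \eqref{eq_dist d}. I do not expect any real obstacle; the statement is essentially the convexity of the norm $|\cdot|$ on $I^V$ phrased for convex combinations indexed by a countable set, and the proof is three lines once the coordinatewise formula \eqref{eq_plan1} is written down.
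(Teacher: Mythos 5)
Your proof is correct and is exactly the argument the paper has in mind (the paper simply labels it ``Straightforward'' and omits the details): unwind the coordinatewise formula for $d$, apply the triangle inequality to the series in each coordinate, and swap the two non-negative sums by Tonelli. Nothing further is needed.
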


\begin{proof} Straightforward.
\end{proof}

Define the subsets $\Delta$ and $\Delta_{a}$ of $I^{V}$, 
$0 \leq a \leq 1$,  where 
$$
\Delta = \{ \overline{x} =\langle x_{v} \rangle : 
\sum_{v \in V} x_{v} \leq 1 \}, \quad
\Delta_{a} =  \{\overline{x} = \langle x_{v} \rangle :  \sum_{v \in V}x_{v} = a \}.$$

Then 
$\Delta = \bigcup_{0 \leq a \leq 1}  \Delta_{a} $, 
the sets $\Delta_{a}$ and $\Delta$ are convex, and 
$\Delta$ is a closed subset of $I^{V}$. 

We will use the following abbreviations: the symbols 
``cl'', ``conv'', and 
``ext'' denote the ``closure'',  ``convex 
hull'', and ``extreme points'' of a set, respectively.

\begin{lemma}
(1) $\Delta = cl[conv(ext \ \Delta)]$.

(2) $\Delta_a $ is dense in $\bigcup_{0 \leq b \leq a}
\Delta_{b}$ for any $0 < a \leq 1$. In particular,
$cl(\Delta_1) = \Delta$.
\end{lemma}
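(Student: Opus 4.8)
The plan is to prove the two assertions separately, as they require somewhat different tools.

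\textbf{Part (1).} I would first observe that $\Delta$ is a compact convex subset of the compact metric space $(I^V,d)$, so the Krein--Milman theorem applies and gives $\Delta = cl[conv(ext\ \Delta)]$ \emph{provided} I actually know that $\Delta$ is compact and that the topology on $\Delta$ is the one coming from a locally convex topological vector space. Compactness of $\Delta$ is immediate: it is a closed subset of the compact space $I^V$. For the locally convex structure, the metric $d$ on $I^V$ is (the restriction of) the product topology on $\R^V$ weighted by $2^{-a(v)}$, which is locally convex, and $\Delta$ sits inside it. Alternatively, and more self-containedly, I would identify $ext\ \Delta$ explicitly: a point $\ol x\in\Delta$ is extreme if and only if $\ol x=\ol 0$ or $\ol x=\ol e^{(v)}$ for some $v\in V$. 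Indeed, if $\sum_v x_v<1$ and $\ol x\neq \ol 0$ then $\ol x$ lies in the relative interior of a segment joining $\ol 0$ and $(1+\e)\ol x$ for small $\e$, and if $\sum_v x_v=1$ with $\ol x$ not a single basis vector, then $\ol x$ has at least two nonzero coordinates $x_{v_1},x_{v_2}$ and can be split by perturbing these two coordinates in opposite directions. So $ext\ \Delta=\{\ol 0\}\cup\{\ol e^{(v)}:v\in V\}$, and then I must show $\Delta=cl[conv(\{\ol 0\}\cup\{\ol e^{(v)}\})]$. The inclusion $\supseteq$ is clear since $\Delta$ is closed and convex and contains all these points. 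For $\subseteq$, given $\ol x\in\Delta$, the truncations $\ol x^{[s]}$ are finite convex combinations of $\ol 0$ and the $\ol e^{(v)}$ (with $\ol 0$ absorbing the deficit $1-\sum_{a(v)\le s}x_v$), hence lie in $conv(ext\ \Delta)$, and $\ol x^{[s]}\to\ol x$ in $d$ because $d(\ol x^{[s]},\ol x)=\sum_{a(v)>s}2^{-a(v)}|x_v|\le 2^{-s}\to 0$. This gives $\ol x\in cl[conv(ext\ \Delta)]$.

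\textbf{Part (2).} Fix $0<a\le 1$. The inclusion $\Delta_a\subseteq\bigcup_{0\le b\le a}\Delta_b$ is trivial, and since the right-hand side is contained in the closed set $\Delta$, it suffices to show that every $\ol y$ with $|\ol y|_1:=\sum_v y_v=b\le a$ is a $d$-limit of points in $\Delta_a$. I would again use truncation: set $\ol y^{[s]}$ and note $\sum_{v}y^{[s]}_v=:b_s\le b\le a$, with $b_s\uparrow b$. If $b<a$ I need to add mass $a-b_s\ge a-b>0$ to get total $a$; I place this extra mass on a single coordinate $v_s$ with $a(v_s)>s$ and $y_{v_s}$ small — concretely, choose $a(v_s)$ large enough that $2^{-a(v_s)}(a-b_s)<2^{-s}$, which is possible since $a-b_s\le a\le 1$. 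Let $\ol z^{(s)}=\ol y^{[s]}+(a-b_s)\ol e^{(v_s)}$. Then $\ol z^{(s)}\in\Delta_a$, and
\begin{equation*}
d(\ol z^{(s)},\ol y)\le d(\ol y^{[s]},\ol y)+2^{-a(v_s)}(a-b_s)\le 2^{-s}+2^{-s}=2^{1-s}\to 0.
\end{equation*}
Hence $\ol z^{(s)}\to\ol y$, proving density. The special case $a=1$ gives that $\Delta_1$ is dense in $\bigcup_{0\le b\le 1}\Delta_b=\Delta$, and since $\Delta$ is closed this yields $cl(\Delta_1)=\Delta$.

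\textbf{Main obstacle.} The only delicate point is ensuring the ``parking'' coordinate $v_s$ for the leftover mass can always be chosen far enough out that its contribution to the weighted metric $d$ is negligible; because the weights $2^{-a(v)}$ are summable and $a\le 1$, there is always room, so this is a minor bookkeeping issue rather than a real difficulty. For Part (1), the one thing to be careful about is invoking Krein--Milman correctly or, if one prefers to avoid it, carrying out the explicit identification of $ext\ \Delta$ and the truncation argument above — both routes are short. I expect no serious obstruction; the lemma is essentially a soft consequence of compactness of $I^V$ together with the truncation estimate $d(\ol x^{[s]},\ol x)\le 2^{-s}$.
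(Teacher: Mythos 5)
Your proof is correct and follows essentially the same route as the paper: part (1) is the Krein--Milman theorem (the paper records only this one-line justification), and part (2) is the same truncate-and-redistribute argument, the only cosmetic difference being that you park the entire deficit $a-b_s$ on a single coordinate $v_s$ with $a(v_s)>s$, while the paper spreads it arbitrarily over the tail $\{v : a(v)>k\}$ — both yield the same $O(2^{-s})$ bound. Your explicit identification $ext\,\Delta=\{\ol 0\}\cup\{\ol e^{(v)} : v\in V\}$ and the resulting self-contained truncation proof of (1) go beyond what the paper writes down, but this is a correct supplement rather than a different method.
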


\begin{proof}
(1) This is the Krein-Milman theorem.

For (2), take $\overline{x} =\langle x_{v} \rangle \in 
\Delta_{b}$, i.e., $\sum_{v \in V}{x}_{v} = b < a$ 
and define
$\ol x^{(k)} = \langle x_{v}^{(k)} \rangle$ such that 
$x_{v}^{(k)} = x_{v}$ if $a(v) \leq k$ and $x_{v}^{(k)} = 
y_{v}^{(k)}$ if $a(v) >  k$,
where 
$$
\sum_{a(v) > k} y_{v}^{(k)} = a - \sum_{a(v)\leq k}{x}_{v},
$$
and $y_{v}^{(k)} \geq 0,\ k = 1, 2, \ldots$. 
Then
$\overline{x}^{(k)} \in \Delta_{a}$ for every $k = 1, 2, 
\ldots $. 
Moreover, 
$$d( \overline{x}^{(k)}, \overline{x}) = \sum_{a(v) > k}
\frac{1}{2^{a(v)}}|x_{v} - y_{v}^{(k)}| \leq
 \sum_{a(v) > k}\frac{1}{2^{a(v)}} =  \frac{1}{2^{k}} 
 \rightarrow 0$$
as $k \rightarrow \infty$.
\end{proof}

\subsection{Convex sets generated by a sequence of vectors} 
Let $\{\overline{b}^{(i)} : i \in \N\}$ be a sequence of 
vectors from the set $\De_1$. Denote by 
$\De(\{\ol b^{(i)}\})$ the smallest convex and closed subset 
of $\De$ that contains all vectors 
${\overline{b}}^{(1)}$, ${\overline{b}}^{(2)}$,
${\overline{b}}^{(3)}, \ldots$. We also define by 
$C = C(\{\ol b^{(i)}\})$ the set of all infinite convex 
combinations of the vectors $\{\overline{b}^{(i)}\}$, 
that is
$$
C = \{ \overline{x}_{u} = \sum_{k = 1}^{\infty}u_{k} 
{\overline{b}}^{(k)} : u_{k} \geq 0, \ 
\sum_{k = 1}^{\infty}u_{k} = 1 \}.
$$

\begin{prop} \label{prop_C in Delta}
The set $C(\{\ol b^{(i)}\})$ is a dense subset of 
$\Delta(\{\overline{b}^{(i)}\})$.
\end{prop}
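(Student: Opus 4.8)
The plan is to show the two inclusions in the form of density statements. Since $C = C(\{\ol b^{(i)}\})$ consists of all infinite convex combinations $\sum_{k=1}^\infty u_k \ol b^{(k)}$ and $\Delta(\{\ol b^{(i)}\})$ is by definition the smallest closed convex subset of $\Delta$ containing every $\ol b^{(i)}$, the inclusion $C \subseteq \Delta(\{\ol b^{(i)}\})$ is the substantive direction, and $cl(C) \supseteq \Delta(\{\ol b^{(i)}\})$ will then follow from minimality once we know $cl(C)$ is closed, convex, and contains all the $\ol b^{(i)}$.

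First I would check that $C$ is well-defined as a subset of $\Delta$: for $u_k \geq 0$ with $\sum u_k = 1$ and each $\ol b^{(k)} \in \Delta_1 \subseteq \Delta$, the vector $\ol x_u = \sum_k u_k \ol b^{(k)}$ has nonnegative coordinates and $\sum_{v \in V} (x_u)_v = \sum_{v} \sum_k u_k b^{(k)}_v = \sum_k u_k \sum_v b^{(k)}_v = \sum_k u_k = 1$ by Tonelli (all terms nonnegative), so in fact $\ol x_u \in \Delta_1$. In particular the series defining each coordinate converges, so $\ol x_u$ is a genuine element of $I^V$.

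Next I would prove $C \subseteq \Delta(\{\ol b^{(i)}\})$. Write $\ol x_u = \lim_{N \to \infty} S_N$ where $S_N = \big(\sum_{k=1}^N u_k\big)^{-1} \sum_{k=1}^N u_k \ol b^{(k)}$ is a genuine finite convex combination of $\ol b^{(1)}, \dots, \ol b^{(N)}$ (after discarding finitely many initial degenerate cases where $\sum_{k \le N} u_k = 0$), hence $S_N \in conv(\{\ol b^{(i)}\}) \subseteq \Delta(\{\ol b^{(i)}\})$. To see $S_N \to \ol x_u$ in the metric $d$, use Lemma 2.2-style estimates: coordinatewise, $(S_N)_v \to (x_u)_v$ for each fixed $v$ because $\sum_{k \le N} u_k \to 1$ and $\sum_{k \le N} u_k b^{(k)}_v \to \sum_k u_k b^{(k)}_v$; then coordinatewise convergence in $I^V$ is exactly convergence in $d$ by the remark following equation (2.1). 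Since $\Delta(\{\ol b^{(i)}\})$ is closed, the limit $\ol x_u$ lies in it. Thus $C \subseteq \Delta(\{\ol b^{(i)}\})$, and therefore $cl(C) \subseteq \Delta(\{\ol b^{(i)}\})$.

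Finally I would show $cl(C) \supseteq \Delta(\{\ol b^{(i)}\})$. The set $cl(C)$ is closed by definition; it contains each $\ol b^{(i)}$ (take $u_i = 1$, all other $u_k = 0$); and $cl(C)$ is convex. For the convexity of $cl(C)$ it suffices that $C$ itself is convex, which is immediate since a convex combination $t \ol x_u + (1-t)\ol x_{u'} = \sum_k (t u_k + (1-t) u'_k)\ol b^{(k)}$ is again an infinite convex combination of the $\ol b^{(k)}$ with coefficients summing to $1$; then the closure of a convex set in the topological vector space $\R^V$ (or just in the metric space $(I^V,d)$, using that addition and scalar multiplication are continuous) is convex. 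Hence $cl(C)$ is a closed convex subset of $\Delta$ containing all $\ol b^{(i)}$, so by the minimality in the definition of $\Delta(\{\ol b^{(i)}\})$ we get $\Delta(\{\ol b^{(i)}\}) \subseteq cl(C)$. Combining the two inclusions gives $cl(C) = \Delta(\{\ol b^{(i)}\})$, i.e. $C$ is dense in $\Delta(\{\ol b^{(i)}\})$.

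The only mild subtlety — the ``hard part,'' though it is not really hard — is handling the truncation/normalization carefully when passing from the infinite convex combination to finite ones: one must divide by the partial sums $\sum_{k \le N} u_k$, argue these are eventually positive, and confirm the normalized partial sums converge coordinatewise to $\ol x_u$; everything else is the routine verification that $cl(C)$ has the three defining properties of $\Delta(\{\ol b^{(i)}\})$.
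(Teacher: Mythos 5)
Your proof is correct and follows essentially the same route as the paper: the substantive inclusion $C\subseteq\Delta(\{\ol b^{(i)}\})$ is obtained in both cases by approximating $\ol x_u$ with the normalized partial sums $s(l)^{-1}\sum_{k\le l}u_k\ol b^{(k)}$ and using closedness of $\Delta(\{\ol b^{(i)}\})$ (you justify the convergence coordinatewise via the remark after \eqref{eq_dist d}, while the paper writes out the explicit metric estimate, but this is the same idea). The reverse inclusion, which the paper dismisses as obvious, you correctly spell out via the minimality characterization of $\Delta(\{\ol b^{(i)}\})$ applied to the closed convex set $cl(C)$.
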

\begin{proof}

To prove this result we need a formula for the distance 
between any two vectors from $C = C(\{\ol b^{(i)}\})$.

\begin{claim}\label{Claim:distance}
Let $\overline{x}_{u} = \sum_{k = 1}^{\infty}u_{k} {\overline{b}}^{(k)}$ and
$\overline{x}_{w} = \sum_{k = 1}^{\infty}w_{k} \overline{b}^{(k)}$ be two 
vectors from the set $C$. Then
\begin{equation}\label{eq_dist in C}
d(\overline{x}_{u}, \ol x_{w}) =
\sum_{v \in V}\frac{1}{2^{a(v)}} \left|\sum_{k = 1}^{\infty}{(u_{k} - w_{k}) b_{v}^{(k)}}\right|.    
\end{equation}    
\end{claim}

\begin{proof}[Proof of Claim~\ref{Claim:distance}]
Indeed, every vector $\ol x_u$ from $C$ can be represented 
as follows:
$$\overline{x}_{u} = \sum_{k = 1}^{\infty}u_{k} 
\overline{b}^{(k)} = 
\sum_{k = 1}^{\infty}u_{k}\ \sum_{v \in V}{\lbrack 
b}_{v}^{(k)} \ol e^{(v)}\rbrack =
\sum_{v \in V}{\left\lbrack \sum_{k = 1}^{\infty}{u_{k}  
b_{v}^{(k)}} \right\rbrack  \ol e^{(v)}}.$$
A similar formula holds for $\ol x_w$. Hence, 
applying \eqref{eq_dist d}, we obtain \eqref{eq_dist in C}.
\end{proof}

%% Check it again

We show now that 
\begin{equation}\label{eq_C in}
C(\{\ol b^{(i)}\}) \subset \Delta(\{\overline{b}^{(i)}\}).
\end{equation}
This inclusion can be deduced also from 
Theorem \ref{thm t1}.

It is clear, that any vector $\ol x_u =\sum_{k=1}^{\infty} 
u_{k} \overline{b}^{(k)}$  such that only finitely many 
$u_k$'s are non-zero is automatically in 
$\Delta({\overline{b}}^{(k)})$. 

Suppose that $\ol x_u$ is such that 
$\sum_{k = 1}^{\infty} u_{k} = 1$ 
and $u_{k} > 0$ for infinitely many $k$.
%$k = 1, 2, \ldots$. 
Define the vectors
$$
\ol x_{u}^{(l)} = \sum_{k \leq l}\frac{u_{k}}{s(l)} 
\overline{b}^{(k)},\ \ \  l = 1, 2, \ldots,
$$ 
where $s(l) = \sum_{k \leq l}u_{k}$. 
Using \eqref{eq_dist in C}, we can write
\be
\ba
d(\ol x_{u}^{(l)}, \ol x_{u}) = & 
\sum_{v\in V}\frac{1}{2^{a(v)}} \left|\left(\sum_{k \leq l} 
\left(\frac{1}{s(l)} - 1\right) u_{k}  b_{v}^{(k)} - \sum_{k > l}{u_{k} 
b_{v}^{(k)}}\right)\right|\\ 
\leq & \left(\frac{1}{s(l)} - 1\right) \sum_{v \in V}
\frac{1}{2^{a(v)}} \left|\sum_{k \leq l} u_{k}b_{v}^{(k)}
\right| + \sum_{v \in V}\frac{1}{2^{a(v)}}\left| 
\sum_{k > l}u_{k}b_{v}^{(k)} \right|\\
\leq & \left(\frac{1}{s(l)} - 1\right)\sum_{v \in V}\frac{1}{2^{a(v)}}
\sum_{k \leq l}u_{k} + \sum_{v \in V}\frac{1}{2^{a(v)}}   
\sum_{k > l} u_{k}\\
\leq & \left( \frac{1}{s(l)} - 1\right)  + \sum_{k > l}u_{k} 
\longrightarrow 0, 
\ea
\ee 
as $l \rightarrow \infty$.

Because $\ol x_{u}^{(l)} \in \Delta(\{\overline{b}^{(k)}\})$
and this set is closed, we conclude that 
$\ol x_{u}\in \Delta(\{\overline{b}^{(k)}\})$. 
This proves \eqref{eq_C in}. The fact that 
$C(\{\ol b^{(i)}\})$ is dense in 
$\Delta(\{\overline{b}^{(i)}\})$ is obvious. 
\end{proof}

\begin{remark}
Denote by $cl(\{\overline{b}^{(k)}\})$ 
the closure of the set $\{\overline{b}^{(k)}\}$ in $\Delta$. 
It is obvious that 
$$
cl(\{\overline{b}^{(k)}\}) \subset 
\Delta(\{ \overline{b}^{(k)}\}).
$$

The set $\Delta(\{ \overline{b}^{(k)}\})$ is closed and 
convex. By the Krein-Milman theorem, it is the 
closure of the convex hull of its extreme points.   
\end{remark}

We recall the following well-known fact, see e.g. 
\cite{Diestel_1984}.

\begin{theorem}\cite[Theorem 2, p. 149]{Diestel_1984}\label{Thm:Diestel}
Let $Y$ be a real locally convex
Hausdorff linear topological space and $K = cl(conv (Z))$ 
be a closed convex hull of a set $Z \subset Y$. 
For $Z$ compact, 
$x \in K$ if and only if there exists a regular Borel 
probability
measure $\mu$ on $Z$ whose barycenter exists and is $x$.
In other words, for any continuous linear functional 
$f\in Y^*$, one has 
\be\label{eq_barycenter}
f(x) = \int_Z f(z) \; d\mu(z).
\ee 
\end{theorem}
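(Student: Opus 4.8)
The plan is to prove the two implications separately; both are classical, and the argument is essentially the barycenter description of a closed convex hull, which I would record here only for completeness. I would use just two ingredients: that $Y$ is locally convex Hausdorff, so that a point lying outside a closed convex set can be strictly separated from it by a member of $Y^*$; and that $Z$ is compact, so that $C(Z)^*$ is, via the Riesz representation theorem, the space of regular Borel measures on $Z$, and the Banach--Alaoglu theorem is available.

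For the implication ``barycenter $\Rightarrow$ membership in $K$'' I would argue by contradiction. Assume $\mu$ is a regular Borel probability measure on $Z$ with $f(x)=\int_Z f\,d\mu$ for every $f\in Y^*$, but $x\notin K$. Strictly separating $\{x\}$ from the closed convex set $K$ produces $f\in Y^*$ and $\gamma\in\R$ with $f<\gamma$ on $K$ and $f(x)>\gamma$; since $Z\subset K$, integrating over $Z$ gives $f(x)=\int_Z f\,d\mu\le\gamma<f(x)$, a contradiction. Hence $x\in K$.

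For the converse I would first note that every point of $conv(Z)$ is trivially a barycenter: if $x=\sum_{i=1}^n t_i z_i$ with $t_i\ge 0$, $\sum_i t_i=1$, $z_i\in Z$, then $\mu=\sum_{i=1}^n t_i\delta_{z_i}$ is a regular Borel probability measure on $Z$ and $\int_Z f\,d\mu=\sum_i t_i f(z_i)=f(x)$ for all $f\in Y^*$. Then, for an arbitrary $x\in K=cl(conv(Z))$, I would choose a net $(x_\alpha)$ in $conv(Z)$ with $x_\alpha\to x$, write each $x_\alpha$ as the barycenter of a finitely supported probability measure $\mu_\alpha$ on $Z$ as above, and use that $M_1(Z)$ is a weak-$*$ closed subset of the unit ball of $C(Z)^*$, hence weak-$*$ compact; passing to a subnet, $\mu_\alpha\to\mu$ weak-$*$ for some $\mu\in M_1(Z)$. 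Since every $f\in Y^*$ restricts to a continuous function on the compact set $Z$, weak-$*$ convergence yields $\int_Z f\,d\mu=\lim_\alpha\int_Z f\,d\mu_\alpha=\lim_\alpha f(x_\alpha)=f(x)$, which is exactly the statement that $\mu$ has barycenter $x$ in the sense of \eqref{eq_barycenter}.

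I expect the only genuine subtlety to lie in this last step: one must check simultaneously that the weak-$*$ limit $\mu$ is a \emph{regular Borel} probability measure (this is precisely what membership in $M_1(Z)\subset C(Z)^*$ gives, through Riesz representation on the compact space $Z$) and that its barycenter \emph{exists} and equals $x$. Both come out of the single observation that functionals in $Y^*$ restrict to elements of $C(Z)$, so that weak-$*$ convergence of the $\mu_\alpha$ automatically transports to convergence of the scalars $\int_Z f\,d\mu_\alpha=f(x_\alpha)$ for every $f\in Y^*$. Everything else — the separation step and the reduction to finite convex combinations — is routine.
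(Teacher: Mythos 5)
Your proof is correct. Note, however, that the paper does not prove this statement at all: it is quoted verbatim from Diestel's book (Theorem 2, p.~149 of \cite{Diestel_1984}) and used as a black box, so there is no internal argument to compare against. What you have written is the standard textbook proof of the barycenter characterization of $cl(conv(Z))$ for compact $Z$ --- Hahn--Banach strict separation for the ``if'' direction, and finitely supported measures plus weak-$*$ compactness of $M_1(Z)$ (Riesz representation and Banach--Alaoglu) for the ``only if'' direction --- which is essentially the argument in the cited source. The one point worth making explicit, which you handle correctly but only implicitly, is that the barycenter is well defined (unique) because $Y^*$ separates points of the locally convex Hausdorff space $Y$; with that remark your argument is complete.
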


We also recall the so-called Milman's Converse of the 
Krein-Milman theorem: 

\begin{theorem}\cite [Corollary 4, 
p.151]{Diestel_1984}\label{Corol:Diestel}: Let $K$ be a compact convex subset of a real locally convex
Hausdorff linear topological space $Y$. If $K$ is the closed convex hull of a set $Z$, then
every extreme point of $K$ lies in the closure of $Z$.
\end{theorem}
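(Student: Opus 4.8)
The plan is to argue by contradiction, reproducing the classical Milman argument. Suppose $x \in ext\ K$ but $x \notin cl(Z)$. First I would use that $Y$ is Hausdorff and locally convex together with the fact that $cl(Z)$ is closed and does not contain $x$: choose a closed, convex, balanced neighborhood $V$ of the origin with $(x + V) \cap cl(Z) = \emptyset$. Since $Z \subseteq conv(Z) \subseteq cl(conv(Z)) = K$ and $K$ is closed, we have $cl(Z) \subseteq K$, so $cl(Z)$ is a closed subset of the compact set $K$ and hence compact. Therefore the open cover $\{ z + int(V) : z \in cl(Z)\}$ of $cl(Z)$ has a finite subcover: pick $z_1, \dots, z_n \in cl(Z)$ with $cl(Z) \subseteq \bigcup_{i=1}^n (z_i + int(V))$.

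Next I would set $Z_i = cl(Z) \cap (z_i + V)$ and $K_i = cl(conv(Z_i))$ for $i = 1, \dots, n$. Each $Z_i$ is compact (closed subset of $cl(Z)$), so each $K_i$ is a compact convex subset of $K$, and since $z_i + V$ is closed and convex we get $K_i \subseteq z_i + V$. The structural heart of the proof is the identity $K = conv(K_1 \cup \cdots \cup K_n)$. For ``$\supseteq$'': the sets $z_i + int(V)$ cover $cl(Z)$, so $\bigcup_i Z_i = cl(Z)$, whence $conv(\bigcup_i K_i) \supseteq conv(cl(Z)) \supseteq conv(Z)$; and $conv(K_1 \cup \cdots \cup K_n)$ is the image of the compact space $S \times K_1 \times \cdots \times K_n$ (where $S = \{\lambda \in \R^n : \lambda_i \geq 0,\ \sum_i \lambda_i = 1\}$ is the standard simplex) under the continuous map $(\lambda, c_1, \dots, c_n) \mapsto \sum_i \lambda_i c_i$, hence compact, hence closed in the Hausdorff space $Y$, so it also contains $cl(conv(Z)) = K$. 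For ``$\subseteq$'': each $K_i \subseteq K$ and $K$ is convex, so $conv(\bigcup_i K_i) \subseteq K$.

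Finally, since $x \in K = conv(K_1 \cup \cdots \cup K_n)$, write $x = \sum_{i=1}^n \lambda_i c_i$ with $c_i \in K_i$, $\lambda_i \geq 0$, $\sum_i \lambda_i = 1$. Because $x$ is an extreme point of $K$, this convex representation must be trivial, so $x = c_j$ for some index $j$ with $\lambda_j > 0$. Then $x \in K_j \subseteq z_j + V$, i.e. $z_j \in x - V = x + V$ (as $V$ is balanced), which contradicts $(x + V) \cap cl(Z) = \emptyset$ since $z_j \in cl(Z)$. Hence $x \in cl(Z)$, as claimed.

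I expect the only genuinely delicate point to be the compactness of $conv(K_1 \cup \cdots \cup K_n)$, i.e. the fact that the convex hull of a finite union of compact convex subsets of a topological vector space is compact; this is what lets us pass from $cl(conv(Z))$ to an actual finite convex combination and so invoke extremality. Everything else is separation and the definition of an extreme point. I would also remark that the hypothesis used here, ``$K$ compact convex with $K = cl(conv(Z))$'', is exactly the statement of the cited corollary and is weaker than ``$Z$ compact'' (used in Theorem~\ref{Thm:Diestel}); compactness of $cl(Z)$ is obtained for free from $cl(Z) \subseteq K$.
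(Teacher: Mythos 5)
Your proof is correct. Note, however, that the paper offers no proof of this statement at all: Theorem~\ref{Corol:Diestel} is quoted verbatim from \cite{Diestel_1984} (Corollary 4, p.~151) as a known fact, so there is no in-paper argument to compare yours against. What you have written is the classical Milman argument --- separate $x$ from $cl(Z)$ by a closed convex balanced neighborhood $V$, cover the compact set $cl(Z)$ by finitely many translates $z_i + int(V)$, form $K_i = cl(conv(cl(Z)\cap(z_i+V)))$, prove $K = conv(K_1\cup\cdots\cup K_n)$, and invoke extremality --- and every step checks out. Two details you handled correctly and that are genuinely worth the care you gave them: (i) compactness of each $K_i$ is \emph{not} automatic from compactness of $Z_i$ in a general locally convex space, but follows here because $K_i$ is a closed subset of the compact set $K$; and (ii) compactness (hence closedness) of $conv(K_1\cup\cdots\cup K_n)$ as a continuous image of $S\times K_1\times\cdots\times K_n$ is exactly what lets you upgrade from $conv(Z)$ to $cl(conv(Z)) = K$ and thereby obtain a genuine finite convex representation of $x$ to which extremality applies. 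Your closing remark is also accurate: this corollary only needs $K$ compact with $K = cl(conv(Z))$, which is weaker than the hypothesis ``$Z$ compact'' appearing in Theorem~\ref{Thm:Diestel}.
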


Applying these facts to the set 
$\Delta(\{\overline{b}^{(k)}\})$, we obtain the following  
results.

\begin{theorem}\label{thm t1}
(1) The set $ext(\Delta(\{\overline{b}^{(k)}\})$ 
of all extreme points of
$\Delta(\{\overline{b}^{(k)}\})$ is contained
in $cl(\{\overline{b}^{(k)}\})$. 

(2) A vector $\ol x$ belongs to 
$\Delta(\{\overline{b}^{(k)}\})$ if and only if 
there exists a Borel 
probability measure $\mu$ on 
$cl(\{\overline{b}^{(k)}\})$
such that 
\be\label{eq_barycenter modified}
\ol x = \int_{cl(\{\overline{b}^{(k)}\})} \ol z\; 
d\mu(\ol z)\footnote{We clarify the meaning of 
this formula in the proof.}.
\ee 
\end{theorem}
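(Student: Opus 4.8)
The plan is to derive Theorem~\ref{thm t1} from Theorems~\ref{Thm:Diestel} and~\ref{Corol:Diestel} applied inside the compact metric space $(I^V, d)$, which is a compact (hence closed) convex subset of the locally convex Hausdorff space $\R^V$ equipped with the product topology. First I would observe that $\Delta(\{\ol b^{(k)}\})$ was defined as the smallest closed convex subset of $\Delta$ containing all the $\ol b^{(k)}$; since $\Delta$ is closed and convex in the compact space $I^V$, this coincides with $cl(conv(Z))$ where $Z = \{\ol b^{(k)} : k \in \N\}$. The set $Z$ is a countable subset of $\Delta$, but it need not be closed, so I would instead work with $cl(Z) = cl(\{\ol b^{(k)}\})$, which is a closed subset of the compact set $\Delta$ and hence is itself compact. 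Moreover $cl(conv(cl(Z))) = cl(conv(Z)) = \Delta(\{\ol b^{(k)}\})$, so $\Delta(\{\ol b^{(k)}\})$ is the closed convex hull of the compact set $cl(Z)$.

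For part (1), this is now immediate from Theorem~\ref{Corol:Diestel} (Milman's converse): every extreme point of $K := \Delta(\{\ol b^{(k)}\})$, being the closed convex hull of the compact set $cl(Z)$, lies in $cl(Z) = cl(\{\ol b^{(k)}\})$. For part (2), the ``if'' direction should follow from Theorem~\ref{Thm:Diestel} once one knows that a barycenter of a probability measure on a compact convex subset of $I^V$ lies in that subset. The ``only if'' direction is again Theorem~\ref{Thm:Diestel}: since $cl(Z)$ is compact and $K = cl(conv(cl(Z)))$, any $\ol x \in K$ is the barycenter of some regular Borel probability measure $\mu$ on $cl(Z)$, meaning $f(\ol x) = \int_{cl(Z)} f(\ol z)\, d\mu(\ol z)$ for every continuous linear functional $f$ on $\R^V$. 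The remaining task is to translate the abstract barycenter identity into the coordinatewise integral formula~\eqref{eq_barycenter modified}; this is the point the footnote promises to clarify.

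The clarification step goes as follows: the coordinate projections $f_v : \R^V \to \R$, $f_v(\ol z) = z_v$, are continuous linear functionals, so Theorem~\ref{Thm:Diestel} gives $x_v = \int_{cl(Z)} z_v\, d\mu(\ol z)$ for each $v \in V$. Thus the vector $\ol x$ is obtained by integrating $\ol z$ componentwise against $\mu$, which is exactly what the notation $\ol x = \int \ol z\, d\mu(\ol z)$ means; one should also note that each $z_v \in [0,1]$ so every such integral is well-defined and lies in $[0,1]$, confirming the barycenter is back in $I^V$, and in fact in $\Delta$ since $\sum_v x_v = \sum_v \int z_v\, d\mu = \int \sum_v z_v\, d\mu \le 1$ by the monotone convergence theorem (the partial sums $\ol z \mapsto \sum_{a(v)\le s} z_v$ being continuous and increasing to $\sum_v z_v \le 1$).

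The main obstacle, such as it is, is purely bookkeeping rather than conceptual: one must be careful that ``continuous linear functional'' in Theorem~\ref{Thm:Diestel} refers to the topology on $\R^V$ for which $I^V$ (with metric $d$) is compact — namely the product topology — and check that the coordinate functionals $f_v$ are continuous and, more importantly, that they are \emph{enough} to pin down the point, i.e. that a point of $I^V$ is determined by its coordinates (it is, trivially) and that the metric topology on $I^V$ agrees with the subspace product topology (it does, by the remark following \eqref{eq_dist d}). Once the correct topological setting is fixed, both parts are direct citations of Diestel's results, and the footnote's promised explanation is the componentwise reading of the barycenter just described.
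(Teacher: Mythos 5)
Your proposal is correct and follows essentially the same route as the paper: part (1) is Milman's converse (Theorem~\ref{Corol:Diestel}) applied to $K = cl(conv(cl(\{\overline{b}^{(k)}\})))$, and part (2) is Theorem~\ref{Thm:Diestel} together with the coordinate projections $f_v(\ol z) = z_v$ to turn the abstract barycenter identity into the componentwise integral \eqref{eq_barycenter modified}. Your additional checks (that the metric topology on $I^V$ agrees with the product topology, and that the barycenter lands back in $\Delta$ via monotone convergence) are sound and slightly more careful than the paper's own write-up, which leaves the "if" direction of (2) implicit.
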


\begin{proof}
We need to prove (2) only. We use \eqref{eq_barycenter} as
a starting point. Note that the set $\Delta(\{ 
\overline{b}^{(k)}\})$ is also a closed convex hull of the set
$cl(\{\overline{b}^{(k)}\})$.

Using Theorem~\ref{Thm:Diestel}, 
we see that for 
every $\overline{x} \in \Delta(\{\overline{b}^{(k)}\})$
there is a regular Borel probability measure $\mu= \mu_x$ on 
 $cl(\{\overline{b}^{(k)}\})$ whose barycenter exists and 
is $\overline{x}$. Therefore, we can write
\begin{equation}\label{eq_barycenter 2}
f(\overline{x}) = \int_{cl(\{\overline{b}^{(k)}\})}
f \left( \overline{z} \right) \; d\mu(\ol z).
\end{equation}
For $\ol z = \sum_{u \in V}z_u \ol e^{(u)} \in 
\Delta(\{\overline{b}^{(k)}\})$, we consider the projection
$f_v : \ol z \mapsto z_v$ and substitute it into  
\eqref{eq_barycenter 2}: 
\be\label{eq_barycenter 3}
x_v = \int_{cl(\{\overline{b}^{(k)}\})}
z_v \; d\mu(\ol z), \quad v \in V. 
\ee
Relation \eqref{eq_barycenter 3} is the coordinate-wise 
form of \eqref{eq_barycenter}. Another form of this 
statement is:
$$
\Delta(\{\overline{b}^{(k)}\}) =
\left\{\int_{cl(\{\overline{b}^{(k)}\})} \ol z\; 
d\mu(\ol z) : \mu \in M_1(cl(\{\overline{b}^{(k)}\})) 
\right\},
$$
where $M_1(\cdot)$ is the set of all Borel probability
measures.

\end{proof}

\begin{remark}
We note that the integrals considered above can be viewed  as 
Bochner integrals for the  functions  
$f$ from $cl(\{\overline{b}^{(k)}\})$ to $B$, 
where  $B$ is the Banach space generated by the vectors  
$\overline{b}^{(k)}, k \in \N$,  with the norm given in 
(\ref{eq_r1'}).  
\end{remark}

\subsection{Continuity of matrix maps} 
We consider here special maps (\textit{matrix maps})
between the metric spaces
$\Delta = \{\overline{x}\overline{} = \langle x_{v} 
\rangle,\ \sum_{v \in V}x_{v} \leq 1,\ x_{v} \geq 0\}$ 
and
$\Delta' = \{ \overline{y} =\langle y_{w}\rangle, \sum_{w 
\in W}y_{w} \leq 1,\ y_{w} \geq 0\}$,
where $V$ and $W$ are countably infinite sets. 

Let
$F = \{ f_{vw} \}, v \in V, \ w \in W$,  
be an infinite matrix such that
$f_{vw} \geq 0, \sum_{w \in W}f_{vw} = 1$ for every $v \in
V$, and $f_{vw} > 0$ if and only if $w \in W_{v}$,
where $W_{v} \subset W$ is a finite set for every $v \in V$.
The transpose matrix ${F}^{T}$ of $F$ defines the map
${F}^{T}\colon\ \Delta \rightarrow \Delta^{'}$: 
$$
F^T(\ol x) = \ol y,\ \ \ y_w = \sum_{v\in V} f_{vw} x_v, 
$$
where $\overline{x} = \langle x_{v}\rangle$ and 
$\overline{y} = \langle y_{w}\rangle$. 
We are interested in the continuity of the map  
${F}^{T}\colon\ \Delta \rightarrow \Delta^{'}$ because 
this property plays an important role in generalized 
Bratteli diagrams, see 
Section \ref{sect_inverse limits}.

It is easy to check  that
${F}^{T}(\Delta_{a}) = {\Delta}'_{a}$, 
for every $0 < a \leq 1$. Indeed, let 
$\overline{x} \in \Delta_{a}$. 
Then we have
$$
\sum_{w \in W}y_{w} = \sum_{w \in W}{\sum_{v \in V}f_{vw}} 
  x_{v}
 = \sum_{v \in V}x_{v} \cdot  \sum_{w \in W}f_{vw}
=\sum_{v \in V}x_{v} = a.
$$

In general, the map ${F}^{T}$ is not continuous. 
We give a necessary and sufficient condition for the continuity of this map. 

Let
$\overline{g}_{v} := \langle f_{vw} : w \in W \rangle$  
be vectors of
$\Delta_{1}^{'}$ determined by $v$-th rows of the matrix 
$F$.

\begin{theorem}\label{t2}
The map $F^{T} \colon \Delta \rightarrow \Delta^{'}$  is continuous
if and only if $|{\overline{g}}_{v}| \rightarrow 0$ as $a(v) 
\rightarrow \infty$.
\end{theorem}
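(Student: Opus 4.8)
The plan is to deduce both implications from the explicit formula \eqref{eq_dist d} for $d$, the main tool being the elementary estimate
\[
d(F^{T}\overline{x}, F^{T}\overline{x}') \;\le\; \sum_{v \in V} |x_{v} - x_{v}'|\;|\overline{g}_{v}|, \qquad \overline{x}, \overline{x}' \in \Delta,
\]
which I would establish first (the symbol $|\cdot|$ on the $W$-side uses the fixed bijection $a$ on $W$, exactly as on $V$). Writing $\overline{y} = F^{T}\overline{x}$, $\overline{y}' = F^{T}\overline{x}'$ and noting that $\sum_{v} x_{v} \le 1$ and $\sum_{v} x_{v}' \le 1$ make all the double series below absolutely convergent, one has by the triangle inequality and Tonelli's theorem
\[
d(\overline{y}, \overline{y}') = \sum_{w \in W} \frac{1}{2^{a(w)}}\left|\sum_{v \in V} f_{vw}(x_{v} - x_{v}')\right| \le \sum_{v \in V} |x_{v} - x_{v}'| \sum_{w \in W} \frac{f_{vw}}{2^{a(w)}} = \sum_{v \in V} |x_{v} - x_{v}'|\;|\overline{g}_{v}|,
\]
since $\sum_{w} 2^{-a(w)} f_{vw} = |\overline{g}_{v}|$ by \eqref{eq_r1'} (recall $f_{vw}\ge 0$).

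For the implication ``$|\overline{g}_{v}| \to 0 \Rightarrow F^{T}$ continuous'', since $\Delta$ and $\Delta'$ are metric it suffices to check sequential continuity; so let $\overline{x}^{(k)} \to \overline{x}$ in $\Delta$, which, as noted above, means $x_{v}^{(k)} \to x_{v}$ for every $v$. Given $\varepsilon > 0$, choose $N$ with $|\overline{g}_{v}| < \varepsilon$ for all $v$ with $a(v) > N$, and split the estimate:
\[
d(F^{T}\overline{x}^{(k)}, F^{T}\overline{x}) \le \sum_{a(v) \le N} |x_{v}^{(k)} - x_{v}|\;|\overline{g}_{v}| + \varepsilon \sum_{a(v) > N} |x_{v}^{(k)} - x_{v}| \le \sum_{a(v) \le N} |x_{v}^{(k)} - x_{v}| + 2\varepsilon,
\]
using $|\overline{g}_{v}| \le 1$ and $\sum_{v} |x_{v}^{(k)} - x_{v}| \le \sum_{v} x_{v}^{(k)} + \sum_{v} x_{v} \le 2$. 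The remaining sum is finite and each summand tends to $0$, so $\limsup_{k} d(F^{T}\overline{x}^{(k)}, F^{T}\overline{x}) \le 2\varepsilon$; as $\varepsilon$ is arbitrary, $F^{T}$ is continuous.

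For the converse I would argue by contraposition. If $|\overline{g}_{v}| \not\to 0$ as $a(v) \to \infty$, pick $\varepsilon_{0} > 0$ and vertices $v_{1}, v_{2}, \dots$ with $a(v_{k}) \to \infty$ and $|\overline{g}_{v_{k}}| \ge \varepsilon_{0}$. The unit vectors $\overline{e}^{(v_{k})}$ lie in $\Delta$, and $d(\overline{e}^{(v_{k})}, \overline{0}) = 2^{-a(v_{k})} \to 0$, yet $F^{T}\overline{e}^{(v_{k})} = \overline{g}_{v_{k}}$ while $F^{T}\overline{0} = \overline{0}$, so $d(F^{T}\overline{e}^{(v_{k})}, F^{T}\overline{0}) = |\overline{g}_{v_{k}}| \ge \varepsilon_{0}$, and $F^{T}$ fails to be continuous at $\overline{0}$.

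The computations are routine; the one point needing care — and the place where working inside $\Delta$ rather than all of $I^{V}$ is essential — is the tail bound $\sum_{a(v) > N} |x_{v}^{(k)} - x_{v}| \le 2$, which is what lets the single number $\sup\{|\overline{g}_{v}| : a(v) > N\}$ control the tail of the defect uniformly in $k$. Note that the column supports $\{v : f_{vw} \ne 0\}$ may be infinite, so there is no coordinatewise shortcut: the quantity $|\overline{g}_{v}|$ is precisely the device that packages the obstruction to continuity, and the theorem asserts that this obstruction disappears exactly when the row weights die out along the enumeration of $V$.
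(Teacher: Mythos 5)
Your proposal is correct and follows essentially the same route as the paper: both proofs rest on the key estimate $d(F^{T}\overline{x},F^{T}\overline{x}')\le\sum_{v}|x_{v}-x'_{v}|\,|\overline{g}_{v}|$, prove sufficiency by splitting the sum at a threshold level where $|\overline{g}_{v}|$ is small (you phrase it via sequential continuity and a $\limsup$, the paper via an explicit $\delta$), and prove necessity by applying $F^{T}$ to the basis vectors $\overline{e}^{(v)}\to\overline{0}$ with $F^{T}\overline{e}^{(v)}=\overline{g}_{v}$. Your tail bound $\sum_{a(v)>N}|x^{(k)}_{v}-x_{v}|\le 2$ is in fact slightly more careful than the paper's corresponding step.
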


\begin{proof}
Sufficiency.
Take two vectors $\overline{x} =\langle x_{v} \rangle$ and 
$\overline{x'} =\langle {x}'_{v} \rangle$  
from the metric space $\Delta$ and set $\overline{y} =\langle y_{w} \rangle  = {F}^{T}(\overline{x})$,
$\overline{y}' = \langle {y}'_{w} \rangle  = {F}^{T}(\overline{x'})$.
We recall that the functions $a : V \to \N$ and $a' : W \to
\N$ enumerate elements of the sets $V$ and $W$. The 
distance $d$ is given by the formula
$d(\overline{x},\overline{x'}) = \sum_{v \in V}{\frac{1}{2^{a(v)}}  |x_{v}} -  {x}'_{v}|$. We  compute 
$$
\ba
d(\overline{y},\overline{y'}) = & \sum_{w \in W}\frac{1}{2^{a'(w)}} |y_{w} - {y}'_{w}|\\
= &  \sum_{w \in W}{\frac{1}{2^{a'(w)}} |\sum_{v \in V}f_{vw}  x_{v}} - \sum_{v \in V}f_{vw}  {x}'_{v}| \\
 \leq & \sum_{w\in W}{\frac{1}{2^{a'(w)}} \sum_{v \in V}f_{vw} {|x}_{v} - {x}'_{v}\ }|\\
= &  \sum_{v \in V}|x_{v} - {x}'_{v}| \sum_{w \in W} \frac{1}{2^{a^{'}(w)}}   f_{vw} \\
= & \sum_{v \in V}|x_{v} - {x}'_{v}| |{\overline{g}}_{v}|.
\ea
$$
Take $\varepsilon > 0$, choose $k_{0} \in \N$ 
such that $|{\overline{g}}_{v}|<\frac{\varepsilon}{2}$ 
if $a(v) >  k_{0}$, and set
$$
\delta = \frac{\varepsilon}{2} \left[ \sum_{a(v) \leq 
k_{0}}2^{a(v)}|{\overline{g}}_{v}| \right]^{- 1}.
$$ 
It is obvious that the inequality 
$d(\overline{x},\overline{x'}) < \delta$ implies $|x_{v} - {x}'_{v}| < \delta 2^{a(v)}$ for every $v\in V$. 

Finally, we have
$$
\ba
d(\overline{y}, \overline{y'}) &\leq  \sum_{v \in V}
|x_{v} - {x}'_{v}|   |{\overline{g}}_{v}|\\
& = \sum_{a(v) \leq k_{0}}|x_{v} - {x}'_{v}| 
| {\overline{g}}_{v}| + \sum_{a(v) > k_{0}}|x_{v} - {x'}_{v}|  |{\overline{g}}_{v}| \\
& \leq   \delta \sum_{a(v) \leq k_{0}}2^{a(v)} 
|{\overline{g}}_{v}| 
+ \frac{\varepsilon}{2} \sum_{a(v) > k_{0}}|x_{v} - {x}'_{v}|\\
& <  \frac{\varepsilon}{2} + \frac{\varepsilon}{2} = \varepsilon.
\ea 
$$
This proves that $F^{T}$ is continuous.

 Necessity. Assume that $F^{T}$ is continuous and let $\ol e^{(v)}$ be the basis vectors from $\Delta$. 
Obviously, $|\ol e^{(v)}| \rightarrow 0$ as $a(v) \to 
\infty$.
This implies that $| {\overline{g}}_{v} |  \rightarrow  0$ because $F^{T}(\ol e^{(v)}) =  {\overline{g}}_{v}$.

\end{proof} 

We will give more examples of continuous and discontinuous liner mappings $F$ related to some 
classes of generalized Bratteli diagrams in Sections 
\ref{sect bounded size} - \ref{ssect measures on B_infty}.

%%%%%
\section{Generalized Bratteli diagrams. Overview}
\label{sect_GBD}

In this section, we recall the main definitions and results 
concerning generalized Bratteli diagrams. For more details 
see \cite{BezuglyiJorgensen2022}, 
\cite{BezuglyiJorgensenKarpelSanadhya2023}. 

\subsection{Main definitions}

We will use the notation $|\cdot |$ for the cardinality of a set.

\begin{definition}\label{Def:generalized_BD} A 
\textit{generalized Bratteli diagram} is a graded graph 
$B = (V, E)$ such that the 
vertex set $V$ and the edge set $E$ are represented as 
partitions $V = \bigsqcup_{i=0}^\infty  V_i$ and $E = 
\bigsqcup_{i=0}^\infty  E_i$ satisfying the following 
properties: 
\vspace{2mm}

\noindent $(i)$ The number of vertices at each level 
$V_i$, $i 
\in \N_0$, is countably infinite (if necessary, we will 
identify each $V_i$ with $\Z$ or $\N$). The set $V_i$ is called the $i$th 
level of the diagram $B$. For all $i \in \N_0$, the set $E_i$
of all edges between $V_i$ and $V_{i+1}$ is countable.
\vspace{2mm}

\noindent $(ii)$ For every edge $e\in E$, we define the 
range and 
source maps $r$ and $s$ such that $r(E_i) = V_{i+1}$ and 
$s(E_i) = V_{i}$ for $i \in \N_0$. It is required that  
$s^{-1}(v)\neq \emptyset $ for all $v\in V$, and  
$r^{-1}(v)\neq\emptyset$ for all $v \in V\setminus V_0$. 
\vspace{2mm}

\noindent $(iii)$ For every vertex $v \in V_n$ and every $n 
\geq 1$, we have $|r^{-1}(v)| < \infty$.  
\end{definition} 

When we index the vertices 
at each level by $\Z$, the generalized Bratteli diagram 
$B$ is called 
\textit{two-sided infinite}, and when the vertices are indexed by 
$\N$ or $\N_0$, then we call $B$ 
\textit{one-sided infinite}.

The structure of a generalized Bratteli diagram $B$ is 
completely determined by a sequence of non-negative 
countably infinite matrices. 
For a vertex $v \in V_m$ and a vertex $w \in V_{n}$, denote 
by $E(v, w)$ the set of all finite paths between $v$ and 
$w$ (this set may be empty). 
Set $f'^{(n)}_{v,w} = |E(v, w)|$ for every $w \in V_n$ and 
$v \in V_{n+1}$. We denote
\begin{equation}\label{Notation:f^i}
    F'_n = (f'^{(n)}_{v,w} : v \in V_{n+1}, w\in V_n),\ \   
    f'^{(n)}_{v,w}  \in \N_0.
\end{equation} 
The matrices $F'_n$, $n \in \N_0$, are called 
\textit{incidence matrices}. 
We reserve the notation $F_n$ for the corresponding 
stochastic matrix (see below). The assumption $r^{-1}(v) <
\infty$ implies that in every row $v$, all but finitely many entries of
$F'_n$ are zeros. 
We will use the notation $B = B(F'_n)$.  If  $F'_n = F'$ 
for every $n \in \N_0$, then the diagram $B$ is called 
\textit{stationary.} We use $f'^{(n)}_{vw}$ in two cases:
for the $(vw)$-entry of $F'^n$ (stationary diagram) and for 
the $(vw)$-entry of the product $F'_{n-1}\ \cdots\ F'_0$ 
(non-stationary diagram). It will be clear from the 
context what case is considered.

To define the path space of a generalized Bratteli diagram
$B$, we consider a finite or infinite sequence of edges 
(it is called a \textit{path}) $(e_i: e_i\in E_i)$ such 
that $s(e_i)=r(e_{i-1})$. Denote the set of all infinite
paths starting at $V_0$ by $X_B$ and call it the 
\textit{path space}. For a finite path $\ol e = (e_0, ... , 
e_n)$, we write
$s(\ol e) = s(e_0)$ and $r(\ol e) = r(e_n)$. The set
$$
    [\ol e] := \{x = (x_i) \in X_B : x_0 = e_0, ..., x_n = e_n\}, 
$$ 
is called the \textit{cylinder set} associated with $\ol e$. 

The \textit{topology} on the path space $X_B$ is generated by
cylinder sets.
This topology coincides with the topology defined by the 
following metric on $X_B$: for $x = (x_i), \, y = (y_i)$, set 
$$
\mathrm{dist}(x, y) = \frac{1}{2^N},\ \ \ N = \min\{i \in \N_0 : 
x_i \neq y_i\}.
$$
The path space $X_B$ is a zero-dimensional Polish space and 
therefore a standard Borel space.

\subsection{Tail invariant measures}

In this paper, we consider \textit{tail invariant measures}
on the path space $X_B$ of a generalized Bratteli diagram
$B$. The term a \textit{measure} is always used for 
non-atomic positive Borel measures. We are mostly 
interested in \textit{full measures}, i.e., every cylinder 
set must be of positive measure.

\begin{definition}\label{Def:Tail_equiv_relation}
Two paths $x= (x_i)$ and $y=(y_i)$ in $X_B$ are called 
\textit{tail equivalent} if there exists an $n \in \mathbb{N}_0$ 
such that $x_i = y_i$ for all $i \geq n$. This notion defines a 
\textit{countable Borel equivalence relation} $\mathcal R$ on the
path space $X_B$ which is called the \textit{tail equivalence 
relation}.
\end{definition}

\begin{definition}\label{def: tail inv meas} Let 
$B =(V, E)$ be 
a generalized Bratteli diagram and $\mathcal R$ 
the tail equivalence relation on the path space $X_B$. 
A measure $\mu$ on 
$X_B$ is called \textit{tail invariant} if, for any cylinder sets
$[\ol e]$ and $[\ol e']$ such that $r(\ol e) = r(\ol e')$, we have
$\mu([\ol e]) = \mu([\ol e'])$.
\end{definition}

The set of probability tail invariant measures is denoted by $M_1(\mc R)$. We note that if a tail invariant Borel measure $\mu$ on $X_B$ takes 
finite values on all cylinder sets, then $\mu$ is uniquely determined by its values on cylinder sets in $X_B$. Thus, every such tail invariant measure can be characterized in terms of a sequence of 
positive vectors associated with vertices of each level, see Theorem  \ref{BKMS_measures=invlimits}. 

For every generalized Bratteli diagram, there exists a
sequence of \textit{Kakutani-Rokhlin towers}.

\begin{definition} \label{Def:Kakutani-Rokhlin} Let 
$B =(V, E)$ 
be a generalized Bratteli diagram, for $w \in V_n, 
n \in \N_0$, denote 
$$
X_w^{(n)} = \{x = (x_i)\in X_B : s(x_{n}) = w\}.
$$
The collection of all such sets forms a partition 
$\zeta_n$ of $X_B$ into  
\textit{Kakutani-Rokhlin towers}
corresponding to the vertices from $V_{n}$.
Each finite path $\ov e = (e_0, \ldots, e_{n-1})$ with 
$r(e_{n-1}) 
= w$, determines a ``level'' of this tower
$$
X_w^{(n)}(\ov e) = \{x = (x_i)\in X_B : x_i = e_i,\; i = 
0,\ldots, n-1 \}.
$$
Clearly, 
$$
X_w^{(n)} = \bigcup_{\ol e \in E(V_0, w)} X_w^{(n)}(\ov e),
$$ 
and the partition $\zeta_{n+1}$ refines $\zeta_n$.
\end{definition}

\begin{definition}\label{Def:Height}  For $v \in V_n$ and $v_0 \in
V_0$, we set $h^{(n)}_{v_0, v} = |E(v_0, v)| $ and define 
$$
H^{(n)}_v = \sum_{v_0 \in V_0} h^{(n)}_{v_0, v}, \ \ n \in \N.
$$ 
Set $H^{(0)}_v = 1$ for all $v\in V_0$.
This gives us the vector $H^{(n)} = \langle H^{(n)}_{v} : 
v \in V_n \rangle$ associated with every level $n\in \N_0$. 
Since $H^{(n)}_v = |E(V_0, v)|$, we call
$H^{(n)}_v$ the \textit{height of the tower} $X_v^{(n)}$ 
corresponding to the vertex $v\in V_n$.
 \end{definition} 

It is easy to see from the structure of a Bratteli diagram that, for every $n \in \mathbb{N}_0$ and every $v \in V_{n+1}$, we have
\begin{equation}\label{eq_H e6}
H_{v}^{(n + 1)} = \sum_{w \in V_{n}}{f'}_{vw}^{(n)}  H_{w}^{(n)}.
\end{equation}
Thus, the relation $H^{(n+1)} = F'_n H^{(n)}$ holds
for every $n$.
Remark that the fact that $r^{- 1}(w) < \infty$ 
for every $w \in V_{n}$ and $n \geq 1$ implies that 
$H_{w}^{(n)} < \infty$ 
and ${f'}_{vw}^{(n)} < \infty$.

 Next, we define the sequence of \textit{stochastic incidence 
matrices} $(F_n)$ that plays a key role in our quantitative
analysis of generalized Bratteli diagrams. We set
$F_n = (f_{vw}^{(n)} : v \in V_{n+1}, w \in V_n)$, where 
\be\label{eq_stoch matrix F_n}
f_{vw}^{(n)} =  {f'}_{vw}^{(n)}\ \cdot \frac{H_{w}^{(n)}}
{H_{v}^{(n + 1)}}.
\ee
Then we get from \eqref{eq_H e6} that 
\begin{equation}\label{e7}
\sum_{w\epsilon V_{n}}f_{vw}^{(n)} =1, \quad v \in\ 
V_{n + 1}.    
\end{equation}

\begin{thm}\label{BKMS_measures=invlimits}
 Let $B = (V,E)$ be a Bratteli diagram (generalized or classical) 
 with the sequence of incidence matrices $(F'_n)$. Then:
\begin{enumerate}

\item Let  $\mu$ be a tail invariant measure on $B$ which takes finite values on all cylinder sets. For every 
$n\in \N_0$, define two sequences of vectors $\ol p^{(n)} = \langle p^{(n)}_w : w \in V_n \rangle$, 
and $\ol q^{(n)} = \langle q^{(n)}_w : w \in V_n \rangle$,
where 
\begin{equation}\label{eq:def_p_n}
    p^{(n)}_w= \mu(X_w^{(n)}(\ov e)), \quad q^{(n)}_w= \mu(X_w^{(n)}), \ \ w\in V_n. 
\end{equation} 
Then the vectors $\ol p^{(n)}$ and $\ol q^{(n)}$ satisfy the relations 
\begin{equation}\label{eq:formula_p_n}
(F'_n)^{T} \ol p^{(n+1)} =\ol p^{(n)}, \quad F_n^{T} \ol q^{(n+1)} =\ol q^{(n)}, \quad n\geq 0, 
\end{equation}
or
\begin{equation}\label{eq_for p and q(e9)}
p_{w}^{(n)} = \sum_{v \in V_{n + 1}}f'^{(n)}_{vw}
p_{v}^{(n + 1)},   \quad
q_{w}^{(n)} = \sum_{v \in V_{n + 1}}f_{vw}^{(n)}
q_{v}^{(n + 1)}.     
\end{equation}

\item Suppose that $\{\ol p^{(n)}= (p_w^{(n)}) 
\}_{n \in \mathbb{N}_0}$ is a sequence of non-negative vectors such 
that $(F'_n)^{T}\ol p^{(n+1)} =\ol p^{(n)}$ for all $n \in 
\N_0$. Then there exists a uniquely determined tail invariant measure $\mu$ such that $\mu(X_w^{(n)}(\ov e))= 
p_w^{(n)}$ for 
$w\in V_n, n \in \mathbb{N}_0$.

\item Suppose that $\{\ol q^{(n)}= (q_w^{(n)}) 
\}_{n \in \N_0}$ is a sequence of non-negative vectors such 
that $F_n^{T}\ol q^{(n+1)} =\ol q^{(n)}$ for all $n \in 
\N_0$. Then there exists a uniquely determined tail invariant measure $\mu$ such that $\mu(X_w^{(n)})= 
q_w^{(n)}$ for 
$w\in V_n, n \in \mathbb N_0$.
\end{enumerate}
%&
\end{thm}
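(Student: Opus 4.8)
The plan is to treat the three parts in order, since part (1) establishes the necessary conditions and parts (2) and (3) establish sufficiency, with (3) being essentially a rescaling of (2).

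For part (1), I would begin by recalling that, since $\mu$ is tail invariant, the value $\mu(X_w^{(n)}(\ol e))$ depends only on the endpoint $w = r(\ol e)$ and not on the particular path $\ol e$; this is exactly Definition~\ref{def: tail inv meas}, so $p_w^{(n)}$ is well defined. Next I would observe that the tower $X_v^{(n+1)}(\ol e')$ attached to a vertex $v \in V_{n+1}$ decomposes, according to which edge at level $n$ the path passes through, into a disjoint union of towers $X_w^{(n)}(\ol e)$ over all edges from $w \in V_n$ to $v$; counting these edges gives $f'^{(n)}_{vw}$ copies for each $w$. Taking $\mu$ of both sides and using tail invariance yields $p_v^{(n+1)} = \sum_{w \in V_n} f'^{(n)}_{vw} p_w^{(n)}$ — wait, I need to be careful about the direction. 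The correct decomposition is of the level $X_w^{(n)}(\ol e)$ at level $n$: it splits over all edges leaving $w$ toward level $n+1$ and then over all extensions, which after one more step and summation over $v \in V_{n+1}$ gives the first identity in \eqref{eq_for p and q(e9)}, namely $p_w^{(n)} = \sum_{v \in V_{n+1}} f'^{(n)}_{vw} p_v^{(n+1)}$; this is the matrix relation $(F'_n)^T \ol p^{(n+1)} = \ol p^{(n)}$. Since $X_w^{(n)} = \bigsqcup_{\ol e \in E(V_0,w)} X_w^{(n)}(\ol e)$ with $H_w^{(n)} = |E(V_0,w)|$ many levels, we get $q_w^{(n)} = H_w^{(n)} p_w^{(n)}$; substituting this into the $p$-relation and using the definition \eqref{eq_stoch matrix F_n} of $f_{vw}^{(n)}$ converts it into the stochastic relation $F_n^T \ol q^{(n+1)} = \ol q^{(n)}$.

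For part (2), given a sequence $\{\ol p^{(n)}\}$ with $(F'_n)^T \ol p^{(n+1)} = \ol p^{(n)}$, I would define a premeasure on the algebra of cylinder sets by $\mu([\ol e]) := p^{(n)}_{r(\ol e)}$ for $\ol e$ ending at level $n$. The compatibility relation \eqref{eq_for p and q(e9)} guarantees exactly that this assignment is finitely additive: the cylinder $[\ol e]$ is the disjoint union of the cylinders $[\ol e e']$ over all one-step extensions $e'$, and summing $p^{(n+1)}_{r(\ol e e')}$ over those extensions reproduces $p^{(n)}_{r(\ol e)}$ by the relation. Since $X_B$ is a zero-dimensional Polish space whose topology is generated by the (closed-open) cylinder sets, and since each cylinder set is compact is \emph{not} true in general here — so I cannot invoke the usual compactness argument directly. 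Instead I would appeal to the Carathéodory/Kolmogorov extension theorem for the standard Borel structure: the cylinder sets form a semiring generating the Borel $\sigma$-algebra, the set function is finitely additive and $\sigma$-finite, and countable additivity on the semiring follows because the path space is Polish and the cylinder sets decrease to $\emptyset$ only trivially within a fixed tower (equivalently, one uses that $X_B$ is the projective limit of the finite level sets). This yields a unique Borel measure $\mu$ with $\mu(X_w^{(n)}(\ol e)) = p_w^{(n)}$; tail invariance is immediate from the fact that the value depends only on the endpoint, and non-atomicity follows provided all $p_w^{(n)} > 0$ (or is automatic in the full-measure case, and in general a measure with an atom would force a cylinder to have a path of positive measure, contradicting that cylinders strictly refine).

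For part (3), given $\{\ol q^{(n)}\}$ with $F_n^T \ol q^{(n+1)} = \ol q^{(n)}$, I would set $p_w^{(n)} := q_w^{(n)} / H_w^{(n)}$ and check, using \eqref{eq_stoch matrix F_n} and \eqref{eq_H e6}, that this sequence satisfies $(F'_n)^T \ol p^{(n+1)} = \ol p^{(n)}$; then part (2) produces the desired measure, and $\mu(X_w^{(n)}) = H_w^{(n)} p_w^{(n)} = q_w^{(n)}$. The main obstacle is the countable additivity / extension step in part (2): because the path space of a generalized Bratteli diagram is not locally compact, one cannot use the elementary compactness argument available for standard Bratteli diagrams, and some care with the Kolmogorov-style extension (or an explicit verification that the premeasure is continuous at $\emptyset$ along decreasing sequences of cylinders) is needed. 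Everything else is bookkeeping with the incidence matrices and the tower decomposition.
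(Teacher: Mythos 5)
Your proposal is correct and follows exactly the route the paper indicates: the paper gives no written proof but refers to \cite{BezuglyiKwiatkowskiMedynetsSolomyak2010} and \cite{BezuglyiJorgensen2022} and remarks that the statement is a form of the Kolmogorov consistency theorem, which is precisely your strategy (tower decomposition plus tail invariance for part (1), a consistent family of measures on the discrete sets of finite paths extended by Kolmogorov/projective-limit arguments for part (2), and the rescaling $p_w^{(n)} = q_w^{(n)}/H_w^{(n)}$ for part (3)). Your caution that cylinder sets of a generalized Bratteli diagram are not compact, so the extension step cannot rely on the elementary compactness argument available for standard diagrams, is exactly the right point to flag.
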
  
The \textit{proof} of Theorem \ref{BKMS_measures=invlimits} 
is 
straightforward and can be found in 
\cite{BezuglyiKwiatkowskiMedynetsSolomyak2010} (for 
classical Bratteli diagrams) and 
\cite{BezuglyiJorgensen2022} (for generalized Bratteli 
diagrams). Theorem \ref{BKMS_measures=invlimits} is a form of
Kolmogorov consistency theorem. 

\subsection{Vershik map}\label{ssect Vmap}
In order to define a Borel dynamical system on a 
generalized Bratteli diagram, we will need the notion 
of an ordered generalized Bratteli diagram.
An \textit{ordered generalized Bratteli diagram} $
B = (B, V, >)$ is a generalized Bratteli diagram $B=(V,E)$
together with a partial order $>$ on $E$ such that edges 
$e,e'$ are comparable if and only if $r(e)=r(e')$ (see 
\cite{BezuglyiJorgensenKarpelSanadhya2023} for more 
details). 
We observe that a partial order "$>$" is a family (product) of linear orders "$>_{v}$" on the finite sets $r^{- 1}(v)$, $v \in V \setminus V_{0}$, that are pairwise independent.
A (finite or infinite) path $e= (e_0,e_1,..., 
e_i,...)$ is called \textit{maximal} (respectively 
\textit{minimal}) if every $e_i$ has 
a maximal (respectively minimal) number among all elements from 
$r^{-1}(r(e_i))$. We denote the sets of all infinite 
maximal and of all infinite minimal paths by $X_{\max}$ and 
$X_{\min}$ respectively.

For a diagram $B=(V,E,>)$, first define a Borel 
transformation 
$\varphi_B : X_B \setminus X_{\max} \rightarrow X_B 
\setminus X_{\min}$ as follows: given $x = (x_0, x_1,...)\in 
X_B\setminus X_{\max}$, 
let $m$ be the smallest number such that $x_m$ is not 
maximal. Let 
$y_m$ be the successor of $x_m$ in the finite set $r^{-1}
(r(x_m))$.
Then we set $\varphi_B(x)= (y_0, y_1,...,y_{m-1}, 
y_m,x_{m+1},...)$
where $(y_0, ..., y_{m-1})$ is the unique minimal path from 
$s(y_{m})$ to $V_0$. In such a way, the Borel map $\varphi_B$ is
a bijection from $X_B\setminus X_{\max}$ onto $X_B\setminus X_{\min}$.  Moreover, it follows from the definition that $\varphi_B$ is a homeomorphism. 
\begin{definition}
If the map $\varphi_B : X_B\setminus X_{\max} \to X_B\setminus X_{\min}$
admits a Borel bijective extension to the entire path space $X_B$, then this extension is called a \textit{Vershik map.} 
The corresponding Borel dynamical system $(X_B,\varphi_B)$ is 
called a generalized \textit{Bratteli-Vershik} system.
\end{definition} 

In some cases, we may be interested in surjective extensions (not necessarily bijections) of $\varphi_B$ to the entire path space $X_B$. 
In this context, we distinguish a class of Bratteli-Vershik maps called 
$p$-continuous maps (partially continuous).
For this, we need the notions the \textit{successor},  $Succ(x)$, of
$x \in X_{\max}$, and \textit{predecessor}, $Pred(y)$, of $y \in X_{\min}$.
With every $x = (x_n) \in X_{\max}$ and $y = (y_n) \in X_{\min}$, we associate the sequences of vertices 
$\ol v = (v_n)$ and $\ol w = (w_n)$ where $v_n = s(x_n)$ and 
$w_n = s(y_n)$.  
It is said that $y \in Succ(x)$ if for infinitely many $n$ there exists
$z \in V_{n + 1}$ and edges $e, e' \in r^{-1}(z)$ such that 
$s(e) =  v_{n}$, $s(e') = {w}_{n}$ and $e'$ is the successor of $e$ in 
the linear order defined on $r^{- 1}(z)$. 
Similarly, we define the set $Pred(y)$ for $y \in X_{\min}$. 
Of course, $x \in Pred(y)$ if and only if $y \in Succ(x)$. 

The following theorem clarifies the role of the defined notions.

\begin{theorem}\label{(3.6a)}
Let $\varphi$ be a Bratteli-Vershik extension of $\varphi_{B}$
such that $\varphi$ is continuous at $x$, where
$x \in X_{\max}$ and $Succ(x) \neq \emptyset$. 
Then $\varphi(x) \in Succ(x)$. On the other hand, if $\psi$ is a 
Bratteli-Vershik extension of $\varphi_{B}^{-1}$ and
$\psi $ is continuous at a path $y \in X_{\min}$, then
$\psi (y) = x \in Pred(y)$.
\end{theorem}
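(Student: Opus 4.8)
\textbf{Proof plan for Theorem \ref{(3.6a)}.}
The plan is to unwind the definitions of $\varphi_B$, of $Succ(x)$, and of continuity, and to show that the only possible limit consistent with continuity at $x$ is a path in $Succ(x)$. First I would fix $x = (x_n) \in X_{\max}$ with associated vertex sequence $\ol v = (v_n)$, $v_n = s(x_n)$. Since $x$ is maximal, it is a limit of non-maximal paths; concretely, for each large $N$ I would pick $x^{(N)} \in X_B \setminus X_{\max}$ agreeing with $x$ on coordinates $0, \dots, N-1$ but with $x^{(N)}_N$ not maximal in $r^{-1}(r(x_N)) = r^{-1}(v_{N+1})$ (such a choice exists for infinitely many $N$ precisely when $Succ(x) \neq \emptyset$; I would extract such an infinite set of indices $N$). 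Then $x^{(N)} \to x$ in the metric on $X_B$, so continuity of $\varphi$ at $x$ forces $\varphi(x^{(N)}) = \varphi_B(x^{(N)}) \to \varphi(x)$.

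Next I would compute $\varphi_B(x^{(N)})$ explicitly from the definition of the Vershik map: the smallest non-maximal coordinate $m = m(N)$ of $x^{(N)}$ satisfies $m \leq N$, and at that coordinate we replace $x^{(N)}_m$ by its successor $y_m$ in $r^{-1}(r(x^{(N)}_m))$, then prepend the unique minimal path from $s(y_m)$ to $V_0$, keeping coordinates $> m$ unchanged. The key point is to control where $m(N)$ sits: because $x$ is maximal, for every coordinate $i < N$ the entry $x_i = x^{(N)}_i$ is maximal in $r^{-1}(r(x_i))$, so in fact $m(N) = N$ for our chosen indices. Hence $\varphi_B(x^{(N)})$ is a minimal path up to level $N$, followed by the successor edge at level $N$ (whose source is some $w_N$ adjacent to $r(x_N) = v_{N+1}$), followed by $x_{N+1}, x_{N+2}, \dots$. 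Passing to the limit $\varphi(x) = \lim_N \varphi_B(x^{(N)})$, the limit path $y = (y_n)$ must be minimal on an infinite increasing sequence of initial segments — and at the relevant levels $N$ its source vertex $w_N$ is, by construction, the source of the successor of the edge $x_N$ in $r^{-1}(v_{N+1})$. Comparing this with the definition of $Succ(x)$ (taking $z = v_{N+1}$, $e = x_N$, $e' = $ its successor), we read off directly that $y \in Succ(x)$.

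The statement for $\psi$ and $\varphi_B^{-1}$ is entirely symmetric: a minimal path $y \in X_{\min}$ with $Pred(y) \neq \emptyset$ is approximated by non-minimal paths $y^{(N)}$ agreeing with $y$ up to level $N$, one applies $\varphi_B^{-1}$ (which takes an edge to its predecessor and prepends the unique maximal path to $V_0$), and the same bookkeeping shows $\psi(y) = \lim_N \varphi_B^{-1}(y^{(N)})$ is a maximal path whose vertex sequence witnesses, at infinitely many levels, that it lies in $Pred(y)$. The final sentence $x \in Pred(y) \iff y \in Succ(x)$ is immediate from the symmetry of the defining condition in $e$ and $e'$, and was already noted before the theorem.

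\textbf{Main obstacle.} The delicate step is the continuity/limit bookkeeping: one must be careful that the approximating paths $x^{(N)}$ can be chosen with the non-maximal coordinate occurring \emph{exactly} at level $N$ (using maximality of $x$ below level $N$), and that the prepended minimal segments of $\varphi_B(x^{(N)})$ are long enough to force the limit path to be minimal on arbitrarily long initial segments — this is what lets us match the ``for infinitely many $n$'' clause in the definition of $Succ(x)$. The hypothesis $Succ(x) \neq \emptyset$ is exactly what guarantees infinitely many usable levels $N$; without it the argument would stall because $\varphi(x)$ need not be distinguishable from $x$ under the metric. I would also need to note that the unique minimal (resp. maximal) path to $V_0$ from a given vertex exists and is well-defined, which follows from $r^{-1}(v)$ being finite and nonempty at every level.
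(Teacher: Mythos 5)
Your overall skeleton — approximate $x$ by non-maximal paths, apply $\varphi_B$, and pass to the limit using continuity at $x$ — is the same as the paper's, but you run the construction in the wrong direction, and this leaves a genuine gap at the final step. You build $x^{(N)}$ by replacing $x_N$ with \emph{some} non-maximal edge out of $v_N$ and then try to ``read off'' that the limit $y=\varphi(x)=\lim_N\varphi_B(x^{(N)})$ lies in $Succ(x)$. What your bookkeeping actually yields is: (a) $\varphi_B(x^{(N)})$ is minimal on coordinates $0,\dots,N-1$, so the limit $y$ is minimal (this part is fine); and (b) at each chosen level $N$ there is a pair $e=x^{(N)}_N$, $e'$ with $s(e)=v_N$, $e'$ the successor of $e$, and $s(e')=w_N$. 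To conclude $y\in Succ(x)$ you need $s(y_N)=w_N$ for infinitely many of these $N$. But convergence $\varphi_B(x^{(N)})\to y$ only guarantees that $\varphi_B(x^{(N)})$ agrees with $y$ on an initial segment of length $M(N)\to\infty$, with nothing forcing $M(N)\ge N$; the coordinate-$N$ edge of $\varphi_B(x^{(N)})$, whose source is $w_N$, need not coincide with $y_N$. The witnessing levels and the agreement levels do not line up, so membership of $y$ in $Succ(x)$ is not established. (Two secondary slips: you insist the replacement edge lie in $r^{-1}(v_{N+1})$ and later write ``$e=x_N$, $e'=$ its successor,'' but $x_N$ is maximal and has no successor, and the definition of $Succ$ permits $z\ne v_{N+1}$; also, the existence of non-maximal edges out of $v_N$ at infinitely many levels is implied by, but not equivalent to, $Succ(x)\ne\emptyset$.)

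The paper's (very terse) proof avoids this by reversing the logic: first \emph{fix} $y\in Succ(x)$ and use its witnessing data to build the approximating sequence. At each of the infinitely many levels $n$ supplied by the definition of $Succ(x)$, take the edge $e_n$ with $s(e_n)=v_n$ whose successor $e'_n$ has source $w_n=s(y_n)$, and set $x^{(n)}=(x_0,\dots,x_{n-1},e_n,\dots)$. Then $\varphi_B(x^{(n)})$ begins with the unique minimal path from $w_n$ to $V_0$, which — because $y$ is minimal and passes through $w_n$ — is exactly $(y_0,\dots,y_{n-1})$. Hence $\varphi_B(x^{(n)})\to y$, and continuity of $\varphi$ at $x$ forces $\varphi(x)=\lim_n\varphi_B(x^{(n)})=y\in Succ(x)$. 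This is the missing idea: the approximating sequence must be built \emph{from} a prechosen element of $Succ(x)$ so that the images converge to it by construction, rather than built arbitrarily and analyzed afterwards.
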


\begin{proof}
We can find two sequences of paths $\{ x^{(n)}\}$,
$\{ y^{(n)}\}$ such that
$\varphi_{B}(x^{(n)}) = y^{(n)}$, 
$\lim\limits_{n \to \infty} x^{(n)} = x$ and
$\lim\limits_{n \to \infty} y^{(n)}  = y \in Succ(x)$.
Then $\varphi (x) =
\lim\limits_{n \to \infty} \varphi( x^{(n)} )  =
\lim\limits_{n \to \infty}  \varphi_{B}( x^{(n)} )  =
\lim\limits_{n \to \infty} y^{(n)} = y \in Succ(x)$. 

Of course, the second part of the theorem can be proved in the same way.
\end{proof}

\begin{remark}\label{(3.6b)} 
The properties $Succ(x) \neq \emptyset$ and $Pred(y) \neq \emptyset$ 
imply $x \in cl \left( X_B \setminus X_{\max} \right)$ and
$y \in\ cl \left( X_B \setminus X_{\min} \right)$ respectively. 
In general, the opposite implications are not true, however, they are true
for a classic Bratteli diagram, because in this case the space $X_{B}$ is compact.
\end{remark}

We will say that an extension $\varphi$ of $\varphi_{B}$ 
(or $\psi$ of $\varphi_{B}^{-1}$) is \textit{$p$-continuous} if there is a
path $x \in X_{\max}$ (or $y \in X_{\min}$) such that
$\varphi(x)$ (or $\psi(y)$) is defined 
and $\varphi(x) \in Succ(x)$ (or $\psi(y) \in Pred(y)$). We will discuss examples of the sets $Succ(x)$ and $Pred(y)$ in Section~\ref{sect VershikMapPascal}.

\subsection{Classes of generalized Bratteli diagrams}

In this paper, we will also consider some particular classes 
of generalized Bratteli diagrams.

\begin{definition}\label{Def:irreducible_GBD} (1) 
Let $B = B(F_n)$ be
a generalized Bratteli diagram. If  $F_n = F$ 
and $V_n = V$ for every $n \in \N_0$, then the diagram $B$ 
is called \textit{stationary.} We 
will write $B = B(F)$ in this case.

(2) A generalized Bratteli diagram $B =(V,E)$, where 
all levels $V_i$ are identified with a set $V_0$ (e.g. 
$V_0 = \mathbb{N}$ or $\mathbb{Z}$),  is called  
\textit{irreducible} if 
for any vertices $i, j \in V_0$ and any level $V_n$ there exist 
$m 
> n$ and a finite path connecting $i \in V_n$ and $j \in V_m$. In 
other words, the $(j, i)$-entry of the matrix $F_{m-1} \cdots 
F_n$ 
is non-zero. Otherwise, the diagram is called \textit{reducible}.

\end{definition}

\begin{definition}\label{Def:BD_bdd_size} A generalized Bratteli 
diagram $B(F_n)$ is called of \textit{bounded size} if there exists 
a sequence of pairs of natural numbers $(t_n, L_n)_{n \in \N_0}$ 
such that, for all $n \in \mathbb{N}_0$ and all $v \in V_{n+1}$,
\begin{equation}\label{eq: Bndd size}
s(r^{-1}(v)) \in \{v - t_n, \ldots, v + t_n\} \quad \mbox{and} 
\quad \sum_{w \in V_{n}} f^{(n)}_{vw} = \sum_{w \in V_{n}} |E(w,v)| 
\leq L_n.
\end{equation} 
If the sequence $(t_n, L_n)_{n \in \N_0}$ is constant, i.e. $t_n = 
t$ and $L_n = L$ for all $n \in \N_0$, then we say that the 
diagram $B(F_n)$ is of \textit{uniformly bounded size}.
\end{definition} 

The following statement is taken from 
\cite{BezuglyiJorgensenKarpelSanadhya2023}.

\begin{lemma}\label{lemma_bdd_size_upper_cone}
Let $B=(V, E)$ be a generalized Bratteli diagram of bounded 
size. Let $n \in \mathbb{N}_0$, $v \in V_{n+1}$ and 
$E(V_0, v)$ be the set of all finite paths 
$\ov e = (e_0, \ldots, e_{n})$ such that $r(\ov e) = v$. 
Then 
$$
s(E(V_0, v)) \subset \left\{v - \sum_{i = 0}^n t_i, \ldots, 
v + \sum_{i = 0}^n t_i\right\}
$$
and
$$
H_v^{(n+1)} = |E(V_0, v)| \leq L_0 \cdots L_n,\ \ v \in 
V_{n+1}.
$$
\end{lemma}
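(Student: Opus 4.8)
The plan is to prove the two assertions together by induction on $n$, exploiting the fact that the bounded-size conditions in \eqref{eq: Bndd size} telescope along a path and that the heights obey the recursion \eqref{eq_H e6}.

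First I would settle the base case $n=0$. Here $v \in V_1$ and $E(V_0,v) = r^{-1}(v)$ consists of single edges, so $s(E(V_0,v)) = s(r^{-1}(v)) \subseteq \{v-t_0,\ldots,v+t_0\}$ directly from Definition \ref{Def:BD_bdd_size}, and $H_v^{(1)} = |r^{-1}(v)| = \sum_{w\in V_0} f'^{(0)}_{vw} \le L_0$ by the same definition.

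For the inductive step, assume the statement at level $n$: for every $w \in V_n$, $s(E(V_0,w)) \subseteq [\,w - \sum_{i=0}^{n-1}t_i,\ w+\sum_{i=0}^{n-1}t_i\,]$ and $H_w^{(n)} \le L_0\cdots L_{n-1}$. Fixing $v \in V_{n+1}$, I would use the unique decomposition of each path $\ov e = (e_0,\ldots,e_n)$ ending at $v$ into a path $(e_0,\ldots,e_{n-1}) \in E(V_0,w)$ with $w := r(e_{n-1}) = s(e_n) \in V_n$, followed by an edge $e_n \in r^{-1}(v)$; this yields the disjoint union
\begin{equation*}
E(V_0,v) = \bigsqcup_{w \in V_n}\ \bigsqcup_{\substack{e \in r^{-1}(v)\\ s(e)=w}} \bigl(E(V_0,w)\times\{e\}\bigr),
\end{equation*}
hence $s(E(V_0,v)) = \bigcup_{w \in s(r^{-1}(v))} s(E(V_0,w))$ and $|E(V_0,v)| = \sum_{w\in V_n} f'^{(n)}_{vw} H_w^{(n)}$ (the latter being just \eqref{eq_H e6}). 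Since every $w \in s(r^{-1}(v))$ satisfies $|v-w| \le t_n$, combining with the inductive interval estimate gives $s(E(V_0,w)) \subseteq [\,v - t_n - \sum_{i=0}^{n-1}t_i,\ v + t_n + \sum_{i=0}^{n-1}t_i\,] = [\,v-\sum_{i=0}^n t_i,\ v+\sum_{i=0}^n t_i\,]$, and taking the union over $w$ proves the first claim. For the height, $H_v^{(n+1)} = \sum_{w\in V_n} f'^{(n)}_{vw} H_w^{(n)} \le (L_0\cdots L_{n-1}) \sum_{w\in V_n} f'^{(n)}_{vw} \le L_0\cdots L_n$, again by \eqref{eq: Bndd size}.

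I do not expect any real obstacle; the statement is essentially a telescoping argument. The only points needing a little care are the bookkeeping of the path decomposition (so that source vertices of sub-paths are correctly tracked through $r$ and $s$, and so the union/disjoint-union indexing is right) and checking that the nested intervals add up precisely to $\pm\sum_{i=0}^n t_i$ rather than an off-by-one variant.
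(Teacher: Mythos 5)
Your induction is correct: the base case follows directly from Definition \ref{Def:BD_bdd_size}, the path decomposition and the recursion \eqref{eq_H e6} carry the interval bound and the height bound from level $n$ to level $n+1$, and the interval radii telescope to $\sum_{i=0}^{n}t_i$ exactly as you claim. The paper itself gives no proof of this lemma (it is imported from \cite{BezuglyiJorgensenKarpelSanadhya2023}), but your argument is the natural telescoping induction that the statement calls for, so there is nothing to flag.
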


\subsection{Subdiagrams and measure extension}
\label{ss subdiagrams} 
In this subsection, we give the basic definitions and include 
some results about subdiagrams of generalized Bratteli 
diagrams and the notion of measure extension. We use 
the approach developed in \cite{BezuglyiKarpelKwiatkowski2015} and \cite{AdamskaBKK2017} for standard Bratteli diagrams. Measure extensions from vertex subdiagrams for generalized Bratteli diagrams were considered in \cite{BezuglyiKarpelKwiatkowski2024}.

Let $B = (V, E)$ be a generalized Bratteli diagram. A \textit{subdiagram} $\ov B$ of $B$ is a (standard or generalized) Bratteli diagram $\ol B = (\ol V,\ol E)$, where $\ol V\subset V$ and $\ol E \subset E$ such that 
$\ol V = \bigcup_n \ol V_n$ and $\ol E = \bigcup_n \ol E_n$, where $\ol V_n \subset V_n$ and $\ol E_n \subset E_n$. In particular, we have $\ol V = s(\ol E)$ and $s(\ol E)  = r(\ol E) \cup \ol V_0$.

Let $\overline{B}$ be a subdiagram of a Bratteli diagram $B$. Then we have the sequence of incidence matrices 
$\{\overline{F}'_n\}_{n=0}^{\infty}$ of $\overline{B}$. There are two principal cases of subdiagrams, {\em edge subdiagrams} and {\em vertex subdiagrams}. By definition, an {\em edge} subdiagram is obtained from the diagram $B$ by ``removing'' some edges and leaving all vertices of $B$ unchanged. 
If $F'_n$ denotes the $n$-th incidence matrix of $B$, then we have  $\overline{F}'_n \leq F'_n$ for every $n \in \mathbb{N}_0$. 
We denote $\widetilde{F}'_n = F'_n - \overline{F}'_n$, 
i.e., $\widetilde{F}'_n$ is the matrix which shows the number of removed edges. 
Without loss of generality, we assume that  
$\overline{F}'_n < F'_n$ for infinitely many $n$.
In general, we do not require that the condition 
${f'}_{vw}^{(n)} > 0$ implies that $\ol {f'}_{vw}^{(n)} > 0$. But we implicitly assume that the path space 
$X_{\ol B}$ of an edge subdiagram is not trivial. Note that an edge subdiagram of a generalized Bratteli diagram is always a generalized Bratteli diagram.

A {\em vertex} subdiagram $\ov B = (\ol W, \ol E)$ of $B$ is a standard or generalized Bratteli diagram defined by a sequence $\overline W = \{W_n\}_{n\geq 0}$ of nonempty proper subsets $W_n \subset V_n$ and by the set of edges $\ol E_n \subset E_n$ whose source and range are in $W_{n}$ and $W_{n+1}$, respectively (only for $n = 0$, in the case of standard Bratteli diagrams we keep $W_0 = V_0 = \{v_0\}$). Thus, the incidence  matrix $\ol F'_n$ of $\ol B$ has the size $|W_{n+1}| \times |W_n|$, and it is represented by a block of $F'_n$ corresponding to the vertices from $W_{n}$ and $W_{n+1}$. We say that, in this case, $\ol W = (W_n)$ is the \textit{support} of $\ol B$. Set $W'_n = V_n \setminus W_n \neq \emptyset$ for all $n$.

It is easy to see that the path space $X_{\ol B}$ of a  subdiagram $\ol B$ of $B$ is 
a closed subset of $X_B$. 
On the other hand, there are closed subsets of $X_B$ which are not obtained as the path space of a Bratteli subdiagram. A closed subset $Z \subset X_B$ is the path space of a subdiagram if and only if $\mathcal R|_{Z \times Z}$ is an etal\'{e} equivalence relation (see \cite{GiordanoPutnamSkau2004} for details).

Let $\widehat X_{\ol B}:= \{y \in X_B : \exists x \in X_{\ov B} \mbox{ such that } x\mathcal{R}y\}$ be the subset of all paths in $X_B$ that are tail equivalent to paths from $X_{\ol B}$. In other words, $\widehat X_{\ol B}$ is the
smallest $\mathcal R$-invariant subset of $X_B$ containing 
$X_{\ol B}$, or an $\mathcal{R}$-saturation of $X_{\ov B}$ (see \cite{Kechris2024}). 
Let $\ov \mu$ be an ergodic tail invariant probability measure on $X_{\ol B}$. Then $\ol \mu$ can be canonically extended to the ergodic measure $\widehat{\ov \mu}$ on the space $\widehat X_{\ol B}$ by tail invariance, see \cite{BezuglyiKarpelKwiatkowski2015}, 
\cite{AdamskaBKK2017}, \cite{BezuglyiKarpelKwiatkowski2024}. More specifically, let the measure 
$\ol \mu$ be defined by a sequence of positive vectors 
$\{\ol p^{(n)} : n \in \N_0\}$ satisfying Theorem \ref{BKMS_measures=invlimits}, that is 
$(\ol F'_n)^T(\ol p^{(n+1)}) = 
\ol p^{(n)}, n \in \N_0$, where $\ol F'_n$ is the incidence
matrix for the subdiagram $\ol B$. Then, for every cylinder 
set $[\ol e] \subset X_B$ with $r(\ol e) = v \in \ol V_n$, we
set $\wh {\ol \mu}([\ol e]) = \ol p^{(n)}_v$. Then 
$\wh {\ol \mu}$ is defined on all clopen sets, and it
can be finally extended to an ergodic Borel measure on  $X_B$ by setting $\wh {\ol \mu} (X_B \setminus \wh X_{\ol B}) = 0$.

Let $\ol B$ be a vertex subdiagram of a generalized Bratteli
diagram $B$ defined by a sequence of subsets $(W_i)$. 
Denote by $\wh X_{\ol B}^{(n)}$ the set of all paths $x = (x_i)_{i = 0}^{\infty}$ from $X_B$ such that the finite path  $(x_0, \ldots, x_n)$ ends at a vertex $v$ of $\ov B$, and the tail  $(x_{n+1},x_{n+2},\ldots)$ belongs to $\overline{B}$,
i.e., 
\begin{equation}\label{n-th level}
\wh X_{\ol B}^{(n)} = \{x = (x_i)\in \wh X_{\ol B} : r(x_i) \in W_i, \ \forall i \geq n\}.
\end{equation}
It is obvious that $\wh X_{\ol B}^{(n)} \subset \wh X_{\ol B}^{(n+1)}$, $\wh X_{\ol B} = \bigcup_n \wh X_{\ol B}^{(n)}$,
and 
\begin{equation}\label{extension_method}
\widehat{\ov \mu}(\wh X_{\ol B}) = \lim_{n\to\infty} \widehat{\ov \mu}(\wh X_{\ol B}^{(n)}) = 
\lim_{n\to\infty}\sum_{w\in W_n}  H^{(n)}_w \ov p^{(n)}_w.
\end{equation}
This limit can be finite or infinite. If it is finite, then we 
say that $\ol\mu$ admits a finite measure extension 
$\widehat{\ov \mu}(\wh X_{\ol B}) < \infty$.

\begin{theorem}\label{TheoremIV_1} Let $\ol B$ be a vertex 
subdiagram of a generalized Bratteli diagram $B =(V, E)$ with
incidence matrices $(F'_n)$. Suppose that $\ol B$ is determined
by a sequence $(W_n)$ of nonempty proper subsets of $V_n$, $n \in \N_{0}$.
Let $\ol \mu $ be a probability tail invariant measure on the path space
$X_{\ol B}$ of $\ol B$ defined by its values $\ol p^{(n)}_w$ on cylinder sets. 
Then the following statements are equivalent:

\noindent 
(i) 
$\displaystyle
\widehat{\ol{\mu}}(\wh{X}_{\overline{B}}) < \infty;$ 

\noindent
(ii) 
$\displaystyle
\sum_{n = 0}^{\infty}\sum_{v \in W_{n + 1}}
\sum_{w\in {W}'_{n}} {f'}_{vw}^{(n)}  H_{w}^{(n)} 
\ol p_v^{(n+1)} < \infty;$ 

\noindent
(iii)
$\displaystyle
\sum_{n = 0}^{\infty}{\sum_{v \in W_{n + 1}}^{}{\wh{\overline{\mu}}\left({X}_{v}^{(n + 1)}\right)}} \sum_{w\in W'_{n}} f_{vw}^{(n)} < \infty,
$
where $f_{vw}^{(n)}$ are the entries of the stochastic matrix
$F_n$ and  $W_{n}^{'} = V_{n} \setminus W_{n},\ n = 1, 2, \ldots$

\end{theorem}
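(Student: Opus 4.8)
The plan is to reduce all three conditions to the convergence of a single series of nonnegative terms. I would begin from \eqref{extension_method}, which presents $\wh{\ol\mu}(\wh X_{\ol B})$ as the limit of the nondecreasing sequence $a_n := \wh{\ol\mu}(\wh X_{\ol B}^{(n)}) = \sum_{w\in W_n} H^{(n)}_w \ol p^{(n)}_w$; note that $a_0 = \sum_{w\in W_0}\ol p^{(0)}_w \le 1 < \infty$. Thus (i) is equivalent to the finiteness of $a_0 + \sum_{n\ge 0}(a_{n+1}-a_n)$, and the whole point is to compute the increment $a_{n+1}-a_n$ and identify it with the summand appearing in (ii).

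Here is the key step. Since $\ol B$ is a \emph{vertex} subdiagram, its incidence matrix entries between vertices of $\ol B$ are exactly those of $B$, so Theorem~\ref{BKMS_measures=invlimits} applied to $\ol B$ yields $\ol p^{(n)}_w = \sum_{v\in W_{n+1}} {f'}_{vw}^{(n)}\,\ol p_v^{(n+1)}$ for every $w\in W_n$. Combining this with the height recursion \eqref{eq_H e6}, namely $H_v^{(n+1)} = \sum_{w\in V_n} {f'}_{vw}^{(n)} H_w^{(n)}$, and splitting $V_n = W_n \sqcup W'_n$, I obtain
\[
a_{n+1} = \sum_{v\in W_{n+1}}\ol p_v^{(n+1)} H_v^{(n+1)}
= \sum_{v\in W_{n+1}}\ol p_v^{(n+1)}\Bigl(\sum_{w\in W_n}{f'}_{vw}^{(n)}H_w^{(n)}\Bigr) + c_n,
\qquad
c_n := \sum_{v\in W_{n+1}}\sum_{w\in W'_n}{f'}_{vw}^{(n)}H_w^{(n)}\,\ol p_v^{(n+1)} .
\]
Interchanging the order of summation in the first term on the right and invoking the $\ol p$-recursion shows that it equals $\sum_{w\in W_n}H_w^{(n)}\ol p_w^{(n)} = a_n$. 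Hence $a_{n+1} = a_n + c_n$ with $c_n \ge 0$, so every manipulation remains valid even if individual $a_n$ happen to be infinite, and $\wh{\ol\mu}(\wh X_{\ol B}) = a_0 + \sum_{n\ge0} c_n$. This is finite precisely when $\sum_{n}c_n < \infty$, which is exactly (ii). This gives (i) $\Leftrightarrow$ (ii).

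The equivalence (ii) $\Leftrightarrow$ (iii) is then pure algebra. By tail invariance, each of the $H_v^{(n+1)}$ cylinder sets with range $v\in \ol V_{n+1}$ carries $\wh{\ol\mu}$-measure $\ol p_v^{(n+1)}$, so $\wh{\ol\mu}(X_v^{(n+1)}) = H_v^{(n+1)}\ol p_v^{(n+1)}$; and the definition \eqref{eq_stoch matrix F_n} of the stochastic matrix gives $f_{vw}^{(n)} = {f'}_{vw}^{(n)} H_w^{(n)}/H_v^{(n+1)}$. Substituting these into the summand of (iii),
\[
\sum_{v\in W_{n+1}}\wh{\ol\mu}(X_v^{(n+1)})\sum_{w\in W'_n} f_{vw}^{(n)}
= \sum_{v\in W_{n+1}}\ol p_v^{(n+1)}\sum_{w\in W'_n}{f'}_{vw}^{(n)}H_w^{(n)} = c_n ,
\]
so the two series in (ii) and (iii) have identical terms, and summing over $n$ finishes the argument. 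I do not anticipate a genuine obstacle here; the result is essentially a bookkeeping computation. The only matters requiring care are justifying the telescoping and the rearrangements when the partial sums $a_n$ may already be infinite — handled by staying throughout within nonnegative series and monotone limits — together with the use, implicit above, of the fact that passing to a vertex subdiagram leaves the incidence coefficients between its own vertices unchanged, which is precisely what allows the $\ol p$-recursion for $\ol B$ and the height recursion for $B$ to be combined term by term.
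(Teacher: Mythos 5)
Your proof is correct. The paper does not actually prove Theorem~\ref{TheoremIV_1} — it defers to \cite{AdamskaBKK2017} and \cite{BezuglyiKarpelKwiatkowski2024} — but your telescoping identity $a_{n+1}=a_n+c_n$, obtained by combining the height recursion \eqref{eq_H e6} with the subdiagram relation $(\ol F'_n)^T\ol p^{(n+1)}=\ol p^{(n)}$ from Theorem~\ref{BKMS_measures=invlimits} and splitting $V_n=W_n\sqcup W'_n$, is exactly the standard route (it is the vertex-subdiagram analogue of the decomposition recorded in Proposition~\ref{prop-edge meas ext}), and your passage from (ii) to (iii) via $\wh{\ol\mu}(X_v^{(n+1)})=H_v^{(n+1)}\ol p_v^{(n+1)}$ together with \eqref{eq_stoch matrix F_n} is the intended bookkeeping; keeping everything inside nonnegative series, as you do, is all that is needed to justify the rearrangements.
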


The proof of Theorem~\ref{TheoremIV_1} can be found in \cite{AdamskaBKK2017} for the classic
Bratteli diagrams and in \cite{BezuglyiKarpelKwiatkowski2024} for the generalized Bratteli diagrams. The proof of the following result can be also found in \cite{AdamskaBKK2017} for standard
Bratteli diagrams, the same reasoning works for generalized diagrams.

\begin{theorem}\label{Theorem_IV_2}
Let
$\overline{B}  = \left(\overline{W}, \overline{E} \right)$ 
be a vertex subdiagram of a generalized Bratteli diagram  
$B = (V, E)$.  Suppose that 
$$
\sum_{n = 0}^{\infty} \sup_{v \in W_{n + 1}}\left(\sum_{w \in W_{n}^{'}} f_{vw}^{(n)}\right) < \infty.
$$ 
Then, for any probability measure $\overline{\mu}$ on
$\overline{B}$, the measure extension 
$\wh{\overline{\mu}}(\wh{X}_{\overline{B}})$ is finite. 
\end{theorem}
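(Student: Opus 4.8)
I would deduce the claim from Theorem~\ref{TheoremIV_1} together with the identity \eqref{extension_method}, which already writes the extension mass as a monotone limit. Set $c_n:=\sup_{v\in W_{n+1}}\bigl(\sum_{w\in W'_n}f^{(n)}_{vw}\bigr)$, so the hypothesis is $\sum_{n\ge0}c_n<\infty$; in particular $c_n\to0$, and each term in the supremum is $<1$, since a vertex $v\in W_{n+1}$ of the Bratteli diagram $\ol B$ receives at least one edge of $\ol E$ (whose source lies in $W_n$), whence $\sum_{w\in W_n}f^{(n)}_{vw}>0$ — so $c_n<1$ for all $n$. Put $a_n:=\sum_{w\in W_n}H^{(n)}_w\,\ol p^{(n)}_w$; by \eqref{extension_method} the sequence $(a_n)$ is non-decreasing, $a_0=\ol\mu(X_{\ol B})=1$, $\lim_n a_n=\wh{\ol\mu}(\wh X_{\ol B})$, and $a_{n+1}-a_n$ is precisely the $n$-th summand of condition (ii) of Theorem~\ref{TheoremIV_1}. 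Hence the whole task is to show $\sup_n a_n<\infty$.

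The key step is a recursive bound $a_{n+1}\le a_n/(1-c_n)$. For $v\in W_{n+1}$, use \eqref{eq_H e6} to split the height as $H^{(n+1)}_v=A_v+B_v$, where $A_v=\sum_{w\in W_n}{f'}^{(n)}_{vw}H^{(n)}_w$ and $B_v=\sum_{w\in W'_n}{f'}^{(n)}_{vw}H^{(n)}_w$. By \eqref{eq_stoch matrix F_n} one has ${f'}^{(n)}_{vw}H^{(n)}_w=f^{(n)}_{vw}H^{(n+1)}_v$, and since $\sum_{w\in V_n}f^{(n)}_{vw}=1$ this yields $B_v=\theta_v H^{(n+1)}_v$ and $A_v=(1-\theta_v)H^{(n+1)}_v$ with $\theta_v:=\sum_{w\in W'_n}f^{(n)}_{vw}\le c_n<1$; as $t\mapsto t/(1-t)$ is increasing on $[0,1)$ this gives the per-vertex comparison $B_v\le\tfrac{c_n}{1-c_n}A_v$. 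On the other hand, since $\ol B$ is a \emph{vertex} subdiagram the block $({f'}^{(n)}_{vw})_{v\in W_{n+1},\,w\in W_n}$ coincides with the incidence matrix $\ol F'_n$ of $\ol B$, so the relation $(\ol F'_n)^T\ol p^{(n+1)}=\ol p^{(n)}$ reads $\sum_{v\in W_{n+1}}{f'}^{(n)}_{vw}\ol p^{(n+1)}_v=\ol p^{(n)}_w$ for $w\in W_n$; multiplying by $H^{(n)}_w$, summing over $w\in W_n$ and interchanging the two non-negative sums gives $\sum_{v\in W_{n+1}}A_v\,\ol p^{(n+1)}_v=a_n$. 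Combining the two estimates,
\[
a_{n+1}=\sum_{v\in W_{n+1}}(A_v+B_v)\,\ol p^{(n+1)}_v\le\Bigl(1+\tfrac{c_n}{1-c_n}\Bigr)\sum_{v\in W_{n+1}}A_v\,\ol p^{(n+1)}_v=\frac{a_n}{1-c_n}.
\]

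It then remains to iterate: $a_n\le a_0\prod_{k=0}^{n-1}(1-c_k)^{-1}\le\prod_{k=0}^{\infty}(1-c_k)^{-1}$, and the infinite product on the right is finite exactly because $\sum_k c_k<\infty$ (equivalently $\sum_k-\log(1-c_k)<\infty$, since $c_k\to0$). Thus $\sup_n a_n<\infty$, i.e.\ $\wh{\ol\mu}(\wh X_{\ol B})=\lim_n a_n<\infty$, as asserted.

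The hard part is obtaining the recursion in a \emph{non-circular} form. The estimate one gets directly from condition (iii) of Theorem~\ref{TheoremIV_1} — namely $a_{n+1}-a_n=\sum_{v\in W_{n+1}}\wh{\ol\mu}(X_v^{(n+1)})\sum_{w\in W'_n}f^{(n)}_{vw}\le c_n\,a_{n+1}$ — has $a_{n+1}$ on both sides, and since $a_{n+1}$ might a priori be $+\infty$ it controls nothing. The device that repairs this is to compare the ``escaping'' contribution $B_v$ with the ``internal'' contribution $A_v$ at the \emph{same} vertex, rather than with the full height $H^{(n+1)}_v$: this converts $a_{n+1}-a_n\le c_n a_{n+1}$ into $a_{n+1}-a_n\le\tfrac{c_n}{1-c_n}a_n$, a genuine recursion whose solvability is precisely the convergence of $\prod_k(1-c_k)^{-1}$. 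The remaining ingredients — the interchange of the double sum, the identity $\sum_v A_v\ol p^{(n+1)}_v=a_n$, and the infinite-product criterion — are routine.
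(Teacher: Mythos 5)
Your overall strategy is sound. The paper itself does not print a proof of this statement (it defers to \cite{AdamskaBKK2017}, where the argument for \emph{standard} diagrams is exactly the recursion $a_{n+1}\le a_n/(1-c_n)$ you derive), and your per-vertex comparison of $B_v$ with $A_v$ is a genuine improvement: it removes the circularity in the naive estimate $a_{n+1}-a_n\le c_n\,a_{n+1}$, which, as you observe, controls nothing while $a_{n+1}$ is not yet known to be finite. The decomposition $H^{(n+1)}_v=A_v+B_v$, the identity $\sum_{v\in W_{n+1}}A_v\,\ol p^{(n+1)}_v=a_n$ (Tonelli plus $(\ol F'_n)^T\ol p^{(n+1)}=\ol p^{(n)}$), and the infinite-product criterion are all correct.

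The gap is your claim that $c_n<1$ for every $n$. You argue that each $v\in W_{n+1}$ receives at least one $\ol E$-edge, so each $\theta_v=\sum_{w\in W'_n}f^{(n)}_{vw}<1$, ``so $c_n<1$.'' For a generalized Bratteli diagram $W_{n+1}$ is typically infinite, and a supremum of infinitely many numbers each strictly below $1$ can equal $1$; the hypothesis $\sum_n c_n<\infty$ forces $c_n\to0$ but does not exclude $c_{n_0}=1$ at finitely many levels. This is not cosmetic: if $\theta_{v_k}\to1$ along a sequence $v_k\in W_{n_0+1}$, then $B_{v_k}/A_{v_k}=\theta_{v_k}/(1-\theta_{v_k})$ is unbounded and the recursion collapses. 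Indeed, take $W_0=\{w_0\}$, vertices $v_k\in W_1$ joined to $w_0$ by one edge and to $W'_0$ by $4^k$ edges, and an odometer over each $v_k$ carrying mass $2^{-k}$; then $c_0=\sup_k 4^k/(4^k+1)=1$ while $c_n=0$ for $n\ge1$, so the hypothesis $\sum_n c_n<\infty$ holds, yet $a_1=\sum_k(4^k+1)2^{-k}=\infty$. So the step cannot be patched by a better argument: one must either add ``$c_n<1$ for every $n$'' as a hypothesis (it is automatic in the standard-diagram setting of \cite{AdamskaBKK2017}, where each $W_{n+1}$ is finite and the sup is a max over a finite set) or amend the statement. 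With that extra hypothesis your proof is complete: $c_n\to0$ handles all but finitely many levels, and the finitely many exceptional factors $(1-c_n)^{-1}$ are finite.
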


Now we consider an edge subdiagram $\overline{B}$ of a 
generalized Bratteli diagram $B$ which is 
defined by a sequence of incidence matrices
$\overline{F}'_{n}$ (the entries of ${\overline{F}}_{n}'$
show the number of remaining edges in $\ol B$
after removing some of them). 
The path space $X_{\ol B}$ consists of infinite paths 
$x = (x_n)$ where every $x_n$ is an edge
in $\ol B$. 

Let $\overline{\mu}$ be a tail invariant measure on  $\overline{B}$. Every such measure can be extended to a 
(finite or infinite) measure $\widehat{\overline{\mu}}$ 
on $B$ by tail invariance. 
It is supported by the set ${\widehat{X}}_{\overline{B}}$,  and, as in (\ref{extension_method}), we find that 
\begin{equation}
\widehat{\overline{\mu}}({\widehat{X}}_{\overline{B}}) =  \lim_{n \to \infty}{\sum_{w \in  V_{n}} H_{w}^{(n)}}  {\overline{p}}_{w}^{(n)}
\end{equation}
where $H_{w}^{(n)}$ is the number of finite paths $\ol e$ in $X_B$
terminating at $w \in V_n$ and $\ol p^{(n)}_w$ is the value
of the measure $\ol\mu$ on $[\ol e]$.

\begin{prop}\label{prop-edge meas ext}
For an edge subdiagram $\overline{B}$ of a (classic or generalized) Bratteli diagram $B$, we have
\begin{eqnarray*}
\widehat{\overline{\mu}}(\widehat{X}_{\overline{B}})  &=& \widehat{\overline{\mu}}(\widehat{X}_{\ol B}^{(1)}) + \sum_{n=1}^\infty \sum_{v\in V_{n+1}} \sum_{w\in V_{n}} \widetilde{f'}_{v,w}^{(n)} H_w^{(n)} \overline{p}_v^{(n+1)},
\end{eqnarray*} 
where $\widetilde{f'}_{vw}^{(n)} = {f'}_{vw}^{(n)} - 
\ol {f'}_{vw}^{(n)}$.
\end{prop}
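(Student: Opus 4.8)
The plan is to realize $\wh{\ol\mu}(\wh X_{\ol B})$ as a countable sum of the $\wh{\ol\mu}$-masses of explicit ``shells'' and to evaluate each shell by a single tail-equivalence move. For $n\in\N_0$ write $\wh X_{\ol B}^{(n)}$ for the set of paths $x=(x_i)\in X_B$ all of whose edges from level $n$ on lie in $\ol B$, i.e.\ $x_i\in\ol E_i$ for $i\geq n$ (the edge-subdiagram analogue of \eqref{n-th level}); such an $x$ is tail equivalent to a path of $X_{\ol B}$ — replace its initial segment by an $\ol B$-path to the vertex $s(x_n)$, which exists because every vertex of an edge subdiagram is reachable from $V_0$ inside $\ol B$ — so $\wh X_{\ol B}^{(n)}\subset\wh X_{\ol B}$, and conversely $\wh X_{\ol B}=\bigcup_n\wh X_{\ol B}^{(n)}$ with $\wh X_{\ol B}^{(0)}=X_{\ol B}$ and $\wh X_{\ol B}^{(n)}\subset\wh X_{\ol B}^{(n+1)}$. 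Thus $\wh X_{\ol B}$ is the disjoint union of $\wh X_{\ol B}^{(1)}$ and the shells $\wh X_{\ol B}^{(n+1)}\setminus\wh X_{\ol B}^{(n)}$, $n\geq1$, so by countable additivity of $\wh{\ol\mu}$,
\[
\wh{\ol\mu}\bigl(\wh X_{\ol B}\bigr)=\wh{\ol\mu}\bigl(\wh X_{\ol B}^{(1)}\bigr)+\sum_{n=1}^{\infty}\wh{\ol\mu}\bigl(\wh X_{\ol B}^{(n+1)}\setminus\wh X_{\ol B}^{(n)}\bigr),
\]
an identity in $[0,\infty]$. It remains to evaluate each shell.

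A path $x$ lies in $\wh X_{\ol B}^{(n+1)}\setminus\wh X_{\ol B}^{(n)}$ precisely when $x_i\in\ol E_i$ for all $i\geq n+1$ but the edge $x_n$ is one of the removed edges, say from $w:=s(x_n)\in V_n$ to $v:=r(x_n)\in V_{n+1}$; there are $\widetilde{f'}^{(n)}_{vw}$ such edges for each pair $(v,w)$, and the initial segment $(x_0,\dots,x_{n-1})$ may be any of the $H^{(n)}_w$ finite paths of $B$ from $V_0$ to $w$. I would then fix the first $n+1$ edges $(x_0,\dots,x_n)$ (ending at $v$) and consider the set of all their completions whose tail from level $n+1$ on lies in $\ol B$; the tail-equivalence move replacing $(x_0,\dots,x_n)$ by a fixed $\ol B$-path from $V_0$ to $v$ carries this set bijectively onto a depth-$(n+1)$ cylinder set of $X_{\ol B}$ ending at $v$, so, since $\wh{\ol\mu}$ is tail invariant and restricts to $\ol\mu$ on $X_{\ol B}$, its $\wh{\ol\mu}$-measure equals $\ol p^{(n+1)}_v$. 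Summing over the removed edges at level $n$ and over the initial segments (all terms being non-negative) gives
\[
\wh{\ol\mu}\bigl(\wh X_{\ol B}^{(n+1)}\setminus\wh X_{\ol B}^{(n)}\bigr)=\sum_{v\in V_{n+1}}\sum_{w\in V_n}\widetilde{f'}^{(n)}_{vw}\,H^{(n)}_w\,\ol p^{(n+1)}_v,\qquad n\geq1,
\]
and substituting this into the previous display yields the asserted formula. (One could equally run the computation the other way: the same retraction argument gives $\wh{\ol\mu}(\wh X_{\ol B}^{(n)})=\sum_{v\in V_n}H^{(n)}_v\ol p^{(n)}_v$ — the edge-subdiagram form of \eqref{extension_method}, whose $n\to\infty$ limit is the formula preceding the statement — and then telescoping together with $H^{(n+1)}_v=\sum_w{f'}^{(n)}_{vw}H^{(n)}_w$ from \eqref{eq_H e6} and $\ol p^{(n)}_w=\sum_v\ol{f'}^{(n)}_{vw}\ol p^{(n+1)}_v$ from Theorem~\ref{BKMS_measures=invlimits} produces the same shell values.)

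The content here is light and the work is all bookkeeping. The one step to handle with care is the evaluation of a single shell (or, in the alternative route, of $\wh{\ol\mu}(\wh X_{\ol B}^{(n)})$): it rests on the fact that every vertex of an edge subdiagram has an $\ol B$-path from $V_0$, which is what makes the ``retraction'' of the initial segment a legitimate tail-equivalence move, and on plain countable additivity of $\wh{\ol\mu}$; every interchange of a countably infinite sum is permissible because all summands are non-negative. In particular no finiteness assumption on $\wh{\ol\mu}(\wh X_{\ol B})$ is needed, and the identity of the Proposition is to be read as an equality in $[0,\infty]$.
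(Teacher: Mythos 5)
Your argument is correct, and it is essentially the approach the paper intends: the paper records $\wh{\ol\mu}(\wh X_{\ol B})=\lim_{n}\sum_{w\in V_n}H^{(n)}_w\,\ol p^{(n)}_w$ and defers the proof to the cited reference, where the identity is obtained by exactly the decomposition you use — the increasing exhaustion by the sets $\wh X_{\ol B}^{(n)}$, with each increment evaluated via tail invariance and the relations $H^{(n+1)}_v=\sum_w f'^{(n)}_{vw}H^{(n)}_w$ and $\ol p^{(n)}_w=\sum_v\ol f'^{(n)}_{vw}\,\ol p^{(n+1)}_v$. Your direct shell count and your telescoping parenthetical are the same computation, and your attention to the reachability of every vertex inside $\ol B$ and to reading the identity in $[0,\infty]$ is exactly the care the statement requires.
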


This proposition is proved exactly in the same way as the corresponding
result in \cite{AdamskaBKK2017}. Thus, if $ \widehat{\overline{\mu}}(\widehat{X}_{\ol B}^{(1)}) < \infty$, we have
\begin{equation}\label{add3_8}
\widehat{\overline{\mu}}( {\widehat{X}}_{\overline{B}}) < \infty \ \Longleftrightarrow \ 
\sum\limits_{n = 1}^{\infty}\sum\limits_{v \in V_{n + 1}}
{\sum\limits_{w \in V_{n}}}{\widetilde{f'}}_{vw}^{(n)} H_{w}^{(n)}  {\overline{p}}_{v}^{(n + 1)}
< \infty.
\end{equation}
Note that for a standard Bratteli diagram $B$ we always have $\widehat{\overline{\mu}}(\widehat{X}_{\ol B}^{(1)}) < \infty$.

We can give also a direct formula for the value of the measure
$\widehat{\overline{\mu}}$ on the cylinder sets. 
Namely, for  $n \in \N$ and  $w \in  V_{n}$, we have
\begin{equation}\label{eq: value of ext neas on e}
\widehat{\overline{\mu}}([\ol e]) =
\lim_{m \to \infty} \left\lbrack {\sum\limits_{v \in  
V_{n + m}}^{}{{g'}_{vw}^{(n,m)}} 
{\overline{p}}_{v}^{(n + m)} } \right\rbrack,\quad 
w = r(\ol e).
\end{equation}
The formula in \eqref{eq: value of ext neas on e} 
is valid for both edge and vertex subdiagrams.

%%%%%Section 4
\section{Tail invariant measures and inverse limits}
\label{sect_inverse limits}

This section is devoted to a detailed description of tail 
invariant measures on the path space of a generalized 
Bratteli diagram in terms of inverse limits of convex
closed sets. We will use the notation from Section
\ref{sect_GBD}.

\subsection{Inverse limits define tail invariant measures}
\label{ssect inverse limits}
Let $\mu \in M_1(\mc R)$ be a probability tail invariant 
measure on the path space $X_B$ of a generalized Bratteli 
diagram $B =(V, E)$. By Theorem \ref{BKMS_measures=invlimits},  
 $\mu$ is completely determined by 
a sequence of infinite vectors $\ol p^{(n)}, n \in \N_0$,
where the entries of $\ol p^{(n)}$ are the measure of the
finite paths $\ol e$ from $E(V_0, w)$, 
$p_{w}^{(n)}$= $\mu ([\ol e])$,  
such that
$$
p_{w}^{(n)} = \sum_{v \in V_{n + 1}}{f'}_{vw}^{(n)} p_{v}^{(n + 1)}. 
$$
The measure $\mu$ is also completely determined by 
a sequence of infinite vectors $\ol q^{(n)}, n \in \N_0$, where $q^{(n)}_w= \mu(X_w^{(n)})$ is a measure of a tower corresponding to a vertex $w \in V_n$
and
$$
q_{w}^{(n)} = \sum_{v \in V_{n + 1}}f_{vw}^{(n)}
q_{v}^{(n + 1)}.
$$
We have $\mu (X^{(n)}_w) = H^{(n)}_w p_{w}^{(n)}$. Since for every level $n$, the towers $X_w^{(n)}$ form a partition of $X_B$, we also have
\begin{equation}\label{eq_prob vector q e8}
  \sum_{w \in V_{n}}\mu (X^{(n)}_w) = \sum_{w \in V_{n}}{H_{w}^{(n)}  p_{w}^{(n)}} = 1.  
\end{equation}
Therefore, every vector $\ol q^{(n)}$ is probability, 
see \eqref{eq_prob vector q e8}. Moreover, we will make our 
notation more precise and write that the vectors
$\ol q^{(n)}$, $n \in \N_0,$ belong to $\De_1^{(n)} : =
\{\overline{x} = \langle x_{w} \rangle : \ w \in V_{n},\
\sum_{w \in V_{n}}x_{w} = 1,\ x_{w} \geq 0  \}$. Clearly, 
$\Delta_1^{(n)}$ is isomorphic to  $\Delta_1$. The index
$n$ shows that this set is related to the $n$-level of the
Bratteli diagram. Similarly,
the vectors $(\ol p^{(n)})$ (defined above) are considered 
in $\De^{(n)} = \{\ \ol x = \langle x_v\rangle : \sum_{v\in 
V_n} x_v \leq 1, x_v \geq 0 \}$, where the set of indices is 
$V_n$.

In what follows we will consider the maps defined by  
stochastic incidence 
matrices $(F_n)$ and describe the set $M_1(\mc R)$ of
probability tail invariant measures in terms of inverse 
limits. 
\\

(A) It follows from Theorem \ref{BKMS_measures=invlimits} that there 
exists a sequence of maps
\begin{equation}\label{eq_inverse limit for p e11}
\Delta_{1}^{(0)}  \stackrel{F_{0}^{T}} \longleftarrow   
  \Delta_{1}^{(1)} \stackrel{F_{1}^{T}} \longleftarrow  
  \Delta_{1}^{(2)}\stackrel{F_{2}^{T}} \longleftarrow \  
  \cdots
 \end{equation} 

The following result is a straightforward corollary of Theorem~\ref{BKMS_measures=invlimits}.
\begin{corol}
The set $M_1(\mc R)$ of all probability tail invariant 
measures on $X_B$ is identified with the inverse limit
of the sets $(\De_1^{(n)}, F_n^T)$:
$$
M_1(\mc R) = \varprojlim_{n\to \infty} (\De_1^{(n)}, F_n^T).
$$
\end{corol}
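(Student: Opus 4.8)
The plan is to recall the definition of the inverse limit $\varprojlim_{n\to\infty}(\Delta_1^{(n)}, F_n^T)$ and then exhibit an explicit bijection between this set and $M_1(\mathcal R)$ using the correspondence furnished by Theorem~\ref{BKMS_measures=invlimits}. By definition, an element of $\varprojlim_{n\to\infty}(\Delta_1^{(n)}, F_n^T)$ is a sequence $(\ol q^{(n)})_{n\in\N_0}$ with $\ol q^{(n)} \in \Delta_1^{(n)}$ for every $n$ and $F_n^T(\ol q^{(n+1)}) = \ol q^{(n)}$ for every $n \in \N_0$. So the claim amounts to: such coherent sequences of probability vectors are in natural one-to-one correspondence with probability tail invariant measures on $X_B$.

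First I would establish the forward map. Given $\mu \in M_1(\mathcal R)$, part (1) of Theorem~\ref{BKMS_measures=invlimits} assigns the sequence $\ol q^{(n)} = \langle \mu(X_w^{(n)}) : w \in V_n\rangle$, which satisfies $F_n^T \ol q^{(n+1)} = \ol q^{(n)}$; and since the towers $X_w^{(n)}$, $w\in V_n$, partition $X_B$, we get $\sum_{w\in V_n}\ol q^{(n)}_w = \mu(X_B) = 1$, so indeed $\ol q^{(n)} \in \Delta_1^{(n)}$ for every $n$. Hence $(\ol q^{(n)}) \in \varprojlim_{n\to\infty}(\Delta_1^{(n)}, F_n^T)$. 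Conversely, given a coherent sequence $(\ol q^{(n)})$ in the inverse limit, part (3) of Theorem~\ref{BKMS_measures=invlimits} produces a uniquely determined tail invariant (positive, possibly degenerate) measure $\mu$ with $\mu(X_w^{(n)}) = \ol q^{(n)}_w$; taking $n=0$ and summing over $V_0$ gives $\mu(X_B) = \sum_{w\in V_0}\ol q^{(0)}_w = 1$, so $\mu \in M_1(\mathcal R)$.

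Finally I would check that these two assignments are mutually inverse. Starting from $\mu$, forming $(\ol q^{(n)})$, and then reconstructing a measure via part (3) gives back a tail invariant measure agreeing with $\mu$ on every $X_w^{(n)}$; since the sets $\{X_w^{(n)} : w\in V_n,\ n\in\N_0\}$ generate the Borel $\sigma$-algebra of $X_B$ (equivalently, the cylinder sets do, and each cylinder set $[\ol e]$ with $r(\ol e)=w\in V_n$ has measure $H_w^{(n)-1}\ol q^{(n)}_w$ by tail invariance and the partition of $X_w^{(n)}$ into $H_w^{(n)}$ equal-measure levels), the uniqueness clause forces it to equal $\mu$. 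In the other direction, starting from a coherent $(\ol q^{(n)})$ and building $\mu$ via part (3), the recovered sequence has $n$-th term $\langle \mu(X_w^{(n)})\rangle = \ol q^{(n)}$ by construction. I expect the only subtlety — and the "main obstacle," though a mild one — is bookkeeping the normalization and verifying that no mass escapes: one must confirm $\mu(X_B)=1$ rather than merely $\mu(X_B)\le 1$, which is exactly why it matters that the $\ol q^{(n)}$ lie in the simplex $\Delta_1^{(n)}$ (sum $=1$) and not just in $\Delta^{(n)}$ (sum $\le 1$); all of this is immediate from Theorem~\ref{BKMS_measures=invlimits} together with the partition identity \eqref{eq_prob vector q e8}, so the corollary follows.
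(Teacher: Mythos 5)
Your proposal is correct and follows essentially the same route as the paper: the paper dispatches this corollary in one sentence by citing Theorem~\ref{BKMS_measures=invlimits} (parts (1) and (3)) together with the partition identity \eqref{eq_prob vector q e8}, which is exactly the correspondence you spell out. The only remark is that you supply the mutual-inverse verification and the normalization check explicitly, which the paper leaves implicit.
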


Indeed, this result holds because every $\mu \in M_1(\mc R)$
is uniquely determined by a sequence of vectors 
$\{\ol q^{(n)}\}$ satisfying \eqref{eq_for p and q(e9)}. 
Recall also that the sets $\Delta_{1}^{(n)}$ are convex 
subsets of  $\Delta^{(n)}\ \subset I^{V_{n}}$.
These sets are, in general, not closed.
\\

(B) We consider another sequence of maps that 
determines elements of the set $M_1(\mc R)$. We observe that the map defined by $F_{n}^{T}$ maps 
$\Delta^{(n + 1)}$ into $\Delta^{(n)}$. Indeed, if 
 $\overline{x} =\langle x_{v}  :  v \in V_{n + 1} 
 \rangle \in  \Delta^{(n + 1)}$, then  
 $\sum_{v \in V_{n + 1}}x_{v} \leq 1$. Hence,  
$F_{n}^{T} \overline{x} = \ol y = \langle y_{w} : 
w \in V_n \rangle$ and 
$$
\sum_{w\in V_n} y_w = \sum_{w\in V_n} 
\sum_{v \in V_{n + 1}}{f_{vw}^{(n)} x_{v}} 
= \sum_{v \in V_{n + 1}}x_{v} \sum_{w \in V_{n}}f_{vw}^{(n)} 
\leq 1.
$$

This means that we have also the following sequence of maps
of compact convex sets $\De^{(n)}$:
\begin{equation}\label{eq_inverse limit 2 for p e12}
\Delta^{(0)}  \stackrel{F_{0}^{T}} \longleftarrow   
  \Delta^{(1)} \stackrel{F_{1}^{T}} \longleftarrow  
  \Delta^{(2)}\stackrel{F_{2}^{T}} \longleftarrow \  
  \cdots
 \end{equation} 

We formulate the result. 
\begin{lemma} Let $M_1(\mc R)$, $\De^{(n)}$, $F_n^T$ be as above. Then
$$
M_1(\mc R) \subset \varprojlim_{n\to \infty} 
(\De^{(n)}, F_n^T).
$$
\end{lemma}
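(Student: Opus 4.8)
The plan is to show that the canonical identification of $M_1(\mc R)$ with $\varprojlim (\De_1^{(n)}, F_n^T)$ from the preceding Corollary factors through the larger inverse system $\varprojlim (\De^{(n)}, F_n^T)$, using the fact that each $\De_1^{(n)}$ sits inside $\De^{(n)}$ and that the maps $F_n^T$ restrict compatibly. First I would take $\mu \in M_1(\mc R)$ and invoke Theorem~\ref{BKMS_measures=invlimits} to obtain the associated sequence of vectors $(\ol q^{(n)})$ with $\ol q^{(n)} \in \De_1^{(n)}$ and $F_n^T \ol q^{(n+1)} = \ol q^{(n)}$ for all $n \in \N_0$. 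Since $\De_1^{(n)} \subset \De^{(n)}$ (a probability vector trivially satisfies $\sum_{w \in V_n} x_w \leq 1$), the same sequence $(\ol q^{(n)})$ is a thread in the inverse system $(\De^{(n)}, F_n^T)$ of compact convex sets displayed in \eqref{eq_inverse limit 2 for p e12}, and hence defines a point of $\varprojlim_{n\to\infty}(\De^{(n)}, F_n^T)$.

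Next I would check that this assignment $\mu \mapsto (\ol q^{(n)})$ is injective as a map into the larger inverse limit, which is immediate: by Theorem~\ref{BKMS_measures=invlimits}(3), $\mu$ is uniquely determined by the sequence $(\ol q^{(n)})$ through $\mu(X_w^{(n)}) = q_w^{(n)}$, so distinct measures give distinct threads. Thus the map $M_1(\mc R) \to \varprojlim(\De^{(n)}, F_n^T)$ is a well-defined injection, which is exactly the asserted inclusion $M_1(\mc R) \subset \varprojlim_{n\to\infty}(\De^{(n)}, F_n^T)$.

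There is essentially no hard part here — the statement is an inclusion, not an equality, so I do not need to worry about which threads in $\varprojlim(\De^{(n)}, F_n^T)$ actually arise from probability measures (those with $\sum_w q_w^{(n)} < 1$ or with mass escaping to infinity generically do not). The one point deserving a sentence of care is that the maps in \eqref{eq_inverse limit 2 for p e12} genuinely send $\De^{(n+1)}$ into $\De^{(n)}$, but this was already verified in part (B) of the text immediately preceding the statement, so I would simply cite that computation. I would close by remarking that the inclusion is in general strict, which motivates the subsequent passage to closures $cl(\De^{(n,m)})$ and the set $\De^{(n,\infty,cl)}$ discussed in the introduction.
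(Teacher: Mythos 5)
Your argument is correct and coincides with the paper's own (very brief) justification: one passes from $\mu$ to the thread $(\ol q^{(n)})$ of probability vectors via Theorem~\ref{BKMS_measures=invlimits}, observes $\De_1^{(n)}\subset\De^{(n)}$ together with the already-verified fact that $F_n^T$ maps $\De^{(n+1)}$ into $\De^{(n)}$, and uses uniqueness of the measure determined by $(\ol q^{(n)})$ for injectivity. Nothing is missing.
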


The lemma follows immediately from the fact that a 
sequence of probability non-negative vectors $\{\ol q^{(n)}\}$ defines a measure $\mu 
\in M_1(\mc R)$ if and only if it satisfies 
\eqref{eq_for p and q(e9)}.
Note that a sequence 
$\{ \overline{q}^{(n)}\} \in \varprojlim 
\left( \Delta^{(n)}, F_{n}^{T} \right)$
determines a probability tail invariant measure on 
$B = (V, E)$ if and only if
${\overline{q}}^{(n)\ } \in \Delta_{1}^{(n)}$ 
for $n = 0, 1,  \ldots$.
\\

(C)  For every $n \in \N_0$ and $m \in \N$, we define the set 
$$
\Delta^{(n,m)} := F_{n}^{T}\cdot  \ldots \cdot  
F_{n + m - 1}^{T}(\Delta^{(n + m)}).
$$
Clearly,  $\Delta^{(n,m)}$ is a subset of
$\Delta^{(n)}$ for every $m$ and
\begin{equation}\label{eq_Delta^(n,i) e13}
\Delta^{(n,m)} = \ F_{n}^{T}(\Delta^{(n + 1,m - 1)}),
\quad
\Delta^{(n,1)} \supset \Delta^{(n,2)} \supset \ldots \supset 
\Delta^{(n,m)} \supset \ldots.
\end{equation}
Hence, we can define
\begin{equation}\label{eq_intersection e14}
\Delta^{(n,\infty)} =\bigcap_{m = 1}^{\infty}\Delta^{(n,m)}.
\end{equation}
Then \eqref{eq_Delta^(n,i) e13} implies that 
\begin{equation}\label{eq_Delta^(n,infty) e15}
\Delta^{(n,\infty)} \supset  F_{n}^{T}(\Delta^{(n + 1,\infty)}),\ \ n = 1, 2, \ldots.
\end{equation}
Relations \eqref{eq_Delta^(n,infty) e15} define the following sequence of maps
\begin{equation}\label{eq_inverse limit e16}
\Delta^{(0,\infty)}  \stackrel{F_{0}^{T}} \longleftarrow   
  \Delta^{(1,\infty)} \stackrel{F_{1}^{T}} \longleftarrow  
  \Delta^{(2,\infty)} \stackrel{F_{2}^{T}} \longleftarrow \  
  \cdots
 \end{equation} 

Thus, we obtain the following lemma.
\begin{lemma}\label{lem_Delta inf meas}
Let $M_1(\mc R)$, $F_{n}^{T}$, $\Delta^{(n)}$, $\Delta^{(n,\infty)}$ be as above. Then
\begin{equation}\label{e17}
\varprojlim_{n\to \infty}  \left( \Delta^{(n)},\ F_{n}^{T} 
\right) = \varprojlim_{n\to \infty}  
\left(\Delta^{(n,\infty)}, F_{n}^{T} \right).
\end{equation}
The set $M_1(\mc R)$ is a subset of $\varprojlim  \left(\Delta^{(n,\infty)}, F_{n}^{T} \right).$
A sequence $\{{\overline{q}}^{(n)}\} \in 
\varprojlim \left(\Delta^{(n,\infty)}, F_{n}^{T} \right)$ 
determines a probability tail invariant measure on 
$B = (V, E)$ if and only if
${\overline{q}}^{(n)\ } \in \Delta_{1}^{(n)}$. 
\end{lemma}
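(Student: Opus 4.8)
The plan is to derive all three assertions of the lemma from the two preceding lemmas together with the telescoping structure of the sets $\Delta^{(n,m)}$ recorded in \eqref{eq_Delta^(n,i) e13}, so very little new work is needed. First one should note that the maps in \eqref{eq_inverse limit e16} are indeed well defined, which is precisely the content of \eqref{eq_Delta^(n,infty) e15}: $F_n^T$ carries $\Delta^{(n+1,\infty)}$ into $\Delta^{(n,\infty)}$.

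To prove the equality \eqref{e17}, I would argue both inclusions. The inclusion ``$\supseteq$'' is immediate, since $\Delta^{(n,\infty)} \subseteq \Delta^{(n)}$ for every $n$ and the connecting maps are in both cases restrictions of $F_n^T$, so any coherent sequence $\{\ol q^{(n)}\}$ with $\ol q^{(n)} \in \Delta^{(n,\infty)}$ is a fortiori a coherent sequence in the $\Delta^{(n)}$. For ``$\subseteq$'', take $\{\ol q^{(n)}\} \in \varprojlim(\Delta^{(n)}, F_n^T)$, so $F_n^T \ol q^{(n+1)} = \ol q^{(n)}$ for all $n$. Fix $n$ and $m \in \N$; iterating the coherence relation gives $\ol q^{(n)} = F_n^T \cdots F_{n+m-1}^T \ol q^{(n+m)}$, and since $\ol q^{(n+m)} \in \Delta^{(n+m)}$ this exhibits $\ol q^{(n)} \in \Delta^{(n,m)}$. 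As $m$ is arbitrary, $\ol q^{(n)} \in \bigcap_{m\geq 1}\Delta^{(n,m)} = \Delta^{(n,\infty)}$, so the whole sequence lies in $\varprojlim(\Delta^{(n,\infty)}, F_n^T)$. This telescoping step is the only point requiring a moment's care, and it is routine.

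The second assertion is then immediate: the lemma immediately preceding this one gives $M_1(\mc R) \subseteq \varprojlim(\Delta^{(n)}, F_n^T)$, and by \eqref{e17} the right-hand side equals $\varprojlim(\Delta^{(n,\infty)}, F_n^T)$. For the third assertion I would transport the characterization already recorded after that preceding lemma. Concretely, given $\{\ol q^{(n)}\} \in \varprojlim(\Delta^{(n,\infty)}, F_n^T)$, this is in particular a coherent sequence of non-negative vectors in the $\Delta^{(n)}$, so by Theorem~\ref{BKMS_measures=invlimits}(3) it determines a unique tail invariant (finite) measure $\mu$ with $\mu(X_w^{(n)}) = q_w^{(n)}$, whose total mass is the common value $\sum_{w \in V_n} q_w^{(n)}$ — common over $n$ because $F_n^T$ preserves coordinate sums by \eqref{e7}. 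Hence $\mu \in M_1(\mc R)$ exactly when that sum equals $1$, i.e. when $\ol q^{(n)} \in \Delta_1^{(n)}$ (for one, equivalently every, $n$). I do not expect any genuine obstacle here; the substance has already been placed in Theorem~\ref{BKMS_measures=invlimits} and the two preceding lemmas.
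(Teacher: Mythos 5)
Your proof is correct and follows essentially the same route the paper intends: the paper states this lemma without a separate proof, presenting it as an immediate consequence of the telescoping relations \eqref{eq_Delta^(n,i) e13}--\eqref{eq_inverse limit e16} together with the preceding lemma and Theorem~\ref{BKMS_measures=invlimits}, and your write-up simply makes those steps explicit (the iterated coherence argument for the inclusion $\varprojlim(\Delta^{(n)}, F_n^T) \subseteq \varprojlim(\Delta^{(n,\infty)}, F_n^T)$, and the mass-preservation of $F_n^T$ from \eqref{e7} for the final equivalence). No gaps.
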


\begin{remark}\label{r4}
For every $n \geq 0$, the sets $\Delta^{(n,m)}$, 
$m = 1, 2, \ldots$  and 
$\Delta^{(n,\infty)}$ are convex. 
In general, they are not closed. The sets
$\Delta^{(n,m)}$ are not empty while the sets 
$\Delta^{(n,\infty)}$ may be empty.

If all maps $F_{n}^{T}, n \in \N_0,$ are continuous,
then all the
above sets are not empty and closed. In this case, we are in 
the setting of the Krein-Milmann theorem. 
To apply this theorem in a general case, we need 
to use another sequence of maps which is considered in (D) 
below. 
\end{remark}

(D) 
Let $cl(\Delta^{(n,m)})$ denote the closure of the set 
$\Delta^{(n,m)}$ in the compact set $\Delta^{(n)}$. 
The sets
$cl(\Delta^{(n,m)})$ form a nested sequence, i.e., 
$cl(\Delta^{(n,m)}) \supset cl(\Delta^{(n,m+1)})$, 
and we can define
\begin{equation}\label{e18}
\Delta^{(n,\infty,cl)} := 
\bigcap_{m = 1}^{\infty}cl(\Delta^{(n,m)}),\ \ n = 0, 1, 2, \ldots .
\end{equation}
The sets $cl(\Delta^{(n,m)})$ and 
$\Delta^{(n,\infty,cl)}$ are closed and not empty. 
The difference with the previous cases is that we cannot 
claim that the sequence 
$$
\{ (\Delta^{(n,\infty,cl)}, F_{n}^{T}) \}
$$
forms the ``classical'' inverse limit, in general. The 
reason is that  $F_{n}^{T}
(\Delta^{(n+1,\infty,cl)})$ is not a subset of 
$\Delta^{(n,\infty,cl)}$.  
However, we again can consider the inverse limit of 
$\{ (\Delta^{(n,\infty,cl)}, F_{n}^{T}) \}$ as the set
of all sequences of probability vectors 
$\{\ol q^{(n)}\}$ such that $F_n^T (\ol q^{(n+1)}) = \ol q^{(n)}$ for all $n$. We remark that if $\{\ol q^{(n)}\}$ 
determines a probability measure $\mu$ then 
$\{\ol q^{(n)}\}$ 
satisfies the above condition.
  
\begin{remark}\label{rem_r6} 
If all maps $F_{n}^{T}$ are continuous, then 
$cl(\Delta^{(n,m)}) = \Delta^{(n,m)}$ and 
$\Delta^{(n,\infty)} = \Delta^{(n,\infty,cl)}$. This means 
that the inverse limit of the sets $\{ 
(\Delta^{(n,\infty,cl)}, F_{n}^{T}) \}$ exists and 
$$
\varprojlim_{n\to \infty}  
\left(\Delta^{(n,\infty, cl)}, F_{n}^{T} \right) = 
\varprojlim_{n\to \infty}  
\left(\Delta^{(n,\infty)}, F_{n}^{T} \right). 
$$
Then every sequence $\{\ol q^{(n)}\} \in \varprojlim_{n\to 
\infty}  \left(\Delta^{(n,\infty, cl)}, F_{n}^{T} \right)$
determines uniquely a probability measure $\mu \in 
M_1(\mc R)$ if and only if ${\overline{q}}^{(n)} \in \Delta_{1}^{(n)}$. 
\end{remark}

\subsection{Finite products of matrices 
\texorpdfstring{$F_n$}{F_n}}
Let $B = (V, E)$ be a generalized Bratteli diagram 
and suppose that the sequences of infinite matrices 
$(F'_n)$ and $(F_n)$ are defined as in Subsection 
\ref{ssect inverse limits}, see 
\eqref{eq_stoch matrix F_n}. Then we set 
$$
G'^{(n,m)} := {F'}_{n + m - 1} \cdots   F'_{n},
$$
$$
{G}^{(n,m)} := F_{n + m - 1} \cdots   F_{n}. 
$$
The entries of $G'^{(n,m)}$ and ${G}^{(n,m)}$ are denoted 
by ${g'}_{vw}^{(n,m)}$ and $g_{vw}^{(n,m)}$, respectively,
where $ v \in V_{n + m},\ w \in V_{n}$. 
It can be easily checked that, for $v \in V_{n+m}$, 
$$
H_v^{(n+m)} = \sum_{u \in V_n} {g'}_{vu}^{(n,m)} H_u^{(n)}
$$
and therefore 
\begin{equation}\label{eq_g(n,m) and g'(n,m) e21}
g_{vw}^{(n,m)} = {g'}_{vw}^{(n,m)} \frac{H_{w}^{(n)}}
{H_{v}^{(n + m)}}.
\end{equation}
The rows of the matrix $G^{(n,m)}$  can be written as 
follows:
$${\overline{g}}_{v}^{(n,m)} =
\frac{1}{\sum_{s \in V_{n}}{g'}_{vs}^{(n,m)} H_{s}^{(n)}} \cdot \langle {g'}_{vw}^{(n,m)} H_{w}^{(n)} :\ w \in V_{n} \rangle.
$$
Using the above notation, we can write
$\Delta^{(n,m)} = G^{(n,m)T}(\Delta^{(n + m)})$. 

If a Bratteli diagram $B = (V, E)$ is \textit{stationary}, 
i.e., $V_{n} = V$ and $F_{n} =  F$, $n \in \N_0$,  
then ${G'}^{(n,m)} = F^{m},\ m = 1, 2, \ldots$, and
the equality \eqref{eq_g(n,m) and g'(n,m) e21} 
has the form
\begin{equation}\label{e22b}
g_{vw}^{(n,m)} = \frac{f_{vw}^{(m)}  H_{w}^{(n)}}
{\sum_{s \in V_{n}}^{}f_{vs}^{(m)}  H_{s}^{(n)}}.
\end{equation}
Assuming $H_{w}^{(0)} = 1$ for every $w \in V_{0}$, we see 
that $H_{w}^{(n)} = \sum_{s \in V_{0}}f_{ws}^{(n)}$, 
$w \in V_{n}$, $n \geq 1$.

Let $\overline{g}_{v} = \langle g_{vw}^{(n,m)} : 
\ w \in V_{n} \rangle$ be the $v$-th row of $G^{(n,m)}$, 
$v \in V_{n + m}$. Then $\overline{g}_v$ can be 
viewed as 
vectors from the set $\Delta^{(n)}$. If $\{\ol e_v : v 
\in V_{n+m}\}$ is 
the standard basis in $\Delta^{(n + m)}$, then  
\be\label{eq-rows g_v}
\overline{g}_{v} = G^{(n,m)T}(\ol e_{v}^{(n + m)}).
\ee 

Let $\Delta^{(n)}_m(\{\overline{g} _{v}\})$ be the closed 
convex 
hull generated by the vectors $\overline{g}_{v}, v \in 
V_{n + m}$, where $m \in \N$ is fixed. 
It follows from the above arguments that the following
lemma holds.

\begin{lemma}\label{lem_closure} Let $\Delta^{(n)}_m(\{\overline{g} _{v}\})$ and $\Delta^{(n,m)}$ be as above. Then
$$
\Delta^{(n)}_m(\{\overline{g} _{v}\}) = cl(\Delta^{(n,m)}). 
$$
\end{lemma}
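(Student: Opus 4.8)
My plan is to prove the two inclusions of the asserted equality separately, using throughout the identity $\Delta^{(n,m)}=G^{(n,m)T}(\Delta^{(n+m)})$ together with the description \eqref{eq-rows g_v} of $\ol g_v$ as the image $G^{(n,m)T}(\ol e_v^{(n+m)})$ of a basis vector. One inclusion is immediate: $\Delta^{(n,m)}$ is convex, being a linear image of the convex set $\Delta^{(n+m)}$, so $cl(\Delta^{(n,m)})$ is a closed convex subset of $\Delta^{(n)}$, and it contains every $\ol g_v$ since $\ol g_v=G^{(n,m)T}(\ol e_v^{(n+m)})\in G^{(n,m)T}(\Delta^{(n+m)})=\Delta^{(n,m)}$. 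By the minimality built into the definition of the closed convex hull generated by the $\ol g_v$, this already yields $\Delta^{(n)}_m(\{\ol g_v\})\subseteq cl(\Delta^{(n,m)})$.

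For the reverse inclusion, since $\Delta^{(n)}_m(\{\ol g_v\})$ is closed it suffices to show $\Delta^{(n,m)}\subseteq\Delta^{(n)}_m(\{\ol g_v\})$. Given $\ol y=G^{(n,m)T}(\ol x)\in\Delta^{(n,m)}$ with $\ol x=\langle x_v\rangle$, $x_v\ge0$ and $s:=\sum_{v\in V_{n+m}}x_v\le1$, I would first write the matrix map out coordinatewise and interchange the (nonnegative, hence unconditionally summable) double sum, obtaining $\ol y=\sum_{v\in V_{n+m}}x_v\,\ol g_v$ as a series converging in the metric $d$; thus $\ol y$ is an infinite subconvex combination of the $\ol g_v$. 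The one computation to run is the tail bound for the finite truncations $\ol y_l:=\sum_{a(v)\le l}x_v\,\ol g_v+(1-s_l)\ol 0$, where $s_l:=\sum_{a(v)\le l}x_v$, each of which is a finite convex combination of $\ol 0$ and the $\ol g_v$: one has $d(\ol y_l,\ol y)=\bigl|\sum_{a(v)>l}x_v\,\ol g_v\bigr|\le\sum_{a(v)>l}x_v\,|\ol g_v|\le\sum_{a(v)>l}x_v\to0$, using $|\ol g_v|\le\sum_w g^{(n,m)}_{vw}=1$ — exactly the estimate already carried out in Claim \ref{Claim:distance} and the proof of Proposition \ref{prop_C in Delta}. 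Hence $\ol y\in cl\bigl(conv(\{\ol g_v:v\in V_{n+m}\}\cup\{\ol 0\})\bigr)$ (for $s=1$ this is also Proposition \ref{prop_C in Delta}, inclusion \eqref{eq_C in}); and since $\ol 0=G^{(n,m)T}(\ol 0)\in\Delta^{(n,m)}$ — which, incidentally, is precisely why the simplices in \eqref{eq_Delta^(n,i) e13}--\eqref{eq_inverse limit e16} are taken with $\sum_v x_v\le1$ rather than $=1$ — the point $\ol 0$ lies in $\Delta^{(n)}_m(\{\ol g_v\})$, so $\ol y\in\Delta^{(n)}_m(\{\ol g_v\})$ and the proof closes.

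The step I expect to be the real obstacle is this identification $G^{(n,m)T}(\ol x)=\sum_v x_v\,\ol g_v$ together with the control of its truncations. Because $G^{(n,m)T}$ need not be continuous (Theorem \ref{t2}), one cannot shortcut the argument by applying $G^{(n,m)T}$ to the Krein--Milman identity $\Delta^{(n+m)}=cl\bigl(conv\{\ol e_v^{(n+m)}\}\bigr)$ (valid because the extreme points of $\Delta^{(n+m)}$ are $\ol 0$ and the $\ol e_v^{(n+m)}$, with $\ol e_v^{(n+m)}\to\ol 0$ as $a(v)\to\infty$) and simply push closures through the map; the argument has to be made directly in the target simplex $\Delta^{(n)}$, where the discontinuity turns out to be harmless only because all coefficients and all entries $g^{(n,m)}_{vw}$ are nonnegative, so every tail telescopes against $\sum_v x_v\le1$. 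A secondary subtlety worth flagging is that $\Delta^{(n)}_m(\{\ol g_v\})$ should be read as a closed convex subset of $\Delta^{(n)}=\{\sum_v x_v\le1\}$, so that it contains the deficient vector $\ol 0$ (which is moreover a genuine limit of the $\ol g_v$ whenever $G^{(n,m)T}$ is continuous), and not as a subset of $\Delta_1^{(n)}=\{\sum_v x_v=1\}$; with that reading the equality holds verbatim.
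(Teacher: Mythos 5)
Your first inclusion and the identity $G^{(n,m)T}(\ol x)=\sum_{v\in V_{n+m}}x_v\,\ol g_v$ together with the tail estimate for its truncations are correct, and they already supply more detail than the paper, which only asserts that the lemma ``follows from the above arguments.'' The genuine gap is the last step of the reverse inclusion: you conclude that $\ol 0\in\Delta^{(n)}_m(\{\ol g_v\})$ \emph{because} $\ol 0=G^{(n,m)T}(\ol 0)\in\Delta^{(n,m)}$. That inference invokes exactly the inclusion $\Delta^{(n,m)}\subseteq\Delta^{(n)}_m(\{\ol g_v\})$ you are in the middle of proving, so it is circular. Worse, the claim can genuinely fail. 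Every $\ol g_v$ is a probability vector, so if some $w_0\in V_n$ satisfies $\inf_{v\in V_{n+m}}g^{(n,m)}_{vw_0}=c>0$, then every finite convex combination of the $\ol g_v$ has $w_0$-entry at least $c$; since coordinate projections are continuous for the metric $d$ (indeed $|x_{w_0}-y_{w_0}|\le 2^{a(w_0)}d(\ol x,\ol y)$), the same bound holds on the closed convex hull, whence $\ol 0\notin\Delta^{(n)}_m(\{\ol g_v\})$. This occurs for perfectly legitimate generalized Bratteli diagrams: take the stationary one-sided diagram with $f'_{v1}=f'_{vv}=1$ for $v\ge2$ and $f'_{11}=2$; then $H^{(n)}_v=2^n$, one checks $g^{(n,m)}_{v1}\ge\frac{1}{2}$ for every $v\in V_{n+m}$, and yet $\ol 0\in\Delta^{(n,m)}$. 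For such a diagram the asserted equality is itself false as literally stated: $cl(\Delta^{(n,m)})$ equals the closed convex hull of $\{\ol g_v\}\cup\{\ol 0\}$, equivalently $\{t\ol z\,:\,0\le t\le1,\ \ol z\in\Delta^{(n)}_m(\{\ol g_v\})\}$, which is strictly larger than $\Delta^{(n)}_m(\{\ol g_v\})$.

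Your closing remark shows you sensed the difficulty, but the proposed ``reading'' is not available: being a closed convex subset of $\Delta^{(n)}$ does not force a hull of probability vectors to contain the deficient vector $\ol 0$. The parenthetical you attach is in fact the correct sufficient condition: when $G^{(n,m)T}$ is continuous, Theorem \ref{t2} gives $|\ol g_v|\to0$ as $a(v)\to\infty$, hence $\ol g_v\to\ol 0$, so $\ol 0\in cl(\{\ol g_v\})\subseteq\Delta^{(n)}_m(\{\ol g_v\})$ and your argument closes; this covers the diagrams treated later in the paper (Pascal, $B_\infty$, $B_{IO}$). To make the lemma correct in general one must either adjoin $\ol 0$ to the generating set or replace $\Delta^{(n+m)}$ by $\Delta_1^{(n+m)}$ in the definition of $\Delta^{(n,m)}$. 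The discrepancy consists only of vectors with coordinate sum strictly less than $1$, so it does not affect the probability measures extracted via Theorem \ref{thm int over L t3}, but as written neither the lemma nor your proof of it holds for an arbitrary generalized Bratteli diagram.
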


Recall that we can apply Theorem \ref{thm t1} to find a 
representation of vectors from the set 
$\Delta^{(n)}_m(\{\overline{g} _{v}\})$, where $n$ and $m$ 
are fixed:
$$
\Delta^{(n)}_m(\{\overline{g} _{v}\}) =
\left\{\int_{cl(\{\overline{g}_v\})} \ol z\; 
d\mu(\ol z) : \mu \in M_1(cl(\{\overline{g}_v\})) 
\right\}.
$$
In the above formula, we take the closure in $\Delta^{(n)}$
of the vectors that came from the level $V_{n+m}$.

Fix $n$ and consider the set of all sequences 
$\{\ol g_{v_m}\}$, where $v_m$ is any vertex from $V_{n+m}$. 
We denote by  
$L^{(n)}(\{ \overline{g}_{v}\})$ 
the set of all limit points of all convergent sequences, 
i.e., 
$$ 
L^{(n)}(\{ \overline{g}_{v}\}) = \{ \ol x \in \Delta^{(n)} : 
\ol x = \lim_{m\to \infty}
\ol g_{v_{m}},\ v_{m} \in V_{n + m} \}.
$$ 

It turns out that the set $L^{(n)}(\{ \overline{g}_{v}\})$ 
can be used to 
describe the vectors from $\Delta^{(n,\infty,cl)}$.

\begin{theorem}\label{thm int over L t3}
Let $\Delta^{(n,\infty,cl)}$ and $L^{(n)}(\{ \overline{g}_{v}\}$ be as above. Then
\begin{equation}\label{eq_int over L e22}
\Delta^{(n,\infty,cl)} = 
\left\{ \int_{L^{(n)}(\{ \overline{g}_{v}\})} \overline{z}
\; d\mu(\ol z)\ :\ \mu \in 
M_1(L^{(n)}(\{ \overline{g}_{v}\})) \right\}. 
\end{equation}

\end{theorem}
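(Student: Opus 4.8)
The plan is to establish the set equality \eqref{eq_int over L e22} by a double inclusion, using the representation result Theorem~\ref{thm t1}(2) together with a diagonal/compactness argument to pass from the finite levels $m$ to the limit $m\to\infty$. Throughout, I work inside the compact metric space $\Delta^{(n)}$, and I recall from Lemma~\ref{lem_closure} that $cl(\Delta^{(n,m)}) = \Delta^{(n)}_m(\{\overline{g}_v\})$, the closed convex hull of the row vectors $\{\overline{g}_v : v \in V_{n+m}\}$, and that by Theorem~\ref{thm t1}(2) every point of this set is the barycenter $\int \overline z\, d\mu$ of a Borel probability measure $\mu$ supported on $cl(\{\overline{g}_v : v \in V_{n+m}\})$.

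For the inclusion $\supseteq$: fix $\mu \in M_1(L^{(n)}(\{\overline{g}_v\}))$ and set $\overline x = \int_{L^{(n)}} \overline z\, d\mu(\overline z)$. Since $L^{(n)}(\{\overline{g}_v\})$ consists of limit points of sequences $(\overline{g}_{v_m})$ with $v_m \in V_{n+m}$, and since for each fixed $m_0$ one has $\overline{g}_{v_m} \in cl(\Delta^{(n,m_0)})$ for all $m \geq m_0$ (because $cl(\Delta^{(n,m)})$ is nested decreasing), each such limit point lies in $cl(\Delta^{(n,m_0)})$ for every $m_0$, hence in $\Delta^{(n,\infty,cl)}$. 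Thus $L^{(n)}(\{\overline{g}_v\}) \subseteq \Delta^{(n,\infty,cl)}$, and since the latter set is closed and convex and $\overline x$ is a barycenter of a probability measure supported on a subset of it, Theorem~\ref{Thm:Diestel} (applied with $Z$ the closure of $L^{(n)}$, which is compact) gives $\overline x \in \Delta^{(n,\infty,cl)}$. One has to check the coordinate-wise reading of the barycenter, exactly as in the proof of Theorem~\ref{thm t1}, to make sense of the vector-valued integral.

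For the reverse inclusion $\subseteq$: take $\overline x \in \Delta^{(n,\infty,cl)}$, so $\overline x \in cl(\Delta^{(n,m)}) = \Delta^{(n)}_m(\{\overline{g}_v\})$ for every $m$. By Theorem~\ref{thm t1}(2) there is, for each $m$, a Borel probability measure $\mu_m$ on $cl(\{\overline{g}_v : v \in V_{n+m}\}) \subseteq \Delta^{(n)}$ with $\overline x = \int \overline z\, d\mu_m(\overline z)$. Since $\Delta^{(n)}$ is a compact metric space, the space $M_1(\Delta^{(n)})$ is weak-$*$ compact and metrizable, so a subsequence $\mu_{m_j}$ converges weak-$*$ to some $\mu \in M_1(\Delta^{(n)})$. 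The main point is to identify the support of $\mu$: I claim $\mu$ is carried by $L^{(n)}(\{\overline{g}_v\})$. Indeed, any point in the support of $\mu$ is a weak-$*$ limit of points in the supports of the $\mu_{m_j}$, which lie in $cl(\{\overline{g}_v : v \in V_{n+m_j}\})$; using the metric \eqref{eq_dist d} and a diagonal argument over the countably many coordinates one extracts from such an approximating family an actual sequence $(\overline{g}_{v_k})$ with $v_k \in V_{n+m_{j_k}}$ converging to that point, which is then by definition a member of $L^{(n)}$ (after noting that passing to a subsequence of levels does not change the set of limit points, since one may always pad with intermediate levels). Finally, weak-$*$ convergence of $\mu_{m_j}\to\mu$ against the (bounded, continuous) coordinate functionals $f_w : \overline z \mapsto z_w$ yields $x_w = \int z_w\, d\mu(\overline z)$ for every $w \in V_n$, i.e.\ $\overline x = \int_{L^{(n)}} \overline z\, d\mu$.

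The step I expect to be the main obstacle is the identification of $\operatorname{supp}\mu$ with $L^{(n)}(\{\overline{g}_v\})$: one must carefully argue that a weak-$*$ limit of measures supported on the ``$m$-th shells'' $\{\overline{g}_v : v\in V_{n+m}\}$ is supported on genuine limit points of sequences running through all levels, and that restricting to the subsequence $(m_j)$ extracted for weak-$*$ convergence does not shrink $L^{(n)}$ — this is where the fact that every $V_{n+m}$ is nonempty and the diagrams are connected downward (so shells at intermediate levels exist) is used. A secondary technical point, already flagged in the footnote to \eqref{eq_barycenter modified}, is to justify throughout that the vector-valued integrals are well defined (as Bochner integrals in the Banach space with norm \eqref{eq_r1'}, or simply coordinate-wise) and that interchanging integral and coordinate projection is legitimate; this is routine given that each $f_w$ is continuous and bounded on the compact set $\Delta^{(n)}$.
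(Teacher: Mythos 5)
Your proof is correct and follows essentially the same route as the paper's: represent $\overline{x}$ as a barycenter at each finite stage via Theorem~\ref{thm t1}(2), pass to a weak-$*$ limit of the representing measures, and locate the support of the limit measure inside $L^{(n)}(\{\overline{g}_v\})$. The only cosmetic differences are that you support $\mu_m$ on the single shell $cl(\{\overline{g}_v : v\in V_{n+m}\})$ whereas the paper uses the nested tails $Z_k = cl(\{\overline{g}_v : v \in V_{n+m},\ m\geq k\})$, which makes the identification $\mathrm{supp}\,\mu\subseteq\bigcap_k Z_k = L^{(n)}(\{\overline{g}_v\})$ immediate by the portmanteau theorem and sidesteps the padding issue you flag, and that you write out the easy inclusion $\supseteq$, which the paper leaves implicit.
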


\begin{proof}
We first observe that, for every $m \geq 1$, the closure 
$cl(\Delta^{(n,m)})$ can be characterized as the set of all 
limit points of sequences $\{\overline{y}^{(m)}(l) \}$ (as
$l \rightarrow \infty$), where ${\overline{y}}^{(m)}(l)$ 
is a finite convex combination of vectors 
$\overline{g}_{v},\ v \in V_{n + m}$, see Lemma 
\ref{lem_closure}.

Recall that for every $k = 1, 2, \ldots$
$$\Delta^{(n,\infty,cl)} = \bigcap_{m = k}^{\infty} (cl( 
\Delta^{(n,m)}))$$
Hence, for each 
$\overline{y} \in \Delta^{(n,\infty,cl)}$, we can select a 
sequence ${ \{ \overline{y}}^{(m)} \},\ m \geq k $, 
such that ${\overline{y}}^{(m)}$ 
is a finite convex combination of the vectors
${\overline{g}}_{v},\ v \in V_{n + m}$,  
and ${\overline{y}}^{(m)} \rightarrow \overline{y}$ 
in $\Delta^{(n)}$ as $m \to \infty$. 

Denoting the closure of the vectors $\{\ol g_v : v \in 
V_{n+m}, m\geq k\}$ by $Z_k$ and applying Theorem 
\ref{thm t1} again, we have for every $\overline{y} \in 
\Delta^{(n,\infty,cl)}$
\be\label{eq y as integral}
\overline{y} = \int_{Z_k} \overline{z} \; d\mu_{k} 
(\overline{z}),
\ee
where  $\mu_{k}$ is a Borel probability measure on the set
$Z_k$. Since $Z_ 1 \supset Z_2 \supset \cdots $, we can 
assume that each measure $\mu_k$ can be considered on 
the set $Z_1$, $k \geq 1$.

Let $\mu$ be a limit measure defined by a subsequence 
of $\{\mu_{k} : k \geq 1\}$. Then $\mu(Z_k) =1$ for all 
$k \in \N$ which implies that 
$$
\mu(\bigcap_{k \geq 1} Z_k) =1.
$$
Moreover, relation \eqref{eq y as integral} holds for
the measure $\mu$.

It is easy to see from the definition of the sets
$Z_k$ and $L^{(n)}(\{ \overline{g}_{v}\})$ that
$$
\bigcap_{k=1}^{\infty}Z_k = L^{(n)}(\{ \overline{g}_{v}\}).
$$
Therefore, we can conclude that for every $\ol y$ from
$\Delta^{(n,\infty,cl)}$
$$
\ba 
\overline{y} = & \int_{Z_k} \ol z\; d\mu (\overline{z})\\
= &  \int_{\bigcap_k Z_k} \ol z\; d\mu (\overline{z})\\
= & \int_{L^{(n)}(\{ \overline{g}_{v}\})} \ol z\; d\mu 
(\overline{z}). \qedhere \\
\ea
$$
\end{proof}

\begin{remark}\label{rem_r7r8}
We observe that the extreme points of 
${\Delta}^{(n,\infty,cl)}$ are contained in 
$L^{(n)}(\{ \overline{g}_{v}\})$.

To determine the set $L^{(n)}(\{ \overline{g}_{v}\})$, we 
must find the set of all limit vectors of the sequences 
of the form 
$\{ {\overline{g}}_{v_{m}},\ v_{m} \in V_{n + m}\}$. 
To do this, we can first find the set of all limit vectors 
of sequences of the so-called ``normalized'' vectors 
$\{ \overline{y}_{v_{m}},\ v_{m} \in V_{n + m} \}$.
This approach is discussed below.
\end{remark}

We recall that the entries of finite products $G'^{(n,m)}$ 
of incidence matrices $F'_n$ are denoted by 
${g'}_{v,w}^{(n,m)}$, where $w \in V_n$ and $v \in V_{n+m}$.
For $v \in V_{n+m}$, define the vector $\ol y_v^{(n,m)}$ by 
setting 

\begin{equation}\label{e23}
\ol y_{v}^{(n,m)} = \frac{1}{\sum_{w \in V_{n}}
{g'}_{vw}^{(n,m)}}  {\overline{g}'}_{v}^{(n,m)}, 
\ v \in V_{n + m},
\end{equation}
where ${\overline{g}'}_{v}^{(n,m)}$ is the $v$-th row of
$G'^{(n,m)}$ and ${g'}_{vw}^{(m,n)}$ is the $w$-th entry 
of this vector. Clearly, every vector 
$\overline{y}_{v}^{(n,m)}$ is probability. 

The following statement clarifies the meaning of this
normalization.

Let 
\be\label{eq-correction}
\ol y^{(n, \infty)} = \lim_{m \to \infty}\ol 
y_{v}^{(n,m)}
\ee 
where $v = v_m \in V_{n+m}$, see \eqref{e23}. 
Denote by $P^{(n)}$ the set of vectors $\ol y^{(n, \infty)}$
satisfying the two conditions:
\be\label{eq-conditions P(n)}
 \sum_{w \in V_{n}}{y_{w}^{(n, \infty)} H_{w}^{(n)}} < 
 \infty
\ee
and
\be\label{eq limit vectors p(infty)}
\lim_{m \to \infty} \sum_{w \in  V_{n}}
\left[ \frac{{g^{'}}_{vw}^{(n, m)}}{\sum_{u \in  V_{n}}
{g^{'}}_{vu}^{(n, m)}} \right]  H_{w}^{(n)} 
= \sum_{w \in V_{n}}{y_{w}^{(n, \infty)} H_{w}^{(n)}}.
\ee

\begin{theorem}\label{thm 4.11 edited} 
Let $\overline{g}_{v} = G^{(n,m)T}(\ol e_{v}^{(n + m)})$, where 
$v \in V_{n+m}$ and $m \geq 1$. Then 
$L^{(n)}(\{ \overline{g}_{v}\})$ consists of all 
vectors $\overline{q}^{(n,\infty)} = \langle  
q^{(n. \infty)}_w : \ w \in V_n \rangle$ such that 
\begin{equation}\label{e24}
{q}^{(n,\infty)}_w = \frac{1}{\sum_{u \in V_{n}}
{y_{u}^{(n, \infty)}  H_{u}^{(n)}}} \cdot 
 y_{w}^{(n, \infty)}  H_{w}^{(n)}
\end{equation}
where $\ol y^{(n, \infty)} = \langle  y^{(n,\infty)}_w : \
w \in V_n\rangle \in P^{(n)}$.
\end{theorem}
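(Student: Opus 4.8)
The plan is to connect the two normalizations, passing from the non-normalized rows $\ol g_v$ of $G^{(n,m)}$ (whose entries are $g_{vw}^{(n,m)} = {g'}_{vw}^{(n,m)} H_w^{(n)}/H_v^{(n+m)}$) to the probability vectors $\ol y_v^{(n,m)}$ obtained by dividing the row ${\overline g'}_v^{(n,m)}$ of $G'^{(n,m)}$ by $\sum_w {g'}_{vw}^{(n,m)}$. First I would record the algebraic identity relating the two: for $v \in V_{n+m}$,
\begin{equation}
g_{vw}^{(n,m)} = \frac{{g'}_{vw}^{(n,m)} H_w^{(n)}}{H_v^{(n+m)}} = \frac{{g'}_{vw}^{(n,m)} H_w^{(n)}}{\sum_{u \in V_n} {g'}_{vu}^{(n,m)} H_u^{(n)}} = \frac{y_{vw}^{(n,m)} H_w^{(n)}}{\sum_{u \in V_n} y_{vu}^{(n,m)} H_u^{(n)}},
\end{equation}
using $H_v^{(n+m)} = \sum_u {g'}_{vu}^{(n,m)} H_u^{(n)}$ from Subsection~4.2 and dividing numerator and denominator by $\sum_w {g'}_{vw}^{(n,m)}$. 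So the map $\ol g_v \mapsto \ol y_v$ is just the coordinatewise transformation $x_w \mapsto x_w/H_w^{(n)}$ followed by renormalization to a probability vector, and conversely $\ol y_v \mapsto \ol g_v$ is $x_w \mapsto x_w H_w^{(n)}$ followed by renormalization. Equation~\eqref{e24} is exactly this inverse transformation applied to a limit $\ol y^{(n,\infty)}$.

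Next I would prove the two inclusions. For ``$\supseteq$'': given $\ol y^{(n,\infty)} \in P^{(n)}$, it arises as $\lim_m \ol y_{v_m}^{(n,m)}$ along some choice of vertices $v_m \in V_{n+m}$; I must show the corresponding $\ol g_{v_m} = {\overline g}_{v_m}^{(n,m)}$ (the $v_m$-th row of $G^{(n,m)}$) converges in $\Delta^{(n)}$ to the vector $\ol q^{(n,\infty)}$ defined by \eqref{e24}. Coordinatewise convergence is immediate from the displayed identity together with $y_{v_m,w}^{(n,m)} \to y_w^{(n,\infty)}$ and condition~\eqref{eq-conditions P(n)} (finiteness of $\sum_w y_w^{(n,\infty)} H_w^{(n)}$, which keeps the denominator finite and positive). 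But coordinatewise convergence in $I^{V_n}$ is convergence in the metric $d$, so $\ol g_{v_m} \to \ol q^{(n,\infty)}$ in $\Delta^{(n)}$, hence $\ol q^{(n,\infty)} \in L^{(n)}(\{\ol g_v\})$. Condition~\eqref{eq limit vectors p(infty)} is what guarantees that the denominators $\sum_w g'_{v_m,w}^{(n,m)}/(\sum_u g'_{v_m,u}^{(n,m)}) \cdot H_w^{(n)}$ actually converge to the claimed limit rather than dropping mass in the limit; without it the limiting row could fail to have the form \eqref{e24}.

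For ``$\subseteq$'': suppose $\ol x = \lim_m \ol g_{v_m}$ for some $v_m \in V_{n+m}$, $\ol x \in \Delta^{(n)}$. Apply the inverse transformation: set $\ol y_{v_m}^{(n,m)}$ as in \eqref{e23}. Since $I^{V_n}$ (hence $\Delta_1^{(n)} \subset \Delta^{(n)}$, viewed inside the compact $I^{V_n}$) is sequentially compact, pass to a subsequence so that $\ol y_{v_m}^{(n,m)} \to \ol y^{(n,\infty)}$ coordinatewise; along this subsequence \eqref{eq-correction} holds. Then coordinatewise the identity forces $x_w$ to equal the right side of \eqref{e24}, so I recover \eqref{e24}, and I must then check $\ol y^{(n,\infty)} \in P^{(n)}$, i.e. verify \eqref{eq-conditions P(n)} and \eqref{eq limit vectors p(infty)}. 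Finiteness \eqref{eq-conditions P(n)} follows because $\ol x \in \Delta^{(n)}$ means $\sum_w x_w \leq 1$, and writing $x_w = y_w^{(n,\infty)} H_w^{(n)}/S$ with $S = \sum_u y_u^{(n,\infty)} H_u^{(n)}$ shows $S < \infty$ (if $S = +\infty$ one gets $x_w = 0$ for all $w$, contradicting that $\ol x$ is a limit of probability rows only if mass is not entirely lost — this boundary case needs care). Condition~\eqref{eq limit vectors p(infty)} is then read off as the statement that $\sum_w g_{v_m,w}^{(n,m)} \to \sum_w x_w$, which holds by coordinatewise convergence combined with the dominated-convergence/Fatou argument once finiteness of the limiting sum is known.

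\textbf{Main obstacle.} I expect the delicate point to be the interchange of limit and infinite sum — controlling whether ``mass escapes to infinity'' as $m \to \infty$. Coordinatewise convergence of the normalized rows $\ol y_{v_m}^{(n,m)}$ does not by itself imply convergence of the weighted sums $\sum_w y_{v_m,w}^{(n,m)} H_w^{(n)}$, and this is precisely why condition~\eqref{eq limit vectors p(infty)} is imposed as a defining property of $P^{(n)}$; the real work is showing that this condition is both necessary and sufficient for the row $\ol g_{v_m}$ to have a limit of the prescribed form \eqref{e24} in $\Delta^{(n)}$, and in particular handling the degenerate case where the limiting measure is the zero vector (total mass lost), which must be excluded or accounted for separately so that \eqref{e24} is well-defined. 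The rest of the argument is the routine bookkeeping of the bijective change of variables $x_w \leftrightarrow x_w/H_w^{(n)}$ together with sequential compactness of $I^{V_n}$.
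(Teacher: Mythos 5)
Your proposal follows essentially the same route as the paper's proof: both rest on the factorization $g_{vw}^{(n,m)} = \bigl(\sum_u g'_{vu}/\sum_u g'_{vu}H_u^{(n)}\bigr)\cdot y_{vw}^{(n,m)}H_w^{(n)}$ relating the stochastic rows to the normalized rows of $G'^{(n,m)}$, use the two defining conditions of $P^{(n)}$ to pass to the limit in the denominator, and extract a coordinatewise convergent subsequence of the vectors $\ol y_{v_m}^{(n,m)}$ for the converse inclusion. The main obstacle you flag — that coordinatewise convergence of the normalized rows does not by itself force convergence of the weighted sums $\sum_w y_{v_m,w}^{(n,m)}H_w^{(n)}$, so that weighted mass could escape to infinity — is precisely the role of condition \eqref{eq limit vectors p(infty)}, and the paper's own converse direction leaves this point just as implicit as you do.
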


\begin{proof}
Let $\ol y^{(n, \infty)} \in P_n$. It follows from \eqref{e23} 
that 
$$
\lim_{m \to \infty} \frac{{g'}_{vw}^{(n, m)}}
{\sum_{u\in V_{n}}^{}{g'}_{vu}^{(n, m)}}
= y_{w}^{(n,\infty)}, \ \ \ w \in V_{n}.
$$ 
Then, by the definition of the set $P^{(n)}$ and 
\eqref{eq limit vectors p(infty)}, we have   
$$
\lim_{m \to \infty}{\frac{\sum_{u \in V_{n}}{g'}_{vu}^{(n, m)}}{\sum_{u \in V_{n}}{{g'}_{vu}^{(m, n)}
\cdot H_{u}^{(n)}}}} = 
\frac{1}{\sum_{u \in V_{n}}{y_{u}^{(n, \infty)} H_{u}^{(n)}}}.
$$

Furthermore, we use the fact that ${G'}^{(n,m)} H^{(n)} =
H^{(n+m)} $ and relation \eqref{eq_g(n,m) and g'(n,m) e21}
to compute 
\begin{equation}\label{e26-1}
\ba
{g}_{vw}^{(n, m)} =\  & {g'}_{vw}^{(n, m)}
\frac{H^{(n)}_w}{H_v^{(n+m)}} \\
=\ & {g'}_{vw}^{(n, m)}
\frac{H^{(n)}_w}{\sum_{u\in V_n}  {g'}_{vu}^{(n, m)}
H_u^{(n)}}\\
=\  & \frac{\sum_{u\in V_n}  {g'}_{vu}^{(n, m)}}
{\sum_{u\in V_n} {g'}_{vu}^{(n, m)}H_u^{(n)}} \cdot 
\frac{{g'}_{vw}^{(n, m)} H^{(n)}_w}
{\sum_{u\in V_n} {{g'}}_{vu}^{(n, m)}}, \ \ w \in V_n. 
\ea
\end{equation}

Finally, we obtain from \eqref{e26-1}
that $\ol q^{(n, \infty)} =
\lim_{m \to \infty} {\overline{g}}_{v}^{(n, m)} $
where 
$$
q^{(n, \infty)}_w = \frac{1}{\sum_{u \in V_{n}}
{y_{u}^{(n, \infty)} H_{u}^{(n)}}}
y_{w}^{(n, \infty)}  H_{w}^{(n)}.
$$
Clearly, $q^{(n, \infty)}_w$ is nonzero if and only if
$y_{w}^{(n, \infty)}$ is nonzero. 

Now assume that
$$
\ol q^{(n, \infty)} = \lim_{m \to \infty} {\overline{g}}_{v_{m}}^{(n, m)}.
$$
Taking again a subsequence of $\{m\}$ we can show that
the limit 
$$
\lim_{m \to \infty} \frac{{g^{'}}_{vw}^{(n, m)}} 
{\sum_{u \in V_{n}}{g'}_{vu}^{(n, m)}}
= y_{w}^{(n, \infty)}
$$
exists for every $w \in  V_{n}$. 
Then, repeating the above computations, we prove that 
the vector $\ol q^{(n, \infty)}$ 
is determined by \eqref{e24}.
\end{proof}

\begin{corol}\label{cor Th 4.11}
(1) Suppose that a generalized 
Bratteli diagram $B = (V,E)$ has the property: 
the set $\{H^{(n)}_w : \ 
w \in V_n\}$ is bounded for every $n \in \N$. 
Then every vector $\ol y^{(n, \infty)}$, defined by 
\eqref{eq-correction} satisfies 
\eqref{eq-conditions P(n)} and \eqref{eq limit vectors p(infty)} and therefore Theorem 
\ref{thm 4.11 edited} holds.

(2) Suppose a generalized Bratteli diagram is 
of bounded size. Then Theorem \ref{thm 4.11 edited} holds. 

(3) For the infinite Pascal-Bratteli diagram 
(see the definition in Section \ref{sect Bratteli-Pascal})
Theorem \ref{thm 4.11 edited} holds.
\end{corol}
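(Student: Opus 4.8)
The plan is to reduce all three assertions to a single claim and establish that claim. Each part says that, for the class of diagrams in question, the description of the limit set $L^{(n)}(\{\ol g_v\})$ in Theorem~\ref{thm 4.11 edited} is unconditionally valid; concretely this means that every limit vector $\ol y^{(n,\infty)}$ produced by the normalization \eqref{e23} automatically satisfies the two conditions \eqref{eq-conditions P(n)} and \eqref{eq limit vectors p(infty)} that define $P^{(n)}$, after which formula \eqref{e24} follows from the computation already carried out in the proof of Theorem~\ref{thm 4.11 edited}. I would also observe that (2) and (3) reduce to (1): by Lemma~\ref{lemma_bdd_size_upper_cone} a bounded size diagram satisfies $H^{(n+1)}_v\le L_0\cdots L_n$ for all $v\in V_{n+1}$, while for the infinite Pascal graph $V_0=\{\ol 0\}$ and $H^{(n)}_{\ol s}$ is the multinomial coefficient $n!/(s_1!\,s_2!\cdots)\le n!$, so in both cases $\{H^{(n)}_w : w\in V_n\}$ is bounded for every $n$ — exactly the hypothesis of (1).

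So the heart of the matter is part (1). Fix $n$ and put $C_n:=\sup_{w\in V_n}H^{(n)}_w<\infty$. The first step is to record, for $v_m\in V_{n+m}$, the exact relation between the stochastic row $\ol g^{(n,m)}_{v_m}$ and the normalized row $\ol y^{(n,m)}_{v_m}$ from \eqref{e23}: using ${G'}^{(n,m)}H^{(n)}=H^{(n+m)}$ and setting $\sigma_m:=\sum_{u\in V_n}{g'}^{(n,m)}_{v_m u}$ one obtains
$$
g^{(n,m)}_{v_m w}=\frac{y^{(n,m)}_{v_m w}\,H^{(n)}_w}{s_m},\qquad s_m:=\sum_{u\in V_n}y^{(n,m)}_{v_m u}\,H^{(n)}_u=\frac{H^{(n+m)}_{v_m}}{\sigma_m}\in[1,C_n].
$$
Passing to a subsequence I may assume $\ol y^{(n,m)}_{v_m}\to\ol y^{(n,\infty)}$ coordinate-wise and $s_m\to s_\infty\in[1,C_n]$. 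Then \eqref{eq-conditions P(n)} is immediate, since $\ol y^{(n,\infty)}$ is a coordinate-wise limit of probability vectors, so $\sum_w y^{(n,\infty)}_w\le 1$ and hence $\sum_w y^{(n,\infty)}_w H^{(n)}_w\le C_n<\infty$. For \eqref{eq limit vectors p(infty)} one inequality, $\sum_w y^{(n,\infty)}_w H^{(n)}_w\le s_\infty$, is Fatou's lemma for series (nonnegative summands converging coordinate-wise). The reverse inequality $s_\infty\le\sum_w y^{(n,\infty)}_w H^{(n)}_w$ — that no $H$-weighted mass escapes to infinity as $m\to\infty$ — is the step that requires work; I would establish it by splitting $s_m$ over a finite block $S\subset V_n$ and its complement, bounding the complementary part by $C_n\sum_{w\notin S}y^{(n,m)}_{v_m w}$, and controlling this uniformly in $m$. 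Granting \eqref{eq limit vectors p(infty)}, passing to the limit in the displayed identity gives $q^{(n,\infty)}_w=y^{(n,\infty)}_w H^{(n)}_w/\sum_u y^{(n,\infty)}_u H^{(n)}_u$, which is \eqref{e24}; the converse containment is the second half of the proof of Theorem~\ref{thm 4.11 edited}.

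For parts (2) and (3) the only extra remark, beyond the boundedness of $H^{(n)}$ noted above, is that the structural hypotheses make the uniform tail estimate in (1) easy. In the bounded size case the condition $s(r^{-1}(v))\subset\{v-t_n,\dots,v+t_n\}$ forces each row ${\ol g'}^{(n,m)}_v$ to be supported in the finite window $\{v-\sum_{i=n}^{n+m-1}t_i,\dots,v+\sum_{i=n}^{n+m-1}t_i\}$, so the complementary sum above is a genuine finite truncation. For the infinite Pascal graph one has, besides the bound $H^{(n)}_{\ol s}\le n!$, an explicit combinatorial formula for the entries of ${G'}^{(n,m)}$ as numbers of increment-sequences, and the same truncation argument applies.

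The step I expect to be the main obstacle is the reverse inequality in \eqref{eq limit vectors p(infty)}, i.e.\ excluding loss of $H$-weighted mass along a sequence $v_m\to\infty$; all the remaining ingredients are either immediate from the boundedness of $H^{(n)}$, an application of Fatou's lemma, or bookkeeping already present in the proof of Theorem~\ref{thm 4.11 edited}.
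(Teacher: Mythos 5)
Your reduction of parts (2) and (3) to part (1) is exactly what the paper does: the paper's entire proof consists of the remark that Lemma~\ref{lemma_bdd_size_upper_cone} gives $H^{(n)}_w \le L_0\cdots L_{n-1}$ in the bounded-size case, with everything else ``left to the reader''; your observation that $H^{(n)}_{\ol s}=n!/(s_1!\cdots s_k!)\le n!$ is the corresponding (unstated) point for (3). So there is nothing in the paper to check your part (1) against, and part (1) is precisely where your argument is incomplete: the identity $g^{(n,m)}_{v_mw}=y^{(n,m)}_{v_mw}H^{(n)}_w/s_m$ with $s_m\in[1,C_n]$ and the Fatou inequality are correct (they match \eqref{e26-1}), but the reverse inequality $s_\infty\le\sum_w y^{(n,\infty)}_wH^{(n)}_w$ is only announced, not proved.

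That step is a genuine gap, and the truncation you sketch does not close it. Bounding the tail of $s_m$ by $C_n\sum_{w\notin S}y^{(n,m)}_{v_mw}=C_n\bigl(1-\sum_{w\in S}y^{(n,m)}_{v_mw}\bigr)$ and letting $m\to\infty$ yields $C_n\bigl(1-\sum_{w\in S}y^{(n,\infty)}_w\bigr)$, which becomes small for large finite $S$ only when $\sum_w y^{(n,\infty)}_w=1$, i.e.\ when no mass of the probability vectors $\ol y^{(n,m)}_{v_m}$ escapes to infinity. Bounded heights alone do not rule this out: the paper's own uniformly bounded size diagram $B_k$ of Section~\ref{sect bounded size} has $H^{(n)}_w\equiv(2k+1)^n$, yet every normalized row tends to $\ol 0$ coordinate-wise while $\sum_w y^{(n,m)}_{vw}H^{(n)}_w\equiv(2k+1)^n$, so \eqref{eq limit vectors p(infty)} fails for the limit $\ol y^{(n,\infty)}=\ol 0$; a similar escape occurs on the Pascal graph along $\ol t^{(m)}=(n+m)\ol e^{(i_m)}$ with $i_m\to\infty$. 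Consequently the conclusion can only be established for limits with $\sum_w y^{(n,\infty)}_w=1$ (equivalently, for tight sequences of normalized rows, which produce the nonzero points of $L^{(n)}$); under that additional hypothesis your Fatou-plus-truncation argument does go through, since bounded $H^{(n)}$ upgrades coordinate-wise convergence without mass loss to convergence of the $H$-weighted sums. You should either build that tightness restriction into the statement you prove or treat the escaping sequences separately (they contribute only the zero vector to $L^{(n)}$); as written, the key inequality cannot be derived from the hypothesis of part (1).
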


The proof of Corollary \ref{cor Th 4.11} is 
straightforward. We note only that to show (2), we 
apply Lemma \ref{lemma_bdd_size_upper_cone}, we
see that $H_w^{(n)} \leq L_0 \cdots L_{n-1}$ where $B$
is determined by the sequence of parameters $(t_i, L_i)$.
We leave the details to the reader.

\begin{remark}
(1) Let $B =(V, E)$ be a generalized Bratteli diagram and 
suppose that $\ol B = (\ol W, \ol E)$ is a subdiagram such 
that $\ol W_n $ is a finite subset of $V_n$. 
Then Theorems \ref{TheoremIV_1} and \ref{Theorem_IV_2} can be used 
to determine tail invariant measures 
on $B$ which are extensions of probability measures from
$\ol B$. 

(2)
The above results are also valid for standard Bratteli 
diagrams, i.e., for diagrams with  finite set  $V_{n}$ 
for all $n$. In this case, the incidence matrices $F'_{n}$
and the stochastic matrices $F_{n}$ have finite sizes
and the maps $F_{n}^{T}$ and $G^{(n,m)}$  are 
continuous. Moreover, the sets $\Delta_{1}^{(n)}$
are closed. That is why we do not need the set 
$\Delta^{(n)}$ and can use $\Delta_{1}^{(n)}$ only.
We also have 
$\Delta^{(n,\infty,cl)} = \Delta^{(n,\infty)}$. 
The sets 
$L^{(n)}(\{ \overline{g}_{v}\})$ can be defined as above. 
Finally, we can show that the extreme points of the set
$\Delta^{(n,\infty)}$ are contained in 
$L^{(n)}(\{ \overline{g}_{v}\})$.
\end{remark}

%%%%
\section{Infinite Pascal-Bratteli diagram}
\label{sect Bratteli-Pascal}
%%%%%%%%%%%%%%%%%%%%%%%%%%%%%%%%%%%%%%%%%%%%%%%%%%%%%%
In this section, we define an infinite Pascal-Bratteli 
diagram (we will abbreviate the name to IPBD) which is 
a generalization of the classic Pascal-Bratteli diagram. 
They have been studied in several papers from various points 
of view, see e.g. \cite{Boca2008}, \cite{FrickOrmes2013}, 
\cite{FrickPetersen2010}, \cite{Fricketal2017},
\cite{MelaPetersen2005}, \cite{Mela2006},
\cite{Mundici2011}, \cite{Strasser2011}, \cite{Vershik2014}. 
We define and discuss Pascal-Bratteli diagrams for the
case of generalized diagrams assuming that the set 
of all vertices is infinite for every level. 
We present two versions of such diagrams: one is 
called $\mathbb{N}$-IPB diagram and the second is 
called $\mathbb{Z}$-IPB diagram. Here $\N$ stands for
natural numbers and $\Z$ stands for integers. 

We recall briefly the notion of the Pascal-Bratteli diagram
with finite levels. For fixed $n \in \N_0$, let $V_n$
be the set of all pairs $(x,y)$ of non-negative integers 
such that $x+y = n$. They are considered the vertices of 
the $n$-th level. The set of edges $E_n$ between the levels
$V_n$ and $V_{n+1}$ is defined by the following rule:
$|E((x_n, y_n), (x_{n+1}, y_{n+1})) | = 1$ if and only if 
either $x_{n+1} = x_n +1$ or $y_{n+1} = y_n +1$. In other
words, the set $r^{-1}(x_n, y_n) = \{ (x_n -1, y_n), (x_n,
y_n-1)\}$, where $(x_n, y_n) \in V_n$. 

In what follows we will extend this definition to the 
cases when a fixed number $n$ is decomposed into infinite sums of finitely many natural numbers and infinitely many zeroes and the sums will be indexed by natural numbers or integers.

\subsection{\texorpdfstring{$\mathbb{N}$}{\mathbb{N}}-infinite Pascal-Bratteli diagram}
\label{ssect N PB}

For $n\in \N$, define
$$
V_{n} = \{\overline{s}= (s_{1}, s_{2}, \ldots ) : 
\sum_{i = 1}^{\infty}s_{i} = n , \ s_i \in \N_0\}.
$$
It is convenient to represent $\ol s$ as an infinite vector 
indexed by $\N$ with nonzero entries $s_i$ where $i$
runs a finite set of indexes $I_{\ol s} = \{i_1, \dots, 
i_k\}$ where $k\leq n$.
In such a way, we numerate the levels $V_n$ by natural
numbers and every $V_n$ contains countably many vertices. 

We will use the standard basis $\{\ol e^{(i)}\}$ in 
$\R^\infty$ where $\ov e^{(i)} = (0,.., 0,1,0, ...)$ (the 
only nonzero entry is at the $i$-th place). Using this 
notation, we can represent every vertex $\ol s \in V_n$ as 
\be\label{eq_ol s}
\ol s = \sum_{j=1}^n\ol e^{(i_j)},
\ee
where $i_j \leq i_{j+1}$. Comparing with the formula
$\overline{s}= (s_{i_1}, \dots  , s_{i_k})$, we see that
$s_{i_j}$ shows how many times $\ol e^{(i_j)}$ is included
in \eqref{eq_ol s}. Note that the vertices of the first 
level $V_1$ can be identified with the vectors 
$\ol e^{(i)}$.

To define the set $E_n$ of edges between the vertices of 
levels $V_{n}$ and $V_{n+1}$, we use the following property:
for $\ol s \in V_n$, $\ol t \in V_{n+1}$, the set $E(\ol s, 
\ol t)$ consists of exactly one edge if
and only if $\ol t = \ol s + \ol e^{(i)}$ for some $i 
\in \N$. In other words, if $\overline{s} =(s_1, s_2, 
\cdots) \in V_{n}$ and 
$\overline{t} =( t_{1}, t_{2}, \ldots ) \in V_{n + 1}$, then 
there exists $i_0$ such that $t_{i_0} = s_{i_0} +1$ and 
$t_i = s_i$ if $i \neq i_0$. 
It is obvious that every $\ol s\in V_n$ is the source for
an edge and  every $\ol t \in 
V_{n+1}$ is the range for and edge from $E(V_n, V_{n+1})$.
It is also clear that $|s^{-1}(\ol s)| = \aleph_0$ since we can add $1$ in any of the coordinates of $\ov s$ and obtain a vertex $\ov t \in V_{n+1}$ which will be connected by an edge to $\ov s$.
In contrast, the set $r^{-1}(\ol t)$ is finite because 
$\ol s \in s(r^{-1}(\ol t))$ if and only if $\ol s = 
\ol t - 
\ol e^{(i_0)}$ where $i_0$ is an index such that $t_{i_0}$
is positive. Moreover, $|r^{-1}(\ol t)| \leq n$ for all 
$\ol t \in V_n$ and if $\ol t = (t_{i_1}, \cdots, t_{i_k})$,
then $s(r^{-1}(\ol t)) = \{\ol t - \ol e^{(i_1)}, \cdots, 
\ol t - \ol e^{(i_k)}\}$.

We set $V = \bigcup_n V_n$ and $E = \bigcup_n E_n$. 
The diagram $B = (V, E)$ 
is called the \textit{infinite Pascal-Bratteli diagram}
(or $\mathbb{N}$-IPB diagram).

We discuss the structure of the incidence matrices 
$F'_n = ({f'}^{(n)}_{\ol t \ol s})$ and the corresponding 
stochastic matrices for the 
$\mathbb{N}$-IPB diagram. The rows of $F'_n$ are determined
by vertices $\ol t = (t_{i_1}, \cdots, t_{i_k})$ of 
$V_{n+1}$:
\be\label{eq:F'_n for P-B} 
{f'}^{(n)}_{\ol t \ol s} = \begin{cases}
1, &  \ol s = \ol t - \ol e^{(i_j)},\ j = 1, \cdots, k\\ 
0, &   \mbox{otherwise} 
\end{cases}
\ee
Every row of $F'_n$ has finitely many nonzero entries,  
and every column of $F'_n$ has infinitely many 1's and infinitely many 0's. 

To describe the path space $X_B$ of the $\mathbb{N}$-IPB 
diagram, we note that every path $\ol x = (x_1, x_2, 
\ldots)$ 
(finite or infinite) can be identified with a sequence of
vertices $(\ol s^{(1)}, \ol s^{(2)}, \ldots)$ such that $\ol s^{(i)} = 
s(x_i) \in V_i,\; i \in \N$. Moreover, if $I_{\ol s}$ is the
set of nonzero entries of the vector $\ol s$, then, for 
every path $\ol x = (\ol s^{(1)}, \ol s^{(2)}, \ol s^{(3)},  \ldots)$, we have
$I_{\ol s^{(1)}} \subset I_{\ol s^{(2)}} \subset  I_{\ol s^{(3)}} 
\subset \ldots $ Moreover, $|I_{\ol s^{(i+1)}}|= 
|I_{\ol s^{(i)}}| +1 $ for $i \in \N$.

The stochastic matrix $F_n$ is determined by the entries
${f'}^{(n)}_{\ol t \ol s}$ and the heights 
$H^{(n)}_{\ol s}$ as above in \eqref{eq_stoch matrix F_n}. 
To work with the entries of stochastic matrices, we need 
a formula for $H^{(n)}_{\ol s}$ where $\ol s \in V_n$.
For this, we will simplify our notation and write 
$\ol s = (s_1, s_2, ...,s_k) \in V_n$ if $s_1+\ldots+ s_k =
n$. The difference is that this notation does not indicate  
what indexes correspond to positive entries in the infinite 
vector $\ol s$. Clearly, $k$ depends on $\ol s$ in this 
representation, however, we will write 
$\ol s = (s_1, \ldots, s_k)$ denoting a vertex from $V_n$. 

\begin{lemma}\label{lem_heights}
In the above settings and for $\ol s = (s_1, \cdots, s_k)$,
we have
\begin{equation}\label{e28}
H^{(n)}_{\overline{s}} = \frac{n!}{{s}_1!
s_{2}!\cdot \ldots s_k!},
\end{equation}

For $\ol t = (t_1, t_2, ..., t_k)$ and $\ol s = \ol t - \ol 
e^{(i)}$, the entry ${f}^{(n)}_{\ol t \ol s}$ 
of the stochastic matrix $F_n$ is determined 
by the relation:
\be\label{eq_entries of F_n}
{f}^{(n)}_{\ol t \ol s} = \frac{t_i}{n+1}
\ee
\end{lemma}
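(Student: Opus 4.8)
The plan is to establish the height formula \eqref{e28} first, and then to read off \eqref{eq_entries of F_n} by substituting \eqref{e28} into the definition \eqref{eq_stoch matrix F_n} of the stochastic incidence matrix $F_n$.

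For \eqref{e28}, recall from Definition \ref{Def:Height} that $H^{(n)}_{\ol s}$ equals the number of finite paths in $B$ from $V_0$ to $\ol s \in V_n$. Since every edge of the $\mathbb{N}$-IPB diagram runs from a vertex $\ol u$ to $\ol u + \ol e^{(i)}$ for some $i$, such a path of length $n$ is exactly an ordered list of $n$ unit increments whose total is $\ol s$. Writing $\ol s = (s_1,\dots,s_k)$ in the abbreviated notation (so $\ol s$ has positive entries $s_1,\dots,s_k$ at $k$ coordinates and $s_1+\dots+s_k=n$), the number of such lists is the multinomial coefficient $\binom{n}{s_1,\dots,s_k} = \frac{n!}{s_1!\cdots s_k!}$, which is \eqref{e28}. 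The same identity also follows by induction on $n$: the base case $H^{(0)}_{\ol 0}=1$ is the convention of Definition \ref{Def:Height}, and the structural relation \eqref{eq_H e6} together with the description \eqref{eq:F'_n for P-B} of $F'_n$ yields $H^{(n+1)}_{\ol t}=\sum_{i:\,t_i\geq 1}H^{(n)}_{\ol t-\ol e^{(i)}}$, which is precisely the Pascal recursion for multinomial coefficients.

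For \eqref{eq_entries of F_n}, fix $\ol t\in V_{n+1}$ and let $i$ be an index with $t_i\geq 1$, so that $\ol s:=\ol t-\ol e^{(i)}\in V_n$ coincides with $\ol t$ except that its $i$-th entry is $t_i-1$. By \eqref{eq:F'_n for P-B} we have ${f'}^{(n)}_{\ol t\ol s}=1$, so \eqref{eq_stoch matrix F_n} and \eqref{e28} give
$$
f^{(n)}_{\ol t\ol s}={f'}^{(n)}_{\ol t\ol s}\cdot\frac{H^{(n)}_{\ol s}}{H^{(n+1)}_{\ol t}}=\frac{n!}{(n+1)!}\cdot\frac{t_i!}{(t_i-1)!}=\frac{t_i}{n+1},
$$
since all the remaining factorials in $H^{(n)}_{\ol s}$ and $H^{(n+1)}_{\ol t}$ cancel. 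This is \eqref{eq_entries of F_n}. As a consistency check, $\sum_{i:\,t_i\geq 1}f^{(n)}_{\ol t\ol s}=\frac{1}{n+1}\sum_i t_i=1$, in agreement with \eqref{e7}.

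I do not anticipate a genuine obstacle here; this is the classical Pascal computation carried out coordinate by coordinate. The only point that requires a little care is the bookkeeping between the two descriptions of a vertex — the honest infinite vector $\ol s=\langle s_i:i\in\N\rangle$ and the abbreviated tuple $(s_1,\dots,s_k)$ listing only its positive coordinates — together with the observation that the difference $\ol t-\ol e^{(i)}$ must be taken only for those indices $i$ where $\ol t$ is strictly positive, so that $\ol s$ has non-negative entries and indeed lies in $V_n$ (these are exactly the indices labeling $s(r^{-1}(\ol t))$).
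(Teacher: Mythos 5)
Your proof is correct and follows essentially the same route as the paper: the paper also establishes \eqref{e28} by induction via the recursion $H^{(n+1)}_{\ol t}=\sum_i H^{(n)}_{\ol t-\ol e^{(i)}}$ (the multinomial Pascal identity) and then obtains \eqref{eq_entries of F_n} by substituting into \eqref{eq_stoch matrix F_n} and cancelling factorials. Your additional direct counting of paths as ordered lists of unit increments is a pleasant alternative justification of \eqref{e28}, but it is the same underlying computation.
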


\begin{proof}
Relation \eqref{e28} can be easily proved 
by induction. Indeed, if $\ol t = (t_1, \dots, t_k)\in 
V_{n+1}$, then
$\ol t$ is connected by an edge with the vertices 
\be\label{eq: ol s vs ol t}
\ol s(i) = (t_1, \dots, t_i -1, \dots, t_k), \ \ 
i = 1,\dots, k.
\ee
Hence,
$$
\ba 
H_{\ol t}^{(n+1)} = & \ H_{\ol s(1)}^{(n)} + \cdots +
H_{\ol s(k)}^{(n)}\\ 
= & \ \frac{n!}{(t_1 -1)!t_2! \cdots  t_k! } + \cdots + 
\frac{n!}{t_1!t_2! \cdots  (t_k -1)! }\\
= & \ \frac{n!}{(t_1 -1)! \cdots (t_k -1)!}\cdot 
\frac{t_1 + \cdots + t_k}{t_1 \cdots t_k} \\
= & \ \frac{(n+1)!}{t_1 ! \cdots t_k !}.
\ea
$$

To compute the entries of the stochastic matrix $F_n$, we 
use \eqref{e28} and \eqref{eq: ol s vs ol t}:
\begin{equation}\label{e30}
f_{ \overline{t}\overline{s}}^{(n)} =
{f'}_{ \overline{t} \overline{s}}^{(n)} \cdot  \frac{H^{(n)}_{\overline{s}}}{H^{(n+1)}_{ \overline{t}}} = 
\frac{n!}{s_1! \cdots  s_k!}
\cdot  \frac{t_1! \cdots  t_k!} {(n + 1)!} 
= \frac{t_i}{n + 1}.
\end{equation}
\end{proof}

Next, we find the formulas for the entries 
$g_{\overline{t} \overline{s}}^{(n,m)}$ of matrices
$G^{(n,m)}$, the products of stochastic matrices 
$F_{n+ m-1}\cdots
F_n$ where $m\geq 1$ and $\overline{t} \in V_{n + m},\ 
\overline{s} \in V_{n}$.  We have seen in Section 
\ref{sect_inverse limits} that
these matrices play an important role in finding 
tail invariant measures on the path space of a Bratteli 
 diagram $X_B$.

\begin{remark}
 Let $\ol t = (t_1, t_2, ... )\in 
V_{n+m}$ and $\ol s = (s_1, s_2, ... )\in V_n$. Then
$E(\ol s, \ol t) \neq \emptyset$ (i.e., $\ol s$ and 
$\ol t$ 
are  connected by a finite path) if and only if $t_i \geq 
s_i$ for every $i=1, 2, ...$. In particular, $s_i > 0$ 
implies $t_i >0$.
\end{remark}

\begin{lemma}
Let $(F'_n)$ be the sequence of incidence matrices for 
the $\N$-IPB diagram defined by \eqref{eq:F'_n for P-B}. 
Then, for every $n \in \N_0$, the corresponding 
stochastic matrices $F_n$ do not satisfy 
Theorem \ref{t2}. This means that the mapping 
$F_n : \Delta_1 \to \De_1$ is discontinuous. 
\end{lemma}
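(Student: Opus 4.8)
The plan is to check, directly, that the continuity criterion of Theorem~\ref{t2} fails for the stochastic matrices $F_n$ of the $\N$-IPB diagram. Recall that $F_n = (f^{(n)}_{\ol t \ol s})$ has rows indexed by $\ol t \in V_{n+1}$ and columns by $\ol s \in V_n$, that its rows sum to $1$, and that the associated map is $F_n^T \colon \De_1^{(n+1)} \to \De_1^{(n)}$. By Theorem~\ref{t2} this map is continuous if and only if $|\overline{g}_{\ol t}| \to 0$ as $a(\ol t) \to \infty$, where $\overline{g}_{\ol t} = \langle f^{(n)}_{\ol t \ol s} : \ol s \in V_n\rangle$ is the $\ol t$-th row of $F_n$. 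So it suffices to produce, for each fixed $n$, a sequence of pairwise distinct vertices $\ol t^{(j)} \in V_{n+1}$ along which $|\overline{g}_{\ol t^{(j)}}|$ stays bounded away from $0$ while $a(\ol t^{(j)}) \to \infty$.

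For the construction I would fix any vertex $\ol s_0 \in V_n$, write $a_0 = a(\ol s_0)$, and for every index $j$ outside the (finite) support of $\ol s_0$ set $\ol t^{(j)} = \ol s_0 + \ol e^{(j)} \in V_{n+1}$. Then $\ol s_0 = \ol t^{(j)} - \ol e^{(j)}$ is a source of an edge into $\ol t^{(j)}$, and the $j$-th coordinate of $\ol t^{(j)}$ equals $1$; hence Lemma~\ref{lem_heights} (formula \eqref{eq_entries of F_n}) gives $f^{(n)}_{\ol t^{(j)} \ol s_0} = \tfrac{1}{n+1}$. Keeping only the $\ol s_0$-term in the sum defining $|\overline{g}_{\ol t^{(j)}}|$ and discarding the remaining nonnegative terms yields
$$
|\overline{g}_{\ol t^{(j)}}|\ =\ \sum_{\ol s \in V_n}\frac{1}{2^{a(\ol s)}}\, f^{(n)}_{\ol t^{(j)} \ol s}\ \geq\ \frac{1}{2^{a_0}}\, f^{(n)}_{\ol t^{(j)} \ol s_0}\ =\ \frac{1}{2^{a_0}(n+1)}\ >\ 0,
$$
a positive constant independent of $j$. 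Since the $\ol t^{(j)}$ are pairwise distinct and $a$ is a bijection of $V_{n+1}$ onto $\N$, only finitely many of them can carry any prescribed $a$-value, so $a(\ol t^{(j)}) \to \infty$. Thus $|\overline{g}_{\ol t}|$ does not tend to $0$ along $a(\ol t) \to \infty$, the criterion of Theorem~\ref{t2} fails, and $F_n^T$ is discontinuous.

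I do not expect a genuine obstacle here; the only point requiring a little care is the bookkeeping with the enumeration $a$: fixing $\ol s_0$ fixes $a_0$ once and for all, so the lower bound is really a positive constant, and distinctness of the $\ol t^{(j)}$ together with injectivity of $a$ forces $a(\ol t^{(j)}) \to \infty$. It is worth noting afterwards that the failure of continuity is robust --- one may instead take $\ol t^{(j)} = n\,\ol e^{(1)} + \ol e^{(j)}$, whose row splits as a mass $\tfrac{n}{n+1}$ that escapes to infinity (the term at $\ol t^{(j)} - \ol e^{(1)}$) plus a fixed atom of size $\tfrac{1}{n+1}$ at the vertex $n\,\ol e^{(1)} \in V_n$.
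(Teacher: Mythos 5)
Your proposal is correct and follows essentially the same route as the paper: the paper also fixes a vertex $\ol s \in V_n$, considers the vertices $\ol t^{(i)} = \ol s + \ol e^{(i)}$, uses $f^{(n)}_{\ol t^{(i)}\ol s} = (s_i+1)/(n+1) \geq 1/(n+1)$ to bound $|\overline{g}_{\ol t^{(i)}}|$ below by $\tfrac{1}{(n+1)2^{a(\ol s)}}$, and concludes that the criterion of Theorem~\ref{t2} fails. Your restriction to indices $j$ outside the support of $\ol s_0$ and the explicit remark that injectivity of the enumeration forces $a(\ol t^{(j)})\to\infty$ are only minor refinements of the same argument.
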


\begin{proof}
To prove this, we fix some
$\overline{s}= (s_{1}, s_{2}, s_{3}, \ldots) \in V_{n}$ 
and let 
${\overline{t}}^{(i)} = \overline{s} + \ol e^{(i)}$, 
$i = 1, 2, \ldots$. Then 
${\overline{t}}^{(i)} \in V_{n + 1}$ 
and
$f_{{\overline{t}}^{(i)} \overline{s}}^{(n)} =
\frac{s_{i} + 1}{n + 1} \geq \frac{1}{n + 1}$. 

Let ${\overline{g}}_{\overline{t}}$ 
be the ${\overline{t}}$-th row of the matrix $F_{n}$. 
Then
$|{\overline{g}}_{\overline{t}^{(i)}}| \geq  \frac{1}{(n + 1) \  2^{a(\overline{s})}}$. This implies that
$\mathop{\overline{\lim}}\limits_{b(\overline{t}) \to \infty} | {\overline{g}}_{\overline{t}} | > 0$, 
where $b(\overline{t})$ is an enumeration of vertices 
of the set $V_{n + 1}$.
\end{proof}

%%%%%%%%%%%%%%%%%%%%%%%%%%%%%%%%%%%%%%%%%%%%%%%%%%%%%%%%%%

\begin{lemma}\label{lem entries of G(m,n)}
Let $\ol t = (t_1, ..., t_k )\in V_{n+m}$ and 
$\ol s = (s_1,..., s_l)\in V_n$ be two vertices connected 
by a finite path. Then 
\begin{equation}\label{e32}
g_{ \overline{t} \overline{s}}^{(n,m)} =
\left\lbrack\begin{pmatrix}
t_{1} \\
s_{1} \\
\end{pmatrix} 
\cdots 
\begin{pmatrix}
t_{k} \\
s_{k} \\
\end{pmatrix}
\right\rbrack \cdot 
\begin{pmatrix}
 n + m  \\
n \\
\end{pmatrix}^{-1}
\end{equation}
\end{lemma}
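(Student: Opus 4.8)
The plan is to compute $g^{(n,m)}_{\ol t\,\ol s}$ in two steps: first handle the unnormalized products $g'^{(n,m)}_{\ol t\,\ol s}$ of the incidence matrices $F'_n$, and then pass to the stochastic matrices via the identity \eqref{eq_g(n,m) and g'(n,m) e21}, namely $g^{(n,m)}_{\ol t\,\ol s} = g'^{(n,m)}_{\ol t\,\ol s}\,H^{(n)}_{\ol s}/H^{(n+m)}_{\ol t}$, using the height formula \eqref{e28} from Lemma~\ref{lem_heights}. For the combinatorial part, I would observe that $g'^{(n,m)}_{\ol t\,\ol s}$ counts the number of finite paths from $\ol s\in V_n$ to $\ol t\in V_{n+m}$ in the $\N$-IPB diagram. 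By the edge rule, a path from $\ol s$ to $\ol t$ is a sequence of single-coordinate increments $\ol e^{(i)}$ that, summed, produce $\ol t-\ol s$. Writing $\ol t-\ol s = (t_1-s_1,\dots,t_k-s_k,0,0,\dots)$ (possible since $\ol s$ and $\ol t$ are connected, so $t_i\ge s_i$ for all $i$, as noted in the Remark preceding the lemma), such a path is exactly an ordered arrangement of $m = \sum_i(t_i-s_i) = (n+m)-n$ increments, of which $t_i-s_i$ are of type $\ol e^{(i)}$. Hence
$$
g'^{(n,m)}_{\ol t\,\ol s} = \binom{m}{(t_1-s_1),\,\dots,\,(t_k-s_k)} = \frac{m!}{(t_1-s_1)!\cdots(t_k-s_k)!}.
$$

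With this in hand, I would just substitute. By \eqref{e28}, $H^{(n)}_{\ol s} = n!/(s_1!\cdots s_k!)$ (padding $\ol s$ with the same index set as $\ol t$, so that coordinates $s_i$ with $t_i>0$ but $s_i=0$ contribute a factor $0!=1$), and $H^{(n+m)}_{\ol t} = (n+m)!/(t_1!\cdots t_k!)$. Therefore
$$
g^{(n,m)}_{\ol t\,\ol s} = \frac{m!}{(t_1-s_1)!\cdots(t_k-s_k)!}\cdot\frac{n!}{s_1!\cdots s_k!}\cdot\frac{t_1!\cdots t_k!}{(n+m)!} = \left[\prod_{i=1}^k \binom{t_i}{s_i}\right]\cdot\frac{m!\,n!}{(n+m)!} = \left[\prod_{i=1}^k \binom{t_i}{s_i}\right]\binom{n+m}{n}^{-1},
$$
which is exactly \eqref{e32}. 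As a sanity check, the case $m=1$ reproduces \eqref{eq_entries of F_n}: then $\ol t-\ol s = \ol e^{(i)}$, all binomials $\binom{t_j}{s_j}$ equal $1$ except $\binom{t_i}{t_i-1}=t_i$, and $\binom{n+1}{n}^{-1} = 1/(n+1)$.

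The argument is essentially a direct computation, so there is no serious obstacle; the only point requiring a little care is the bookkeeping of index sets — one must fix a common finite index set containing the supports of both $\ol s$ and $\ol t$, check that the path-counting identity for $g'^{(n,m)}_{\ol t\,\ol s}$ is unaffected by including extra indices $i$ with $s_i=t_i=0$ (each contributing a trivial factor), and likewise that the height formula is insensitive to such padding. Alternatively, one can prove the height-free identity $g'^{(n,m)}_{\ol t\,\ol s} = \binom{m}{t_1-s_1,\dots,t_k-s_k}$ by induction on $m$ using $\Delta^{(n,m)} = F^T_n(\Delta^{(n+1,m-1)})$ and Pascal's recurrence for multinomial coefficients, but the direct path-counting interpretation is cleaner and I would present that.
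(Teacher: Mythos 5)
Your proof is correct and follows essentially the same route as the paper: both first establish the multinomial formula ${g'}^{(n,m)}_{\ol t\,\ol s} = m!/\big[(t_1-s_1)!\cdots(t_k-s_k)!\big]$ and then substitute the height formula \eqref{e28} into the identity $g^{(n,m)}_{\ol t\,\ol s} = {g'}^{(n,m)}_{\ol t\,\ol s}\,H^{(n)}_{\ol s}/H^{(n+m)}_{\ol t}$. The only difference is cosmetic: the paper justifies the formula for ${g'}^{(n,m)}_{\ol t\,\ol s}$ by an induction analogous to the proof of \eqref{e28} (and leaves the details to the reader), whereas you give the equivalent direct path-counting argument.
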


\begin{proof}
We first note that the entries ${g'}_{\overline{t} 
\overline{s}}^{(n,m)}$ of the matrices ${G'}^{(n,m)}
= F'_{n+m -1}\ \cdots \ F'_n$ can be found as follows: 
\be\label{eq:entries g(n,m)}
{g'}_{ \overline{t} \overline{s}}^{(n,m)} =  
\frac{m!}{\left( t_{1} - s_{1} \right)! \cdots 
\left( t_{k} - s_{k} \right)!}, \ \ 
\textrm{ if } s_{i} \leq t_{i} \textrm{ for all } 
i=1,..., k. 
\ee
We observe that, if the vertices $\ol t$ and $\ol s$ are 
connected by a path, then  $l \leq k$. To make
sense of all differences $t_i - s_i$ we can assume
(if necessary) that some 
$s_i$ are zeros. It does not affect the formula in 
\eqref{e32} because $\begin{pmatrix}
t_{i} \\
s_{i} \\
\end{pmatrix} = 1$ if $s_i =0$.
To complete the definition of entries
of ${G'}^{(n,m)}$, we set 
$$
{g'}_{ \overline{t} \overline{s}}^{(n,m)} = 0, \ \ 
\textrm{ if } s_{i} > t_{i} 
\textrm{ for at least one index } i \geq 1.
$$
The condition $s_{i} > t_{i}$ means that there is no path
between $\ol t$ and $\ol s$. 

We leave the proof of relation \eqref{eq:entries g(n,m)}
to the reader since it can be proved exactly in 
the same way as \eqref{e28}. 

Next, we use Lemma \ref{lem_heights} and compute 
$$
\ba
g_{ \overline{t} \overline{s}}^{(n,m)} =  & \ 
{g'}_{ \overline{t} \overline{s}}^{(n,m)} 
\frac{H^{(n)}_{\ol s}}{H^{(n +m)}_{\ol t}} \\
= &\ 
\frac{m!}{\left( t_{1} - s_{1} \right)! \cdots  
\left( t_{k} - s_{k} \right)! } 
\cdot  \frac{n!}{{s}_{1}! \cdots  s_{k}!} 
\cdot  \frac{{t}_{1}! \cdots  t_{k}!}{(n + m)!} \\
= &\ \left\lbrack\begin{pmatrix}
t_{1} \\
s_{1} \\
\end{pmatrix} 
\cdots 
\begin{pmatrix}
t_{k} \\
s_{k} \\
\end{pmatrix}
\right\rbrack \cdot 
\begin{pmatrix}
 n + m  \\
n \\
\end{pmatrix}^{-1}\\
\ea
$$
\end{proof}

\begin{remark}
There are only $l$ factors different from 1 in the product 
$\begin{pmatrix}
t_{1} \\
s_{1} \\
\end{pmatrix} 
\cdots 
\begin{pmatrix}
t_{k} \\
s_{k} \\
\end{pmatrix}$. So, with some abuse of notation, one 
can also write this product as 
$\begin{pmatrix}
t_{1} \\
s_{1} \\
\end{pmatrix} 
\cdots 
\begin{pmatrix}
t_{l} \\
s_{l} \\
\end{pmatrix}$. In this case, we consider only the 
entries of 
$\ol t$ and $\ol s$ that contribute to the product above.  
\end{remark}

%checked till here

Using the results of Section \ref{sect_inverse limits}, we 
describe the sets $\Delta^{(n,\infty,cl)},\ 
n\in \N, $ for the $\mathbb{N}$-IPB diagram. 
Fix $n, m \geq 1$. The rows of the matrix $G^{(n,m)}$ are 
represented by the vectors 
$$
\overline{g}_{\overline{t}}^{(n,m)} = 
\langle g_{\ \overline{t}\overline{s}}^{(n,m)} :\  
\overline{s} \in V_{n} \rangle,\ \ \ \ol t =(t_1, ..., t_k)
\in V_{n+m},
$$
where the entries $g_{\overline{t} \ol s}^{(n,m)}$
are found in Lemma \ref{lem entries of G(m,n)}. 
If necessary, we can write $\ol s = (s_1, \ldots, s_l) 
\in V_n$
as a collection of $k$ numbers $(s_i : i = 1,..., k)$, 
where $s_1 + \cdots 
+ s_k = n$, $s_i \leq t_i$,   assuming that some of them 
can be zero. 

\begin{remark} \label{rem representation of entries}
Let $\ol t = (t_1, ..., t_k )\in V_{n+m}$ and 
$\ol s = (s_1,..., s_l)\in V_n$ be two vertices connected 
by a finite path. 
One can easily see that relation (\ref{e32}) can be written 
then in the form
\begin{equation}\label{e33}
\ba
{g}_{ \overline{t} \overline{s}}^{(n,m)} = &\ 
\frac{m!}{\left( t_{1} - s_{1} \right)!\ \cdots \ \left( t_{k} - s_{k} \right)!} \cdot  \frac{n!}{{s}_{1}!\ \cdots 
\ s_{k}!} \cdot  \frac{{t}_{1}!\ \cdots \ 
t_{k}!}{(n + m)!}\\ 
= &\ \frac{n!}{{s}_{1}!\ \cdots \ s_{l}!}  \cdot 
\frac{\left\lbrack \left( t_{1} - s_{1} + 1 \right) \ \cdots\ t_{1} \right\rbrack\ \cdots \ \left\lbrack \left( t_{l} - s_{l} + 1 \right)\ \cdots\  t_{l}  \right\rbrack}
{(m + 1) \ \cdots \  (m + n)}.
\ea
\end{equation}
In other words, we should take into account only non-zero 
entries of $\ol s$ to compute ${g}_{ \overline{t} \overline{s}}^{(n,m)}$ in \eqref{e33}
\end{remark}

We recall that $L^{(n)} = 
L^{(n)}(\{ \ol g_{\ol t} \})$ 
denotes the set of all limit points of convergent sequences 
$\{\ol g^{(n,m)}_{\ol t_m}  :\ \ol t_{m} \in V_{n + m}\}$.

\begin{theorem}\label{thm L(n) via d's}
%05/10/2023
Let $\ol d = \langle d_i : i \in \N_0\rangle$ be a 
vector from $\Delta$, i.e. $\sum_{i=1}^{\infty} d_{i} \leq 
1$.  The set $L^{(n)}(\{ \ol g_{\ol t} \})$ 
consists of the vectors $\ol q^{(n)}(\ol d)
= \langle q^{(n)}_{\ol s}(\ol d) : \ \ol s \in 
V_n\rangle$ where 
$$
 q^{(n)}_{\ol s}(\ol d) =\frac{n!}{s_{1}!\ \cdots \ 
s_{k}!} \cdot   d_{1}^{s_{1}}\ \cdots \ d_{k}^{s_{k}}. 
$$
\end{theorem}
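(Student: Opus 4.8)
The proof rests on the closed form \eqref{e33} (equivalently Lemma~\ref{lem entries of G(m,n)}) for the entries $g^{(n,m)}_{\ol t\,\ol s}$ of the rows $\ol g^{(n,m)}_{\ol t}$, $\ol t\in V_{n+m}$, together with the fact recalled after \eqref{eq_dist d} that convergence in $\Delta^{(n)}$ is coordinatewise. So I would reduce the whole statement to the asymptotics of a single entry as $m\to\infty$. Fix $\ol s=(s_1,\dots,s_k)\in V_n$ and a sequence $\ol t_m=(t^{(m)}_1,t^{(m)}_2,\dots)\in V_{n+m}$. By \eqref{e33},
\[
g^{(n,m)}_{\ol t_m\,\ol s}=\frac{n!}{s_1!\cdots s_k!}\;\frac{\prod_{r}\prod_{j=0}^{s_r-1}\bigl(t^{(m)}_r-j\bigr)}{\prod_{j=1}^{n}(m+j)},
\]
where the numerator is a product of exactly $\sum_r s_r=n$ factors and the denominator of $n$ factors. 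Pairing the $n$ numerator factors with the $n$ denominator factors, each ratio has the shape $\bigl(t^{(m)}_r-j\bigr)/(m+j')$ with $j,j'$ fixed, hence tends to $d_r:=\lim_m t^{(m)}_r/m$ whenever this limit exists (since $(m+j')/m\to 1$). Therefore $g^{(n,m)}_{\ol t_m\,\ol s}\to \frac{n!}{s_1!\cdots s_k!}\,d_1^{s_1}\cdots d_k^{s_k}=q^{(n)}_{\ol s}(\ol d)$. As $0\le t^{(m)}_i/m\le (n+m)/m$ is bounded, such limits always exist along a subsequence.

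\textbf{The inclusion $L^{(n)}\subseteq\{\ol q^{(n)}(\ol d):\ol d\in\Delta\}$.} Suppose $\ol x=\lim_m \ol g^{(n,m)}_{\ol t_m}$. By a diagonal argument over $i\in\N$, pass to a subsequence along which $d_i:=\lim_m t^{(m)}_i/m$ exists for every $i$; put $\ol d=\langle d_i\rangle$. For any finite $I\subset\N$ one has $\sum_{i\in I}d_i=\lim_m\sum_{i\in I}t^{(m)}_i/m\le\lim_m (n+m)/m=1$, so $\sum_i d_i\le 1$, i.e. $\ol d\in\Delta$. Applying the first paragraph coordinatewise along this subsequence gives $g^{(n,m)}_{\ol t_m\,\ol s}\to q^{(n)}_{\ol s}(\ol d)$ for every $\ol s\in V_n$; since the original sequence already converges to $\ol x$, this forces $x_{\ol s}=q^{(n)}_{\ol s}(\ol d)$ for all $\ol s$, that is, $\ol x=\ol q^{(n)}(\ol d)$.

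\textbf{The inclusion $\{\ol q^{(n)}(\ol d):\ol d\in\Delta\}\subseteq L^{(n)}$.} Given $\ol d=\langle d_i\rangle\in\Delta$, choose $N_m\to\infty$ with $N_m=o(m)$ and $\sum_{i\le N_m}d_i\to\sum_i d_i$; set $t^{(m)}_i=\lfloor d_i m\rfloor$ for $i\le N_m$, put the remainder $R_m:=(n+m)-\sum_{i\le N_m}\lfloor d_i m\rfloor$ at the index $N_m+1$ (here $R_m\ge n>0$, since $\sum_{i\le N_m}\lfloor d_i m\rfloor\le m\sum_i d_i\le m$), and set all other entries to $0$; then $\ol t_m\in V_{n+m}$. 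Fix $\ol s$. Once $N_m$ exceeds every index in the support of $\ol s$, the entry $g^{(n,m)}_{\ol t_m\,\ol s}$ in \eqref{e33} involves only the numbers $t^{(m)}_i=\lfloor d_im\rfloor$ with $i\in\mathrm{supp}(\ol s)$: if some such $i$ has $d_i=0$, then $t^{(m)}_i=0<s_i$, so $\ol t_m$ and $\ol s$ are not joined by a path and $g^{(n,m)}_{\ol t_m\,\ol s}=0=q^{(n)}_{\ol s}(\ol d)$; if all relevant $d_i>0$, then $\lfloor d_im\rfloor/m\to d_i$ and the first paragraph yields $g^{(n,m)}_{\ol t_m\,\ol s}\to q^{(n)}_{\ol s}(\ol d)$. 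Coordinatewise convergence is convergence in $\Delta^{(n)}$, hence $\ol g^{(n,m)}_{\ol t_m}\to\ol q^{(n)}(\ol d)$ and $\ol q^{(n)}(\ol d)\in L^{(n)}$.

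\textbf{Main obstacle.} The genuine content is the product asymptotics of the first paragraph; the rest is bookkeeping, but three points need care. First, the remainder $R_m$ must be parked at an index (here $N_m+1$) that eventually leaves the support of every fixed $\ol s$, so that it never enters formula \eqref{e33} for that $\ol s$. Second, the degenerate cases $d_i=0$ with $s_i>0$ and $\sum_i d_i=1$ must be handled uniformly in $m$, which the crude bound $R_m\ge n$ does. Third, the Fatou-type estimate $\sum_i d_i\le 1$ is exactly what pins the limit set down to $\Delta$ rather than to $\Delta_1$. Interchanging the limit $m\to\infty$ with the products is harmless because, for each fixed $\ol s$, only the $n$ numbers $\{t^{(m)}_i:i\in\mathrm{supp}(\ol s)\}$ appear in \eqref{e33}, so no infinite product is ever involved.
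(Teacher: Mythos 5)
Your proof is correct and follows essentially the same route as the paper's: both rest on the closed form \eqref{e33} for the entries $g^{(n,m)}_{\ol t\,\ol s}$, the observation that each entry converges to $\frac{n!}{s_1!\cdots s_k!}d_1^{s_1}\cdots d_k^{s_k}$ precisely when the ratios $t^{(m)}_i/m$ converge to $d_i$, and a diagonal subsequence argument for the converse inclusion (this is the content of the paper's Lemma~\ref{lem_d_i}). The one place you go beyond the paper is the explicit construction, via $t^{(m)}_i=\lfloor d_i m\rfloor$ with the remainder parked at an escaping index $N_m+1$, of a vertex sequence realizing an arbitrary $\ol d\in\Delta$; the paper leaves this reverse inclusion implicit, so this is a welcome (and correctly executed) addition rather than a divergence.
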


We will need the following lemma.

\begin{lemma}\label{lem_d_i}
Let $\{\ol t^{(m)}\}$ be a sequence of vertices such that
$$\overline{t}^{(m)} = (t_{1}^{(m)}, t_{2}^{(m)}, \ldots) 
\in V_{n + m}.
$$ 
Then the sequence 
$\overline{g}_{\overline{t}^{(m)}}$ converges if and only 
if the limit 
\be\label{eq_d_i limit}
d_i = \lim_{m \rightarrow \infty}\frac{t_{i}^{(m)}}{m}
\ee
exists for all $i = 1, 2, ...$
\end{lemma}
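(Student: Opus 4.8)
Since the levels of the $\mathbb N$-IPB diagram are indexed by $n\in\mathbb N$, we have $n\ge1$. The plan is to read the coordinates of $\overline{g}_{\overline{t}^{(m)}}=\overline{g}_{\overline{t}^{(m)}}^{(n,m)}$ off the formula \eqref{e33} (equivalently \eqref{e32}), to note that, $n$ being fixed, the denominator $(m+1)\cdots(m+n)$ is comparable to $m^{n}$, and then to work with the normalized entries $r_i^{(m)}:=t_i^{(m)}/m$, which all lie in the bounded interval $[0,(n+m)/m]$. (Each $\overline{g}_{\overline{t}^{(m)}}$ is a probability vector, being a row of the stochastic matrix $G^{(n,m)}$, so the sequence lives in the compact metric space $I^{V_n}\supset\Delta^{(n)}$ and ``convergence'' is with respect to the metric $d$.) The computational core is the following claim: for every \emph{fixed} vertex $\overline{s}=(s_1,\dots,s_l)\in V_n$ (so $s_1+\cdots+s_l=n$),
\[
g_{\overline{t}^{(m)}\,\overline{s}}^{(n,m)}=\frac{n!}{s_1!\cdots s_l!}\,\bigl(r_1^{(m)}\bigr)^{s_1}\cdots\bigl(r_l^{(m)}\bigr)^{s_l}+o(1),\qquad m\to\infty,
\]
with the convention (from Lemma~\ref{lem entries of G(m,n)}) that the left-hand side is $0$ when $\overline{t}^{(m)}$ and $\overline{s}$ are not joined by a path; in that case $t_i^{(m)}<s_i\le n$ for some $i$, so $r_i^{(m)}\to0$ and the right-hand side is $o(1)$ as well.

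To establish the claim I would expand the numerator $\prod_{i=1}^{l}\bigl[(t_i-s_i+1)\cdots t_i\bigr]$ in \eqref{e33} as a polynomial in $t_1,\dots,t_l$: it has total degree $n$, leading monomial $t_1^{s_1}\cdots t_l^{s_l}$, and all remaining monomials of total degree at most $n-1$, with coefficients bounded in terms of $n$ only. Using $t_i^{(m)}\le n+m$ and $(m+1)\cdots(m+n)\ge m^{n}$, each lower-degree monomial contributes $O\bigl((n+m)^{\,n-1}/m^{n}\bigr)=O(1/m)$, while the leading monomial satisfies
\[
\left|\frac{t_1^{s_1}\cdots t_l^{s_l}}{(m+1)\cdots(m+n)}-\frac{t_1^{s_1}\cdots t_l^{s_l}}{m^{n}}\right|\le(n+m)^{n}\left|\frac1{m^{n}}-\frac1{(m+1)\cdots(m+n)}\right|=O(1/m),
\]
since $(m+1)\cdots(m+n)=m^{n}+O(m^{\,n-1})$. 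Adding the two estimates and recalling $t_i^{(m)}/m=r_i^{(m)}$ gives the claim.

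With the claim in hand both directions are immediate. If $d_i:=\lim_m r_i^{(m)}$ exists for every $i$, then each coordinate $g_{\overline{t}^{(m)}\,\overline{s}}^{(n,m)}$ converges (to $\tfrac{n!}{s_1!\cdots s_l!}d_1^{s_1}\cdots d_l^{s_l}$), and since convergence in the metric $d$ on $I^{V_n}$ is equivalent to coordinatewise convergence (see the remark after \eqref{eq_dist d}), the sequence $\overline{g}_{\overline{t}^{(m)}}$ converges. Conversely, if $\overline{g}_{\overline{t}^{(m)}}$ converges, then in particular its coordinate at the vertex $\overline{s}=n\overline{e}^{(i)}\in V_n$ converges, say to $c_i\ge0$; by the claim this coordinate equals $\bigl(r_i^{(m)}\bigr)^{n}+o(1)$, so $\bigl(r_i^{(m)}\bigr)^{n}\to c_i$, and since $x\mapsto x^{n}$ is a homeomorphism of $[0,\infty)$ for $n\ge1$ we get $r_i^{(m)}\to c_i^{1/n}$; thus $d_i$ exists.

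The only point that requires care is the claim itself: because $t_i^{(m)}$ is not assumed to tend to infinity, one cannot simply replace $(t_i-s_i+1)\cdots t_i$ by $t_i^{s_i}$, and one must argue through the exact polynomial identity together with the crude bound $t_i^{(m)}\le n+m$. Once the claim is established, the equivalence follows with no further work; moreover, the same computation identifies the limit as the vector $\bigl\langle \tfrac{n!}{s_1!\cdots s_l!}d_1^{s_1}\cdots d_l^{s_l}\bigr\rangle$, which feeds directly into Theorem~\ref{thm L(n) via d's}.
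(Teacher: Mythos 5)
Your proof is correct and follows essentially the same route as the paper's: both read the coordinates off formula \eqref{e33}, observe that coordinatewise convergence is equivalent to convergence in the metric $d$, and test the converse against the vertices $\overline{s}=n\,\overline{e}^{(i)}$. The one place you genuinely improve on the paper is the converse: the paper extracts nested subsequences to produce the limits $d_{n,i}$ (and so, as written, only exhibits subsequential limits of $t_i^{(m)}/m$), whereas your uniform estimate $g_{\overline{t}^{(m)}\overline{s}}^{(n,m)}=(r_i^{(m)})^{n}+o(1)$ combined with the continuity of the $n$-th root on $[0,\infty)$ yields convergence of the full sequence $t_i^{(m)}/m$ directly, which is exactly what the ``only if'' direction of the lemma asserts. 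Your care about the lower-order monomials (using $t_i^{(m)}\le n+m$ rather than assuming $t_i^{(m)}\to\infty$) is also the right way to justify the replacement of $(t_i-s_i+1)\cdots t_i$ by $t_i^{s_i}$, a step the paper performs without comment.
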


\begin{proof}
We first assume that the limit $d_i$ in 
\eqref{eq_d_i limit} exists,  $i = 1, 2, \ldots$. %%%%%%%%%%%%%%%%%%%%%%%%%%%%%%%%%%%%%%%%%%%%%%%%%%%%
%05/10/2023
Then, for every $k=1,2, ...$,
$$
\sum_{i = 1}^{k}d_{i} =  
\lim_{m \rightarrow \infty}\frac{1}{m} \cdot  
\sum_{i = 1}^{k}t_{i}^{(m)} \leq 1
$$
which implies that 
$$
\sum_{i = 1}^{\infty}d_{i} \leq 1
$$
%%%%%%%%%%%%%%%%%%%%%%%%%%%%%%%%%%%%%%%%%%%%%%% 
It follows immediately 
from \eqref{e33} that for every 
$\overline{s} = (s_1, ..., s_l) \in V_{n}$
 \be\label{eq_limit}
 \ba 
 \lim_{m \to \infty}
 g_{\overline{t}^{(m)}\overline{s}}^{(n,m)} = &\ 
 \frac{n!}{{s}_{1}! \ \cdots\  s_{l}!} \cdot 
 \lim_{m\to \infty} \frac{\left\lbrack \left( t_{1}^{(m)} - s_{1} + 1 \right) \ \cdots\ t_{1}^{(m)}\right\rbrack} {m^{s_{1}}} \ \cdots \ 
 \frac{\left\lbrack \left( t_{l}^{(m)} - s_{l} + 1 \right) \ \cdots\  t_{l}^{(m)} \right\rbrack}{m^{s_{l}}} \times \\
& \qquad \times   \frac{m^{s_{1}}\ \cdots\ m^{s_{l}}}
 {(m + 1)\ \cdots\   (m + n)}  \\ 
  = & \ \frac{n!}{{s}_{1}! \ \cdots\  s_{l}!} \cdot 
d_{1}^{s_{1}} \ \cdots \ d_{l}^{s_{l}}\\
 \ea
 \ee
 We used here that $s_1 + \cdots + s_l = n$ and 
 \eqref{eq_d_i limit}.

To prove the converse, we assume that the limit 
$$
\lim_{m_{j \to \infty}} g_{{\overline{t}}^{(m_{j})}}^{(n, m_{j})} \quad \mbox{exists}.
$$ 
This implies that 
$$
\lim_{m_j \to \infty}g_{{\overline{t}}^{( m_{j} )} 
\overline{s}}^{( n, m_{j})} 
\quad \mbox{exists\ for\ every} \ \overline{s} \in V_{n}.
$$
For infinitely many vertices $\ol s \in V_n$, this limit is zero. For other vertices, we use \eqref{eq_limit} to see
that the limit 
$$
\lim_{m_j\to \infty} \left\lbrack \frac{\left( t_{1}^{(m_j)} - s_{1} + 1 \right)}{m_j} \ \cdots\ \frac{t_{1}^{(m_j)}}{m_j}
\right\rbrack\ \cdots\ \left\lbrack \frac{\left( t_{l}^{(m_j)} - s_{l} + 1 \right)}{m_j} \ \cdots\ \frac{t_{l}^{(m_j)}}{m_j}
\right\rbrack
$$
exists. 
%%%%%%%%%%%%%%%%%%%%%%%%%%%%%%%%%%%%%%%%%%%%%%%%%%%%%
%06/10/2023
Indeed, for some fixed $i \geq 1$, take $\ol s \in V_n$ 
such that $s_i = n$ and $s_j = 0$ for $j \neq i$.
Then
$$
\lim_{m_j \to \infty}\frac
{\left( t_{i}^{( m_{j})} - n + 1 \right)\ \cdots \  
t_{i}^{(m_{j})} }{m_{j}^n}
$$
exists. We can find a subsequence $\{ m_{j}^{'} \}$ 
(depending on $i$) such that
$\lim_{m_j^{'} \to \infty}\frac{t_{i}^{( m_{j}^{'})}}
{m_{j}^{'}}$ exists for every $i =1, 2, \ldots$.
Now, applying the standard argument, we can select a 
subsequence $\{ m^{''}_{j} \}$ (being a subsequence of each 
sequence $\{ m_{j}^{'}\}$ depending on $i$ for sufficiently large $j$) such that
\be\label{eq:def of d_n,i}
\lim_{m_j^{''} \to \infty}\frac{t_{i}^{( m_{j}^{''})}}{m_{j}^{''}} 
=  d_{n,i}, \  
i = 1, 2, \ldots 
\ee
According to the above considerations, we have 
$\sum_{i = 1}^{\infty}{d_{n,i}} \leq 1$.

%%%%%%%%%%%%%%%%%%%%%%%%%%%%%%%%%%%%%%%%%%%%%
\end{proof}

\begin{proof} (\textit{Proof of Theorem \ref{thm L(n) via d's}}).    
We use Lemma \ref{lem_d_i} and relations \eqref{eq_limit},
\eqref{eq:def of d_n,i} to find the limit 
points of the set $L^{(n)}(\{\ol g_t\})$:
$$
\lim_{m_j \to \infty}g_{{\overline{t}}^{( m_{j}^{'})},\ol s} ^{( n,m_{j}^{'})} = 
\frac{n!}{s_{1}!\ \cdots\  s_{l}!}\cdot 
d_{n,1}^{s_{1}}\ \cdots \  d_{n,l}^{s_{l}}. 
$$
This proves the theorem.
\end{proof}

\begin{prop} \label{prop inv meas PB}
(1) Let $B$ be the $\N$-infinite Pascal-Bratteli diagram,
and $\ol d \in \Delta_1$ a probability vector. Define 
the sequence of vectors $\ol p^{(n)} = \langle 
p^{(n)}_{\ol s} : \ \ol s \in V_n\rangle$ ($n \in \N$)  
by setting
\be\label{eq_vectors p(n)}
p^{(n)}_{\ol s} = d_1^{s_1} \ \cdots\ d_l^{s_l}
\ee
where $\ol s = (s_1, \ldots, s_l)$ is any vertex from $V_n$.
Then $\ol d$ determines uniquely a probability 
tail invariant measure $\mu_{\ol d}$ on the path space 
$X_B$ of the diagram $B$.

(2) Suppose that 
$\ol q^{(n)}(\ol d) = \langle q^{(n)}_{\ol s}(\ol d^{(n)}) : 
\ \ol s \in V_n\rangle$ is a vector from $L^{(n)}(\{
\ol g_{\ol t}\})$ where 
$$
q^{(n)}_{\ol s}(\ol d^{(n)}) =\frac{n!}{s_{1}!\ \cdots \ 
s_{l}!} \cdot   d_{n,1}^{s_{1}}\ \cdots \ d_{n,l}^{s_{l}},
$$
and 
$ {\overline{d}}^{(n)}  = \langle d_{n,i} :\ i=1, 2, \ldots 
\rangle$ is a probability vector.
Then the sequence of 
${\overline{q}}^{(n)}({\overline{d}}^{(n)})$
determine an invariant measure on $\mathbb{N}$-IPB 
diagram if and only if 
${\overline{d}}^{(n)} = \overline{d}$  for every 
 $n \in \N$.

\end{prop}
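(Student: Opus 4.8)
The plan is to reduce both assertions to the consistency conditions of Theorem~\ref{BKMS_measures=invlimits} together with an infinite multinomial computation, using the combinatorial formulas for $F'_n$, $F_n$ and the heights $H^{(n)}_{\ol s}$ from Lemma~\ref{lem_heights}. For Part~(1) I would first verify $(F'_n)^T \ol p^{(n+1)} = \ol p^{(n)}$: written coordinate-wise at a vertex $\ol s \in V_n$, its left-hand side is $\sum_{\ol t \in V_{n+1}} {f'}^{(n)}_{\ol t \ol s}\, p^{(n+1)}_{\ol t}$, and by \eqref{eq:F'_n for P-B} the only contributing $\ol t$ are $\ol s + \ol e^{(i)}$, $i\in\N$, with $p^{(n+1)}_{\ol s + \ol e^{(i)}} = d_i\, p^{(n)}_{\ol s}$ for every such $i$ --- whether or not $i$ already indexes a nonzero coordinate of $\ol s$. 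So the sum equals $p^{(n)}_{\ol s}\sum_{i\in\N} d_i = p^{(n)}_{\ol s}$ since $\ol d$ is a probability vector, and Theorem~\ref{BKMS_measures=invlimits}(2) produces a unique tail invariant measure $\mu_{\ol d}$ with $\mu_{\ol d}(X_{\ol s}^{(n)}(\ol e)) = p^{(n)}_{\ol s}$. That $\mu_{\ol d}$ is a probability measure follows from \eqref{eq_prob vector q e8} and the height formula \eqref{e28}:
\[
\sum_{\ol s \in V_n} H^{(n)}_{\ol s}\, p^{(n)}_{\ol s} = \sum_{\ol s \in V_n} \frac{n!}{\prod_i s_i!}\prod_i d_i^{s_i} = \Bigl(\sum_{i\in\N} d_i\Bigr)^n = 1,
\]
the middle equality being the multinomial theorem for the absolutely convergent series $\sum_i d_i$.

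For Part~(2) the core is a single identity: for any probability vector $\ol c = \langle c_i\rangle$ and the vectors $\ol q^{(n)}(\ol c)$ with $q^{(n)}_{\ol s}(\ol c) = \frac{n!}{\prod_i s_i!}\prod_i c_i^{s_i}$, one has $F_n^T \ol q^{(n+1)}(\ol c) = \ol q^{(n)}(\ol c)$ for every $n$. I would prove this by writing the $\ol s$-coordinate of the left side as $\sum_{i\in\N} f^{(n)}_{\ol s+\ol e^{(i)},\, \ol s}\, q^{(n+1)}_{\ol s+\ol e^{(i)}}(\ol c)$, substituting $f^{(n)}_{\ol t \ol s} = t_i/(n+1)$ from Lemma~\ref{lem_heights} and expanding $q^{(n+1)}_{\ol s + \ol e^{(i)}}(\ol c)$: the factor $s_i+1$ cancels the extra factorial and one is left with $c_i\, q^{(n)}_{\ol s}(\ol c)$, so summing over $i$ gives $q^{(n)}_{\ol s}(\ol c)\sum_i c_i = q^{(n)}_{\ol s}(\ol c)$. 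Note that $q^{(n)}_{\ol s}(\ol c) = H^{(n)}_{\ol s}\, p^{(n)}_{\ol s}$ for the $\ol p^{(n)}$ of Part~(1) with $\ol d = \ol c$, so this is merely the $\ol q$-form of Part~(1).

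Granting the identity, the equivalence is immediate. If $\ol d^{(n)} = \ol d$ for all $n$, then $\ol q^{(n)}(\ol d)$ satisfies the consistency relations and each $\ol q^{(n)}(\ol d)$ is a probability vector (multinomial theorem again), so Theorem~\ref{BKMS_measures=invlimits}(3) shows the sequence determines an invariant measure --- precisely $\mu_{\ol d}$. Conversely, if the $\ol q^{(n)}(\ol d^{(n)})$ determine an invariant measure, then $\ol q^{(n)}(\ol d^{(n)}) = F_n^T \ol q^{(n+1)}(\ol d^{(n+1)}) = \ol q^{(n)}(\ol d^{(n+1)})$ by the identity, and evaluating at the vertex $\ol s$ with $s_i = n$ and the other coordinates zero gives $(d_{n,i})^n = (d_{n+1,i})^n$, hence $d_{n,i} = d_{n+1,i}$ for every $i$; so all $\ol d^{(n)}$ coincide. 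The computations are elementary; the only points needing care are the convergence of the infinite multinomial sum and the bookkeeping in $\sum_i p^{(n+1)}_{\ol s+\ol e^{(i)}}$ --- the sum runs over all of $\N$, since adding $\ol e^{(i)}$ at a previously zero coordinate still contributes the term $d_i\, p^{(n)}_{\ol s}$. I expect organizing that bookkeeping transparently to be the only mild obstacle, with no deeper difficulty.
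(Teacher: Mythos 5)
Your proposal is correct and follows essentially the same route as the paper: both parts reduce to the consistency relations of Theorem~\ref{BKMS_measures=invlimits}, with the telescoping sum $\sum_i d_i = 1$ doing the work in Part~(1) and the evaluation at the "monomial" vertex $\ol s = (0,\ldots,n,\ldots,0)$ forcing $\ol d^{(n)} = \ol d^{(n+1)}$ in Part~(2). The only (harmless) presentational differences are that you verify Part~(2) in the $\ol q$/$F_n$ picture rather than the paper's $\ol p$/$F'_n$ picture — equivalent via \eqref{eq_stoch matrix F_n} — and that you make explicit the multinomial identity $\sum_{\ol s\in V_n} H^{(n)}_{\ol s}\, p^{(n)}_{\ol s} = \bigl(\sum_i d_i\bigr)^n = 1$ showing $\mu_{\ol d}$ is a probability measure, which the paper leaves implicit.
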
 

\begin{proof}
To prove (1), we use Theorem \ref{BKMS_measures=invlimits}
and show that ${F'}_n^T \ol p^{(n+1)} = \ol p^{(n)}$ for
all $n\geq 1$. Indeed, if $\ol s = \langle s_j : j \in \N
\rangle$ is 
a vertex in $V_n$ ($\ol s$ has $l$ nonzero 
entries) connected with a vertex  $\ol t \in 
V_{n+1}$, then $\ol t = \ol s + \ol e^i$ for some $i \in 
\N$. Hence, 
$$
\ba 
\sum_{\overline{t} \in V_{n + 1}}{f'}_{\overline{t}\overline{s}}^{(n)}
p_{\overline{t}}^{(n + 1)} = & 
\sum_{i = 1}^{\infty} (d_{1}^{s_{1}} \ 
\cdots\  d_{i}^{s_{i} + 1}\cdots) \\
= & {\lbrack d}_{1}^{s_{1}}\cdot \ d_{2}^{s_{2\ }}\cdot \ldots\rbrack\cdot \sum_{i = 1}^{\infty}d_{i} \\
= &  d_{1}^{s_{1}}\ \cdots \ d_{l}^{s_{l}}\\
=&  p_{\overline{s}}^{(n)}.
\ea 
$$

Using \eqref{eq_vectors p(n)}, we assign the value 
$p_{\ol s}^{(n)}$ to any cylinder set connecting 
a vertex in $V_1$ with $\ol s$. It defines $\mu_{\ol d}$
on all clopen sets. The proved relation means that 
this definition satisfies Theorem \ref{BKMS_measures=invlimits}. Hence, we can extend $\mu_{\ol d}$ to all Borel sets and obtain a tail invariant probability measure.

(2)  We use the computation similar to that from (1) to see
that ${F'}^T_{\ol t \ol s} \ol p^{(n+1)} = \ol p^{(n)}$ 
where $\ol p^{(n)}$ is determined by $\ol d^{(n)}$ as in 
(1). To see that $\ol d^{(n)} = \ol d^{(n+1)}$ it suffices
to take $\overline{s} = (0,  \ldots, n, 0, \ldots )$ where 
$n$ is at $k$-th position.

\end{proof}

\begin{theorem}\label{t5}
The  measures $\mu_{\overline{d}}$, defined in Proposition
\ref{prop inv meas PB} by probability vectors 
$\overline{d} = \langle d_{i} :\ i = 1, 2, \ldots 
\rangle$, form the set of all probability invariant ergodic 
measures on $\mathbb N$-IPB diagram.
\end{theorem}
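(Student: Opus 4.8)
The statement splits into two halves: every $\mu_{\ol d}$ with $\ol d$ a probability vector is ergodic, and conversely every ergodic probability tail invariant measure on the $\N$-IPB diagram is some $\mu_{\ol d}$. The plan is to prove the single structural fact that the assignment $\ol d\mapsto\mu_{\ol d}$ extends to an \emph{affine bijection} $\tau\mapsto\mu_\tau$ from the set $M_1(\Delta_1)$ of Borel probability measures on the simplex $\Delta_1$ onto $M_1(\mc R)$, carrying the Dirac mass $\delta_{\ol d}$ to $\mu_{\ol d}$. Granting this, since an affine bijection of convex sets sends extreme points to extreme points, since $\mathrm{ext}\,M_1(\Delta_1)=\{\delta_{\ol d}:\ol d\in\Delta_1\}$, and since for the countable Borel equivalence relation $\mc R$ on the standard Borel space $X_B$ the ergodic measures are exactly the extreme points of $M_1(\mc R)$ (ergodic decomposition), we get $\mathrm{ext}\,M_1(\mc R)=\{\mu_{\ol d}:\ol d\in\Delta_1\}$, which is the set of ergodic probability tail invariant measures. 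Different $\ol d$ give different measures since already at level $1$ one has $q^{(1)}_{\ol e^{(i)}}(\ol d)=d_i$; and among the $\mu_{\ol d}$ those that are non-atomic in the sense of the paper are the ones with $\ol d$ having at least two positive entries, the rest being point masses on a single ``straight'' path.

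First I would make $\tau\mapsto\mu_\tau$ precise. For $\tau\in M_1(\Delta_1)$ set $q^{(n)}_{\ol s}:=\int_{\Delta_1}q^{(n)}_{\ol s}(\ol d)\,d\tau(\ol d)$ for $\ol s=(s_1,\dots,s_k)\in V_n$, where $q^{(n)}_{\ol s}(\ol d)=H^{(n)}_{\ol s}\,p^{(n)}_{\ol s}(\ol d)$ with $p^{(n)}_{\ol s}(\ol d)=d_1^{s_1}\cdots d_k^{s_k}$ and $H^{(n)}_{\ol s}=\tfrac{n!}{s_1!\cdots s_k!}$ (Theorem \ref{thm L(n) via d's}, Lemma \ref{lem_heights}). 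The multinomial identity $\sum_{\ol s\in V_n}q^{(n)}_{\ol s}(\ol d)=(\sum_i d_i)^n=1$ on $\Delta_1$ gives $\sum_{\ol s}q^{(n)}_{\ol s}=1$, and integrating the consistency of the vectors $\ol q^{(n)}(\ol d)$ that follows from Proposition \ref{prop inv meas PB}(1) (monotone convergence, all terms nonnegative) gives $F_n^T\ol q^{(n+1)}=\ol q^{(n)}$; so by Theorem \ref{BKMS_measures=invlimits} there is a unique $\mu_\tau\in M_1(\mc R)$ with $\mu_\tau(X^{(n)}_{\ol s})=q^{(n)}_{\ol s}$. This map is affine, and it is injective: $\mu_\tau$ determines all the numbers $\int_{\Delta_1}\prod_i d_i^{s_i}\,d\tau$, and the monomials $\ol d\mapsto\prod_i d_i^{s_i}$ together with the constants span a uniformly dense subspace of $C(\Delta)$ (Stone--Weierstrass), so a probability measure on the compact simplex $\Delta\supset\Delta_1$ is determined by these moments.

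The substance of the proof is surjectivity. Let $\mu\in M_1(\mc R)$ have tower vectors $\ol q^{(n)}\in\Delta_1^{(n)}$. By Lemma \ref{lem_Delta inf meas} (and the discussion in Section~\ref{sect_inverse limits}(D)) $\ol q^{(n)}\in\Delta^{(n,\infty,cl)}$, so by Theorem \ref{thm int over L t3} there is $\nu_n\in M_1(L^{(n)}(\{\ol g_{\ol t}\}))$ with $q^{(n)}_{\ol s}=\int z_{\ol s}\,d\nu_n(z)$, and by Theorem \ref{thm L(n) via d's}, $L^{(n)}(\{\ol g_{\ol t}\})=\{\ol q^{(n)}(\ol d):\ol d\in\Delta\}$. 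Since $\sum_{\ol s}z_{\ol s}\le1$ on $L^{(n)}$ with value $(\sum_i d_i)^n$ at $z=\ol q^{(n)}(\ol d)$, the equation $\int(\sum_{\ol s}z_{\ol s})\,d\nu_n=\sum_{\ol s}q^{(n)}_{\ol s}=1$ forces $\nu_n$ (for $n\ge1$) to be carried by $\{\ol q^{(n)}(\ol d):\ol d\in\Delta_1\}$; transporting along the homeomorphism $\ol d\mapsto\ol q^{(n)}(\ol d)$ (inverse $d_i=(z_{n\ol e^{(i)}})^{1/n}$) yields $\tau_n\in M_1(\Delta_1)$ with $\int p^{(n)}_{\ol s}(\ol d)\,d\tau_n=q^{(n)}_{\ol s}/H^{(n)}_{\ol s}$ for $|\ol s|=n$. \emph{The main obstacle is to show $\tau_n$ is independent of $n$} --- a de Finetti–type coherence across levels (Proposition \ref{prop inv meas PB}(2) is its incarnation at the level of vectors). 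To prove it I would use $p^{(n+1)}_{\ol s+\ol e^{(j)}}(\ol d)=d_j\,p^{(n)}_{\ol s}(\ol d)$, the fact $\sum_j d_j=1$ on $\Delta_1$, the relation $f^{(n)}_{\ol s+\ol e^{(j)},\,\ol s}\,H^{(n+1)}_{\ol s+\ol e^{(j)}}=H^{(n)}_{\ol s}$ coming from \eqref{eq_stoch matrix F_n}, and the consistency relation $q^{(n)}_{\ol s}=\sum_j f^{(n)}_{\ol s+\ol e^{(j)},\,\ol s}\,q^{(n+1)}_{\ol s+\ol e^{(j)}}$ of Theorem \ref{BKMS_measures=invlimits}; then, integrating in $\ol d$ over $\Delta_1$, for $|\ol s|=n$,
\begin{align*}
\int p^{(n)}_{\ol s}(\ol d)\,d\tau_{n+1}
&= \sum_j\int p^{(n+1)}_{\ol s+\ol e^{(j)}}(\ol d)\,d\tau_{n+1}
= \sum_j\frac{q^{(n+1)}_{\ol s+\ol e^{(j)}}}{H^{(n+1)}_{\ol s+\ol e^{(j)}}} \\
&= \frac{1}{H^{(n)}_{\ol s}}\sum_j f^{(n)}_{\ol s+\ol e^{(j)},\,\ol s}\,q^{(n+1)}_{\ol s+\ol e^{(j)}}
= \frac{q^{(n)}_{\ol s}}{H^{(n)}_{\ol s}}
= \int p^{(n)}_{\ol s}(\ol d)\,d\tau_n,
\end{align*}
the interchange of $\sum_j$ and the integral being monotone convergence. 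Hence $\tau_n$ and $\tau_{n+1}$ agree on all monomials of degree $n$; since on $\Delta_1$ a monomial of degree $m\le n$ equals the degree-$n$ polynomial obtained by multiplying by $(\sum_j d_j)^{n-m}$, they agree on all polynomials of degree $\le n$, and letting $n\to\infty$ (Stone--Weierstrass on $\Delta$) gives $\tau_n=\tau_{n+1}=:\tau$. Then $\mu$ and $\mu_\tau$ have the same tower vectors, so $\mu=\mu_\tau$.

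With the affine bijection $\tau\mapsto\mu_\tau$ established, the theorem follows as in the first paragraph. As a cross-check, and as an independent route to the ergodicity half: coding a path of the $\N$-IPB diagram by its sequence of edge-labels $j_1,j_2,\dots\in\N$ identifies $(X_B,\mc R)$ with $\N^{\N}$ under the finite-permutation equivalence relation, and $\mu_{\ol d}$ with the i.i.d.\ product measure $\ol d^{\otimes\N}$; then ergodicity of $\mu_{\ol d}$ is the Hewitt--Savage zero--one law, and surjectivity is de Finetti's theorem for a countable alphabet. On either route the one genuinely nontrivial point is the level-independence of the representing measure $\tau_n$, i.e.\ ruling out that the de Finetti parameter drifts with the level, and it is precisely here that the explicit form of the stochastic incidence matrices (Lemma \ref{lem_heights}) is used.
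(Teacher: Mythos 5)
Your argument is correct in substance but follows a genuinely different route from the paper's. The paper splits the theorem into two separate arguments: for the inclusion ``ergodic $\Rightarrow$ $\mu_{\ol d}$'' it asserts that ergodicity forces each tower vector $\ol q^{(n)}$ to be an \emph{extreme point} of $\De^{(n,\infty,cl)}$, invokes Remark \ref{rem_r7r8} to place these in $L^{(n)}(\{\ol g_{\ol t}\})$, and then uses Proposition \ref{prop inv meas PB}(2) to force $\ol d^{(n)}=\ol d$; for the converse it proves the measures $\mu_{\ol d}$ are pairwise singular via the strong law of large numbers applied to the i.i.d.\ increments $\ol Y_n$ (the sets $X(\ol d)$ of paths with $s_i^{(n)}/n\to d_i$), and then deduces ergodicity from the ergodic decomposition. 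You instead prove the single structural statement that $\tau\mapsto\mu_\tau$ is an affine bijection $M_1(\Delta_1)\to M_1(\mc R)$ and read off both halves from preservation of extreme points. What your approach buys: it treats \emph{arbitrary} invariant measures at once (every $\mu\in M_1(\mc R)$ is exhibited as a mixture $\mu_\tau$), so you never need the paper's unproved claim that ergodicity of $\mu$ makes each $\ol q^{(n)}$ extreme in $\De^{(n,\infty,cl)}$ — a projection of an extreme point of an inverse limit need not be extreme in each factor, so that step of the paper genuinely requires an argument. What the paper's approach buys: the SLLN computation gives the concrete generic sets $X(\ol d)$ and mutual singularity for free, which your main route does not produce (though your Hewitt--Savage/de Finetti aside recovers it). Your identification of $(X_B,\mc R,\mu_{\ol d})$ with $(\N^{\N},\text{finite permutations},\ol d^{\otimes\N})$ is correct and is the classical picture behind the Pascal adic.

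One step is over-claimed. From the coherence computation you only obtain that $\tau_n$ and $\tau_{n+1}$ agree on monomials of degree $\le n$; since Theorem \ref{thm int over L t3} gives no uniqueness for the representing measure $\nu_n$, the high-degree moments of $\tau_n$ and $\tau_{n+1}$ are not controlled, and the conclusion ``$\tau_n=\tau_{n+1}$'' does not follow. The repair is routine and uses only what you already have: for each fixed monomial $P$ of degree $m$ the values $\int P\,d\tau_n$ are constant for $n\ge m$ (equal to $q^{(m)}_{\ol s}/H^{(m)}_{\ol s}$), hence $\bigl(\int f\,d\tau_n\bigr)_n$ is Cauchy for every $f\in C(\Delta)$ by density of polynomials, so $\tau_n$ converges weak-$*$ to some $\tau\in M_1(\Delta)$ whose moments are the prescribed ones; the identity $\sum_j\int d_j\,d\tau=1$ (Tonelli) together with $\sum_j d_j\le 1$ on $\Delta$ shows $\tau$ is carried by $\Delta_1$, and then $\mu=\mu_\tau$. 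With this substitution the argument is complete.
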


\begin{proof} Let $\mu$ be an ergodic tail invariant 
probability measure on the path space $X_B$ of the 
$\N$-IPB diagram.  
By Lemma \ref{lem_Delta inf meas}, there exists a 
sequence of vectors $\{ \overline{q}^{(n)}\}$ such that
${\overline{q}}^{(n)} \in \Delta^{(n,\infty,cl)}$
and this sequence determines the measure $\mu$. We show 
that 
\be \label{eq mu = m_d e36}
\mu = \mu_{\ol d}
\ee
for some probability vector $\ol d$. 
The vectors $\overline{q}^{(n)}$'s are extreme points of
$\Delta^{(n,\infty,cl)}$.
As observed in Remark \ref{rem_r7r8}, the extreme points 
of $\Delta^{(n, cl, \infty)}$ are contained in 
$L^{(n)}(\{\ol g_{\ol t}\})$ where $\ol t \in V_{n+m}, 
m\geq 1$. Hence, the vectors  
$\ol q^{(n)}$ are of the form 
${\overline{q}}^{(n)}({\overline{d}}^{(n)})$, where
$\overline{d}^{(n)}$ are probability vectors.
It follows from Proposition \ref{prop inv meas PB} (2)
that $\overline{d}^{(n)} = \ol d$ for all $n$. This proves
\eqref{eq mu = m_d e36}.

To prove that each measure $\mu_{\overline{d}}$ is ergodic, 
we will show that the measures $\mu_{\overline{d}}$, when 
$\overline{d}$ runs over all probability vectors, 
are mutually singular. 

Fix a probability vector $\overline{d} = \langle d_i : 
i \in \N\rangle$ and define the set 
$X(\overline{d})$ consisting of all infinite paths
$\overline{e} = ( {\overline{s}}^{(1)},
{\overline{s}}^{(2)}, {\overline{s}}^{(3)}, \ldots) 
\in X_{B}$ such that 
\be\label{eq:limit}
\lim_{n \rightarrow \infty}\frac{s_{i}^{(n)}}{n} = d_{i},
\ i = 1, 2, \ldots,
\ee
where 
${\overline{s}}^{(n)} =  ( s_{1}^{(n)}, s_{2}^{(n)}, 
\ldots) \in V_{n}$, $n \in \N$.
We will show that $\mu_{\overline{d}}( X( \overline{d} ) ) = 1$.

Recall that 
$ \ol e^{(j)} = (0, \ldots, 0, 1, 0, \ldots\ )$, $j \in \N$,
is the vector such that the symbol ``1'' appears at the 
$j$-th position. Clearly, $\ol e^{(j)} \in \Delta_{1}$. 

Let $\overline{y} \ : \ X_{B} \rightarrow \Delta$
be a random vector-valued function such that 
$\overline{y}$ takes the value  $\ol e^{(j)}$
with probability $d_{j}$, $j = 1, 2, \ldots$.

Next, we define a sequence of independent random vectors 
${\overline{Y}}_{1}, {\overline{Y}}_{2}, {\overline{Y}}_{3}, 
\ldots$ as follows: for an infinite path
$\overline{e} =({\overline{s}}^{(1)},
{\overline{s}}^{(2)}, {\overline{s}}^{(3)}, \ldots )$, we  
set $\overline{Y}_{n}(\overline{e})= \overline{e}^{(j)}$, 
where $j = j_{n}$ is a natural number such that 
$\overline{s}^{(n)} = {\overline{s}}^{(n - 1)} + 
\ol e^{(j)}$ ($\ol s^{(0)} = \ol 0$). 
Clearly, $\overline{Y}_{n}$ has the same distribution as 
$\overline{y}$. In particular, 
$\mathbb E( {\overline{Y}}_{n} ) = \mathbb E(\overline{y}) = 
\overline{d} = \langle  d_{i}, i = 1, 2, \ldots \rangle$.

It follows from the definition of $\ol Y_n$'s that 
$({\overline{Y}}_{1} + {\overline{Y}}_{2} + 
+ \ldots + {\overline{Y}}_{n} ) (\overline{e})= 
{\overline{s}}^{(n)}$.
This implies that 
$$\frac{1}{n}  \cdot  ({\overline{Y}}_{1} + 
{\overline{Y}}_{2} + \ldots + \overline{Y}_{n})(\overline{e}) \in \Delta_{1}.
$$
Applying the limit theorem for the sequence $\{\ol Y_n\}$, we get 
$$\frac{1}{n}  \cdot  ({\overline{Y}}_{1} + {\overline{Y}}_{2} +  \ldots + {\overline{Y}}_{n} )(\overline{e}) \rightarrow  \mathbb E(\overline{y}) = 
\overline{d}  =\langle d_{i} \: \ i \in \N \rangle 
$$ 
for $\mu_{\overline{d}}$-almost all $\overline{e} 
\in X_{B}$. 
It follows from \eqref{eq:limit} that 
$\mu_{\overline{d}} ( X( \overline{d} ) ) = 1$.

Since $X(\overline{d}) \cap X( \overline{c} ) = \emptyset$ 
for different probability vectors $\overline{d}$ and 
$\overline{c}$, we obtain that the measures 
$\mu_{\overline{d}}$ and $\mu_{c\overline{}}$ are mutually 
singular. 
We can finish the proof of Theorem \ref{t5} as follows. 
It was proved that each ergodic invariant measure is of the
form $\mu_{\overline{d}}$. Then every invariant probability 
measure
$\mu$ on the $\N$-IPB diagram is an integral over the set 
of ergodic measures. Thus, $\mu$ cannot be singular to each 
ergodic measure.  Therefore every measure 
$\mu_{\overline{d}}$ is ergodic.
\end{proof}

%%%%%%%%%%%%%%%%%%%%%%%%%%%%%%%%%%%%%%%%%%%%%%%%%%%%%%%%%%%%%%%%%%%%%%%%%%%%%%%%%%%%%%%%%%%%%%%%%%%%

\subsection{\texorpdfstring{$\mathbb{Z}$}{\mathbb{Z}}-Infinite Pascal-Bratteli diagram}

In this subsection, we consider another realization 
of the infinite Pascal-Bratteli diagram with vertices whose 
entries are indexed by $\Z$. For the reader's convenience,
we remind the notation from Subsection \ref{ssect N PB} 
adapted to this case.

For $n \in \N$ define 
$$
V_{n} = \{\overline{s}=(\ldots s_{-1}, s_{0}, s_{1}, 
\ldots)  \ :\ s_{i} \in \N_0, 
\sum_{i = - \infty}^{\infty} s_{i} =n \}.
$$
Every $V_n$ is a countable set. To define the set of all 
edges $ E = \bigcup_{n\geq 1} E_n$, we say that for 
 $\overline{s} = (\ldots, s_{- 1}, s_{0}, s_{1}, \ldots) 
\in V_{n}$ and
$\overline{t} = (\ldots, t_{- 1}, t_{0}, t_{1}, \ldots) \in 
V_{n + 1}$ the set $E(\ol s, \ol t)$ consists of exactly
one edge if and only if $\ol t = \ol s + \ol e^{(i)}$ for 
some $i \in \Z$. Here $e^{(i)} = (\ldots,  0, 1, 0, 
\ldots) $ is the vector whose only non-zero entry is 1 
at the $i$th place. That is if $\overline{s}$ 
and $\overline{t}$ as above, then 
there exists $i$ such that $t_{i} = s_{i} +1$ and 
$t_i = s_i$ if $i \neq i_0$. In particular, $s_{i} = 0$ if
$t_{i} = 0$. Then every $\ol s\in V_n$ is the source 
for an edge and  every $\ol t \in 
V_{n+1}$ is the range for and edge from $E(V_n, V_{n+1})$.
Hence $|s^{-1}(\ol s)| = \aleph_0$ and the set 
$r^{-1}(\ol t)$ is finite. 

The diagram $B = (V, E)$, where $V$ and $E$ are defined 
above, is called the $\mathbb{Z}$-\textit{infinite 
Pascal-Bratteli diagram} (or $\mathbb{Z}$-IPB diagram).

We use the same notation $H^{(n)}_{\overline{s}}$, 
${F'}_{n} =({f'}_{ \overline{t} \overline{s}}^{(n)})$, and 
$F_{n} = (f_{ \overline{t} \overline{s}}^{(n)}) \ 
(\overline{t} \in V_{n + 1}$, $\overline{s} \in V_{n}, 
n \in \N)$ for the height of the towers, incidence, and 
stochastic matrices of the $\mathbb{Z}$-IPB diagram, 
respectively.

Let  $i \in  \mathbb{Z}$ is  such  that $\overline{t} = \overline{s} + \ol e^{(i)}$, $i\in \Z$. 
Then we have the following formulas
\begin{equation}\label{e1'}
H^{(n)}_{\overline{s}} = \frac{n!}{{(\ldots s_{ -1}! \cdot s}_{0}! \cdot s_{1}! \cdot \ldots)},
\end{equation}

\begin{equation}\label{e2'}
{f'}_{ \overline{t} \overline{s}}^{(n)} = 1 \textrm{ if } \overline{t} = \overline{s} + \ol e^{(i)}, 
\textrm{ and }
{f'}_{ \overline{t} \overline{s}}^{(n)} = 0 \textrm{ otherwise}.
\end{equation}

\begin{equation}\label{e3'}
f_{\ \overline{t} \overline{s}}^{(n)} = {f'}_{ \overline{t} \overline{s}}^{(n)} \cdot \frac{H^{(n)}_{\overline{s}}}{H^{(n+1)}_{\overline{t}}}
= \frac{n!}{(\ldots s_{-1}! \cdot s_{0}! \cdot s_{1}! \cdot \ldots)} \cdot \frac{(\ldots \cdot t_{-1}! \cdot t_{0}! \cdot t_{1}! \cdot \ldots)}{(n + 1)!} = \frac{t_{i}}{(n + 1)}, 
\end{equation}

Using similar arguments as for the $\mathbb{N}$-IPB 
diagram, we can find the formulas for the entries 
${g'}_{ \overline{t} \overline{s}}^{(n, m)}$ and
$g_{\overline{t} \overline{s}}^{(n, m)}$ of the matrices
${G'}^{(n, m)}$ and $G^{(n, m)}$,
$\overline{t} \in V_{n + m}$, $\overline{s} \in V_{n}$. 
We obviously have 
${g'}_{ \overline{t} \overline{s}}^{(n, m)} = 0$  if 
$s_{i} > t_{i}$  for at least one index $i \in  \Z$.
If  $s_{i} \leq t_{i}$ for all $i \in \Z$, then 
\begin{equation}\label{e4'}
{g'}_{\ \overline{t} \overline{s}}^{(n, m)} = \frac{m!}{\ldots \cdot ( t_{-1} - s_{-1})! \cdot \left( t_{0} - s_{0} \right)! \cdot \left( t_{1} - s_{1} \right)! \cdot \ldots} 
\end{equation}
Further, we calculate as in \eqref{e33}
\begin{equation}\label{e5'}
\ba 
{g}_{\overline{t}\overline{s}}^{(n, m)}= &\ 
\frac{m!}{\ldots \cdot ( t_{-1} -  s_{-1})! \cdot \left( t_{0} - s_{0} \right)! \cdot \left( t_{1} - s_{1} \right)! \cdot \ldots\ } \cdot 
\frac{n!}{{ \ldots \cdot s_{-1}!\cdot s}_{0\ }! \cdot s_{1}! \cdot \ldots} \times \\
&\ \qquad 
\times  \frac{{\ldots \cdot t_{-1}t}_{0}! \cdot t_{1}! \cdot \ldots}{(n + m)!} \\ 
= & \
\left\lbrack \ldots \cdot \begin{pmatrix}
t_{-1} \\
s_{-1} \\
\end{pmatrix}
\cdot 
\begin{pmatrix}
t_{0} \\
s_{0} \\
\end{pmatrix} 
\cdot 
\begin{pmatrix}
t_{1} \\
s_{1} \\
\end{pmatrix}
\cdot  \ldots{} \right\rbrack 
\begin{pmatrix}
 n + m  \\
n \\
\end{pmatrix}
\ea 
\end{equation}
whenever $s_{i} \leq t_{i}$ for all $i$ 
and we set 
$g_{\overline{t} \overline{s}}^{(n, m)} = 0$ if
$s_{i} > t_{i}$ for at least one $i \in \mathbb{Z}$ .

Next, we describe the sets 
$\Delta^{(n,\infty,cl)}, \ n = 1, 2, \ldots$ 
for $\mathbb{Z}$-IPB diagram. Fix $n, m \geq 1$ using 
\eqref{e5'}. The rows of the matrix
$G^{(n, m)}$ are the vectors of the form
$$
{\overline{g}}_{\overline{t}}^{(n,m)} = 
\langle g_{ \overline{t} \overline{s}}^{(n, m)} \: \  \overline{s} \in V_{n} \rangle. 
$$
We have from (\ref{e5'})
\begin{equation}\label{e6'}
\ba
{g}_{ \overline{t} \overline{s}}^{(n, m)}= &\ 
\frac{m!}{ \ldots ( t_{-1} - s_{-1})! \cdot \left( t_{0} - s_{0} \right)! \cdot \left( t_{1} - s_{1} \right)! \cdot \ldots} \cdot  
\frac{n!}{{\ldots s_{-1} \cdot s_{0}! \cdot s_{1}! \cdot \ldots}} \times  \\
 & \  \qquad 
\times \frac{{\ldots  t_{-1} ! \cdot t}_{0}! \cdot t_{1}! \cdot \ldots}{(n + m)!}\\
= & \ \frac{n!}{{ \ldots s_{-1}! \cdot s}_{0}! \cdot s_{1}! \cdot \ldots} \cdot 
\frac{ \left\lbrack  \ldots ( t_{-1} - s_{-1} + 1 ) \ldots t_{-1} \right\rbrack \cdot \left\lbrack \left( t_{0} - s_{0} + 1 \right) \ldots t_{0} \right\rbrack \cdot \ldots}{(m + 1)  (m + 2)  \ldots  (m + n)}.
\ea
\end{equation}

Let
$m \rightarrow \infty$ and take ${\overline{t}}^{(m)} = (\ldots(t_{-1}^{(m)}, t_{0}^{(m)}, t_{1}^{(m)}, \ldots) 
\in  V_{n + m}$. 
Using relation (\ref{e6'}) and the same arguments as in 
the case of the $\mathbb{N}$-IPB diagram, we prove that
$$\lim_{m \rightarrow \infty}{\overline{g}}_{{\overline{t}}^{(m)}} \ \ 
\mbox{exists} \ \Longleftrightarrow \ 
\lim_{m \rightarrow \infty}\frac{t_{i}^{(m)}}{m} \ \ 
\mbox{exist\ for\ all}\ i \in \Z.
$$

Then we can repeat the proof of Theorem 
\ref{thm 4.11 edited} and show that 
\begin{equation}\label{e7'}
L_{n} (\{ {\overline{g}}_{\overline{t}} :\  \overline{t} 
\in \ V_{n + m},  m \geq 1 \}) =
\{ {\overline{q}}^{(n)}(\overline{d}) \}
\end{equation}
where 
$$
\overline{q}^{(n)}(\overline{d}) = \langle q_{\overline{s}}^{(n)} ( \overline{d} ) : \ \overline{s} \in V_{n} \rangle =
\left\langle \frac{n!}{\ldots s_{-1}! \cdot s_{0}! \cdot s_{1}! \cdot \ldots } 
\cdot [\ldots \cdot d_{-1}^{s_{-1}} \cdot d_{0}^{s_{0}} \cdot d_{1}^{s_{1}} \cdot \ldots] \ : \  
\overline{s} \in V_{n} \right\rangle,
$$
and
$\overline{d} = \langle d_{i} : \  i \in \Z \rangle$
 is a non-negative vector such that 
 $\sum_{i = -\infty}^{\infty} d_{i}  \leq 1$.

\begin{lemma}\label{r1'}
Let $\overline{d} = \langle d_{i} :\ i \in \mathbb{Z} \rangle$ be a probability vector.
Then $\overline{d}$ determines a tail invariant probability 
measure $\mu_{\overline{d}}$ on the 
$\mathbb{Z}$-IPB diagram such that
$$
p_{\overline{s}}^{(n)} = \mu_{\overline{d}}
([\ol e_{\ol s}]) 
= \frac{q_{\overline{s}}^{(n)}}{H_{\overline{s}}^{(n)}} 
= \ldots \cdot d_{-1}^{s_{-1}}d_{0}^{s_{0}} d_{1}^{s_{1}} \cdot \ldots,\quad
\overline{s} \in V_{n},\ n \in \N,
$$
where $\ol e_{\ol s}$ is a finite path with the range 
$\ol s \in V_n$.
\end{lemma}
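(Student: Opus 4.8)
The plan is to follow the proof of Proposition~\ref{prop inv meas PB}(1) verbatim, with $\N$ replaced by $\Z$; nothing genuinely new happens, only bookkeeping over a two-sided index set. First I would introduce the sequence of nonnegative vectors $\ol p^{(n)} = \langle p_{\ol s}^{(n)} : \ol s \in V_n \rangle$, $n \in \N$, defined by
$$
p_{\ol s}^{(n)} = \prod_{i \in \Z} d_i^{s_i},
$$
which is a finite product because every $\ol s = (\ldots, s_{-1}, s_0, s_1, \ldots) \in V_n$ has only finitely many nonzero coordinates (indeed $\sum_i s_i = n$). The aim is to check that $\{\ol p^{(n)}\}$ satisfies the hypothesis of Theorem~\ref{BKMS_measures=invlimits}(2), namely $(F'_n)^T \ol p^{(n+1)} = \ol p^{(n)}$ for every $n$.

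For the verification, fix $n$ and $\ol s \in V_n$. By the definition of the edge set of the $\Z$-IPB diagram and by \eqref{e2'}, the vertices $\ol t \in V_{n+1}$ with ${f'}_{\ol t \ol s}^{(n)} = 1$ are exactly those of the form $\ol t = \ol s + \ol e^{(i)}$, $i \in \Z$, and all remaining entries of the $\ol s$-column of $F'_n$ vanish. Hence
$$
\sum_{\ol t \in V_{n+1}} {f'}_{\ol t \ol s}^{(n)}\, p_{\ol t}^{(n+1)}
= \sum_{i \in \Z} \Bigl( \prod_{j \in \Z} d_j^{s_j} \Bigr) d_i
= \Bigl( \prod_{j \in \Z} d_j^{s_j} \Bigr) \sum_{i \in \Z} d_i
= \prod_{j \in \Z} d_j^{s_j} = p_{\ol s}^{(n)},
$$
using $\sum_{i \in \Z} d_i = 1$. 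This is precisely $(F'_n)^T \ol p^{(n+1)} = \ol p^{(n)}$, so Theorem~\ref{BKMS_measures=invlimits}(2) provides a uniquely determined tail invariant measure $\mu_{\ol d}$ on $X_B$ with $\mu_{\ol d}([\ol e_{\ol s}]) = p_{\ol s}^{(n)}$ for every finite path $\ol e_{\ol s}$ ending at $\ol s \in V_n$. Since $q_{\ol s}^{(n)} = \mu_{\ol d}(X_{\ol s}^{(n)}) = H_{\ol s}^{(n)}\, p_{\ol s}^{(n)}$, the displayed formula of the lemma follows using \eqref{e1'}.

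It remains to check that $\mu_{\ol d}$ is a probability measure. By \eqref{eq_prob vector q e8} this amounts to $\sum_{\ol s \in V_n} H_{\ol s}^{(n)} p_{\ol s}^{(n)} = 1$ for one (equivalently, every) level $n$; using \eqref{e1'} this is the identity
$$
\sum_{\ol s \in V_n} \frac{n!}{\prod_{i \in \Z} s_i!}\, \prod_{i \in \Z} d_i^{s_i} = \Bigl( \sum_{i \in \Z} d_i \Bigr)^n = 1,
$$
i.e. the multinomial theorem applied to the countable family $(d_i)_{i \in \Z}$. The only point requiring a word of justification is the passage from the finite multinomial expansion to the countable one; since all terms are nonnegative it is handled by monotone convergence (unconditional convergence of nonnegative series), so I do not anticipate any real obstacle — the lemma is the $\Z$-analogue of a computation already carried out for the $\N$-IPB diagram.
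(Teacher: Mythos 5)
Your proposal is correct and follows essentially the same route as the paper: the paper states Lemma~\ref{r1'} without a separate proof, leaving it as the verbatim $\Z$-analogue of Proposition~\ref{prop inv meas PB}(1), and your verification of $(F'_n)^T \ol p^{(n+1)} = \ol p^{(n)}$ via $\sum_{i\in\Z} d_i = 1$ followed by an appeal to Theorem~\ref{BKMS_measures=invlimits}(2) is exactly that argument. The explicit normalization check $\sum_{\ol s \in V_n} H_{\ol s}^{(n)} p_{\ol s}^{(n)} = 1$ via the countable multinomial identity and monotone convergence is a sound (and slightly more careful) rendering of what the paper takes for granted.
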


\begin{theorem}\label{t1'}
The  measures $\mu_{\overline{d}}$, defined in Lemma 
\ref{r1'} by probability vectors 
$\overline{d} = \langle d_{i} :\ i = 1, 2, \ldots 
\rangle$, form the set of all probability invariant ergodic 
measures on $\mathbb Z$-IPB diagram.

\end{theorem}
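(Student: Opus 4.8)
The plan is to transcribe the proof of Theorem~\ref{t5} with the index set $\N$ replaced by $\Z$ everywhere; all the ingredients needed for the $\Z$-version have already been set up around \eqref{e7'} and in Lemma~\ref{r1'}, so no genuinely new idea is required, only care with the bookkeeping.

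First I would show that every ergodic tail invariant probability measure $\mu$ on the $\Z$-IPB diagram equals some $\mu_{\ol d}$. By Lemma~\ref{lem_Delta inf meas}, $\mu$ corresponds to a sequence $\{\ol q^{(n)}\}$ with $\ol q^{(n)}\in\Delta^{(n,\infty,cl)}$ and $F_n^T(\ol q^{(n+1)})=\ol q^{(n)}$; ergodicity forces each $\ol q^{(n)}$ to be an extreme point of $\Delta^{(n,\infty,cl)}$, since a proper convex splitting of $\ol q^{(n)}$ inside this set would split $\mu$. By Remark~\ref{rem_r7r8} such extreme points lie in $L^{(n)}(\{\ol g_{\ol t}\})$, and by \eqref{e7'} this set is precisely $\{\ol q^{(n)}(\ol d^{(n)})\}$ for probability vectors $\ol d^{(n)}=\langle d_{n,i}:i\in\Z\rangle$. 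I would then run the short consistency computation of Proposition~\ref{prop inv meas PB}(2) in the $\Z$-setting: evaluating $({F'}_n)^T\ol p^{(n+1)}=\ol p^{(n)}$ on the cylinder level at the vertex $\ol s\in V_n$ supported at a single coordinate $i$ with $s_i=n$ gives $d_{n,i}=d_{n+1,i}$ for all $i$ and $n$, hence $\ol d^{(n)}=\ol d$ is independent of $n$, so $\mu=\mu_{\ol d}$.

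Next I would prove that each $\mu_{\ol d}$ is ergodic by showing the family $\{\mu_{\ol d}\}$ is pairwise mutually singular. Fix $\ol d$ and set
\[
X(\ol d)=\Big\{\ol e=(\ol s^{(1)},\ol s^{(2)},\dots)\in X_B:\ \lim_{n\to\infty}\frac{s_i^{(n)}}{n}=d_i\ \text{ for all } i\in\Z\Big\}.
\]
On $(X_B,\mu_{\ol d})$ I would introduce the $\Delta_1$-valued random vectors $\ol Y_n(\ol e)=\ol e^{(j)}$, where $j$ is the coordinate with $\ol s^{(n)}=\ol s^{(n-1)}+\ol e^{(j)}$ (with $\ol s^{(0)}=\ol 0$). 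The product form $p^{(n)}_{\ol s}=\prod_i d_i^{s_i}$ from Lemma~\ref{r1'} makes the $\ol Y_n$ independent and identically distributed, each taking value $\ol e^{(j)}$ with probability $d_j$, so $\mathbb E(\ol Y_n)=\ol d$. Since $(\ol Y_1+\dots+\ol Y_n)(\ol e)=\ol s^{(n)}$, the law of large numbers applied coordinatewise gives $\frac1n\ol s^{(n)}\to\ol d$ for $\mu_{\ol d}$-a.e.\ $\ol e$, i.e. $\mu_{\ol d}(X(\ol d))=1$. As $X(\ol d)\cap X(\ol c)=\emptyset$ for $\ol d\neq\ol c$, mutual singularity follows, and then — exactly as in Theorem~\ref{t5} — since any tail invariant probability measure is an integral over its ergodic components it cannot be singular to the whole family $\{\mu_{\ol d}\}$; hence no $\mu_{\ol d}$ is singular to every ergodic measure, which together with mutual singularity forces each $\mu_{\ol d}$ to be ergodic. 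This yields that $\{\mu_{\ol d}\}$ is exactly the set of ergodic tail invariant probability measures.

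The main obstacle is purely technical: one must check that the convergence $\frac1n\ol s^{(n)}\to\ol d$ is read off in the metric $d$ of \eqref{eq_dist d}, and that passing from ``for each fixed $i$'' to ``for all $i\in\Z$ simultaneously'' is legitimate — both handled by a diagonal argument over the countable index set together with the classical one-dimensional strong law — and that the description of $L^{(n)}$ in \eqref{e7'} genuinely applies, which it does by the computation preceding that display. Everything else is copied verbatim from the $\N$-IPB case.
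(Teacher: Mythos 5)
Your proposal is correct and coincides with the paper's intended argument: the paper states Theorem \ref{t1'} without a separate proof precisely because it is meant to be the verbatim transcription of the proof of Theorem \ref{t5} to the $\Z$-indexed setting, using the $\Z$-analogues already prepared in \eqref{e1'}--\eqref{e7'} and Lemma \ref{r1'}. Your two steps — identifying the extreme points of $\Delta^{(n,\infty,cl)}$ inside $L^{(n)}$ to force $\mu=\mu_{\ol d}$, and then establishing mutual singularity via the law of large numbers on the sets $X(\ol d)$ — are exactly the paper's.
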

%%%%%%%%%%%%%%%%%%%%%%%%%%%%%%%%%%%%%%%%%%%%%%%%%%%%%%%%%%%%%%%%%%%%%%%%%%%%%%%%%%%%%%%%%%%%%%%%%%%%%%%%%%%%

\subsection{Multi-dimensional finite Pascal-Bratteli diagrams}

We discuss in this section multi-dimensional finite 
Pascal-Bratteli diagrams. 
 These diagrams and their tail invariant probability measures  were considered  
by K. Petersen and  the other authors  in \cite{MelaPetersen2005}  and in 
\cite{FrickPetersen2010}.
Our goal is to show that these measures can be obtained 
using the ``geometrical'' method, which was presented
in Section \ref{sect_inverse limits}.

To define an MFPB diagram $B = (V, E)$, 
we fix first a natural number 
$k \geq 2$. For $n= 0, 1, 2, \ldots $, we set
$$V_{n} = \{\overline{s} = (s_{1}, s_{2}, \ldots, s_{k} ) 
: \  s_{i} \in \N_0,\ \  \sum_{i = 1}^{k}s_{i} =n \}.
$$
Of course, $V_{0} = \{ {\overline{s}}_{0} = 
(0, \ldots, 0 )\}$ and $|V_{n}| < \infty$ for all $n \in 
\N.$

We define the set of edges $E$ as follows.
Let $\overline{s} = (s_1, \ldots, s_k) \in V_{n}$ and  
$\overline{t} = (t_{1},  \ldots, t_{k}) \in V_{n + 1}$. 
Then the edge 
$e(\overline{s}, \overline{t})$
exists if and only if $\ol s + \ol e^{(i_0)} =
\ol t$, i.e.,  
$s_{i_{0}} + 1 = t_{i_{0}}$ for some $i_{0}$ and
$s_{i} = t_{i}$ for $i \neq i_{0}$. 
Then, for each 
$\overline{s}  \in V = \bigcup_{n = 0}^{\infty}{(V}_{n})$, 
we have $|s^{(-1)} ( \overline{s})| = k $ 
and $|r^{( - 1)} ( \overline{s})| = k$. 
The defined diagram is called $k$-\textit{dimensional 
Pascal-Bratteli diagram}. 

Applying the methods used in Subsection 
\ref{ssect N PB}, we can find the formulas for the heights
$H^{(n)}_{\ol s}$ of towers and entries of matrices ${G'}^{(n,m)}$ and $G^{(n,m)}$.  

The number of finite paths with range $\ol s =
(s_1, \ldots, s_k) \in V_m$ is
$$
H_{\overline{s}}^{(n)} = \frac{n!}{s_{1}!\cdot \ldots 
\cdot s_{k}!}.
$$ 

For $\ol s = (s_1, \ldots, s_k) \in V_m$ and 
$\overline{t} = (t_1, \ldots, t_k) \in  V_{n + 1}$, 
the entries ${f^{'}}_{\overline{t}\overline{s}}^{(n)}$ of 
the incidence matrix $F'_{n}$ are determined by the 
rule:
$$
{f^{'}}_{\overline{t}\overline{s}}^{(n)} =
\begin{cases}
    1, \ & \mbox{if} \ e(\overline{s}, \overline{t}) \neq
    \emptyset\\
    0, \ & \mbox{if} \ e(\overline{s}, \overline{t}) = 
    \emptyset
\end{cases}
$$

Then the entries $\{ f_{\overline{t} \overline{s}}^{(n)}\}$ 
of the stochastic matrix $F_{n}$ are given by the following formulas:
$$
f_{\overline{t} \overline{s}}^{(n)} = 
{f^{'}}_{\overline{t} \overline{s}}^{(n)} \cdot  \frac{H_{\overline{s}}^{(n)}}{H_{\overline{t}}^{n + 1}}
= \frac{t_{i}}{(n + 1)},\ n \in \N, 
$$ 
if $t_{i} = \ s_{i} + 1$ for some $i$,
and $f_{\overline{t}\overline{s}}^{(n)} = 0$ otherwise.

The entries ${g'}_{ \overline{t} \overline{s}}^{(n,m)}$ and
$g_{\overline{t} \overline{s}}^{(n,m)}$ of the matrices
${G'}^{(n,m)}$ and $G^{(n,m)}$, can be found similar to 
(\ref{e32}) and \eqref{eq:entries g(n,m)}. 
For $\ol s \in V_n$ and $\ol t \in V_{n+m}$, we obtain 
\begin{equation}\label{e31'}
    {g'}_{ \overline{t}\overline{s}}^{(n,m)} = \frac{m!}{\left( t_{1} - s_{1} \right)! \cdot  \left( t_{2} - s_{2} \right)!\cdot  \ldots \cdot  \left( t_{k} - s_{k} \right)!}
\end{equation}
if $s_{i} \leq t_{i}$ for $i = 1, 2, \ldots, k$, and
${g'}_{ \overline{t}\overline{s}}^{(n,m)} = 0$ 
if $s_{i} > t_{i}$ for at least one index $i \geq 1$. 
Similarly, we can write 
\begin{equation}\label{e32'}
\ba 
{g}_{\ \overline{t}\overline{s}}^{(n,m)} = &\ 
\frac{m!}{\left( t_{1} - s_{1} \right)!  \cdot  \ldots 
\cdot  \left( t_{k} - s_{k} \right)!}
\cdot  \frac{n!}{{s}_{1\ }! \cdot  \ldots \cdot {s}_{k}!}  
\cdot  \frac{{t}_{1}!  \cdot \ldots \cdot  t_{k}!}
{(n + m)!} \\
= &\  \left\lbrack\begin{pmatrix}
t_{1} \\
s_{1} \\
\end{pmatrix} 
\cdot  \ldots \cdot 
\begin{pmatrix}
t_{k} \\
s_{k} \\
\end{pmatrix} \right\rbrack \cdot 
\begin{pmatrix}
 n + m \\
n \\
\end{pmatrix}^{-1}
\ea 
\end{equation}
whenever $s_{i} \leq t_{i}$ for $i = 1, \ldots, k$, 
and $g_{\overline{t} \overline{s}}^{(n,m)} = 0$ if
$s_{i} > t_{i}$ for at least one $i$.

\begin{prop}
Let $\overline{d} = \langle d_{i} :\ i = 1,..., k \rangle$ be a probability vector.
Then $\overline{d}$ determines a tail invariant probability 
measure $\mu_{\overline{d}}$ on the path space of the 
MFPB diagram such that
$$
p_{\overline{s}}^{(n)} = \mu_{\overline{d}}
([\ol e_{\ol s}]) 
= \frac{q_{\overline{s}}^{(n)}}{H_{\overline{s}}^{(n)}} 
=   d_{1}^{s_{1}}d_{2}^{s_{2}} \ \ldots \ d_{k}^{s_{k}} \quad \overline{s} \in V_{n},\ n \in \N,
$$
where $\ol e_{\ol s}$ is a finite path with the range 
$\ol s \in V_n$.

The measures $\mu_{\overline{d}}$ form the set of all 
probability invariant ergodic measures on the MFPB diagram.
\end{prop}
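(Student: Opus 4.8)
The plan is to adapt, essentially verbatim, the three ingredients already developed for the $\mathbb N$- and $\mathbb Z$-IPB diagrams — Theorem~\ref{BKMS_measures=invlimits}, the description of $L^{(n)}$ via ratios $t_i/m$, and the strong-law-of-large-numbers argument of Theorem~\ref{t5} — observing at the outset that everything simplifies here because each $V_n$ is finite, so $\Delta_1^{(n)}$ is a compact simplex, every $F_n^T$ is continuous, and $\Delta^{(n,\infty,cl)}=\Delta^{(n,\infty)}=\bigcap_m G^{(n,m)T}(\Delta_1^{(n+m)})$ is the honest inverse limit (Lemma~\ref{lem_Delta inf meas}, Remark~\ref{rem_r6}).

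First I would show that $\ol d$ defines a measure. Put $p^{(n)}_{\ol s}=d_1^{s_1}\cdots d_k^{s_k}$ for $\ol s=(s_1,\dots,s_k)\in V_n$. Since $r^{-1}(\ol s)=\{\ol s+\ol e^{(i)}:i=1,\dots,k\}$ and $\sum_i d_i=1$,
\be
\sum_{\ol t\in V_{n+1}}{f'}^{(n)}_{\ol t\,\ol s}\,p^{(n+1)}_{\ol t}
=\Big(\prod_{j=1}^{k}d_j^{s_j}\Big)\sum_{i=1}^{k}d_i=p^{(n)}_{\ol s},
\ee
which is exactly $(F'_n)^T\ol p^{(n+1)}=\ol p^{(n)}$; by Theorem~\ref{BKMS_measures=invlimits}(2) this yields a unique tail invariant measure $\mu_{\ol d}$, and $\sum_{\ol s\in V_n}H^{(n)}_{\ol s}p^{(n)}_{\ol s}=(d_1+\cdots+d_k)^n=1$ by the multinomial theorem, so $\mu_{\ol d}$ is a probability measure with $q^{(n)}_{\ol s}=H^{(n)}_{\ol s}p^{(n)}_{\ol s}$ as claimed.

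Next I would identify all ergodic measures. By Remark~\ref{rem_r7r8} the extreme points of $\Delta^{(n,\infty)}$ lie in $L^{(n)}(\{\ol g_{\ol t}\})$. Using formula~\eqref{e32'} for $g^{(n,m)}_{\ol t\,\ol s}$ and repeating the computations of Lemma~\ref{lem_d_i} and Theorem~\ref{thm L(n) via d's} with finitely many coordinates, I would show that for $\ol t^{(m)}\in V_{n+m}$ the row $\ol g^{(n,m)}_{\ol t^{(m)}}$ converges in $\Delta^{(n)}$ if and only if each $t_i^{(m)}/m$ converges, to some $d_i$ with $\sum_i d_i=1$ (since $\sum_i t_i^{(m)}=n+m$), the limit then being $\ol q^{(n)}(\ol d)$ with $q^{(n)}_{\ol s}(\ol d)=\frac{n!}{s_1!\cdots s_k!}\,d_1^{s_1}\cdots d_k^{s_k}$; hence $L^{(n)}(\{\ol g_{\ol t}\})=\{\ol q^{(n)}(\ol d):\ol d\text{ a probability vector on }\{1,\dots,k\}\}$. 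An ergodic $\mu$ then corresponds to a sequence of extreme points $\ol q^{(n)}=\ol q^{(n)}(\ol d^{(n)})$, and imposing $(F'_n)^T\ol p^{(n+1)}=\ol p^{(n)}$ and evaluating at $\ol s=(0,\dots,n,\dots,0)$ (with $n$ in position $j$) forces $\ol d^{(n)}=\ol d^{(n+1)}$ for all $n$, exactly as in Proposition~\ref{prop inv meas PB}(2); so $\mu=\mu_{\ol d}$.

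Finally I would prove ergodicity of each $\mu_{\ol d}$ by the method of Theorem~\ref{t5}: for a path $\ol e=(\ol s^{(1)},\ol s^{(2)},\dots)$ let $\ol Y_n(\ol e)=\ol e^{(j_n)}$, where $\ol s^{(n)}=\ol s^{(n-1)}+\ol e^{(j_n)}$; under $\mu_{\ol d}$ the $\ol Y_n$ are i.i.d.\ with $\mathbb E(\ol Y_n)=\ol d$ and $\ol Y_1+\cdots+\ol Y_n=\ol s^{(n)}$, so the strong law of large numbers gives $n^{-1}\ol s^{(n)}\to\ol d$ for $\mu_{\ol d}$-a.e.\ $\ol e$, hence $\mu_{\ol d}(X(\ol d))=1$ where $X(\ol d)=\{\ol e:s^{(n)}_i/n\to d_i,\ i=1,\dots,k\}$. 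The sets $X(\ol d)$ are pairwise disjoint, so the measures $\mu_{\ol d}$ are mutually singular; since every invariant probability measure is an integral over ergodic ones and, by the previous step, all ergodic measures are among the $\mu_{\ol d}$, no $\mu_{\ol d}$ can be singular with respect to every ergodic measure, so each $\mu_{\ol d}$ is itself ergodic. The only step needing genuine care is the convergence analysis behind the description of $L^{(n)}$ — pinning down which row-sequences converge and that all extreme points of $\Delta^{(n,\infty)}$ arise this way — but this is a routine transcription of Lemma~\ref{lem_d_i} to finitely many indices, and I expect no real obstacle; the remaining steps are the multinomial theorem and the classical SLLN.
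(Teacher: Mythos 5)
Your proposal is correct and follows exactly the route the paper intends: the paper in fact omits the proof of this proposition, having set up the formulas for $H^{(n)}_{\ol s}$ and $g^{(n,m)}_{\ol t\,\ol s}$ precisely so that the arguments of Proposition~\ref{prop inv meas PB} and Theorem~\ref{t5} transfer verbatim to finitely many coordinates, with the simplifications (compactness of $\Delta_1^{(n)}$, continuity of $F_n^T$, $\sum_i d_i=1$ forced by $\sum_i t_i^{(m)}=n+m$) that you correctly note. The only blemish is notational: the set you call $r^{-1}(\ol s)$ is really $r(s^{-1}(\ol s))=\{\ol s+\ol e^{(i)}\}$, but the computation it feeds is right.
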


\subsection{Subdiagrams of the Pascal-Bratteli diagrams}

Let $B = (V, E)$ be a $\mathbb{N}$-infinite Pascal-Bratteli diagram. Take a proper non-empty subset $K \subset  \N$, $K$ can be finite or infinite. Define the sets $W_n$  by
the following rule:  
$$
W_{n} := \{\overline{s} = (s_{i}) : \sum_{i = 1}^{\infty} 
s_{i} = n\ \mbox{such\ that}\ s_i = 0 \ \mbox{whenever} \ 
i \in \N\setminus K\}.
$$ 
Here $n = 1, 2, \ldots$ and the entries of $\ol s$ are non-negative integers. 

Of course, the terms of the sequence $\ol W = (W_{n})$ can be 
viewed as subsets of 
$V_{n}$, the levels of $\N$-infinite  
Pascal-Bratteli diagram. Thus, the set $K$ defines a 
subdiagram ${\overline{B}}_{K} = (\ol W, \overline{E})$,
where $\overline{E}$ is the set of all possible edges between $W_{n}$ and $W_{(n + 1)}$, $n \in \mathbb{N}$. 
It is clear that ${\overline{B}}_{K}$ is, in its turn,  
a Pascal-Bratteli diagram (finite if $|K| < \infty$ or infinite if $|K| = \infty$). 
\vskip 0.3cm
\textbf{Claim.} $\widehat{X}_{{\overline{B}}_{K}} = X_{{\overline{B}}_{K}}$. 
\vskip 0.2cm

This means that, by the definition of $\ol B_K$, the path space 
of the diagram is invariant for the tail equivalence 
relation. 
Moreover, every invariant probability measure $\overline{\mu}$ 
on
$\overline{B}_{K} = (\ol W, \ol E)$ is automatically its extension to the whole diagram $B = (V, E)$.

The same facts are true for $\Z$-infinite Pascal-Bratteli
diagram where $K$
is a proper subset of $\mathbb{Z}$. 

%%%%%%%%%%%%%%%%%%%% 
%%%%%%%%%%%%%%%%%%%%

\section{A class of Bratteli diagrams of bounded size}
\label{sect bounded size}

In this section, we consider generalized Bratteli diagrams 
with additional properties. In particular, we define
the diagrams of (uniformly) bounded size. 
This class of Bratteli diagrams was defined and considered in 
detail in 
\cite{BezuglyiJorgensenKarpelSanadhya2023}, see Definition 
\ref{Def:BD_bdd_size}. We first discuss the case of 
standard Bratteli diagrams.

\subsection{Bratteli diagram of uniformly bounded 
size}\label{ss finite uniformly bndd}

To define a standard Bratteli diagram $B_k = (V, E)$, we fix a 
number $k \in \N$ and take 
$V_{n} = \{- nk, ... , -1, 0, 1, ... , n k \}$, 
$n \in \N$. 
The set $E_{n}$ of edges between $V_{n}$ and $V_{n + 1}$
is defined as follows: for $w\in V_n, v \in V_{n+1}$,
the edge $e(w, v)$ exists if and only if 
$v \in \{ w-k, \ldots, w-1, w, w+1, \ldots, w+k\}$.
In Figure 1 this diagram is shown schematically, we do not draw
all finite paths. 

\begin{figure}[htb!]\label{fig bndd}
\begin{center}
\includegraphics[scale=0.7]{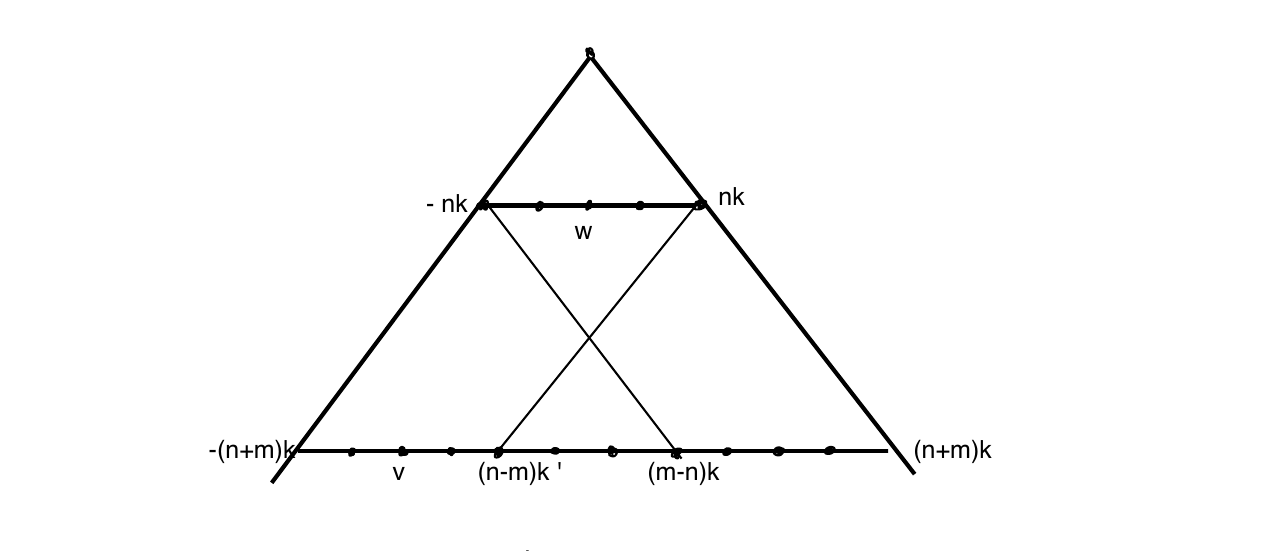}
\caption{The Bratteli diagram $B_k$.}
\end{center}
\end{figure}

The entries $f_{vw}^{(n)}$ of the incidence matrix $F_{n}$ 
are given by the formula:
$$
f_{vw}^{(n)} = \begin{cases}
    1, & |v-w| \leq k \\
    0, &  \mbox{otherwise},
\end{cases}  \ \ w\in V_n, v \in V_{n+1}. 
$$ 
For $w \in V_{n}$, we have 
$|s^{-1}(w)| = 2k + 1$. Similarly, for $v \in V_{n + 1}$, 
we have the following properties: 
$|r^{-1}(v)| = 2k + 1$ if $ -k(n-1) \leq v \leq k(n-1)$,
$|r^{(-1)}(v)| = 2k +1 - s$ if $ v = k (n-1)  + s$
or $v = - k(n-1) -s$ where $s = 1, ..., 2k$.

Our goal is to determine the heights $H_{w}^{(n)}$
of the clopen sets 
$X_{w}^{(n)}$ for all $n \geq 1$ and $w \in V_n$.
For this, we consider the sequence of functions
$$f_{n}(x) = ( x^{k} + \ldots + x + 1 + \ x^{- 1} + 
\ldots + x^{- k})^{n}.
$$
Every  $f_{n}(x)$ is represented as follows:
$$
f_{n}(x) = \sum_{w = -nk}^{n k}{K}_{w}^{(n)}\cdot x^{w}
$$
where ${K}_{w}^{(n)}$ are some natural coefficients. 

\begin{lemma}
For the Bratteli diagram $B_k$ defined above,     
\begin{equation}\label{e37}
H_{w}^{(n)} = K_{w}^{(n)}
\end{equation}
for each $w \in V_{n}$ and  $n \in \N$.
\end{lemma}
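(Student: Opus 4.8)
The plan is to prove the identity by induction on $n$, using the factorization $f_{n+1}(x) = f_n(x)\cdot f_1(x)$, where $f_1(x) = x^{k} + \cdots + x + 1 + x^{-1} + \cdots + x^{-k}$, together with the observation that extracting the coefficient of $x^{v}$ from such a product is exactly the linear recurrence \eqref{eq_H e6} satisfied by the heights $H^{(n)}_w$.

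For the base case $n = 0$ one has $f_0(x) \equiv 1$, so $K^{(0)}_w = 1$ for $w = 0$ (the top vertex, identified with $0$) and $K^{(0)}_w = 0$ otherwise, which matches $H^{(0)}_w$. Alternatively one can start at $n = 1$: every $v \in V_1 = \{-k,\dots,k\}$ is joined to the top vertex by a single edge, so $H^{(1)}_v = 1$, while all coefficients of $f_1$ equal $1$, giving $K^{(1)}_v = 1$ as well.

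For the inductive step, assume $H^{(n)}_w = K^{(n)}_w$ for all $w \in V_n$, and extend both quantities by $0$ to integers $w$ with $|w| > nk$; this is harmless, since $f_n$ is supported on exponents in $\{-nk,\dots,nk\}$ and no finite path reaches a vertex outside $V_n$. In $B_k$ there is at most one edge between any two vertices, and ${f'}^{(n)}_{vw} = 1$ precisely when $|v-w| \le k$, so \eqref{eq_H e6} gives, for every $v \in V_{n+1}$,
\[
H^{(n+1)}_v = \sum_{w \in V_n} {f'}^{(n)}_{vw}\, H^{(n)}_w = \sum_{j=-k}^{k} H^{(n)}_{v-j} = \sum_{j=-k}^{k} K^{(n)}_{v-j},
\]
the last equality by the inductive hypothesis. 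On the other hand, multiplying $f_n(x)$ by $f_1(x) = \sum_{j=-k}^{k} x^{j}$ shows that the coefficient of $x^{v}$ in $f_{n+1}(x)$ is also $\sum_{j=-k}^{k} K^{(n)}_{v-j}$. Hence $H^{(n+1)}_v = K^{(n+1)}_v$ for all $v \in V_{n+1}$, which closes the induction.

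The only delicate point — more a matter of care than a genuine obstacle — is the boundary bookkeeping: for $v$ near $\pm(n+1)k$ some of the indices $v - j$ lie outside $V_n$, and one must check that such terms contribute nothing on either side. Once the zero-extension convention is in place, the polynomial identity $f_{n+1} = f_n f_1$ and the recursion \eqref{eq_H e6} are literally the same recurrence with the same initial data, so the equality $H^{(n)}_w = K^{(n)}_w$ propagates to all levels.
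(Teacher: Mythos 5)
Your proof is correct and follows essentially the same route as the paper: an induction showing that the heights and the coefficients satisfy the same recurrence $\sum_{j=-k}^{k}(\cdot)_{v-j}$ with the same initial data, the recurrence coming from \eqref{eq_H e6} on one side and from $f_{n+1}=f_n f_1$ on the other. The only difference is cosmetic — your zero-extension convention handles the boundary vertices uniformly, where the paper instead splits $V_{n+1}$ into the three ranges $I_0$, $I_-$, $I_+$ and writes out the truncated sums explicitly.
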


\begin{proof}
Clearly, $H_{w}^{(1)} = K_{w}^{(1)} = 1$ for every
$w \in V_{1}$. 

Next, we check that the heights
$H_{w}^{(n)}$'s and the coefficients
$K_{w}^{(n)}$'s satisfy the same inductive equations. 
By definition of the Bratteli diagram $B_k$, 
\begin{equation}\label{e38}
{H}_{v}^{(n + 1)} =  \sum_{w \in r^{- 1}(v)}H_{w}^{(n)}
\end{equation}
for $v \in V_{n+1}$. The set $V_{n+1}$ is divided into 
three subsets:  $I_0 =\{- k(n-1), ..., 0, ...,  k(n-1)\}$, 
$I_- =\{-k(n+1), ...,  -k(n-1)  -1 \}$, and 
$I_+= \{k(n-1) +1, ..., k(n+1) \}$. Then relation 
\eqref{e38} implies that the following formulas hold:
\be\label{eq-heights}
\ba
{H}_{v}^{(n + 1)} = &\ \sum_{w= v-k}^{v+k} H_w^{(n)}, \ 
& v \in I_0\\
{H}_{v}^{(n + 1)} =  &\ \sum_{w= -nk}^{v+k} H_w^{(n)}, \ 
& v \in I_-\\
{H}_{v}^{(n + 1)} = &\ \sum_{w= v-k}^{nk} H_w^{(n)},\ 
& v \in I_+.\\
\ea 
\ee 

To find inductive equations for the coefficients 
$K_{w}^{(n)}$'s, we use the following computation:
$$
\ba
f_{n + 1}(x) = &\  \sum_{v =-(n + 1)k}^{(n + 1)k}
K_{v}^{(n + 1)}x^{v} \\ 
= &\ \left\lbrack\sum_{w=-n k}^{n k}K_{w}^{(n)} x^{w}\right\rbrack
\cdot  ( x^{- k} + \ldots + x^{-1} + 1 + 
x^{1} + \ldots + x^{k})\\
= &\ \left\lbrack\sum_{w =- nk}^{nk}K_{w}^{(n)}x^{w-k} \right\rbrack + \ldots 
 + \left\lbrack\sum_{w=-nk}^{nk} K_{w}^{(n)}x^{w} \right\rbrack  + \ldots + 
\left\lbrack\sum_{w =-nk}^{nk}K_{w}^{(n)} x^{w + k} \right\rbrack.\\ 
\ea 
$$
Applying the inductive assumption that $H^{(n)}_w =
K^{(n)}_w$, we reorder the right-hand side of the above 
equality according to the powers of $x$ and compare the 
coefficients with those of $f_{n+1}(x)$.
As a consequence, we obtain that $K_{v}^{(n + 1)}$ 
satisfy all equalities (\ref{eq-heights}). In this way, we 
have proved (\ref{e37}).
\end{proof}

In what follows, we will use the method described in 
Section
\ref{sect_inverse limits}. We will first find the formulas 
for the entries ${g'}_{vw}^{(n,m)}$ of
matrices ${G'}^{(n,m)}$. 

Fix a vertex $w \in V_{n}$ and consider all paths from  $w$ 
to the vertices of $V_{n + m}$.
obviously, if we go along those paths, then we can 
reach only the vertices from the subset 
$\{ w-mk, w-mk+1, \ldots, w+mk-1, w+mk\} \subset V_{n + m}$.
The set of all paths from the vertex $w\in V_n$ to 
$V_{n + m}$ is identical to the subset of all paths from 
$0 \in V_0$ to the vertices of the level $V_m$. Here $w$ is any 
vertex from $V_n$.
Based on this observation, we conclude that 
$${g'}_{vw}^{(n,m)}= K_{v-w}^{(m)} = K_{-mk +s}^{(m)}$$  
where $s \in \{ 0, 1, \ldots, 2mk\}$ is determined by the relation 
$v= w-mk+s$. We set 
${g'}_{vw}^{(n,m)}= 0$ otherwise.  This fact can be used to 
describe  the entries of 
the $v$-th row ${\overline{g}'}_{v}^{(n,m)} =
\langle  {g'}_{vw}^{(n,m)} :\ w \in V_{n} \rangle$ of the 
matrix ${G'}^{(n,m)}$,  $v \in V_{n + m}.$
 We take a number $m$ considerably larger than $n$, 
because in the sequel we want to find
$\lim\limits_{m \to \infty}{\overline{g}}_{v}^{(n,m)}$, $v = v_{m}$. 

Based on  \eqref{eq-rows g_v}, \eqref{e37}, and 
the observation above, we have the following formula 
for the rows of the matrix ${G'}^{(n,m)}$.

\begin{lemma}\label{lem rows via K}
For $v =-(n+m) k+s \in V_{n+m}$, where  $s = 0, 1,\ldots, 
2(m+n)k$, we have
\be\label{eq-rows in K}
{\overline{g}'}_{v}^{(n,m)} = 
\begin{dcases}
\langle K_{- mk + s}^{(m)},  \ldots,
K_{- mk}^{(m)}, 0, \ldots, 0 \rangle,  &\ 
s = 0, 1, \ldots, 2nk-1\\
\langle K_{-mk + s}^{(m)},\ldots, K_{-mk + s -2nk}^{(m)}  
\rangle, &\ s = 2nk, \ldots, 2mk\\ 
\langle 0, \ldots, 0, K_{mk}^{(m)}, \ldots, 
K_{-mk+s-2nk}^{(m)} \rangle,  &\ 
s = 2mk+1, \ldots, 2(n+m)k
\end{dcases}
\ee 
\end{lemma}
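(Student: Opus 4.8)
The plan is to deduce everything by bookkeeping from the formula ${g'}_{vw}^{(n,m)} = K_{v-w}^{(m)}$ (valid whenever $-mk \leq v-w \leq mk$, and equal to $0$ otherwise) established just before the statement, together with the elementary fact that every coefficient $K_j^{(m)}$ with $|j| \leq mk$ is strictly positive, since $f_m(x) = (x^k + \cdots + x + 1 + x^{-1} + \cdots + x^{-k})^m$ has only positive coefficients on the monomials $x^{-mk}, \dots, x^{mk}$. First I would fix $v \in V_{n+m}$ and write $v = -(n+m)k + s$ with $s \in \{0, 1, \dots, 2(n+m)k\}$. The row ${\overline{g}'}_v^{(n,m)}$ is indexed by $w$ running over $V_n = \{-nk, \dots, nk\}$, and its $w$-th coordinate is nonzero exactly when $v - mk \leq w \leq v + mk$; here $v - mk = s - (n+2m)k$ and $v + mk = s - nk$.

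The core step, carried out next, is to intersect the interval $[v-mk,\, v+mk]$ with the index range $[-nk,\, nk]$ and read off where the nonzero block starts and ends. The left endpoint of the intersection is $\max(-nk,\, s - (n+2m)k)$, equal to $-nk$ when $s \leq 2mk$ and to $s - (n+2m)k$ when $s > 2mk$; the right endpoint is $\min(nk,\, s - nk)$, equal to $s - nk$ when $s < 2nk$ and to $nk$ when $s \geq 2nk$. Using that $m$ is taken $\geq n$ (indeed much larger than $n$), so that $2nk \leq 2mk$, the range $\{0, \dots, 2(n+m)k\}$ of $s$ splits into the three consecutive blocks $\{0, \dots, 2nk-1\}$, $\{2nk, \dots, 2mk\}$, $\{2mk+1, \dots, 2(n+m)k\}$ occurring in the statement. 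On the first block the support of the row runs from $w = -nk$ to $w = v+mk = -nk + s$, leaving $2nk - s$ trailing zeros; on the second block it is the whole index range; on the third block it runs from $w = v - mk = s - (n+2m)k$ to $w = nk$, leaving $s - 2mk$ leading zeros. In each case a quick count confirms the block has the right total length $2nk+1$.

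It then remains to identify the nonzero entries themselves. Since $v - w$ decreases by one as $w$ increases by one, the nonzero coordinates form a run of consecutive coefficients $K^{(m)}_\bullet$, so it suffices to evaluate $K_{v-w}^{(m)}$ at the two endpoints of the support in each case: at $w = -nk$ we get $K_{v+nk}^{(m)} = K_{-mk+s}^{(m)}$; at $w = v+mk$ we get $K_{-mk}^{(m)}$; at $w = v-mk$ we get $K_{mk}^{(m)}$; and at $w = nk$ we get $K_{v-nk}^{(m)} = K_{-mk+s-2nk}^{(m)}$. Substituting these into the three cases yields precisely \eqref{eq-rows in K}. I do not anticipate a genuine obstacle here; the only thing needing care is the treatment of the boundary values $s = 2nk$ and $s = 2mk$ (and the extremes $s = 0$, $s = 2(n+m)k$), and making explicit that the hypothesis $m \geq n$ is exactly what makes the three $s$-blocks a genuine partition of $\{0,\dots,2(n+m)k\}$.
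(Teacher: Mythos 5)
Your proposal is correct and follows essentially the same route as the paper, which derives the lemma directly from the identity ${g'}_{vw}^{(n,m)} = K_{v-w}^{(m)}$ (for $|v-w|\leq mk$, zero otherwise) and the resulting bookkeeping; you have simply made explicit the intersection of the support interval $[v-mk, v+mk]$ with the index range $[-nk, nk]$ and the evaluation of $K^{(m)}_{v-w}$ at the endpoints, which the paper leaves to the reader. Your checks at the boundary values $s=2nk$, $s=2mk$ and your remark that $m\geq n$ is what makes the three $s$-blocks partition $\{0,\dots,2(n+m)k\}$ are accurate and consistent with the paper's standing assumption that $m$ is considerably larger than $n$.
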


Recall that, in the formulation of this lemma,  we assume that $m$ is considerably larger than $n$. 

To determine the set $\Delta^{(n,\infty, cl)}$, we use Theorem
\ref{thm 4.11 edited}. For this, we are looking for 
the set of all limit points of the vectors
${\overline{g}}_{v}^{(n,m)}$'s 
(when $v = v_{m},\ m \rightarrow \infty)$. 

Define the following probability vectors: 
\begin{equation}\label{e40}
\overline{ y}_{v}^{(n,m)} = \frac{1}{\sum_{w = - n k}^{w = nk}{g'}_{vw}^{(m,n)}} \cdot  {\overline{g}'}_{v}^{(n,m)},
\end{equation}
where $v = -(n+m) k, \ldots, (n+m)k \in V_{n+m}$.

Applying Lemma \ref{lem rows via K}, we get 
\be\label{eq-prob rows in K}
\overline{y}_{v}^{(n,m)} = 
\begin{cases} \dfrac{1}{\sum_{u=0}^s K_{- mk + s - u}^{(m)}}
\langle K_{- mk + s}^{(m)}, \ldots,
K_{- mk}^{(m)}, 0, \ldots, 0 \rangle,  & 
s = 0,  \ldots, 2nk-1\\
\\
\dfrac{1}{\sum_{u=0}^{s} K_{- mk + s + u}^{(m)}}
\langle  K_{-mk + s}^{(m)},\ldots, K_{-mk + s - 2nk}^{(m)} \rangle, 
&\ s = 2nk, \ldots, 2mk\\ 
\\
\dfrac{1}{\sum_{u=s}^{s-2nk} K_{- mk + s - u}^{(m)}}
\langle 0, \ldots, 0, K_{mk}^{(m)}, \ldots, 
K_{-mk+s-2nk}^{(m)} \rangle,  &\ 
s = 2mk+1, \ldots, 2(n+m)k
\end{cases}
\ee 

Hence, we have the following result. 

\begin{theorem}\label{t6}
The set $\Delta^{(n,\infty, cl)}$ is the set of all vectors 
\begin{equation}\label{e42}
 {\overline{q}}^{(n,\infty)} =  \frac{1}{\sum_{w = - nk}^{nk}{y_{w}^{(n,\infty)}K_{w}^{(n)}}}
\langle  y_{ - nk}^{(n,\infty)} K_{ - nk}^{(n)}, \ldots, y_{n k}^{(n,\infty)} K_{nk}^{(n)} \rangle,  
\end{equation}
 where 
 $\overline{y}^{(n,\infty)} =
\langle y_{ - n k}^{(n,\infty)}, \ldots, y_{nk}^{(n,\infty)} \rangle$
is a limit of vectors $\overline{y}_{v}^{(n,m)}$,
where $v= v_{m}$, $m \rightarrow \infty$.
\end{theorem}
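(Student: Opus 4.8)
The plan is to obtain Theorem \ref{t6} as a direct specialization of the general machinery of Section \ref{sect_inverse limits}, feeding in the concrete combinatorial data of $B_k$ computed earlier in this section. The diagram $B_k$ is a standard Bratteli diagram of uniformly bounded size (with parameters $t_n = k$ and $L_n = 2k+1$, in the notation of Definition \ref{Def:BD_bdd_size}). Two consequences of this will be used. First, all levels $V_n$ are finite, so every map $F_n^T$ and $G^{(n,m)T}$ is continuous; hence by Remark \ref{rem_r6} we have $\Delta^{(n,\infty,cl)} = \Delta^{(n,\infty)} = \bigcap_m \Delta^{(n,m)}$ and these sets form a genuine inverse system of compact convex sets. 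Second, by Corollary \ref{cor Th 4.11}(2) the hypotheses of Theorem \ref{thm 4.11 edited} are satisfied: the heights are bounded on each level ($H_w^{(n)}\le L_0\cdots L_{n-1}$ by Lemma \ref{lemma_bdd_size_upper_cone}), so conditions \eqref{eq-conditions P(n)} and \eqref{eq limit vectors p(infty)} defining $P^{(n)}$ hold automatically for every convergent sequence of normalized rows. Therefore Theorem \ref{thm 4.11 edited} describes $L^{(n)}(\{\overline g_v\})$ exactly in terms of the limits of the normalized vectors $\overline y_v^{(n,m)}$ of \eqref{e40}.

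\textbf{Key steps.} First I would substitute the explicit formulas. The lemma giving \eqref{e37} identifies $H_w^{(n)} = K_w^{(n)}$ for $w\in V_n$; inserting this into formula \eqref{e24} of Theorem \ref{thm 4.11 edited} turns the coordinate-wise description of a vector of $L^{(n)}(\{\overline g_v\})$ into precisely the right-hand side of \eqref{e42}, with $\overline y^{(n,\infty)} = \lim_m \overline y_{v_m}^{(n,m)}$. The only bookkeeping needed here is to check that the normalization in \eqref{e40} (dividing the $v$-th row of $G'^{(n,m)}$ by $\sum_{w\in V_n} g'^{(n,m)}_{vw}$) coincides with the one built into the definition of $P^{(n)}$, and that the stochastic row $\overline g_v^{(n,m)} = G^{(n,m)T}(\overline e_v^{(n+m)})$ and the incidence-matrix row $\overline g{}'^{(n,m)}_v$ differ only by reweighting with $H^{(n)}$ and renormalization --- which is just \eqref{eq_g(n,m) and g'(n,m) e21}, already made explicit in \eqref{eq-rows in K}--\eqref{eq-prob rows in K}. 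This identifies $L^{(n)}(\{\overline g_v\})$ with the set of vectors $\overline q^{(n,\infty)}$ in the statement. Then, using Theorem \ref{thm int over L t3} together with Remark \ref{rem_r7r8} (the extreme points of $\Delta^{(n,\infty,cl)}$ lie in $L^{(n)}$), the description of $\Delta^{(n,\infty,cl)}$ reduces to this description of $L^{(n)}$: $\Delta^{(n,\infty,cl)}$ is the set of barycenters $\int_{L^{(n)}}\overline z\,d\mu(\overline z)$, $\mu\in M_1(L^{(n)})$, and one reads off \eqref{e42}.

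\textbf{Main obstacle.} I expect the delicate point to be the passage from $L^{(n)}(\{\overline g_v\})$ to $\Delta^{(n,\infty,cl)}$ and the exact matching of the conclusion with what Theorem \ref{thm int over L t3} delivers: one must verify that the set of vectors \eqref{e42} is closed (it is the continuous image of the compact set of limit vectors $\overline y^{(n,\infty)}$ under the reweighting--normalization map) and, depending on the precise reading of the statement, argue that this set already equals its closed convex hull, or else phrase the conclusion in terms of barycenters. The ancillary combinatorial analysis --- determining which probability vectors $\overline y^{(n,\infty)}$ actually arise as limits $\lim_m \overline y_{v_m}^{(n,m)}$, via a local and large-deviation analysis of the trinomial-type coefficients $K_w^{(m)}$ along the regimes of \eqref{eq-prob rows in K} --- is routine once \eqref{eq-rows in K} is available, and I would not carry it out in detail.
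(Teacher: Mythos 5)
Your proposal follows the same route as the paper: Theorem \ref{t6} is presented there as an immediate consequence of Theorem \ref{thm 4.11 edited} (whose hypotheses hold by Corollary \ref{cor Th 4.11}(2), exactly as you argue) after substituting $H_w^{(n)} = K_w^{(n)}$ from \eqref{e37} and the normalized rows \eqref{eq-prob rows in K}; no further argument is given in the text. The ``main obstacle'' you flag is genuine, but it is an imprecision in the statement rather than a defect of your argument: what the general machinery (Theorem \ref{thm 4.11 edited} combined with Theorem \ref{thm int over L t3}) actually delivers is that the vectors \eqref{e42} constitute $L^{(n)}(\{\overline{g}_v\})$ --- hence contain all extreme points of $\Delta^{(n,\infty,cl)}$ --- while $\Delta^{(n,\infty,cl)}$ itself is the set of barycenters of Borel probability measures on this set; since the reweighting--normalization map $\overline{y}^{(n,\infty)} \mapsto \overline{q}^{(n,\infty)}$ is not affine, the set \eqref{e42} need not be convex, so the two do not literally coincide. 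Read as a description of $L^{(n)}$ (equivalently, of the generating set whose barycenters exhaust $\Delta^{(n,\infty,cl)}$ in the sense of Theorem \ref{thm int over L t3}), your derivation is exactly the paper's, and your remark that the hard unresolved part is the explicit determination of the limits of the normalized trinomial-type coefficients matches the open question the authors pose immediately after the theorem.
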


\textbf{\textit{Question.}} Are there explicit formulas for the
numbers $K_i^{(m)}$? In this case, we could determine the
values of the tail invariant measures on cylinder sets using relation \eqref{e42}.

\subsection{Generalized Bratteli diagram of uniformly bounded size}

Fix again a natural number $k \geq 1$ and define the 
generalized Bratteli diagram $B_k =(V, E)$. For $V = 
\bigcup_n V_n$, let $V_{n} = \mathbb{Z}$ for  $n \in N_0$.
The set $E = \bigcup_n E_{n}$ consists of edges between 
$V_{n}$ and $V_{n + 1}$ where  
$e_{wv}$ exists if and only if $v \in \{w - k, \ldots, w, 
\ldots, w+k\}$, $w \in V_{n}$ and $v \in V_{n + 1}$.
The entries $f_{vw}^{(n)}$ of the incidence matrix $F_{n}$ are 
given by the formula
$$
f_{vw}^{(n)} = \begin{cases}
    1, &\ \mbox{if} \ |v-w|\leq k \\
    0, &\ \mbox{otherwise}.
\end{cases}
$$
For each $w \in V_{n}$, we have $|s^{(-1)}(w)| = 2k + 1$, 
and for each $v \in V_{n + 1}$, 
we also  have $|r^{( - 1)}(v)|= 2 k + 1$.

It follows from the definition of $B_k$ that
$H_{w}^{(n)} = (2k + 1)^{n}$ for every $w \in V_{n}$, 
$n \in \N_0$. As in Subsection \ref{ss finite uniformly bndd},
we can show that the entries of ${G'}^{(n,m)}$ are determined
by the formula:   
${g'}_{vw}^{(n,m)} = K_{v - w}^{(m)}$ for $v \in \{w-m k, 
\ldots, w+mk\}$, and ${g'}_{vw}^{(n,m)} = 0$, otherwise.

Hence, the $v$-th row of ${G'}^{(n,m)}$ has the form
$${\overline{g}'}_{v}^{(n,m)} = 
\langle \ldots, 0, K_{mk}^{(m)}, K_{m k - 1}^{(m)}, \ldots, 
K_{- m k}^{(m)}, 0,  \ldots\rangle, 
$$ 
where $K_{mk}^{(m)}$ is the $(v-m k)$-th entry.

Further, we have
$$g_{vw}^{(n,m)} = {g'}_{vw}^{(n,m)}\cdot \frac{H_{w}^{(n)}}{H_{v}^{(n + m)}} =
\frac{1}{{(2k + 1)}^{m}} \cdot  {g'}_{vw}^{(n,m)},
$$
and
$${\overline{g}}_{v}^{(n,m)} = \frac{1}{{(2k + 1)}^{m}} 
\cdot   \langle \ldots, 0, K_{m k}^{(m)}, \ldots, 
K_{- m k}^{(m)}, 0,  \ldots \rangle.
$$

\begin{prop}
There is no probability tail invariant measure on the 
generalized Bratteli diagram $B_k$.
\end{prop}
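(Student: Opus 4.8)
The plan is to show that for this diagram the closed set $\De^{(n,\infty,cl)}$ collapses to $\{\ol 0\}$ for every $n$, which by the inverse–limit description of $M_1(\mc R)$ leaves no room for a probability vector, hence no probability tail invariant measure.

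First I would record what such a measure would have to produce. By Theorem \ref{BKMS_measures=invlimits} and the discussion around Lemma \ref{lem_Delta inf meas}, a measure $\mu\in M_1(\mc R)$ yields probability vectors $\ol q^{(n)}\in\De_1^{(n)}$ (so $\sum_{w}q^{(n)}_w=1$) with $F_n^T\ol q^{(n+1)}=\ol q^{(n)}$; iterating gives $\ol q^{(n)}=G^{(n,m)T}(\ol q^{(n+m)})\in\De^{(n,m)}\subseteq cl(\De^{(n,m)})$ for all $m\geq 1$, and therefore $\ol q^{(n)}\in\De^{(n,\infty,cl)}$. Since a probability vector cannot equal $\ol 0$, it suffices to prove $\De^{(n,\infty,cl)}=\{\ol 0\}$ for every $n$.

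Next I would bound the rows of $G^{(n,m)}$. From the formulas already derived for $B_k$ we have $g^{(n,m)}_{vw}=K^{(m)}_{v-w}/(2k+1)^m$, so, setting $\e_m:=\max_{s}K^{(m)}_s/(2k+1)^m$,
\[
|\ol g^{(n,m)}_v|\;=\;\sum_{w\in V_n}\frac{1}{2^{a(w)}}\,g^{(n,m)}_{vw}\;\le\;\e_m\sum_{w\in V_n}\frac{1}{2^{a(w)}}\;=\;C\,\e_m ,
\]
uniformly in $v\in V_{n+m}$, where $C:=\sum_{w\in V_n}2^{-a(w)}<\infty$. By Lemma \ref{lem_closure}, $cl(\De^{(n,m)})$ is the closed convex hull of $\{\ol g^{(n,m)}_v:v\in V_{n+m}\}\cup\{\ol 0\}$; applying inequality \eqref{e1} with all $\ol y^{(k)}=\ol 0$ and passing to the closure shows that $cl(\De^{(n,m)})$ lies in the closed ball of radius $C\e_m$ about $\ol 0$. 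Since $\ol 0\in\De^{(n,m)}$ for every $m$, this yields $\De^{(n,\infty,cl)}=\bigcap_m cl(\De^{(n,m)})=\{\ol 0\}$, provided $\e_m\to0$, which finishes the argument together with the previous paragraph. (One can also bypass the machinery altogether: the relation $q^{(n)}_w=\frac{1}{2k+1}\sum_{|v-w|\le k}q^{(n+1)}_v$ exhibits $\ol q^{(0)}$ as the $m$-fold convolution of $\ol q^{(m)}$ with the law $s\mapsto K^{(m)}_s/(2k+1)^m$, whence $\sup_w q^{(0)}_w\le\e_m\to0$, again contradicting $\sum_w q^{(0)}_w=1$.)

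The one substantive point — and where I expect the only real work to be, although it is classical — is $\e_m\to0$. Here $K^{(m)}_s$ counts the representations $s=a_1+\dots+a_m$ with $a_i\in\{-k,\dots,k\}$, so $K^{(m)}_s/(2k+1)^m=\mathbb P(S_m=s)$ for $S_m=X_1+\dots+X_m$ with $X_i$ i.i.d.\ uniform on $\{-k,\dots,k\}$, and $\e_m$ is the concentration function of $S_m$. Since $X_1$ is non-degenerate, $\e_m\to0$: this is the local central limit theorem, or, directly, Fourier inversion gives $\mathbb P(S_m=s)=\frac{1}{2\pi}\int_{-\pi}^{\pi}\varphi(t)^m e^{-ist}\,dt$ with $\varphi(t)=\frac{1}{2k+1}\sum_{j=-k}^{k}e^{ijt}$ satisfying $|\varphi(t)|<1$ on $(-\pi,\pi]\setminus\{0\}$, so $\e_m\le\frac{1}{2\pi}\int_{-\pi}^{\pi}|\varphi(t)|^m\,dt\to0$ by dominated convergence; one could instead cite the Kolmogorov--Rogozin concentration inequality, which even gives $\e_m=O(m^{-1/2})$. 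Everything else is bookkeeping built on Section \ref{sect_inverse limits} and the explicit description of the rows $\ol g^{(n,m)}_v$ obtained above.
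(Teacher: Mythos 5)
Your proof is correct, and while it runs on the same Section \ref{sect_inverse limits} chassis as the paper's argument --- both come down to showing that the normalized coefficients $K^{(m)}_s/(2k+1)^m$ tend to $0$ uniformly in $s$, so that nothing but the zero vector survives in $\Delta^{(n,\infty,cl)}$ --- the way you establish that key fact is genuinely different. The paper argues entirely combinatorially: it observes that $K^{(m)}_0$ is the largest coefficient, that $K^{(m)}_0/(2k+1)^m$ is non-increasing in $m$ and hence converges to some $g_0$, uses the recursion $K^{(m+1)}_0=\sum_{|i|\le k}K^{(m)}_i$ to force $g_i=g_0$ for all $|i|\le k$, deduces $(2k+1)g_0\le 1$, and then bootstraps this to $g_0\le(2k+1)^{-r}$ for every $r$, whence $g_0=0$. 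You instead read $K^{(m)}_s/(2k+1)^m$ as $\mathbb{P}(S_m=s)$ for a non-degenerate i.i.d.\ lattice walk and invoke the characteristic-function bound $\varepsilon_m\le\frac{1}{2\pi}\int_{-\pi}^{\pi}|\varphi(t)|^m\,dt\to 0$ (or Kolmogorov--Rogozin), which is shorter, yields the quantitative rate $O(m^{-1/2})$, and generalizes immediately to any bounded-size diagram whose rows define a fixed non-degenerate step distribution; the trade-off is that the paper's version is self-contained and elementary, while yours imports a classical external fact. Your downstream bookkeeping is also marginally cleaner: the uniform ball bound on $cl(\Delta^{(n,m)})$ (or the one-line convolution estimate $\sup_w q^{(0)}_w\le\varepsilon_m$) lets you bypass the limit set $L^{(n)}$ and the integral representation of $\Delta^{(n,\infty,cl)}$ altogether.
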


\begin{proof}
    We will show that the set of all limit vectors of
${\overline{g}}_{v}^{(n,m)}$'s is a zero vector 
$\ol 0 = \langle \ldots, 0, 0, 0, \ldots \rangle$. 

Note that $K_{0}^{(m)}$ is the largest number among ${K_{i}^{(m)}}$'s where 
$i = m  k, \ldots, - m k$. We prove that
$\lim_{m \rightarrow \infty}\dfrac{K_{0}^{(m)}}{{(2k + 1)}^{m}} = 0$. 
Indeed, since 
\begin{equation}\label{e43}
\frac{K_{0}^{(m + 1)}}{{(2k + 1)}^{m + 1}} =
\frac{1}{2 k + 1} \cdot  \frac{{\lbrack K}_{k}^{(m)}
 + \ldots + K_{0}^{(m)} + \ldots + K_{ - k}^{(m)}\rbrack}
 {(2 k + 1)^{m}} 
\leq \frac{K_{0}^{(m)}}{{(2k + 1)}^{m}},
\end{equation}
the limit $g_{0} = \lim_{m \rightarrow \infty}
\frac{K_{0}^{(m)}}{{(2\cdot k + 1)}^{m}}$ 
exists. We will show that $g_0 = 0$.

Choosing subsequences of $m$'s, we can define  
$g_{i} = \lim_{m \rightarrow \infty}\dfrac{K_{i}^{(m)}}
{(2k + 1)^{m}}$, where $i = k, \ldots, 1, -1, \ldots, -k$. 
Obviously,  $0 \leq g_{i} \leq g_{0}$. It follows from 
(\ref{e43}) that 
$$
\frac{2 k}{2k + 1}  g_{0} = \frac{1}{2 k + 1} \cdot [ g_{k}  + \ldots + g_{1} + g_{- 1} + \ldots + g_{- k}].
$$
This implies that $g_{i} = g_{0}$ for $i = k, \ldots, 1, -1, \ldots, -k$.

Finally, we have
$$(2k+1)  g_{0} = 
\lim_{m \rightarrow \infty}\frac{{\lbrack K}_{k}^{(m)} + \ldots + K_{0}^{(m)} + \ldots + K_{- k}^{(m)}\rbrack}
{(2k + 1)^{m}} \leq 1. 
$$
This implies that $g_{0}\  \leq \frac{1}{2k + 1}$.

In a similar way we can prove that
$g_{0} \leq \frac{1}{{(2\cdot k + 1)}^{2}}$ and so on. 
Hence $g_{0} = 0$, and the proposition is proved.
\end{proof}

\section{Examples}\label{sect examples}
In this section, we consider several examples of generalized
Bratteli diagrams. Our goal is to show how the methods, 
developed in Section \ref{sect_inverse limits}, can be 
applied to the study of probability tail invariant measures.

\subsection{ Bratteli diagrams and substitutions}

The shift dynamical systems associated with substitutions on a 
finite  alphabet 
have been studied by many authors; we mention here only several of them 
\cite{Fogg2002}, \cite{Queffelec2010},
\cite{DurandHostSkau1999}, 
\cite{DurandPerrin2022}, \cite{Putnam2018}, etc.
The primary interest is usually focused on minimal 
substitution dynamical systems. It was shown that substitution dynamical systems are completely described by 
stationary Bratteli diagrams. In 
\cite{BezuglyiKwiatkowskiMedynets2009}, the authors 
constructed Bratteli diagrams for aperiodic substitution 
dynamics. 

In recent years, substitution dynamical systems 
have been considered on a countable or even 
compact alphabet, see 
\cite{Ferenczi2006}, \cite{BezuglyiJorgensenSanadhya2024}, 
\cite{Manibo2023}, \cite{Manibo_Rust_Walton_2022}.
The problem of finding finite (or sigma-finite) invariant  measures for substitution dynamical systems is highly 
non-trivial in this case. 
In \cite{DomingosFMV2022} investigated 
the existence of invariant probability measures for substitutions on
countable alphabet.
In particular, they found a sufficient condition 
under which there are no invariant probability measures.  We show below that this sufficient condition is, in fact,  a consequence of Theorem \ref{thm 4.11 edited}. 

Let $A$ be a countable set (an alphabet), 
$A^{*}$ be the set of all finite words on $A$, and $A^{\mathbb{N}_0}$ be the set of infinite words on $A$. 
A \textit{substitution} is a map 
$\sigma \colon A \rightarrow A^{*}$ 
such that for every $a \in  A$, the finite word $\sigma(a)$ 
is not empty. We can extend a map $\sigma$ to $A^{*}$ and $A^{\mathbb{N}_0}$ by concatenation:
$\sigma(a_{0} , a_{1}, \ldots) =
\sigma(a_{0}) \sigma(a_{1}) \ldots$. 
In particular, we can define words 
$\sigma^{n}(a),\ a \in A,\ n \in \N$, by setting 
$\sigma^{1}(a) = \sigma(a) = u_{0} u_{1} \ldots {u}_{k}$, 
$\sigma^{n + 1}(a) = 
\sigma^{n} (u_{0}) \sigma^{n}(u_{1}), \ldots, \sigma^{n}(u_{k})$.
A substitution $\sigma$ determines a shift dynamical system 
$(X_{\sigma}, S)$, where
$X_{\sigma}$ is the set of all  sequences $u \in A^{\mathbb{N}_{0}}$ (or $u \in  A^{\mathbb{Z}}$) 
such that any finite subword of $u$ occurs in $\sigma^{n}(a)$ 
for some $a \in  A$ and $n \in \N$ and $S$ is the shift on
$X_\sigma$. 
To avoid unnecessary complications, we will assume that 
$|{\sigma}^{n}(a)| \rightarrow \infty$ as $n \to \infty$ for any $a \in A$
where $|\sigma^{n}(a)|$ denotes the length of $\sigma^{n}(a)$.

Now we define a Bratteli diagram $B_{\sigma} = (V, E)$ 
associated with a substitution $\sigma$. 
Set $V_{n} = A$ $n \in \N_0$. For $a \in V_{1}$, the set
$r^{-1}(a)$ consists of single edges $e(u_i, a)$
connecting $a$ and the vertices $u_0, \ldots, u_k$ where 
$(u_{0} u_{1}, \ldots {u}_{k}) = \sigma(a)$.
 
The Bratteli diagram $B_{\sigma} = (V, E)$ is stationary and 
its incidence matrix $M = (M_{ij})$, $i, j= 0, 1, 2, \ldots$ 
is the matrix associated to $\sigma$,
i.e., $M_{ij}$ is the number of occurrences of the letter  
$j$ in the word $\sigma(i)$. 
In general, the dynamics of the Bratteli diagram
$B_{\sigma} = (V, E)$ is not a good model for the shift dynamical system
($X_{\sigma}, S)$. 
However, if we assume that every pair $(uv)$ 
(where $u$ is the last letter of some $\sigma(i)$ and $v$ is the first letter of some $\sigma(j)$) appears inside some 
$\sigma(k)$,
then both dynamical systems $(X_{\sigma}, S)$ and the Vershik map $(X_B, \varphi_B)$ are isomorphic (for a more detailed 
discussion of this relation see \cite{BezuglyiKwiatkowskiMedynets2009}).

In \cite{DomingosFMV2022}, the authors proved the following
theorem.

\begin{theorem}\label{thm-Ferenczi}
Let $\sigma  : \N_0 \to  \N_0$ 
be a bounded length substitution such that $\sigma$ 
has a periodic point $u$ and the matrix $M = M_\sigma$ of 
the substitution is irreducible and aperiodic. If $M$ 
satisfies 
\be\label{eq-Ferenczi}
\lim_{n \to \infty} \sup_{i\in \N_0} \frac{M^{(n)}_{ij}}
{\sum_{k \geq 0} M^{(n)}_{ik} } = 0 \qquad \forall j \in N_0
\ee
then the dynamical system $(X_\sigma, S)$ has no finite 
invariant measure.
\end{theorem}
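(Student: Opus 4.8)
The plan is to transfer the statement to the stationary Bratteli diagram $B_\sigma=(V,E)$ associated with $\sigma$ --- with $V_n=\N_0$ for every $n$ and constant incidence matrix $M$ --- and then apply the description of $\Delta^{(n,\infty,cl)}$ obtained in Theorem \ref{thm 4.11 edited} and Theorem \ref{thm int over L t3}. Since $\sigma$ has a periodic point and $M$ is irreducible and aperiodic, the $S$-invariant probability measures of $(X_\sigma,S)$ correspond (up to a negligible set, via the constructions of \cite{BezuglyiKwiatkowskiMedynets2009}) to probability tail invariant measures on $X_{B_\sigma}$; in particular a finite $S$-invariant measure on $X_\sigma$ yields, after normalization, an element of $M_1(\mc R)$ on $B_\sigma$. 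Hence it suffices to prove that $B_\sigma$ carries no probability tail invariant measure. Because $\sigma$ has bounded length, say $|\sigma(a)|\le L$ for all $a$, one has $H^{(n)}_w=|\sigma^n(w)|\le L^n$, so $\{H^{(n)}_w:w\in V_n\}$ is bounded for each $n$, and Corollary \ref{cor Th 4.11}(1) guarantees that Theorem \ref{thm 4.11 edited} is applicable to $B_\sigma$.

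Next I would record the shape of the products of incidence matrices. As $B_\sigma$ is stationary with matrix $M$, one has ${G'}^{(n,m)}=M^m$, i.e. ${g'}_{vw}^{(n,m)}=M^{(m)}_{vw}$, and therefore, using $H^{(n)}_u\ge 1$,
\[
g_{vw}^{(n,m)}=\frac{M^{(m)}_{vw}\,H^{(n)}_w}{\sum_{u}M^{(m)}_{vu}\,H^{(n)}_u}\ \le\ H^{(n)}_w\cdot\frac{M^{(m)}_{vw}}{\sum_{u}M^{(m)}_{vu}}.
\]
By Theorem \ref{thm 4.11 edited}, $L^{(n)}(\{\ol g_v\})$ consists of the normalizations of the limit vectors $\ol y^{(n,\infty)}$ whose coordinates are $y^{(n,\infty)}_w=\lim_{m\to\infty}M^{(m)}_{v_m w}/\sum_u M^{(m)}_{v_m u}$ along sequences $v_m\in V_{n+m}$. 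Here hypothesis \eqref{eq-Ferenczi} enters: it says exactly that $\sup_v M^{(m)}_{vw}/\sum_u M^{(m)}_{vu}\to 0$ as $m\to\infty$ for every fixed $w$. Thus $y^{(n,\infty)}_w=0$ for all $w\in V_n$; equivalently, by the displayed bound every entry of every row $\ol g_v^{(n,m)}$ tends to $0$ uniformly in $v$, so $\ol 0$ is the only limit vector and $L^{(n)}(\{\ol g_v\})=\{\ol 0\}$. Theorem \ref{thm int over L t3} then yields $\Delta^{(n,\infty,cl)}=\{\ol 0\}$ for every $n\ge 0$.

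To conclude, assume $\mu\in M_1(\mc R)$ on $B_\sigma$. By Lemma \ref{lem_Delta inf meas}, $\mu$ is determined by a sequence $\{\ol q^{(n)}\}$ with $\ol q^{(n)}\in\Delta^{(n,\infty)}\subset\Delta^{(n,\infty,cl)}$, each $\ol q^{(n)}$ a probability vector, i.e. $\ol q^{(n)}\in\Delta_1^{(n)}$. But $\Delta^{(n,\infty,cl)}=\{\ol 0\}$ and $\ol 0\notin\Delta_1^{(n)}$, a contradiction. So $B_\sigma$, hence $(X_\sigma,S)$, has no finite invariant measure. One can even bypass Theorems \ref{thm 4.11 edited}--\ref{thm int over L t3}: writing $\ol q^{(n)}=G^{(n,m)T}\ol x^{(m)}$ with $\ol x^{(m)}\in\Delta^{(n+m)}$, the displayed bound gives $q_w^{(n)}=\sum_v g_{vw}^{(n,m)}x_v^{(m)}\le H^{(n)}_w\sup_v M^{(m)}_{vw}/\sum_u M^{(m)}_{vu}\to 0$ as $m\to\infty$, already forcing $\ol q^{(n)}=\ol 0$.

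I expect the main obstacle to lie not in the diagram computation --- which reduces to the elementary estimate above plus the machinery of Section \ref{sect_inverse limits} --- but in the reduction step: one must verify that the hypotheses on $\sigma$ (a periodic point, irreducibility and aperiodicity of $M$, bounded length, and $|\sigma^n(a)|\to\infty$) really guarantee that a finite $S$-invariant measure on $X_\sigma$ descends to a finite tail invariant measure on $X_{B_\sigma}$, and that no $S$-invariant measure concentrated on the exceptional (maximal or minimal) paths of $B_\sigma$ is overlooked. Once this bookkeeping is in place, the statement follows.
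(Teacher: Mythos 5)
Your diagram computation is exactly the paper's argument: the paper bounds
$g_{ij}^{(n,m)} = {f'}_{ij}^{(m)} H_j^{(n)} / \sum_k {f'}_{ik}^{(m)} H_k^{(n)} \le H_j^{(n)}\, M^{(m)}_{ij}/\sum_k M^{(m)}_{ik}$ using $H_k^{(n)}\ge 1$, invokes \eqref{eq-Ferenczi} to conclude that every row $\ol g_i^{(n,m)}$ tends to $\ol 0$, and deduces $\Delta^{(n,\infty,cl)}=\{\langle 0,0,\ldots\rangle\}$, hence no probability tail invariant measure. Your shortcut that bypasses Theorems \ref{thm 4.11 edited} and \ref{thm int over L t3} is a harmless simplification of the same estimate.

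One caveat on scope: the paper does \emph{not} prove Theorem \ref{thm-Ferenczi} itself --- it is quoted from \cite{DomingosFMV2022} --- and what the paper establishes is only the subsequent Proposition, namely that condition \eqref{eq-Ferenczi} forces the Bratteli diagram $B_\sigma$ to have no tail invariant probability measure. The reduction you flag at the end (that a finite $S$-invariant measure on $(X_\sigma,S)$ would descend to a probability tail invariant measure on $X_{B_\sigma}$, using the periodic point, irreducibility, aperiodicity, and the recognizability issues handled in \cite{BezuglyiKwiatkowskiMedynets2009}) is therefore genuinely the missing piece if one wants the full theorem from the diagram statement; neither your write-up nor the paper carries it out, so you are right to identify it as the real obstacle rather than mere bookkeeping.
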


\begin{prop}
Condition (\ref{eq-Ferenczi}) implies that the Bratteli 
diagram $B_{\sigma} = (V, E)$ has no tail invariant 
probability measure. 
\end{prop}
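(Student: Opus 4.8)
The plan is to run the hypothesis \eqref{eq-Ferenczi} through the inverse-limit description of invariant measures from Section~\ref{sect_inverse limits}. Recall that $B_\sigma$ is stationary with incidence matrix $M$, so $\overline{g}_{v}^{(n,m)} = G^{(n,m)T}(\overline e_v^{(n+m)})$ is the $v$-th row of the $m$-fold stochastic product, and by Lemma~\ref{lem_Delta inf meas} any $\mu \in M_1(\mathcal R)$ yields a sequence of \emph{probability} vectors $\{\overline q^{(n)}\}$ with $\overline q^{(n)} \in \Delta^{(n,\infty)} \subseteq \Delta^{(n,\infty,cl)}$. By Theorem~\ref{thm int over L t3} the set $\Delta^{(n,\infty,cl)}$ is exactly the set of barycenters of Borel probability measures on $L^{(n)}(\{\overline g_v\})$. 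Hence it suffices to prove that, under \eqref{eq-Ferenczi}, one has $L^{(n)}(\{\overline g_v\}) = \{\overline 0\}$ for every $n$: then $\Delta^{(n,\infty,cl)} = \{\overline 0\}$ contains no probability vector, so no admissible sequence $\{\overline q^{(n)}\}$ exists and $M_1(\mathcal R) = \emptyset$.

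To compute $L^{(n)}$ I would estimate the coordinates of the rows directly. Using $g_{vw}^{(n,m)} = (M^m)_{vw} H^{(n)}_w / H^{(n+m)}_v$ together with $H^{(n+m)}_v = \sum_k (M^m)_{vk} H^{(n)}_k \ge \sum_k (M^m)_{vk}$ (every height is $\ge 1$, since $H^{(0)} \equiv 1$ and $H^{(n+1)} = F'_n H^{(n)}$), for any sequence $v_m \in V_{n+m}$ and any fixed $w \in V_n$ one gets
\[
0 \le g_{v_m w}^{(n,m)} \le H^{(n)}_w\,\frac{(M^m)_{v_m w}}{\sum_k (M^m)_{v_m k}} \le H^{(n)}_w\,\sup_i \frac{M^{(m)}_{iw}}{\sum_k M^{(m)}_{ik}} \longrightarrow 0 \quad (m \to \infty)
\]
by \eqref{eq-Ferenczi}. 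Thus every convergent subsequence of $(\overline g_{v_m}^{(n,m)})_m$ tends to $\overline 0$, and since $\Delta^{(n)}$ is compact this forces $L^{(n)}(\{\overline g_v\}) = \{\overline 0\}$. (Alternatively, the same coordinatewise bound applies to every convex combination of the rows and passes to closures, so by Lemma~\ref{lem_closure} each point of $cl(\Delta^{(n,m)})$ has $w$-th coordinate at most $H^{(n)}_w \sup_i M^{(m)}_{iw}/\sum_k M^{(m)}_{ik}$, whence $\Delta^{(n,\infty,cl)} = \bigcap_m cl(\Delta^{(n,m)}) = \{\overline 0\}$ directly, without invoking $L^{(n)}$.) This is precisely where Theorem~\ref{thm 4.11 edited} enters: condition \eqref{eq-Ferenczi} says the normalized rows $\overline y_v^{(n,m)}$ of $G'^{(n,m)}$ tend coordinatewise to $\overline 0$, and formula \eqref{e26-1} transfers this degeneracy to the stochastic rows $\overline g_v^{(n,m)}$.

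The step I expect to require the most care is the bookkeeping between the two normalizations: Theorem~\ref{thm 4.11 edited} normalizes the rows of $G'^{(n,m)} = M^m$ by their row sums, whereas the stochastic rows $\overline g_v^{(n,m)}$ are normalized by the heights $H^{(n+m)}_v = \sum_k (M^m)_{vk} H^{(n)}_k$; one must check that the crude bound $H^{(n)}_k \ge 1$ is enough to push the $\overline y$-estimate to a $\overline g$-estimate, and that all the row sums involved are finite and positive (which holds for the incidence matrix of any substitution with nonempty images, so the ``bounded length'' hypothesis of Theorem~\ref{thm-Ferenczi} is not even needed at this point). Once this is settled, the conclusion is immediate: a tail invariant probability measure would force some probability vector $\overline q^{(n)}$ to lie in $\Delta^{(n,\infty,cl)} = \{\overline 0\}$, which is impossible, so $B_\sigma$ supports no tail invariant probability measure.
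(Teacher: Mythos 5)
Your proposal is correct and follows essentially the same route as the paper: the central estimate $g_{v w}^{(n,m)} \le H^{(n)}_w\, M^{(m)}_{vw}/\sum_k M^{(m)}_{vk}$, obtained by bounding the heights in the denominator below by $1$, is exactly the paper's computation, and the conclusion that $\Delta^{(n,\infty,cl)}=\{\overline 0\}$ rules out any probability vector $\overline q^{(n)}$ is the same. Your extra care in spelling out why $L^{(n)}(\{\overline g_v\})=\{\overline 0\}$ forces $\Delta^{(n,\infty,cl)}=\{\overline 0\}$ via Theorem~\ref{thm int over L t3} only makes explicit a step the paper leaves implicit.
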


\begin{proof}
We use our standard notation of matrices related to a 
Bratteli diagram. Note that $M = F'$ in our notation. 
Because $B_\sigma$ is a stationary Bratteli diagram, we have
${G'}^{(n,m)} = {F'}^{m}$ and, for the stochastic matrix 
$G^{(n,m)}$, we can write 
$$
\overline{g}_{i}^{(n, m)} = 
\left\langle\frac{{f'}_{ij}^{(m)} \cdot H_{j}^{(n)}}{ \sum_{k = 0}^{\infty} {f'}_{ik}^{(m)} \cdot H_{k}^{(n)} } \ :\ j = 0, 1, 2, \ldots 
\right\rangle.
$$
Then using \eqref{eq-Ferenczi} we have, for each $j \in \N_0$,
$$
g_{ij}^{(n, m)} = \frac{{f'}_{ij}^{(m)} \cdot H_{j}^{(n)}}
{\sum_{k = 0}^{\infty} {f'}_{ik}^{(m)} \cdot H_{k}^{(n)}} 
\leq
\frac{{f'}_{ij}^{(m)} \cdot H_{j}^{(n)}}{\sum_{k = 0}^{\infty} {f'}_{ik}^{(m)}} 
 \rightarrow 0
 $$ 
Thus, $\lim_{m \to \infty}{\overline{g}}_{i}^{(n, m)} = 0$ 
for any sequence $\{ i_{m} \}$ and therefore 
$\mathrm{\Delta}^{(n, cl, \infty)} = \{ \langle 0, 0, \ldots \rangle \}$
for each  $n \in \N_0$.
\end{proof}

\subsection{Reducible Bratteli diagrams with infinitely many odometers}

We consider here a class of reducible non-stationary 
generalized Bratteli diagrams $B = B_{IO}$ consisting of 
infinitely many odometers connected by single edges. This class of diagrams was first considered in \cite{BezuglyiKarpelKwiatkowski2024}, where the authors used the procedure of measure extension from a subdiagram to obtain results concerning the number of ergodic tail invariant measures. Here we recall the obtained results and show how apply methods developed in Section \ref{sect_inverse limits} to these diagrams. For more results concerning tail invariant measures and Vershik maps for reducible generalized Bratteli diagrams with infinitely many odometers see \cite{BezuglyiKarpelKwiatkowski2024}. 
%We will focus on the study of tail invariant measures and their extensions. 

Let the generalized Bratteli diagram $B = B_{IO}$ be defined 
by the sequence of incidence matrices 
\be\label{eq-m-x nostat DIO}
{F}_{n}^{'} =
\left[\begin{array}{llllllll}
 a_{n}^{(1)} &      1 &      0 &      0 & \ldots & \ldots & \ldots & \ldots  \\
     0 &  a_{n}^{(2)} &      1 &      0 &      0 & \ldots & \ldots & \ldots  \\
     0 &      0 &  a_{n}^{(3)} &      1 &      0 &      0 & \ldots & \ldots  \\
     0 &      0 &      0 &  a_{n}^{(4)} &      1 &      0 &      0 & \ldots  \\
\ldots & \ldots & \ldots & \ldots & \ldots & \ldots & \ldots & \ldots  \\
\end{array} \right\rbrack,\qquad n \in \N_0,
\ee
where the natural numbers $a_{n}^{(i)} \geq 2$ for all $n, i$.
The index $n$ points out at the $n$-th level of the diagram $B$,
and $i$ indicates the odometer supported by the $i$-vertex
in each level, see Figure 2 where a part of the diagram between levels $V_n$ and $V_{n+1}$ is shown ($a_{n}^{(i)}$ indicates the number
of vertical edges).

\begin{figure}[htb!]\label{fig odometer}
\begin{center}
\includegraphics[scale=0.7]{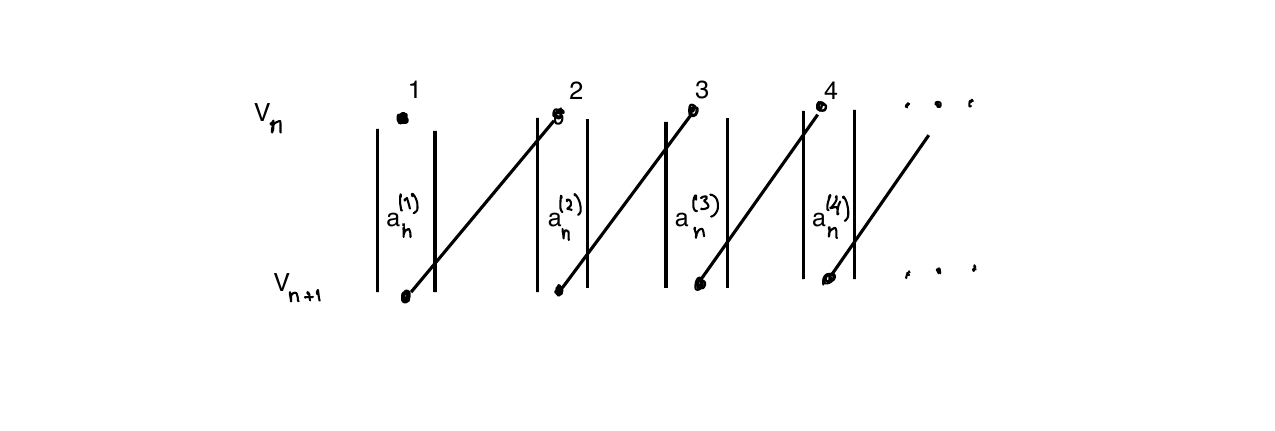}
\caption{The diagram $B_{IO}$.}
\end{center}
\end{figure}

The diagram $B_{IO}$ has a natural set of elementary vertex 
subdiagrams $\ol B(i)$ consisting of vertical odometers where
$i$ runs the set $\N$. 
There are exactly $a_n^{(i)}$ edges connecting the vertices
$i \in V_n$ and $i \in V_{n+1}$. 
 The subdiagram $\ol B(i)$ 
of $B$ admits a unique tail invariant probability measure 
$\ol \mu(i)$ on the path space $X_{\ol B(i)}$ such that for a 
cylinder set $[\ol e] = [e_1, \ldots, e_n], s(e_j) = r(e_j) =i$,
$$
\ol \mu(i) ([\ol e]) = \frac{1}{a^{(i)}_1\ \cdots\ 
a^{(i)}_n}.
$$
The measure extension procedure applied to $\ol B(i)$ 
gives us the measure $\wh{\ol \mu}(i)$ on the tail invariant set 
$\wh X_{\ol B(i)}$. It follows from Theorem 
\ref{TheoremIV_1} that 
$$
\wh{\ol \mu}(i)(\wh X_{\ol B(i)}) < \infty \ 
\Longleftrightarrow \ \sum_{n=1}^\infty \frac{H^{(n)}_{i+1}}
{a^{(i)}_1\ \cdots\ a^{(i)}_n} < \infty.
$$
Thus, it follows from the construction of $B_{IO}$ 
that there are infinitely many ergodic measures 
$\wh{\ol \mu}(i)$ on the path space $X_B$. Some of them may
be finite the others are infinite. We will give an example
below, for more examples see \cite{BezuglyiKarpelKwiatkowski2024}. Moreover, the measures  $\wh{\ol \mu}(i)$ 
and $\wh{\ol \mu}(j)$ are 
mutually singular ($i \neq j)$ because they are supported by 
non-intersecting tail invariant sets $\wh X_{\ov B(i)}$ and $\wh X_{\ov B(j)}$. Our goal is to show that there are no other ergodic measures.

\begin{remark}\label{rem-theta vs mu}
Let $\theta$ be a finite tail invariant measure on 
$\wh X_{\ol B(i)}$. Then there is a constant $C$ such that 
$\theta = C \wh{\ol \mu}(i)$. 

Indeed, let $C = \theta(X_{\ol B(i)})$, then $\ol \theta :=
\theta|_{X_{\ol B(i)}}= C \ol \mu(i)$. By tail invariance,
$$
\theta([e_0, \ldots, e_n]) =  \frac{C}{a^{(i)}_1\ \cdots\ 
a^{(i)}_n} = C \ol \mu(i)([e_0, \ldots, e_n]).
$$
\end{remark}

\begin{theorem}\label{prop-all erg meas}
Let $\mc M$ be the family of measures $\wh{\ol\mu}(i)$ 
such that $\wh{\ol \mu}(i)(\wh X_{\ol B(i)}) < \infty$. 
Then, after normalization, $\mc M$ coincides with the set of all ergodic probability 
tail invariant measures on the path space $X_B$ of the diagram 
$B$. 
\end{theorem}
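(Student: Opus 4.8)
The plan is to prove the two inclusions separately. For the direction that every normalized measure of $\mc M$ is an ergodic probability tail invariant measure, I would invoke only the general facts about measure extension recalled in Subsection \ref{ss subdiagrams}: the tail-invariant extension $\wh{\ol\mu}(i)$ of the ergodic measure $\ol\mu(i)$ on the odometer subdiagram $\ol B(i)$ is again ergodic; it is finite exactly when $\wh{\ol\mu}(i)(\wh X_{\ol B(i)})<\infty$, i.e.\ exactly for the members of $\mc M$ (equivalently, by Theorem \ref{TheoremIV_1}, when the associated series converges); and such a finite ergodic measure yields an ergodic probability measure after division by its total mass. This already gives the inclusion $\subseteq$.

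For the converse I would first analyze the path space. The key structural remark is that in $B_{IO}$ every vertex $w\in V_n$ is the source of edges whose ranges lie in $\{w,\,w-1\}\subset V_{n+1}$ (the $a_n^{(w)}$ odometer edges at $w$ and, when $w\geq 2$, the single diagonal edge to $w-1$). Hence along any path $x=(x_n)\in X_B$ the source sequence $s(x_0)\geq s(x_1)\geq s(x_2)\geq\cdots$ is non-increasing, and since the vertices are indexed by $\N$ it is bounded below, so it stabilizes: there are $N$ and $i$ with $s(x_n)=i$ for all $n\geq N$. Replacing $(x_0,\dots,x_{N-1})$ by any finite path of $\ol B(i)$ of length $N$ exhibits $x$ as tail equivalent to a path lying entirely in $X_{\ol B(i)}$, while conversely any path of $\wh X_{\ol B(i)}$ stabilizes at $i$. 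I would conclude $X_B=\bigsqcup_{i\in\N}\wh X_{\ol B(i)}$, a partition into $\mc R$-invariant Borel sets, with $\wh X_{\ol B(i)}=\bigcup_N\bigcap_{n\geq N}\{x:s(x_n)=i\}$.

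Then, given an arbitrary ergodic probability tail invariant measure $\mu$ on $X_B$, since each $\wh X_{\ol B(i)}$ is $\mc R$-invariant, ergodicity forces $\mu(\wh X_{\ol B(i)})\in\{0,1\}$; as the sets partition $X_B$ and $\mu(X_B)=1$, there is a unique $i_0$ with $\mu(\wh X_{\ol B(i_0)})=1$. Thus $\mu$ is a finite tail invariant measure giving full mass to $\wh X_{\ol B(i_0)}$, and Remark \ref{rem-theta vs mu} applies to $\theta=\mu$ and gives $\mu=C\,\wh{\ol\mu}(i_0)$ with $C=\mu(X_{\ol B(i_0)})$. Since $\mu$ is a nonzero probability measure, $C>0$, forcing $\wh{\ol\mu}(i_0)(\wh X_{\ol B(i_0)})=C^{-1}<\infty$, so $\wh{\ol\mu}(i_0)\in\mc M$ and $\mu$ is exactly its normalization. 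To finish, I would note that distinct members of $\mc M$ live on the disjoint sets $\wh X_{\ol B(i)}$, hence are mutually singular, so the normalization map is injective and $\mc M$ corresponds bijectively to the whole set of ergodic probability tail invariant measures.

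The hard part is the vertex-stabilization step of the second paragraph: it is precisely what makes the decomposition of $X_B$ into the tail saturations of the elementary odometers exhaustive and thereby excludes ergodic mass ``escaping to infinity''; without it there would be room for further ergodic measures. Once it is in place, ergodicity together with the uniqueness statement of Remark \ref{rem-theta vs mu} closes the argument with no extra computation --- the only remaining point to check being that a path stabilizing at $i$ is genuinely tail equivalent to a path of $X_{\ol B(i)}$, which is immediate since its finite initial segment may be freely replaced.
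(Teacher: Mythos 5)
Your argument is correct and follows essentially the same route as the paper: the paper's proof (given as a sketch in the remark after the theorem, with details deferred to \cite{BezuglyiKarpelKwiatkowski2024}) likewise partitions $X_B$ into the invariant sets $\wh X_{\ol B(i)}$, uses ergodicity to place full mass on a single $\wh X_{\ol B(i_0)}$, and concludes via Remark \ref{rem-theta vs mu}. The only difference is that you explicitly justify the exhaustiveness of the partition by the monotone-stabilization of the source sequence, a step the paper states without proof.
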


\begin{remark} (i) The proof of Theorem~\ref{prop-all erg meas} can be found in \cite{BezuglyiKarpelKwiatkowski2024}, and it contains a version of the 
famous Rokhlin theorem about a canonical system of measures associated with a measurable partition. Note that $X_B$ is partitioned into sets $\wh X_{\ol B(i)}$ for $i = 1,2,\ldots$ Thus, for any probability ergodic invariant measure $\mu$ on $X_B$, we have $\mu(\wh X_{\ol B(i)}) = 1$ for some $i$. Hence by Remark~\ref{rem-theta vs mu}, we have 
$$\mu = \frac{\wh{\ov \mu}(i)}{\wh{\ov \mu}(i)\left(\wh X_{\ol B(i)}\right)}.$$

(ii) The result of Theorem \ref{prop-all erg meas} 
can be obtained in the case when the vertical
odometers are replaced with simple stationary 
standard Bratteli diagrams $\ol B_i$ with the incidence matrix $\ol F_i$. As for odometers, we will 
have a unique ergodic probability measure $\ol\mu_i$
on the path space $X_{\ol B_i}$. The measure $\ol\mu_i$
is completely determined by the values on cylinder sets
$\ol \mu_i([\ol e]) =\frac{\xi_v}{\lambda^n}$ where 
$\xi = (\xi_v)$ is the Perron-Frobenius eigenvector,
$\xi \ol F_i = \lambda\xi$ and $r(\ol e) = v \in V_n$, see, e.g. \cite{BezuglyiKwiatkowskiMedynetsSolomyak2010}.
Assuming that the extension
$\wh{\ol\mu}_i(\wh X_{\ol B_i})$ is finite, we get
that this measure is a unique ergodic measure (up to
a constant). The same arguments as in the proof of Theorem \ref{prop-all erg meas} can be repeated. 

(iii) For the Bratteli diagram $B_{IO}$,
the set of limit points $L(\{\ol g_v : v \in V_{n+m}, m \in 
\N \})$ coincides with $\{\lim_{m\to \infty} 
\ol g^{(n,m)}_{i_m}  \}$. 
Since the incidence matrices of the diagram are upper triangular,
we get that the limit of $\ol g^{(n,m)}_{i_m}$ is zero when $i_m 
\rightarrow \infty$ as $m \rightarrow \infty$. This means that
we should only consider the sequences $\{i_m\}$ such 
that $i_m = i$.
Denote by $\nu(i)$ a probability 
measure determined by the limit $\lim_{m\to \infty} 
\ol g^{(n,m)}_{i}$. Then it can be shown that the following 
fact holds:

\textbf{Claim.} If for some $i$ the extension of the measure $\ol \mu(i)$ is finite, then $\nu(i) = c_i \wh{\ol\mu}(i)$ 
for a constant $c_i$.
\end{remark}

From Theorem~\ref{prop-all erg meas} it follows, that $B_{IO}$ can have not more than countably many probability ergodic invariant measures. We formulate here a statement that was proved in  
\cite{BezuglyiKarpelKwiatkowski2024}. 
 
\begin{theorem}\cite{BezuglyiKarpelKwiatkowski2024}\label{Thm:DIO_measures_options}
A stationary generalized Bratteli diagram with infinitely many odometers can have only finitely many probability ergodic invariant measures. In particular, one can find diagrams which 
(i) have a unique probability ergodic invariant measure, (ii) have no probability invariant measure, but possess an infinite $\sigma$-finite invariant measure that takes finite values on all cylinder sets, and  (iii) have no invariant measure that takes finite values on all cylinder sets. A non-stationary generalized Bratteli diagram with infinitely many odometers can have countably infinitely many probability ergodic invariant measures.
\end{theorem}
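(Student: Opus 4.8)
Throughout, $B = B_{IO}$ has one‑sided levels $V_n = \N$ and the bidiagonal incidence matrices \eqref{eq-m-x nostat DIO}. The plan is to reduce every assertion to Theorem~\ref{prop-all erg meas}, which identifies the ergodic probability tail invariant measures on $X_B$ (up to normalization) with the family $\mc M = \{\wh{\ol\mu}(i) : \wh{\ol\mu}(i)(\wh X_{\ol B(i)}) < \infty\}$, and to the criterion obtained from Theorem~\ref{TheoremIV_1}, namely that $\wh{\ol\mu}(i)$ is finite exactly when $\sum_{n\ge 1} H^{(n)}_{i+1}/(a^{(i)}_1\cdots a^{(i)}_n) < \infty$. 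Thus everything comes down to understanding the set $S$ of indices $i$ for which this series converges.

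\emph{Stationary case.} Here $a^{(i)}_n = a^{(i)}$, so the criterion reads $\sum_n H^{(n)}_{i+1}/(a^{(i)})^n < \infty$, and the key lemma I would prove is: if $i \in S$ then $a^{(j)} < a^{(i)}$ for every $j > i$. The argument is a lower bound on heights. Since in $B_{IO}$ the edges out of a level‑$n$ vertex $w$ go to $w$ with multiplicity $a^{(w)}$ and to $w-1$ with multiplicity $1$, for large $n$ there is a finite path from the vertex $j \in V_0$ to $i+1 \in V_n$ that stays at $j$ for the first $n-(j-i-1)$ levels and then descends one step at a time; its multiplicity is $(a^{(j)})^{\,n-(j-i-1)}$. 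Hence $H^{(n)}_{i+1}/(a^{(i)})^n \ge (a^{(j)}/a^{(i)})^{n}\,(a^{(i)})^{-(j-i-1)}$, which does not tend to $0$ once $a^{(j)}\ge a^{(i)}$, contradicting convergence. Granting the lemma, any $i_1 < i_2$ in $S$ satisfy $a^{(i_2)} < a^{(i_1)}$, so $(a^{(i)})_{i\in S}$ is a strictly decreasing sequence of integers $\ge 2$; therefore $|S|\le a^{(\min S)}-1 < \infty$, and by Theorem~\ref{prop-all erg meas} this is exactly the number of ergodic probability tail invariant measures.

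\emph{The sub‑cases and the non‑stationary statement.} I would realize (i)--(iii) by explicit stationary $B_{IO}$: for (i) take $a^{(1)}=4$, $a^{(i)}=2$ for $i\ge2$, so $H^{(n)}_{i+1}=3^n$ for $i\ge1$ and hence $1\in S$ while the lemma excludes every $i\ge2$ — a unique ergodic probability measure; for (ii) take $a^{(1)}=3$, $a^{(i)}=2$ for $i\ge2$, so $S=\emptyset$ (the $i=1$ series is $\sum 3^n/3^n=\infty$, and the lemma's necessary condition fails for $i\ge 2$), hence no probability invariant measure, yet $\ol p^{(n)}_w:=3^{-n}$ solves $(F'_n)^T\ol p^{(n+1)}=\ol p^{(n)}$ and therefore by Theorem~\ref{BKMS_measures=invlimits} defines a tail invariant measure finite on every cylinder with infinite total mass $\sum_w H^{(n)}_w 3^{-n}$; for (iii) take $a^{(i)}=i+1$, strictly increasing, and run the forward recursion $\ol p^{(n+1)}_w=(\ol p^{(n)}_w-\ol p^{(n+1)}_{w-1})/a^{(w)}$ — solving the two‑term recursion in $n$ for the $w$‑th coordinate shows that $a^{(w)}>a^{(w-1)}$ forces $\ol p^{(0)}_{w-1}=0$ (the slower mode $(a^{(w-1)})^{-n}$ occurs with coefficient $\propto \ol p^{(0)}_{w-1}/(a^{(w-1)}-a^{(w)})\le 0$, driving $\ol p^{(n)}_w$ to $-\infty$ if $\ol p^{(0)}_{w-1}>0$), and induction on $w$ gives $\ol p^{(0)}\equiv 0$, so no tail invariant measure on $X_B$ is finite on all cylinders. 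For the non‑stationary statement I would take $a^{(i)}_n=c_n$ independent of $i$ with $c_n\ge 2$ and $\sum_n c_n^{-1}<\infty$ (say $c_n=2^n$); then $H^{(n)}_w=\prod_{k=0}^{n-1}(c_k+1)$ for all $w$, and for every $i$
\[
\sum_{n\ge1}\frac{H^{(n)}_{i+1}}{a^{(i)}_1\cdots a^{(i)}_n}=(c_0+1)\sum_{n\ge1}\frac1{c_n}\prod_{k=1}^{n-1}\Bigl(1+\tfrac1{c_k}\Bigr)<\infty,
\]
so $S=\N$ and, by Theorem~\ref{prop-all erg meas}, the $\wh{\ol\mu}(i)$, $i\in\N$, are pairwise singular (they live on the disjoint saturations $\wh X_{\ol B(i)}$) and exhaust the ergodic probability tail invariant measures, hence there are exactly $\aleph_0$ of them.

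The main obstacle is the key lemma of the stationary case: pinning down the strict domination $a^{(i)} > \sup_{j>i}a^{(j)}$ as a \emph{necessary} consequence of convergence needs the right height estimate, and the lingering‑path lower bound above is exactly what closes the gap between ``$a^{(i)}$ is a max of the tail'' and ``$a^{(i)}$ is a strict max of the tail''. The analysis of the forward recursion in case (iii) is the other slightly delicate point; the remaining steps are bookkeeping or direct appeals to the quoted theorems.
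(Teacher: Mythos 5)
Your argument is correct, but note that the paper itself gives no proof of this statement: it is quoted from \cite{BezuglyiKarpelKwiatkowski2024}, so there is nothing internal to compare against. What you have done is reassemble the result from the tools the paper \emph{does} supply -- the classification of ergodic probability measures as normalized finite extensions $\wh{\ol\mu}(i)$ (Theorem \ref{prop-all erg meas}), the finiteness criterion $\sum_n H^{(n)}_{i+1}/(a^{(i)}_1\cdots a^{(i)}_n)<\infty$ coming from Theorem \ref{TheoremIV_1}, and Proposition \ref{prop sum a_n} for the non-stationary countable case -- and the genuinely new ingredients you add all check out. The lingering-path lower bound $H^{(n)}_{i+1}\ge (a^{(j)})^{\,n-(j-i-1)}$ is valid (paths in $B_{IO}$ move weakly downward in the vertex index, so such a path exists for every $j\ge i+1$ and all large $n$), and it does yield the key lemma that $i\in S$ forces $a^{(j)}<a^{(i)}$ for all $j>i$, hence that $a^{(\cdot)}$ is strictly decreasing on $S$ and $S$ is finite. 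The three stationary examples are verified by the same height computation ($H^{(n)}_w=3^n$ for $w\ge 2$ in (i) and (ii)), and the forward recursion in (iii) is sound. Two small blemishes: the constant in the height bound should be $(a^{(j)})^{-(j-i-1)}$ rather than $(a^{(i)})^{-(j-i-1)}$ (immaterial, since only its positivity and independence of $n$ are used); and in (iii) it is the rescaled quantity $(a^{(w)})^{n}p^{(n)}_w$ that tends to $-\infty$, while $p^{(n)}_w$ itself merely becomes negative -- which is already the contradiction with non-negativity. In (ii) you should also say explicitly that the absence of \emph{ergodic} probability invariant measures rules out all probability invariant measures via ergodic decomposition of the tail equivalence relation; the paper uses this fact elsewhere (e.g., at the end of the proof of Theorem \ref{t5}), so it is safe to invoke.
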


Consider a non-stationary generalized Bratteli diagram $B$ defined by a sequence of 
natural numbers $\{a_{i} : i \in \N_0 \}$. Without 
loss of generality, we can assume that $a_n \geq 2$ for 
all $n$.
The diagram $B$ consists of an infinite sequence of
non-stationary odometers 
connected with the neighboring odometer by single edges.
More precisely, let  $V_{n} = \N$
and the incidence matrices $F^{'}_{n}$ has the form
\be\label{eq-m-x DIO}
{F}_{n}^{'} =
\left[\begin{array}{llllllll}
 a_{n} &      1 &      0 &      0 & \ldots & \ldots & \ldots & \ldots  \\
     0 &  a_{n} &      1 &      0 &      0 & \ldots & \ldots & \ldots  \\
     0 &      0 &  a_{n} &      1 &      0 &      0 & \ldots & \ldots  \\
     0 &      0 &      0 &  a_{n} &      1 &      0 &      0 & \ldots  \\
\ldots & \ldots & \ldots & \ldots & \ldots & \ldots & \ldots & \ldots  \\
\end{array} \right\rbrack,\qquad n \in \N_0.
\ee

Fix $i \geq 1$, set $W_{n} = \left\{ i \right\}$, $n = 1, 2, 
\ldots$ and define the subdiagram ${\overline{B}}_{(i)} = (\ol W, {\ol E})$ as above with the only difference that 
the set $\ol E_n$ is formed now by $a_n$ edges connecting 
the vertices $i \in V_n$ and $i\in V_{n+1}$. 
The unique tail invariant probability measure
$\overline{\mu} = \overline{\mu}(i)$ on
${\overline{B}}_{(i)} = (\ol W, {\ol E})$ is given by the formula
$\overline{\mu}([\ol e]) = \dfrac{1}{a_{0}\ \cdots\ a_{n}}$ 
where $r(\ol e) = i \in V_{n+1}$. 

By definition of the diagram, $H^{(n)} = H^{(n)}_i$ 
for all  $n$ and  $i$ (as usual, we set $H^{(0)}_i =1$). 
Then $H^{(n + 1)} = H_{i}^{(n + 1)}  = (a_{n} + 1) H^{(n)}$ 
which implies that 
$$H^{(n+1)} = (a_{0} +1) \ \cdots \ (a_{n} + 1), \quad n \in 
\N_0.
$$ 

The proof of the following statement can be found in \cite{BezuglyiKarpelKwiatkowski2024}.

\begin{prop} \label{prop sum a_n}
Let the sequence $(a_n)$ be such that $\sum_n a_n^{-1}
< \infty$. Then,
for every $i$, the extension $\wh{\ol \mu}(i)$ is finite.
The set of all ergodic finite tail invariant measures
on the path space of the diagram $B$ is 
formed by $\{\wh{\ol \mu}(i) \ : i \in \N\}$.
\end{prop}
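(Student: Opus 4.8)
The plan is to split the assertion into two independent parts: (a) the finiteness $\wh{\ol\mu}(i)(\wh X_{\ol B(i)})<\infty$ for every $i$, which I expect to be an elementary product–convergence computation, and (b) the exhaustiveness of the family $\{\wh{\ol\mu}(i)\}$, which I would obtain from the classification already available for the diagrams $B_{IO}$.

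For part (a), I would first record the two facts furnished by the structure of the diagram: the recursion $H^{(n+1)}=(a_n+1)H^{(n)}$ gives $H^{(n)}_w=H^{(n)}=\prod_{k=0}^{n-1}(a_k+1)$ for all $w$, and the unique tail invariant probability measure $\ol\mu(i)$ on the odometer $\ol B(i)$ assigns $\ol\mu(i)([\ol e])=(a_0\cdots a_{n-1})^{-1}$ to a length-$n$ cylinder with $r(\ol e)=i\in V_n$. Inserting these into the measure-extension formula \eqref{extension_method} for the vertex subdiagram with support $W_n=\{i\}$, I get
$$
\wh{\ol\mu}(i)(\wh X_{\ol B(i)})=\lim_{n\to\infty}H^{(n)}_i\,\ol p^{(n)}_i=\lim_{n\to\infty}\prod_{k=0}^{n-1}\frac{a_k+1}{a_k}=\prod_{k=0}^{\infty}\Bigl(1+\frac{1}{a_k}\Bigr).
$$
Since $a_k\ge 2$ and $\log(1+x)\le x$, the hypothesis $\sum_k a_k^{-1}<\infty$ makes this product converge, so $\wh{\ol\mu}(i)(\wh X_{\ol B(i)})<\infty$ for every $i$; the value being $\ge 1$, the normalization is legitimate. (If one prefers, the same series $\sum_n H^{(n)}(a_0\cdots a_n)^{-1}$ appears when checking condition (ii) of Theorem~\ref{TheoremIV_1}, because the only edge from the support of $\ol B(i)$ into $W'_n=\N\setminus\{i\}$ is ${f'}^{(n)}_{i,i+1}=1$.)

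For part (b), I would note that the present diagram is precisely the special case $a_n^{(i)}=a_n$ of the family $B_{IO}$ with incidence matrices \eqref{eq-m-x nostat DIO}, so Theorem~\ref{prop-all erg meas} applies as stated: after normalization the family $\mathcal M$ of those $\wh{\ol\mu}(i)$ with finite extension coincides with the set of all ergodic probability tail invariant measures on $X_B$. By part (a) every $\wh{\ol\mu}(i)$ has finite extension, hence $\mathcal M=\{\wh{\ol\mu}(i):i\in\N\}$; moreover these measures are pairwise mutually singular since they are carried by the disjoint tail invariant sets $\wh X_{\ol B(i)}$. Up to scaling this yields the proposition.

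The step I expect to be the genuine obstacle — were one to want a self-contained proof rather than a citation of Theorem~\ref{prop-all erg meas} — is the exhaustiveness. It rests on the observation that along any infinite path the source vertices $s(x_n)\in\N$ form a non-increasing, hence eventually constant, sequence, so $X_B=\bigsqcup_{i\in\N}\wh X_{\ol B(i)}$ is a partition into tail invariant Borel sets; any ergodic measure therefore concentrates on a single $\wh X_{\ol B(i)}$, and a Rokhlin-type disintegration combined with the uniqueness of the tail invariant measure on an odometer (cf.\ Remark~\ref{rem-theta vs mu}) forces it to be the normalization of $\wh{\ol\mu}(i)$. By contrast, the finiteness computation in part (a) is routine.
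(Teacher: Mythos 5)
Your proof is correct and follows essentially the route the paper intends: the paper itself defers the argument to a citation, but the finiteness reduces to the product $\prod_k(1+a_k^{-1})$ via the extension formula \eqref{extension_method} (equivalently the series in Theorem~\ref{TheoremIV_1}(ii)), and the exhaustiveness is exactly Theorem~\ref{prop-all erg meas} together with the partition $X_B=\bigsqcup_i\wh X_{\ol B(i)}$ and Remark~\ref{rem-theta vs mu}, just as you assemble it. Nothing further is needed.
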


The next statement is, in some sense, the converse to Proposition
\ref{prop sum a_n}.

\begin{prop}
Let the Bratteli diagram $B$ be defined by the sequence of
incidence matrices $F'_n$, see \eqref{eq-m-x DIO}. 
If the series $\sum_{n\geq 1} a_n^{-1}$ diverges, then 
the diagram does not admit finite tail invariant measures.
\end{prop}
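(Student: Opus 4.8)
The plan is to establish that, under the hypothesis $\sum_{n\ge 1}a_n^{-1}=\infty$, the canonical extension $\widehat{\overline{\mu}}(i)$ of the internal odometer measure $\overline{\mu}(i)$ has infinite total mass for \emph{every} $i\in\N$, and then to derive from this the nonexistence of any finite tail invariant measure on $X_B$. For the latter reduction I would argue as follows. In the diagram \eqref{eq-m-x DIO} a path passing through a vertex $w\in V_n$ can only reach a vertex $v\in V_{n+1}$ with $v\in\{w,w-1\}$, so the vertex indices along any infinite path form a non-increasing sequence in $\N$ and are therefore eventually constant; consequently every path eventually stays inside one of the odometers $\overline{B}(i)$, i.e. $X_B=\bigsqcup_{i\in\N}\widehat{X}_{\overline{B}(i)}$, a partition into tail invariant Borel sets. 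By Theorem~\ref{prop-all erg meas}, the ergodic probability tail invariant measures on $X_B$ are exactly the normalizations of those $\widehat{\overline{\mu}}(i)$ with $\widehat{\overline{\mu}}(i)(\widehat{X}_{\overline{B}(i)})<\infty$. Hence, once we know that no such extension is finite, $X_B$ carries no ergodic probability tail invariant measure; since every probability tail invariant measure is an integral over ergodic ones (the ergodic decomposition argument used at the end of the proof of Theorem~\ref{t5}), $X_B$ then carries no probability tail invariant measure and, a fortiori, no finite one.

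It remains to compute $\widehat{\overline{\mu}}(i)(\widehat{X}_{\overline{B}(i)})$. Here the support of the subdiagram is $W_n=\{i\}$ for all $n$, so $H^{(n)}_i=H^{(n)}=\prod_{k=0}^{n-1}(a_k+1)$ (with $H^{(0)}=1$) and $\overline{p}^{(n)}_i=\overline{\mu}(i)([\overline{e}])=(a_0a_1\cdots a_{n-1})^{-1}$ for a finite path $\overline{e}$ in $\overline{B}(i)$ ending at level $n$. By \eqref{extension_method},
\[
\widehat{\overline{\mu}}(i)\big(\widehat{X}_{\overline{B}(i)}\big)
=\lim_{n\to\infty}H^{(n)}_i\,\overline{p}^{(n)}_i
=\lim_{n\to\infty}\prod_{k=0}^{n-1}\frac{a_k+1}{a_k}
=\lim_{n\to\infty}\prod_{k=0}^{n-1}\Big(1+\frac{1}{a_k}\Big).
\]
Equivalently, one may invoke Theorem~\ref{TheoremIV_1}(ii): its series $\sum_{n\ge 0}H^{(n)}_{i+1}\,\overline{p}^{(n+1)}_i=\sum_{n\ge 0}a_n^{-1}\prod_{k=0}^{n-1}(1+a_k^{-1})$ telescopes (each summand equals $P_{n+1}-P_n$ with $P_n=\prod_{k<n}(1+a_k^{-1})$), giving the same limit diminished by $1$.

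Finally, since $a_k\ge 2$ for all $k$, the infinite product $\prod_{k\ge 0}(1+a_k^{-1})$ converges if and only if $\sum_{k\ge 0}a_k^{-1}<\infty$. Under the assumption $\sum_{n\ge 1}a_n^{-1}=\infty$ it diverges to $+\infty$, so $\widehat{\overline{\mu}}(i)(\widehat{X}_{\overline{B}(i)})=\infty$ for every $i$, and the reduction of the first paragraph completes the proof.

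The computation itself is a single telescoping product and presents no difficulty; the step deserving the most care is the passage from ``no odometer yields a finite extension'' to ``no finite tail invariant measure on $X_B$ at all''. This relies on two already-established facts that must be cited accurately: the decomposition $X_B=\bigsqcup_i\widehat{X}_{\overline{B}(i)}$ together with the classification of ergodic measures in Theorem~\ref{prop-all erg meas} (forcing an ergodic probability measure to be a normalized $\widehat{\overline{\mu}}(i)$, cf. also Remark~\ref{rem-theta vs mu}), and the ergodic decomposition of probability tail invariant measures for the countable Borel equivalence relation $\mathcal R$.
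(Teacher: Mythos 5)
Your proof is correct, but it follows a genuinely different route from the paper's. The paper stays entirely inside the inverse‑limit framework of Section \ref{sect_inverse limits}: it computes the entries ${g'}^{(n,m)}_{ij}$ and $g^{(n,m)}_{ij}$ of the products of incidence and stochastic matrices explicitly, and then invokes Theorem \ref{thm 4.11 edited} to argue that a probability tail invariant measure would force the diagonal entries $g^{(n,m)}_{ii}=\prod_{k=n}^{n+m-1}\frac{a_k}{1+a_k}$ to converge to a nonzero limit, which is equivalent to $\sum_n a_n^{-1}<\infty$ — contradiction. You instead reduce the statement to the measure‑extension machinery: the partition $X_B=\bigsqcup_i \wh X_{\ol B(i)}$, the classification of ergodic probability measures as normalized finite extensions (Theorem \ref{prop-all erg meas} and Remark \ref{rem-theta vs mu}), and the telescoping computation $\wh{\ol\mu}(i)(\wh X_{\ol B(i)})=\lim_n H^{(n)}_i\,\ol p^{(n)}_i=\prod_{k}(1+a_k^{-1})$, which diverges exactly when $\sum_k a_k^{-1}$ does. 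Both arguments ultimately rest on the same elementary equivalence between the divergence of $\sum a_n^{-1}$, the divergence of $\prod(1+a_n^{-1})$, and the vanishing of $\prod\frac{a_n}{1+a_n}$. What each buys: your route is shorter and more transparent, is symmetric with the proof of the converse Proposition \ref{prop sum a_n}, and makes explicit the point that the paper's write‑up leaves implicit (a decreasing product always has a limit; what matters is that the limit be positive, equivalently that the mass of the limit vectors in $L^{(n)}$ not escape to zero). The cost is that it leans on the classification theorem, whose proof is delegated to \cite{BezuglyiKarpelKwiatkowski2024}, and on an ergodic‑decomposition step; note that the latter can be avoided entirely by applying Remark \ref{rem-theta vs mu} to each restriction $\mu|_{\wh X_{\ol B(i)}}$ of a hypothetical finite invariant measure, which forces $\mu=0$ directly. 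The paper's route, by contrast, is self‑contained within the inverse‑limit method that the section is designed to illustrate.
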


\begin{proof}
The following formulas hold for the diagram $B$ (they 
can be easily proved by induction taking into account that
the height of a tower does not depend on the vertex):
$$
H^{(n)}_i = (a_1 +1) \ \cdots\ (a_{n-1} +1), \quad i \in \N.
$$
The entries of the matrix ${G'}^{(n,m)}$ can be found as 
follows:
$$
{g'}^{(n,m)}_{ii} = a_n \ \cdots \ a_{n+m-1}
$$
and, for $j =  i+1, \ldots i+ m$,
$$
{g'}^{(n,m)}_{ij} = (a_n \ \cdots\ a_{n+m-1}) \left[ 
\sum_{s_1 < s_2 < \cdots < s_{j-i}} 
\frac{1}{a_{s_1} \ \cdots \ a_{s_{j - i}}}\right]
$$
where $ n \leq s_1 < \cdots < s_{j-i} \leq n +m-1$.
Clearly,  ${g'}^{(n,m)}_{ij} =0$ for other $i, j$.
Now we can find the entries of the stochastic matrix 
${G}^{(n,m)}$: if $j = i+1, \ldots, i+m$, then
$$
{g}^{(n,m)}_{ij} = {g'}^{(n,m)}_{ij} \frac{H^{(n)}_j}
{H^{(n+m)}_i} = \left\lbrack \frac{a_{n}}{1 + a_{n}}
\ \cdots \ 
\frac{a_{n + m - 1}}{1 + a_{n + m - 1}} \right\rbrack \cdot 
\left[
\sum_{s_1 < s_2 < \cdots < s_{j-i}} 
\frac{1}{a_{s_1} \ \cdots \ a_{s_{j - i}}}\right]
$$
and 
$$
{g}^{(n,m)}_{ii} = \frac{a_{n}}{1 + a_{n}}
\ \cdots \ \frac{a_{n + m - 1}}{1 + a_{n + m - 1}} 
$$

Assume now that the set of tail invariant probability measures
$M_1(B)$ is not empty. Then, by Theorem \ref{thm 4.11 edited},
there is some $i$ such that the sequence of vectors 
$\{{\overline{g}}_{i}^{(n, m)}\}$ has the limit as $m \to 
\infty$. This means that 
$$
\lim_{m \to\infty} {g}^{(n,m)}_{ii} = 
\lim_{m \to\infty} \frac{a_{n}}{1 + a_{n}}
\ \cdots \ \frac{a_{n + m - 1}}{1 + a_{n + m - 1}} 
$$
exists. The latter is equivalent to
the convergence of the series $\sum_{n\geq 1} a_n^{-1}$. 
This is a contradiction.  
\end{proof}

The corresponding stochastic matrices $F_{n}$'s have 
the form
$$
{F}_{n} =
\left[\begin{array}{llllllll}
 \frac{a_{n}}{a_{n} + 1} &     \frac{1}{a_{n} + 1} &                        0 &                        0 &              \ldots & \ldots & \ldots & \ldots  \\
                       0 & \frac{a_{n}}{a_{n} + 1} &      \frac{1}{a_{n} + 1} &                        0 &                   0 & \ldots & \ldots & \ldots  \\
                       0 &                       0 &  \frac{a_{n}}{a_{n} + 1} &      \frac{1}{a_{n} + 1} &                   0 &      0 & \ldots & \ldots   \\
                  \ldots &                  \ldots &                   \ldots &                   \ldots &              \ldots & \ldots & \ldots & \ldots  \\
\end{array} \right\rbrack,
$$
i.e., $f_{ii}^{(n)} = \frac{a_{n}}{(a_{n} + 1)}$,
$f^{(n)}_{i(i + 1)} = \frac{1}{(a_{n} + 1)}$,
${f}^{(n)}_{ij} = 0$ for $j \neq i, i + 1$, $i \in \N$. 
Let
$\overline{g}_{i} = \left\langle f^{(n)}_{ij} : \ 
j \in \N \right\rangle$
be the $i$-th row of the matrix $F_{n}$. 
Then
$|{\overline{g}}_{i}| \leq \frac{1}{2^{a(i)}} + \frac{1}{2^{a(i + 1)}} \to 0$
as $i \to \infty$ where $a(i)$ is an enumeration of 
vertices in $V_n$. 
It follows from Theorem \ref{t2} that the linear maps
$F_{n}^{T} \colon \Delta_1 \to \Delta_1$ 
are continuous. 

We observe that the above statement is true in a more general situation when the entries of an incidence  matrix $F$ under its main diagonal are 
zeros.

\section{Uncountably many ergodic probability tail 
invariant measures}\label{ssect measures on B_infty}

This section studies a class of reducible generalized Bratteli diagrams $B_\infty$ whose incidence matrices are triangular. 
It turns out that such diagrams
have uncountably many ergodic probability tail invariant measures. 
We also consider subdiagrams of $B_\infty$, standard and generalized,
and answer the questions about internal tail invariant measures
on such subdiagrams and the finiteness of their extensions.

\subsection{Triangular generalized Bratteli diagram \texorpdfstring{$B_\infty$}{B_\infty}} 
We define the diagram $B_\infty = (V, E)$ by taking $V_n = \N$ 
for all $n \in \N$ so that each vertex $v\in V_n$ can be 
written as $(n,i)$. 
For $v= (n, i)\in V_{n}$, an edge $e(v,u)$ where $u \in 
V_{n + 1}$, 
exists whenever $u = (n+1, j),\ j = i, i+1, \ldots$.
The incidence matrices $F'_{n}$ are the same for all 
levels, $F'_{n} =F'$, where
$$
F' = 
\left\lbrack \begin{array}{ccccccc}
1 & 0 & 0 & 0 & \ldots &\ldots & \ldots \\
1 & 1 & 0 & 0 & 0      &\ldots & \ldots    \\
1 & 1 & 1 & 0 & 0      & 0     & \ldots  \\
\ldots & \ldots & \ldots &  \ldots & \ldots & \ldots & \ldots  \\
\end{array} \right\rbrack.
$$

The stochastic matrix $F$ obtained from the matrix $F'$ has a form
$$
F =
\left[\begin{array}{llllllll}
          1 &           0 &           0 &      0 &  \ldots & \ldots & \ldots & \ldots  \\
\frac{1}{2} & \frac{1}{2} &           0 &      0 &       0 & \ldots & \ldots & \ldots  \\
\frac{1}{3} & \frac{1}{3} & \frac{1}{3} &      0 &       0 & \ldots & \ldots & \ldots  \\
     \ldots &      \ldots &      \ldots & \ldots &  \ldots & \ldots & \ldots & \ldots  \\
\end{array} \right\rbrack.
$$
We claim that the matrix $F$ satisfies the assumption of Theorem \ref{t2}. 
Indeed, the $i$-th row ${\overline{g}}_{i} =
\left\langle \frac{1}{i},  \ldots, \frac{1}{i}, 0, \ldots \right\rangle$  of $F$ satisfies the relation: 
$|{\overline{g}}_{i}|  \leq \left\lbrack 
\frac{1}{2^{a(1)}} + \frac{1}{2^{a(2)}} + \ldots \right\rbrack \cdot \frac{1}{i}
\leq \frac{2}{i} \to 0$, as $i \to \infty$ 
where $a(\cdot)$ is an enumeration of the vertices. 
Thus, ${G'}^{(n,m)} = {F'}^{m} = \{f_{ij}^{(m)} 
i, j \in \N\}$ for all $n \geq 1$, and these matrices generate continuous mappings.

To find the matrices 
${F'}^{n} = \left( {f'}_{ij}^{n} \ : i, j = 1, 2, \ldots \right)$, we will use the numbers the 
$S_{i}^{(k)},\ k \in \N_0,\ i\in \N$, which are defined as follows:
\be\label{eq=prop of S}
S_{i}^{(0)} = 1, \quad 
S_{i}^{(k + 1)} = S_{1}^{(k)}  + \ldots + S_{i}^{(k)}
\ee 
The numbers $S_{i}^{(k)}$ are used in the Cesaro summability 
method, and it is known \cite{Hardy1992} that they are 
\be\label{eq-numbers S}
S_{i}^{(k)} = 
\begin{pmatrix}
i + k - 1 \\
k \\
\end{pmatrix}.
\ee
Using the induction it is not hard to prove that 
${f'}_{ij}^{(n)} = S_{i - j + 1}^{(n - 1)}$
whenever $j = 1, \ldots, i$ 
and ${f'}_{ij}^{(n)} = 0$, otherwise 
($i = 1, 2, \ldots,\ n = 1, 2, \ldots$). Thus,
\be\label{eq-F' for x2}
{F'}^{(m)} = \left[ \begin{array}{ccccccc}
S_{1}^{(m - 1)} & 0                     & 0                     & 0      & \ldots & 0 & \ldots \\
S_{2}^{(m - 1)} & S_{1}^{(m - 1)}       & 0                     & 0      & \ldots & 0  & \ldots \\
\ldots          & \ldots                & \ldots                & \ldots & \ldots & \ldots & \ldots  \\
S_{k}^{(m - 1)} & S_{k - 1}^{(m - 1)} & S_{k - 2}^{(m - 1)} & \ldots & \ldots & S_{1}^{(m - 1)} & \ldots \\
\ldots          & \ldots                & \ldots                & \ldots & \ldots & \ldots & \ldots \\

\end{array} \right],
\ee 
It follows from \eqref{eq-F' for x2} that 
\be\label{eq=H=S}
H_{i}^{(n)} = S_{i}^{(n - 1)} = \begin{pmatrix}
i + n - 2 \\
n-1 \\
\end{pmatrix},\quad n, i \in \N,
\ee 
and we find that 
$$
g_{ij}^{(n,m)} = \frac{{f'}_{ij}^{m}  H_{j}^{(n)}}
{\sum_{s = 1}^{i}{{f'}_{is}^{(m)}  H_{s}^{(n)}}}
=
\frac{S_{i - j + 1}^{(m - 1)} H_{j}^{(n)}}{\sum_{s = 1}^{i}{S_{i - s + 1}^{(m - 1)} H_{s}^{(n)}}}
$$
if $j = 1,  \ldots i$ and $g_{ij}^{(n,m)} = 0$ otherwise.

To describe the set $\Delta^{(n, \infty, cl)}$, 
we use the method developed in 
Theorem  \ref{thm 4.11 edited}. 
For this, we have first to find the set of all limit points $\ol {y}^{(n, \infty)}$ of the vectors
$$
\overline{y}_{i}^{(n, m)} = \left\langle {y}_{i j}^{(n,m)} = \frac{{f'}_{ij}^{(m)}}{\sum_{s = 1}^{\infty}{f'}_{is}^{(m)}} 
\ : j = 1, 2, \ldots \right\rangle, \quad i\in \N,
$$ 
when $m \to \infty$. 

\begin{lemma}\label{lem limi vectors p_a}
The set of limits of the vectors 
${\overline{{y}}}_{i}^{(n, m)}$ is formed by the vectors 
$$
\overline{{y}}_{a}^{(n, \infty)} = 
 \lim_{m \to \infty}\overline{{y}}_{i_m}^{(n,m)} = 
\left\langle \frac{1}{(a + 1)}, \frac{a}{{(a + 1)}^{2}}, \frac{a^{2}}{{(a + 1)}^{3}}, \ldots \right
\rangle,\quad  0\leq a < \infty.
$$
\end{lemma}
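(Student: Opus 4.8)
The plan is to reduce everything to an explicit closed form for the normalized entries $y_{ij}^{(n,m)}$ and then pass to the limit in a finite product. From \eqref{eq-F' for x2} we have ${f'}_{ij}^{(m)} = S_{i-j+1}^{(m-1)}$ for $1\le j\le i$ and ${f'}_{ij}^{(m)}=0$ otherwise, so by the Cesàro recursion \eqref{eq=prop of S} the $i$-th row sum is $\sum_{s=1}^{i}{f'}_{is}^{(m)} = \sum_{r=1}^{i} S_r^{(m-1)} = S_i^{(m)}$. Hence $y_{ij}^{(n,m)} = S_{i-j+1}^{(m-1)}/S_i^{(m)}$, and inserting the binomial expression \eqref{eq-numbers S} and cancelling factorials yields
$$
y_{ij}^{(n,m)} = \frac{m}{\,i+m-1\,}\prod_{l=1}^{j-1}\frac{i-l}{\,i+m-1-l\,}, \qquad i,j,m\ge 1,
$$
where the empty product ($j=1$) equals $1$; note this formula automatically gives $0$ when $j>i$ since the factor with $l=i$ vanishes.

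First I would prove that every $\overline{y}_a^{(n,\infty)}$ with $0\le a<\infty$ is a limit point. Fix such an $a$ and set $i_m = \max(1,\lfloor am\rfloor)$, so that $i_m/m\to a$. For each fixed $j$ the finite product above converges factor by factor: $\frac{m}{i_m+m-1}\to\frac{1}{a+1}$ and $\frac{i_m-l}{i_m+m-1-l}\to\frac{a}{a+1}$ for $1\le l\le j-1$, whence
$$
y_{i_m,j}^{(n,m)} \longrightarrow \frac{1}{a+1}\Big(\frac{a}{a+1}\Big)^{j-1} = \frac{a^{j-1}}{(a+1)^j},
$$
which is exactly the $j$-th coordinate of $\overline{y}_a^{(n,\infty)}$. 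Since coordinate-wise convergence coincides with convergence in $\Delta^{(n)}$, this gives $\overline{y}_{i_m}^{(n,m)}\to\overline{y}_a^{(n,\infty)}$. (For $a=0$ the sequence $i_m\equiv 1$ works and produces the limit $\langle 1,0,0,\dots\rangle$.)

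Next I would show the converse: if $\{\overline{y}_{i_m}^{(n,m)}\}$ converges, its limit is one of the $\overline{y}_a^{(n,\infty)}$. By compactness of $[0,\infty]$, some subsequence satisfies $i_{m}/m\to a\in[0,\infty]$; moreover, since $a\mapsto\overline{y}_a^{(n,\infty)}$ is injective (the first coordinate $\tfrac1{a+1}$ recovers $a$) and the case $a=\infty$ forces $y_{i_m,j}^{(n,m)}\le\frac{m}{i_m+m-1}\to0$ for every $j$, two subsequences with different limiting ratios would produce two different limit vectors — contradicting convergence. Hence $i_m/m$ itself converges in $[0,\infty]$, and the computation of the previous paragraph identifies the limit as $\overline{y}_a^{(n,\infty)}$ when $a<\infty$. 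The only remaining possibility, $i_m/m\to\infty$, yields the zero vector, which is discarded since it gives $\sum_w y_w^{(n,\infty)}H_w^{(n)}=0$ and hence no probability vector in \eqref{e24} (equivalently, it is not among the vectors of $P^{(n)}$ entering Theorem~\ref{thm 4.11 edited}). This exhausts all cases.

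The substantive step is the closed form for $y_{ij}^{(n,m)}$ together with the correct bookkeeping of which factors occur; once that is in place, the rest is an elementary limit of a finite product. The only genuinely delicate point is the behaviour as $i_m/m\to\infty$: one must notice that this produces $\overline{0}$, which lies outside the family $\{\overline{y}_a^{(n,\infty)}:0\le a<\infty\}$ and is legitimately excluded because it corresponds to no invariant measure.
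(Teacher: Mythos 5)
Your proof is correct and follows essentially the same route as the paper: both compute the closed form $y_{ij}^{(n,m)} = S_{i-j+1}^{(m-1)}/S_i^{(m)} = \frac{m(i-1)\cdots(i-j+1)}{(i+m-1)\cdots(i+m-j)}$ and pass to the limit along a subsequence with $i_m/m\to a$. Your treatment is in fact slightly more careful than the paper's, which simply asserts one may assume $i_m/m\to a<\infty$; you correctly observe that $i_m/m\to\infty$ also produces a genuine limit point (the zero vector) and justify why it is excluded from the family $\{\overline{y}_a^{(n,\infty)}\}$, and you also supply the explicit construction $i_m=\lfloor am\rfloor$ showing every $a$ is attained.
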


\begin{proof}
Recall that we have 
${f'}_{ij}^{(m)} = S_{i - j + 1}^{(m - 1)}$ if $j = 1, \ldots, 
i$ and ${f'}_{ij}^{(m)} = 0$ if $j > i$. Moreover, 
$$\sum_{s = 1}^{\infty} {f'}_{is}^{(m)} = 
\sum_{s = 1}^{i} S_{i - s + 1}^{(m - 1)} 
= S_{i}^{(m)}.
$$ 
Let  
$\overline{{y}} = \lim_{m \to \infty}
{\overline{{y}}}_{i}^{(n,m)},\ i= i_{m}$.
Taking a subsequence of $\{m\}$, we can assume that 
$\dfrac{i_{m}}{m} \rightarrow a$ for some $0 \leq a < \infty$.
Fix $j \geq 1$. Since $i_{m} \rightarrow \infty$, 
we can assume that $i_{m} > j$. Compute for $j= 2, \ldots, i$
$$
{y}_{ij}^{(n,m)} = \frac{S_{i - j + 1}^{(m - 1)}}{S_{i}^{(m)}} =
\frac{\begin{pmatrix}
i + m - j - 1 \\
m - 1 \\
\end{pmatrix}\ }{\begin{pmatrix}
i + m - 1 \\
m \\
\end{pmatrix}} =
\left[ \frac{(i + m - j - 1)!}{(m - 1)! \cdot (i - j)!} \right] 
\cdot \left[ \frac{m! \cdot (i - 1)!}{(i + m - 1)!} \right] 
$$
$$
= \frac{m \cdot (i - j + 1) \cdot \ldots \cdot (i - 1)}
{(i + m - j) \cdot \ldots \cdot  (i + m - 1)} =
\frac{ \left[ \frac{(i - j + 1)}{m} \cdot \ldots \cdot \frac{(i - 1)}{m} \right] }{\left[ \frac{(i + m - j)}{m} \cdot \ldots \cdot \frac{(i + m - 1)}{m}\right] }
$$
If $j = 1$, we have ${y}_{i1}^{(n, m )} = \dfrac{m}
{i+m-1}$. Taking the limit in the formulas for 
${y}_{ij}^{(n,m)}$ as $m \to \infty$, we get that 
$$
\lim_{m\to \infty} {y}_{ij}^{(n,m)} = 
\lim_{m \to \infty}{\frac{ \left[ \frac{(i - j + 1)}{m} \cdot \ldots \cdot \frac{(i - 1)}{m} \right] }{ \left[ \frac{(i + m - j)}{m} \cdot \ldots \cdot \frac{(i + m - 1)}{m} \right] }} 
= \frac{a^{j - 1}}{{(a + 1)}^{j}}.
$$
If the sequence $\{ i_{m} \}$ is bounded (we can assume 
$i_{m} = i$), then $a = 0$, and in this case
${\overline{p}}_{0}^{(n, \infty)} = \langle 1, 0, 0, \ldots \rangle$.

This proves the lemma.
\end{proof}

Following the method described in Section 
\ref{sect_inverse limits}, we need to find the limit set
$L^{(n)}(\{\ol g_v \})$ in terms of the vectors 
$\overline{{y}}_{a}^{(n, \infty)}$. 

\begin{lemma}\label{lem:H(n,a)}
The vectors $\overline{{y}}_{a}^{(n, \infty)}$ belong to the set
$P^{(n)}$ and satisfy conditions \eqref{eq-conditions P(n)} 
(defined in Theorem \ref{thm 4.11 edited}) 
and \eqref{eq limit vectors p(infty)}. That is
\be\label{eq-H(n,a)}
H(n, a):= \sum_{u \in V_n} {y}_u^{(n, \infty)} H_u^{(n)}< \infty
\ee 
and 
\be\label{eq-lim H(n,a)}
\lim_{m \to \infty} \sum_{u \in V_n} 
\left[ \frac{{g'}^{(m,n)}_{vu}}{\sum_{w \in V_n}{g'}^{(m,n)}_{vw}} \right] H_u^{(n)} = H(n, a).
\ee
\end{lemma}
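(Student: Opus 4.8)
The plan is to verify directly the two conditions defining membership in $P^{(n)}$, using the explicit closed forms already available in this subsection: $H_u^{(n)} = S_u^{(n-1)} = \binom{u+n-2}{n-1}$ from \eqref{eq=H=S}, the matrix powers $F'^m$ from \eqref{eq-F' for x2}, and the coordinates $y_u^{(n,\infty)} = a^{u-1}/(a+1)^u$ of $\ol y_a^{(n,\infty)}$ supplied by Lemma~\ref{lem limi vectors p_a}. Condition \eqref{eq-H(n,a)} is the easy half: $H(n,a) = \sum_{u\ge 1}\binom{u+n-2}{n-1}\,a^{u-1}/(a+1)^u$ is a negative–binomial series, and applying $\sum_{u\ge1}\binom{u+n-2}{n-1}x^{u-1} = (1-x)^{-n}$ with $x = a/(a+1)$ (so $1-x = 1/(a+1)$) gives $H(n,a) = (a+1)^{n-1}$, which is finite. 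This already pins down the value that the left side of \eqref{eq-lim H(n,a)} must approach.

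The substantial step is \eqref{eq-lim H(n,a)}. Since $B_\infty$ is stationary, $G'^{(n,m)} = F'^m$, and I would substitute $f'^{(m)}_{vu} = S^{(m-1)}_{v-u+1}$ for $1\le u\le v$ together with the row sum $\sum_{u}f'^{(m)}_{vu} = \sum_{r=1}^{v}S^{(m-1)}_r = S^{(m)}_v$ (from \eqref{eq=prop of S}), so that the quantity inside the limit becomes $\dfrac{1}{S^{(m)}_v}\sum_{u=1}^{v}S^{(m-1)}_{v-u+1}S^{(n-1)}_u$. Writing $S^{(k)}_i = \binom{i+k-1}{k}$ and re-indexing $j=u-1$, the numerator is a Chu–Vandermonde convolution, $\sum_{j=0}^{v-1}\binom{j+n-1}{n-1}\binom{(v-1-j)+m-1}{m-1} = \binom{v+n+m-2}{n+m-1}$. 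Dividing by $S^{(m)}_v = \binom{v+m-1}{m}$ and cancelling factorials collapses everything to the finite product of exactly $n-1$ factors, $\prod_{\ell=0}^{n-2}\dfrac{v+m+\ell}{m+\ell+1}$. Finally I would evaluate this along the sequence $v = v_m$ realizing the limit $\ol y_a^{(n,\infty)} = \lim_m \ol y_{v_m}^{(n,m)}$ of Lemma~\ref{lem limi vectors p_a}, so $v_m/m\to a$: each factor tends to $a+1$, hence the product tends to $(a+1)^{n-1} = H(n,a)$, which is exactly \eqref{eq-lim H(n,a)}.

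The only genuine obstacle is spotting the identity $\sum_{u}S^{(m-1)}_{v-u+1}S^{(n-1)}_u = \binom{v+n+m-2}{n+m-1}$, i.e.\ recognizing the Cesàro numbers $S^{(k)}_i$ as negative binomial coefficients and invoking Chu–Vandermonde; after that the argument is bookkeeping with factorials plus one routine limit. As a slightly softer alternative for \eqref{eq-lim H(n,a)}, one could bypass the exact identity and instead apply dominated convergence to $\sum_u y^{(n,m)}_{v_m,u}H^{(n)}_u$, using the pointwise convergence $y^{(n,m)}_{v_m,u}\to y^{(n,\infty)}_u$ from Lemma~\ref{lem limi vectors p_a} and a uniform (in large $m$) domination of $y^{(n,m)}_{v_m,u}H^{(n)}_u$ by a fixed summable sequence comparable to $y^{(n,\infty)}_uH^{(n)}_u$, which is summable by \eqref{eq-H(n,a)}; but the explicit computation is cleaner and delivers the limiting value $(a+1)^{n-1}$ directly.
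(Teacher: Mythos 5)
Your proposal is correct, and for the main condition \eqref{eq-lim H(n,a)} it takes a genuinely different route from the paper. The paper proves \eqref{eq-H(n,a)} by establishing the recursion $H(n+1,a)=(1+a)H(n,a)$ through an interchange of summation, and then proves \eqref{eq-lim H(n,a)} by a two-sided squeeze: a $\liminf$ lower bound obtained by truncating the sum to finitely many terms, and a $\limsup$ upper bound obtained by comparing with an auxiliary parameter $b>a$ and letting $b\downarrow a$. You instead evaluate everything in closed form: the negative binomial series gives $H(n,a)=(a+1)^{n-1}$ in one line, and the Chu--Vandermonde convolution collapses the finite weighted sum to the exact expression $\prod_{\ell=0}^{n-2}\frac{v+m+\ell}{m+\ell+1}$, after which the limit along $v_m/m\to a$ is immediate (and the boundary case $a=0$ comes for free). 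Your identity is in fact the height recursion in disguise, since $\sum_u {g'}^{(n,m)}_{vu}H_u^{(n)}=H_v^{(n+m)}=\binom{v+n+m-2}{n+m-1}$, so one could even skip Chu--Vandermonde entirely and quote \eqref{eq_H e6}. What each approach buys: yours is shorter, gives an exact pre-limit formula, and avoids the delicate interchange of limit and infinite sum altogether; the paper's $\liminf/\limsup$ argument is longer but does not depend on the special combinatorial structure of $B_\infty$ and so is the template one would reach for on diagrams where no closed form for $H_v^{(n)}$ is available. Your fallback suggestion (dominated convergence) is also viable but unnecessary here.
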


\begin{proof}
We first show that, for all $n \geq 1$,
\be \label{eq-formula for H(n,a)}
H(n, a) = (1 + a)^{n-1}.
\ee
Note that by Lemma \ref{lem limi vectors p_a} 
$$
H(n, a) =\sum_{j=1}^\infty \frac{a^{j-1}}{(1 +a)^j} H_j^{(n)}. 
$$
For $n =1$, $H_j^{(1)} =1$ and 
$$
H(1, a) = \frac{1}{1 + a} \sum_{j=1}^\infty 
\left(\frac{a}{1 +a}\right)^j = 1. 
$$
In the following computation, we show that $H(n,a)$ is finite. 
This fact proves that \eqref{eq-H(n,a)} holds.
$$
\ba
H(n,a) = &\ \sum_{j=1}^\infty \frac{a^{j-1}}{(1 +a)^j} H_j^{(n)} 
= \sum_{j=1}^\infty \frac{a^{j-1}}{(1 +a)^j} S_j^{(n-1)}\\
= & \  \sum_{j =1}^{\infty}\frac{a^{j - 1}}{(a + 1)^{j}}  
\left( \begin{array}{c}
j + n - 2 \\
n - 1 \\
\end{array} \right)  = 
\frac{1}{(n - 1)!} \cdot
\sum_{j = 1}^{\infty}\frac{a^{j - 1}}{(a + 1)^{j}} 
\frac{(j + n - 2)!}{(j - 1)!}\\
= &\ \frac{1}{(n - 1)!} \cdot
\sum_{j = 1}^{\infty}\frac{a^{j - 1}}{(a + 1)^{j}} \cdot 
\left[j (j+1) \cdot \ldots \cdot (j+n-2) \right]  < \infty.
\ea
$$

Next, we prove that \eqref{eq-formula for H(n,a)} holds. 
$$
\ba 
H(n+1, a) = &\ \sum_{j=1}^\infty \frac{a^{j-1}}{(1 +a)^j} H_j^{(n+1)} =  \sum_{j=1}^\infty \frac{a^{j-1}}{(1 +a)^j} 
\sum_{l=1}^j H_l^{(n)} \\
= &\ \sum_{l=1}^\infty H_l^{(n)} \sum_{j=l}^\infty 
\frac{a^{j-1}}{(1 +a)^j} = \sum_{l=1}^\infty H_l^{(n)} 
\left(\frac{a}{1 + a} \right)^{l-1}\\
= &\ (1+a) H(n, a),\\
\ea 
$$
and \eqref{eq-formula for H(n,a)} follows. 

It remains to show that condition \eqref{eq-lim H(n,a)} is 
satisfied.
It was proved in Lemma \ref{lem limi vectors p_a} that 
\be\label{eq < >}
\sum_{j = 1}^{i} {y}_{ij}^{(n, m)} H_{j}^{(n)} =  
{y}_{i, 1}^{(n,m)} + 
\sum_{j = 2}^{i} H_j^{(n)} \frac{\frac{i-j+1}{m}\ \cdots \
\frac{i-1}{m}}{\frac{i +m-j}{m}\ \cdots \
\frac{i +m -1}{m}}
\ee 
Fix $k \geq 1$ and take $i = i_m >k$ as $m \to \infty$. Then 
$$
\sum_{j = 1}^{i} {y}_{ij}^{(n, m)} H_{j}^{(n)} >   
\frac{m}{i+m -1} + 
\sum_{j = 2}^{k} H_j^{(n)} \frac{\frac{i-j+1}{m}\ \cdots \
\frac{i-1}{m}}{\frac{i +m-j}{m}\ \cdots \
\frac{i +m -1}{m}}
$$
As $m \to \infty$ (remember that $\dfrac{i_{m}}{m} \to a$), we 
get from the above inequality that for every $k \in \N$
$$
\liminf_{m \to \infty} \sum_{j = 1}^{i}{p_{ij}^{(n,m)} H_{j}^{(n)}} \geq 
\sum_{j = 1}^{k} \frac{a^{j - 1}}{(a + 1)^{j}} H_{j}^{(n)}.
$$
Hence
\be\label{eq-liminf}
\liminf_{m \to \infty} \sum_{j = 1}^{i}{{y}_{ij}^{(n,m)} H_{j}^{(n)}} \geq 
\sum_{j = 1}^{\infty} \frac{a^{j - 1}}{(a + 1)^{j}} H_{j}^{(n)}
= H(n, a) = (a+1)^{n-1}.
\ee

Take  $b > a$ (then $\frac{a}{a+1} < \frac{b}{b+1}$). It follows 
from $\frac{i_m}{m} \to a$ that, for sufficiently large $m$, 
$$
\frac{\frac{i - l}{m}}{\frac{i- l}{m} + 1}
< \frac{b}{b + 1},\quad l= 0, 1, \ldots j-1, \ i = i_m.
$$
Therefore \eqref{eq < >} can be estimated from above 
$$
\sum_{j = 1}^{i} {y}_{ij}^{(n, m)} H_{j}^{(n)} < 
\frac{m}{i+m -1} + \sum_{j=2}^i H_j^{(n)} \frac{m}{m + i -j} \left( \frac{b}{1+b}\right)^{j-1}
$$
Taking the limit when $m \to \infty$, we obtain the inequality
$$
\limsup_{m\to \infty} \ \sum_{j = 1}^{i} {y}_{ij}^{(n, m)} 
H_{j}^{(n)} \leq \frac{1}{1+a} +  
 \sum_{j=2}^\infty H_j^{(n)} \frac{1}{1+a} 
\left( \frac{b}{1+b}\right)^{j-1}. 
$$
Since this relation holds for all $b > a$, we can deduce that
\be\label{eq-limsup}
\ba 
\limsup_{m\to \infty} \ \sum_{j = 1}^{i} {y}_{ij}^{(n, m)} 
H_{j}^{(n)} \leq &\  \frac{1}{1+a} + \lim_{b \to a} \left(
\sum_{j=2}^\infty H_j^{(n)} \frac{1}{1+a} 
\left( \frac{b}{1+b}\right)^{j-1} \right)\\
= &\ \frac{1}{1+a} + 
\sum_{j=2}^\infty H_j^{(n)} \frac{a^{j-1}}{(1+a)^j}\\
= &\ H(n,a)\\
= &\ (a+1)^{n-1}.
\ea 
\ee 
It follows from \eqref{eq-liminf} and \eqref{eq-limsup} that 
$$
\lim_{m\to \infty} \ \sum_{j = 1}^{i} {y}_{ij}^{(n, m)} 
H_{j}^{(n)} = (a+1)^{n-1} = \sum_{j = 1}^{\infty} 
y_{ij}^{(n, m)} H_{j}^{(n)},
$$
and \eqref{eq-lim H(n,a)} is proved. 
\end{proof}

Let the vectors $\ol q_{{a}}^{(n, \infty)} = 
\langle q_{{a,i}}^{(n, \infty)} : \ i \in V_n\rangle$ be defined 
as in Theorem \ref{thm 4.11 edited}, see \eqref{e24}, and let $\ol p_a^{(n, \infty)} = \left\langle p_{a, i }^{(n, \infty)} \ |\ i \in V_n \right\rangle$ be such that  $q_{a, i}^{(n, \infty)} = H_i^{(n)}  p_{a, i}^{ (n, \infty) },\ n, i = 1, 2, \ldots$.

Then we
can find the entries of $\ol q_{a}^{(n, \infty)}$:
$$
\ba 
q_{a,i}^{(n, \infty)} =  &\frac{1}{ \sum_{j\geq 1}
p^{(n, \infty)}_j H_j^{(n)} } \cdot p^{(n, \infty)}_i 
H_i^{(n)} \\
= &\ \frac{1}{(a+1)^{n-1}}\frac{a^{i-1}}{(a+1)^i} S_i^{(n-1)}\\
= &\ \frac{a^{i-1}}{(a+1)^{n +i -1}} 
\left( \begin{array}{c}
n + i- 2 \\
n-1
\end{array} \right), \qquad i \geq 1.
\ea 
$$
Thus, we proved that 
\be\label{eq=q}
\overline{q}_{a}^{(n,\infty)} = 
\left\langle 
\frac{1}{{(a + 1)}^{n}},  \ 
\frac{a}{{(a + 1)}^{n + 1}} 
\left( 
\begin{array}{c}
n  \\
1
\end{array} \right), 
\frac{a^{2}}{{(a + 1)}^{n + 2}}
\left( \begin{array}{c}
n + 1 \\
2
\end{array} \right), \ 
 \ldots
\right\rangle
\ee  
According to Theorem \ref{thm 4.11 edited}, we get that 
$$
L^{(n)} (\{ \overline{g}_{i}^{(n,m)} : i \in V_{n + m}, 
m \in \N\}) = 
\{ \overline{q}_{a}^{(n, \infty)} :\ 0 \leq a < \infty \}.
$$
It follows from (\ref{eq=q}) that
$$\ol p_{a}^{(n, \infty)} = 
\left\langle \frac{1}{(a + 1)^{n}}, 
\frac{a}{(a + 1)^{n + 1}}, 
\frac{a^{2}}{(a + 1)^{n + 2}},  
\ldots
\right\rangle =
\left\langle
\frac{a^{(i - 1)}}{H(n, a) \cdot (a + 1)^{i}} \ |\ i \in \N
\right\rangle,
$$
because 
$H_i^{(n)} = 
\left(\begin{array}{c}
i + n - 2\\
n - 1
\end{array}\right)
=
\left(\begin{array}{c}
i + n - 2\\
i - 1
\end{array}\right),
\ n, i \in \N.
$

\begin{prop}\label{prop mu_a} 
For every $0 \leq a < \infty$, the sequence of vectors
$\{ \overline{q}_{{a}}^{(n, \infty)}: \ n \in \N \}$ 
(or $\{ \overline{p}_{ a }^{(n, \infty)}: \ n \in \N \}$) 
determines uniquely a tail invariant probability measure 
$\mu_{a}$ on the 
path space of the Bratteli diagram $B_\infty = (V, E)$. 
\end{prop}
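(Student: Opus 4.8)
The plan is to verify the hypotheses of Theorem~\ref{BKMS_measures=invlimits}(2): it suffices to show that the sequence $\{\ol p_a^{(n,\infty)}\}_{n\in\N}$ consists of non-negative vectors satisfying ${F'}^T \ol p_a^{(n+1,\infty)} = \ol p_a^{(n,\infty)}$ for all $n\in\N$ (and, after checking that the associated $\ol q_a^{(n,\infty)}$ are probability vectors, that the resulting measure is a \emph{probability} tail invariant measure). Non-negativity is immediate from the explicit formula $p_{a,i}^{(n,\infty)} = \dfrac{a^{i-1}}{(a+1)^{n+i-1}}\ge 0$. So the heart of the matter is the consistency relation, which in coordinates reads
$$
p_{a,j}^{(n,\infty)} = \sum_{i\ge j} {f'}_{ij}\, p_{a,i}^{(n+1,\infty)} = \sum_{i\ge j} p_{a,i}^{(n+1,\infty)},
$$
since the column $j$ of $F'$ has a $1$ in every row $i\ge j$ and $0$ otherwise. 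Plugging in the formula, the right-hand side becomes $\sum_{i\ge j}\dfrac{a^{i-1}}{(a+1)^{n+i}}$, a geometric series in $i$ with ratio $\dfrac{a}{a+1}<1$; summing it gives $\dfrac{a^{j-1}}{(a+1)^{n+j}}\cdot\dfrac{1}{1-\frac{a}{a+1}} = \dfrac{a^{j-1}}{(a+1)^{n+j-1}} = p_{a,j}^{(n,\infty)}$. This is a short, completely routine computation; the only mild care needed is the case $a=0$, where $\ol p_0^{(n,\infty)} = \langle 1,0,0,\ldots\rangle$ and the relation holds trivially.

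Next I would confirm that the $\ol q_a^{(n,\infty)}$ are genuine probability vectors, so that the measure produced is a probability measure rather than merely $\sigma$-finite. By construction $q_{a,i}^{(n,\infty)} = H_i^{(n)} p_{a,i}^{(n,\infty)}/H(n,a)$, and Lemma~\ref{lem:H(n,a)} already establishes that $H(n,a) = \sum_i H_i^{(n)} p_{a,i}^{(n,\infty)} = (1+a)^{n-1}<\infty$; dividing by this finite normalization gives $\sum_i q_{a,i}^{(n,\infty)} = 1$, i.e.\ $\ol q_a^{(n,\infty)}\in\Delta_1^{(n)}$. Since Theorem~\ref{BKMS_measures=invlimits}(2) yields a unique tail invariant measure $\mu_a$ with $\mu_a(X_v^{(n)}(\ov e)) = p_{a,v}^{(n,\infty)}$, and the tower $X_v^{(n)}$ has $\mu_a(X_v^{(n)}) = H_v^{(n)} p_{a,v}^{(n,\infty)} = (1+a)^{n-1} q_{a,v}^{(n,\infty)}$; summing over $v\in V_n$ and dividing by $(1+a)^{n-1}$ shows $\sum_v\mu_a(X_v^{(n)}) = (1+a)^{n-1}\cdot 1/(1+a)^{n-1}\cdot$\ldots — more directly, one normalizes the vectors from the outset so that $\ol q_a^{(0,\infty)}\in\Delta_1^{(0)}$, whence $\mu_a(X_B)=1$. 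The cleanest route is simply to invoke part (3) of Theorem~\ref{BKMS_measures=invlimits} with the sequence $\{\ol q_a^{(n,\infty)}\}$, after checking $F_n^T\ol q_a^{(n+1,\infty)} = \ol q_a^{(n,\infty)}$; but since we have already verified the $\ol p$-relation and the $\ol q$'s are obtained from the $\ol p$'s by the same normalization used throughout Section~\ref{sect_inverse limits}, uniqueness and finiteness follow automatically.

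I do not expect a genuine obstacle here: the proposition is essentially the assertion that the explicit vectors in \eqref{eq=q}, which were \emph{derived} from the limit-point analysis of Theorem~\ref{thm 4.11 edited} and Lemmas~\ref{lem limi vectors p_a}--\ref{lem:H(n,a)}, actually assemble into a coherent inverse-limit sequence. The single computational point worth spelling out is the geometric-series summation establishing ${F'}^T\ol p_a^{(n+1,\infty)}=\ol p_a^{(n,\infty)}$; everything else (non-negativity, finiteness of the normalization, uniqueness of the measure) is either immediate or already recorded in Lemma~\ref{lem:H(n,a)}. Thus the proof is a two-line verification followed by an appeal to Theorem~\ref{BKMS_measures=invlimits}.
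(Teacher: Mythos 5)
Your proposal is correct and follows essentially the same route as the paper: the paper's proof also consists of verifying ${F'}^T\,\overline{p}_{a}^{(n+1,\infty)} = \overline{p}_{a}^{(n,\infty)}$ by summing the same geometric series (using $H(n+1,a)=(1+a)H(n,a)$) and then checking $\sum_{j}\mu_a(X_j^{(n)}) = \frac{1}{H(n,a)}\sum_j \frac{a^{j-1}}{(a+1)^j}H_j^{(n)}=1$ to see that $\mu_a$ is a probability measure. The only cosmetic difference is that you keep the normalization folded into the exponent while the paper carries the factor $1/H(n,a)$ explicitly; the half-finished normalization computation in your second paragraph should be cleaned up, but the argument it gestures at is exactly the paper's.
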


\begin{proof}
It suffices to show that 
\be\label{eq8.12}
{F'}^T ( \overline{p}_{a}^{(n+1, \infty)} ) = 
\overline{p}_{a}^{(n, \infty)},\quad n \in \N.
\ee
We compute
$$
\ba 
{F'}^T \left\langle \frac{a^{i-1}}{H(n +1,a) 
(a +1)^i} :\ i \in \N \right\rangle 
= &\  \frac{1}{H(n+1, a)} \left\langle \sum_{i =j}^\infty 
\frac{a^{i}}{(a+1)^{i+1}} :\ j = 0,1, 2, \ldots
\right\rangle \\
= &\ \frac{1}{H(n+1, a)} \left\langle  
\frac{a^j}{(a+1)^{j}} : \ j=0,1, 2, \ldots \right\rangle \\
= &\ \frac{1}{H(n, a)} \left\langle  
\frac{a^j}{(a+1)^{j+1}} : \ j =0,1, 2, \ldots \right\rangle\\
= &\ {\overline{p}_{a}^{(n, \infty)}}.
\ea 
$$

Thus, we have
$$
\mu_a([\ol e]) = \frac{a^{j-1}}{H(n,a) (a+1)^j}.
$$
and 
$$
\mu_a(X^{(n)}_j) = \frac{a^{j-1}}{H(n,a) (a+1)^j} H^{(n)}_j =
 \frac{a^{j-1}}{(a+1)^{n+j-1}}
\left( 
\begin{array}{c}
n + j - 2 \\
n-1
\end{array} \right).
$$
One can easily check that $\mu_a$ is a probability measure 
since, by definition of $H(n,a)$, we have
$$
\sum_{j \geq 1}\mu_a(X^{(n)}_j) =\frac{1}{H(n,a)}\sum_{j\geq 1}
\frac{a^{j-1}}{ (a+1)^j} H^{(n)}_j =1,\qquad n \in \N.
$$
\end{proof}

Our next goal is to show that every measure $\mu_a$ is ergodic. 
For this, we need to use the notion of \textit{completely
monotonic sequences} and their relations with the 
\textit{Hausdorff moment problem}, see e.g. \cite{Widder1941},  
\cite{Akhiezer2021}.  

Let $\ol c = \{c_i : \ i \in \N\}$ be a given sequence of 
positive numbers. We define new sequences $\Delta^n(\ol c), 
n\in \N_0$, where $\Delta^{0}(\overline{c}) = \overline{c}$
and the sequences 
$\Delta^1 ( \overline{c} ), \Delta^{2}(\overline{c}), \ldots$ 
(called  the sequences of differences of $\overline{c}$) 
are defined as follows:
$$
\Delta^1(\overline{c}) = \left\{ c_{1} - c_{2}, c_{2} - c_{3}, c_{3} - c_{4}, \ldots \right\},
$$
$$
\Delta^{2}(\overline{c}) = \Delta^1( \Delta^1(\overline{c})), 
$$
and so on, $\Delta^{n}(\ol c) = \Delta^1(\Delta^{n-1}(\ol c))$.

\begin{remark}
In the literature, one can see another definition of 
the difference operator $\delta$ acting on sequences $\ol c = 
\{c_n\}_n$: $\ol c$. By definition,  
$\delta c_n = c_{n+1} - c_n$ and 
$\delta^{k+1} (\ol c) = \delta(\delta^k(\ol c))$. Then 
$\Delta^k(\ol c) = (-1)^k \delta^k(\ol c)$.
\end{remark}

The terms of the sequences $[\Delta^k(\ol c)]_i$ can be found 
by the following formulas: 
$$
\lbrack \Delta^{2} \left( \overline{c} \right) \rbrack_{i}
= c_{i} - 2 \cdot c_{i + 1} + c_{i + 2},
$$
$$
\lbrack \Delta^{3} ( \overline{c} ) \rbrack_{i} =
c_{i} - 3  c_{i + 1} + 3 c_{i + 2} - c_{i + 3}, 
$$
and, in general,
$$
\lbrack \Delta^{k} ( \overline{c}) \rbrack_{i} =
c_{i}- \left( \begin{array}{c}
k \\
1 \\
\end{array} \right) c_{i + 1} + 
\left( \begin{array}{c}
k \\
2 \\
\end{array} \right) c_{i + 2} - 
\ldots 
+ (-1)^{k-1} 
\left( \begin{array}{c}
k \\
k - 1 \\
\end{array} \right)  c_{i + k - 1} +
(-1)^{k}
c_{i + k},
$$
for all $i, k$. 

It is said the sequence $\overline{c}$ is \textit{completely monotonic} if
$\lbrack \Delta^{k} ( \overline{c} ) \rbrack_{i} > 0$
for each $k \in \N_0$ and $i \in \N$.

Recall the following result proved by Hausdorff 
in \cite{Hausdorff-1_1921},
\cite{Hausdorff-2_1921}

\begin{theorem}\label{thm-Hausdorff}
A sequence 
$\overline{c} = \{ c_{1}, c_{2}, c_{3}, c_{4}, \ldots \}$ 
is completely monotonic if and only if there exists a 
positive finite measure $\theta$
on the interval $[0, 1]$ such that, for every $i \in \N$,
$$c_{i} = \int_{0}^{1} x^{i} \theta(dx).
$$ 
Moreover, there is a one-to-one correspondence between the set 
of all completely monotonic 
sequences and the set of all finite positive measures on the 
interval $[0, 1]$.
\end{theorem}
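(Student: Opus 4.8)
This is the solution of the Hausdorff moment problem, and I would organize the argument into three steps: the easy implication, the construction of $\theta$ from a completely monotonic sequence, and the injectivity that turns the construction into a bijection. For the easy implication, suppose $c_i=\int_0^1 x^i\,d\theta(x)$ with $\theta$ finite and positive. The plan is to establish, by induction on $k$, the telescoping identity
\[
[\Delta^k(\overline c)]_i=\int_0^1 x^i(1-x)^k\,d\theta(x),\qquad i\ge 1,\ k\ge 0 ;
\]
the base case $k=0$ is the hypothesis, and the inductive step uses $[\Delta^{k+1}(\overline c)]_i=[\Delta^k(\overline c)]_i-[\Delta^k(\overline c)]_{i+1}$ together with the factorization $x^i(1-x)^k-x^{i+1}(1-x)^k=x^i(1-x)^{k+1}$ and linearity of the integral. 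Since $x\mapsto x^i(1-x)^k$ is nonnegative on $[0,1]$ and strictly positive on $(0,1)$, each $[\Delta^k(\overline c)]_i$ is positive once $\theta$ charges $(0,1)$, so $\overline c$ is completely monotonic.

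For the construction, given a completely monotonic $\overline c$ I would realize $\theta$ as a weak-$*$ limit of Hausdorff's discrete approximants. First I would form, for each $n$, the positive measure
\[
\theta_n=\sum_{k}\binom{n}{k}\,[\Delta^{\,n-k}(\overline c)]_{k+1}\,\delta_{k/n}
\]
on $[0,1]$, noting that complete monotonicity is exactly what makes all the weights nonnegative. Next I would use a binomial summation identity to show that the total mass $\theta_n([0,1])$ equals $c_1$ for every $n$; then $\{\theta_n\}$ lies in a weak-$*$ compact subset of $C[0,1]^{*}$ by Banach--Alaoglu, and a subsequence converges weak-$*$ to a finite positive measure $\theta$ on $[0,1]$. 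Finally I would verify that, for each fixed $i$, the moments $\int_0^1 x^i\,d\theta_n(x)$ converge to the prescribed value as $n\to\infty$, which is a Bernstein-polynomial bookkeeping computation with the combinatorial identities for iterated differences; since $x\mapsto x^i$ is continuous, weak-$*$ convergence then transfers this to $\theta$.

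For the bijection, the construction already gives surjectivity onto the completely monotonic sequences, and injectivity of $\theta\mapsto(c_i)_i$ follows from the Weierstrass approximation theorem: the monomials determine $\int f\,d\theta$ for every $f\in C[0,1]$, hence determine $\theta$, once one restricts to measures with no atom at the point where all the $x^i$ vanish. The hard part will be the moment-convergence step in the construction --- pinning down the precise combinatorial identity for $\int x^i\,d\theta_n$ and controlling it uniformly in $n$; the remaining steps are routine soft analysis, induction, and a density argument.
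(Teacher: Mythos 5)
You should first note that the paper does not prove Theorem \ref{thm-Hausdorff} at all: it is quoted from Hausdorff's 1921 papers with no argument supplied, so there is no in-paper proof to compare yours against. Your outline is the standard classical route (Hausdorff--Bernstein discrete approximants plus Banach--Alaoglu), and the "easy" implication via $[\Delta^k(\overline c)]_i=\int_0^1x^i(1-x)^k\,d\theta$, the total-mass identity $\theta_n([0,1])=c_1$, and the observation that injectivity only holds modulo an atom at $0$ are all correct (your caveat "once $\theta$ charges $(0,1)$" is genuinely needed, since the paper's definition uses strict inequalities).

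The gap sits exactly where you park it, in the moment-convergence step, and it is not mere bookkeeping. With the paper's difference $[\Delta (\overline c)]_i=c_i-c_{i+1}$ and your weights $w_{n,k}=\binom nk\,[\Delta^{n-k}(\overline c)]_{k+1}$, the exact combinatorial identity is
\[
\sum_{k=0}^{n} w_{n,k}\,\frac{k(k-1)\cdots(k-r+1)}{n(n-1)\cdots(n-r+1)}=c_{r+1},\qquad r\ge 0
\]
(check $n=2$, $r=1$: it returns $c_2$), so $\int_0^1x^r\,d\theta_n\to c_{r+1}$, not $c_r$. Your limit measure therefore satisfies $c_i=\int_0^1x^{i-1}\,\theta(dx)$ --- the correctly indexed classical theorem --- rather than the displayed formula $c_i=\int_0^1x^{i}\,\theta(dx)$. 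No amount of care in the bookkeeping can close this, because the statement as printed fails in the "only if" direction: $c_i=1/i$ has $[\Delta^k(\overline c)]_i=\frac{(i-1)!\,k!}{(i+k)!}>0$, yet any finite $\theta$ with $\int_0^1 x^i\,d\theta=1/i$ would have to agree with the infinite measure $x^{-1}dx$ on $(0,1]$ (test against Bernstein polynomials of cutoff functions vanishing near $0$), which is impossible. So you should either prove the shifted form $c_i=\int_0^1 x^{i-1}\,\theta(dx)$, or add the hypothesis that $x^{-1}\theta$ is finite. For the way the theorem is actually used in Theorems \ref{thm measures for Binfty} and Proposition \ref{prop nu_a ergodc} this is immaterial, since the sequences occurring there are geometric, $c_i=\lambda\rho^i$ with $0<\rho<1$, and passing from $\theta$ to $x^{-1}\theta=\rho^{-1}\theta$ preserves finiteness (cf.\ $\theta_a=\frac1a\delta_{a/(a+1)}$).
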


Now, we can use a different characterization of the set 
$M_1(B_\infty)$ of tail invariant probability
measures on the path space of the generalized Bratteli diagram 
$B_\infty$. 

Every measure
$\mu \in M_1(B_\infty)$ is uniquely determined by a sequence 
of positive infinite vectors 
$\overline{p}^{(n, \infty)} =  \langle p^{(n, \infty)}_i : i \in \N\rangle$.
Since $\mu$ is a probability measure, the vector 
$\ol p^{(1, \infty)}$ is probability. 

From (\ref{eq8.12}) we get
\be\label{eq-compl mon}
p^{(n)}_i = p^{(n+1)}_i +p^{(n+1)}_{i+1} + \ldots. 
\ee 

\begin{lemma}\label{lem compl mon}
Every tail invariant probability measure $\mu \in M_1(B_\infty)$ is uniquely determined by a completely 
monotonic sequences  $\overline{p} = \left\langle p_i\ |\  i =1, 2, \ldots \right\rangle$  
in such a way that, $p_{i} = p_{i}^{(1, \infty)}, \ i =1, 2, \ldots$.
For each cylinder set $[\ol e]$ such that 
$r(\ol e) = i \in V_n$, we have 
$\mu([\ol e]) = \lbrack\Delta^{n - 1} (\overline{p}) \rbrack_{i}$,  $n \in \N$.
\end{lemma}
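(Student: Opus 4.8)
The plan is to exploit the recursion \eqref{eq-compl mon} together with the Hausdorff characterization (Theorem~\ref{thm-Hausdorff}) to set up a bijection between $M_1(B_\infty)$ and the completely monotonic sequences with $p_1^{(1,\infty)} + $ (appropriate normalization) $=1$, and then to identify $\mu([\ol e])$ with the iterated difference $[\Delta^{n-1}(\ol p)]_i$. First I would observe that, by Theorem~\ref{BKMS_measures=invlimits}, a tail invariant probability measure $\mu$ on $X_{B_\infty}$ is the same thing as a sequence of nonnegative vectors $\ol p^{(n,\infty)} = \langle p_i^{(n,\infty)} : i \in \N\rangle$ satisfying ${F'}^T(\ol p^{(n+1,\infty)}) = \ol p^{(n,\infty)}$ for all $n$, with the probability normalization $\sum_{i} H_i^{(1)} p_i^{(1,\infty)} = \sum_i p_i^{(1,\infty)} = 1$ (since $H_i^{(1)}=1$). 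Writing out ${F'}^T$ for the lower-triangular all-ones matrix $F'$ gives exactly \eqref{eq-compl mon}: $p_i^{(n,\infty)} = \sum_{j \geq i} p_j^{(n+1,\infty)}$. Set $\ol p := \ol p^{(1,\infty)}$, so that $\ol p$ is a probability vector with positive entries (fullness of $\mu$).

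Next I would run the recursion backwards. From $p_i^{(n,\infty)} = \sum_{j\geq i} p_j^{(n+1,\infty)}$ we get $p_j^{(n+1,\infty)} = p_j^{(n,\infty)} - p_{j+1}^{(n,\infty)} = [\Delta^1(\ol p^{(n,\infty)})]_j$. Iterating, $\ol p^{(n,\infty)} = \Delta^{n-1}(\ol p)$ coordinatewise, i.e. $p_i^{(n,\infty)} = [\Delta^{n-1}(\ol p)]_i$ for all $n \in \N$ and $i \in \N$; this is an immediate induction on $n$. Since $\mu([\ol e]) = p_i^{(n,\infty)}$ whenever $r(\ol e) = i \in V_n$ (by the definition of the vectors $\ol p^{(n,\infty)}$ in Theorem~\ref{BKMS_measures=invlimits}), this already gives the formula $\mu([\ol e]) = [\Delta^{n-1}(\ol p)]_i$. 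The nontrivial point is that $\mu$ being a genuine (nonnegative, in fact full) measure forces $\ol p$ to be completely monotonic: for every $n$ and $i$ we must have $p_i^{(n,\infty)} = [\Delta^{n-1}(\ol p)]_i \geq 0$ (indeed $>0$ for fullness), which is precisely the defining inequality $[\Delta^{k}(\ol p)]_i > 0$ for all $k \in \N_0$, $i \in \N$. Conversely, any completely monotonic $\ol p$ with $\sum_i p_i = 1$ produces, via $\ol p^{(n,\infty)} := \Delta^{n-1}(\ol p)$, a sequence of nonnegative vectors satisfying \eqref{eq-compl mon}, hence ${F'}^T(\ol p^{(n+1,\infty)}) = \ol p^{(n,\infty)}$, and therefore a unique tail invariant probability measure by Theorem~\ref{BKMS_measures=invlimits} — one only needs to check that the normalization is preserved, which follows from $\sum_i p_i^{(n+1,\infty)} = \sum_i [\Delta^1(\ol p^{(n,\infty)})]_i = p_1^{(n,\infty)} \leq 1$, and that the resulting measure is a probability measure because $\sum_i H_i^{(1)} p_i^{(1,\infty)} = \sum_i p_i = 1$.

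The main obstacle I anticipate is purely bookkeeping: one must be careful that the series $\sum_{j\geq i} p_j^{(n+1,\infty)}$ converges (which it does, being dominated by $\sum_j p_j^{(n+1,\infty)} = p_1^{(n,\infty)} \leq p_1 \leq 1$), and that the backward recursion $p_j^{(n+1,\infty)} = p_j^{(n,\infty)} - p_{j+1}^{(n,\infty)}$ is legitimate — this requires knowing $\ol p^{(n,\infty)} \to 0$ entrywise or at least that the tails vanish, which again follows from summability. The identity $[\Delta^k(\ol c)]_i = \sum_{r=0}^k (-1)^r \binom{k}{r} c_{i+r}$ should be invoked for the explicit binomial form. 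I would also remark that combining this lemma with Theorem~\ref{thm-Hausdorff} yields the bijection between $M_1(B_\infty)$ and probability measures $\theta$ on $[0,1]$ with $p_i = \int_0^1 x^i \, \theta(dx)$ normalized so that $\sum_i p_i = \int_0^1 \frac{x}{1-x}\theta(dx) = 1$; the atoms $\theta = \delta_{a/(1+a)}$ recover exactly the measures $\mu_a$ of Proposition~\ref{prop mu_a}, which is what makes the uncountability and the ergodic decomposition transparent. This last observation is not needed for the lemma itself, so I would keep it as a closing remark rather than part of the proof.
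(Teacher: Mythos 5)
Your proposal is correct and follows essentially the same route as the paper: both derive the recursion $p_j^{(n+1,\infty)} = p_j^{(n,\infty)} - p_{j+1}^{(n,\infty)}$ from ${F'}^T\ol p^{(n+1,\infty)} = \ol p^{(n,\infty)}$ and then establish $\Delta^{k}(\ol p^{(n,\infty)}) = \ol p^{(n+k,\infty)}$ by induction, so that complete monotonicity is exactly positivity of the cylinder-set values and the formula $\mu([\ol e]) = [\Delta^{n-1}(\ol p)]_i$ drops out. Your extra care about convergence of the tail sums, the converse direction, and the Hausdorff remark are all consistent with the paper, which leaves those points implicit.
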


\begin{proof}
Fix some $n$, and show that $\ol p^{(n)}$ is a completely 
monotonic sequence. Indeed, it follows from 
\eqref{eq-compl mon} that $[\De p^{(n)}]_i = p^{(n+1)}_i > 0$.
For $\De^2$, we compute
$$
[\De^2 p^{(n)}]_i = [\De p^{(n)}]_i - [\De p^{(n)}]_{i+1} =
p^{(n)}_i  - 2p^{(n)}_{i+1} + p^{(n)}_{i+2} =
p^{(n+1)}_i - p^{(n +1)}_{i+1} = p^{(n+2)}_i >0.
$$
By induction, we deduce that $\De^k (\ol p^{(n)}) = 
\ol p^{(n+ k)}$, or 
$[\De^k p^{(n)}]_i = p^{(n+k)}_i > 0$ for all natural numbers
$n$ and $k$.  
\end{proof}

It follows from Proposition \ref{prop mu_a} that the measure 
$\mu_{a}$ is determined by the probability completely 
monotonic vectors (sequence)
$${\overline{p}_{a}^{(1, \infty)}} = \left\langle 
\frac{1}{(a + 1)}, \frac{a}{{(a + 1)}^{2}}, 
\frac{a^{2}}{{(a + 1)}^{3}},
\ldots
\right\rangle.
$$ 

\begin{theorem}\label{thm measures for Binfty}
The family of measures $\{ \mu_a : 0 < a< \infty\}$ forms the 
set of all ergodic probability tail invariant measures
on the generalized Bratteli diagram $B_\infty$.
\end{theorem}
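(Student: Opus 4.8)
The plan is to prove Theorem \ref{thm measures for Binfty} in two stages: first, that every ergodic tail invariant probability measure on $B_\infty$ is of the form $\mu_a$ for some $0 \le a < \infty$; and second, that each $\mu_a$ is in fact ergodic. Since all the machinery has been assembled above, the first stage is essentially bookkeeping. By Lemma \ref{lem_Delta inf meas} (together with Remark \ref{rem_r6}, applicable since the maps $F_n^T$ are continuous for $B_\infty$ as checked via Theorem \ref{t2}), every $\mu \in M_1(\mc R)$ is given by a sequence $\{\ol q^{(n)}\}$ with $\ol q^{(n)} \in \Delta^{(n,\infty,cl)}$. If $\mu$ is ergodic, then each $\ol q^{(n)}$ is an extreme point of $\Delta^{(n,\infty,cl)}$ (ergodic measures cannot be written as nontrivial convex combinations), and by Remark \ref{rem_r7r8} the extreme points lie in $L^{(n)}(\{\ol g_v\})$. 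We computed this set explicitly: combining Lemma \ref{lem limi vectors p_a}, Lemma \ref{lem:H(n,a)}, and the normalization of Theorem \ref{thm 4.11 edited}, we obtained $L^{(n)}(\{\ol g_v\}) = \{\ol q_a^{(n,\infty)} : 0 \le a < \infty\}$, with $\ol q_a^{(n,\infty)}$ given by \eqref{eq=q}. Hence $\ol q^{(n)} = \ol q_{a_n}^{(n,\infty)}$ for some $a_n \ge 0$. The consistency relation $F_n^T(\ol q^{(n+1)}) = \ol q^{(n)}$, equivalently ${F'}^T(\ol p_{a_{n+1}}^{(n+1,\infty)}) = \ol p_{a_n}^{(n,\infty)}$, forces $a_{n+1} = a_n$: indeed Proposition \ref{prop mu_a} shows \eqref{eq8.12} holds when the parameter is the same at both levels, and reading off, say, the first coordinate gives $\frac{1}{(a_{n+1}+1)^{n+1}}\cdot(a_{n+1}+1) \cdot\frac{1}{a_{n+1}} \cdots$ — more cleanly, one compares $p^{(n)}_1 = \sum_{i\ge 1} p^{(n+1)}_i$ and uses that $a \mapsto \sum_i a^{i-1}/(a+1)^{i+n}\cdot(\text{stuff})$ is strictly monotone, so the common value of $a$ is determined. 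Thus $\mu = \mu_a$ for a single $a$.

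For the second stage — ergodicity of each $\mu_a$ — the plan is to exhibit, for distinct $a$, disjoint tail invariant sets carrying the respective measures, which shows the $\mu_a$ are mutually singular, and then conclude ergodicity by the same decomposition argument used at the end of the proof of Theorem \ref{t5}. Concretely, for a path $x = (v_n)$ with $v_n = s(x_n) \in V_n$ (so $v_n$ is a natural number, the vertex label, and since the incidence matrix is lower-triangular with the structure above, $v_{n+1} \ge v_n$), the quantity that should stabilize $\mu_a$-almost surely is something like $\lim_n v_n / n$, which ought to equal $a$ under $\mu_a$. This follows the heuristic behind Lemma \ref{lem_d_i}: the parameter $a$ emerged precisely as $\lim_m i_m/m$ along the vertices realizing a limit row. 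One sets $X(a) = \{x : \lim_n v_n/n = a\}$; these are pairwise disjoint and tail invariant (the tail of a path determines the asymptotics of $v_n$), and the claim is $\mu_a(X(a)) = 1$. To prove this I would model the increments: under $\mu_a$, conditioned on being at vertex $i$ in level $n$, the path moves to vertex $j \ge i$ in level $n+1$ with probability proportional to $f'_{ji} p^{(n+1)}_{a,j} = p^{(n+1)}_{a,j}$ (for $j \ge i$), and one computes this conditional law to be geometric-type with the parameter $a/(a+1)$, so the expected increment $v_{n+1} - v_n$ is a constant depending on $a$; a law-of-large-numbers / Borel–Cantelli argument then pins down $\lim v_n/n$.

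The main obstacle I expect is the probabilistic bookkeeping in the second stage: unlike the Pascal case in Theorem \ref{t5}, where the increments $\ol Y_n$ were genuinely i.i.d.\ with a clean distribution, here the natural increment process $v_{n+1} - v_n$ is only a Markov chain (its law depends on the current vertex $i$ through the range of allowed $j$), so one cannot quote the classical strong law directly. The fix is to note that the measure $\mu_a$ decomposes the path space over the sequence of increments in a way that is still tractable — or, more robustly, to run the argument the other way: show directly that $\mu_a$ is not a nontrivial convex combination $\mu_a = \tfrac12(\nu_1 + \nu_2)$ of tail invariant probability measures by using that $\ol q_a^{(n,\infty)}$ is an \emph{extreme} point of $\Delta^{(n,\infty,cl)}$ for every $n$, which we already know from Remark \ref{rem_r7r8} combined with the fact that the map $a \mapsto \ol q_a^{(n,\infty)}$ is injective (its coordinates determine $a$). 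This last route avoids probability entirely: if $\mu_a$ split, then at each level $\ol q_a^{(n,\infty)}$ would split within $\Delta^{(n,\infty,cl)}$, contradicting extremality. I would present the extremality argument as the primary proof and mention the singularity/LLN picture as a remark, since it gives the cleanest and most self-contained conclusion. One final point to verify carefully: that $a=0$ (the measure $\mu_0$ supported on the leftmost vertex, an odometer-type path) is either included or explicitly excluded — the statement says $0 < a < \infty$, and indeed $\ol p_0^{(1,\infty)} = \langle 1, 0, 0, \ldots\rangle$ corresponds to the measure concentrated on the vertical path at vertex $1$, which is a legitimate ergodic measure too, so I would either widen the range to $0 \le a$ or note that $\mu_0$ is the extension of the odometer measure on the leftmost subdiagram and is covered separately.
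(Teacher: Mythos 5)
Your first stage --- showing that every ergodic tail invariant probability measure is carried by vectors $\ol q^{(n)}$ lying in $L^{(n)}(\{\ol g_v\}) = \{\ol q_a^{(n,\infty)} : 0\le a<\infty\}$ and that the consistency relation forces a single value of $a$ across all levels --- is essentially the paper's route; it mirrors the argument used for Theorem \ref{t5} and is fine.

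The gap is in your second stage. Your primary argument asserts that each $\ol q_a^{(n,\infty)}$ is an extreme point of $\Delta^{(n,\infty,cl)}$, ``which we already know from Remark \ref{rem_r7r8}.'' But that remark gives only the inclusion $\mathrm{ext}(\Delta^{(n,\infty,cl)}) \subseteq L^{(n)}(\{\ol g_v\})$, not the reverse; the injectivity of $a \mapsto \ol q_a^{(n,\infty)}$ has no bearing on whether a given $\ol q_a^{(n,\infty)}$ can be written as a nontrivial convex combination of other points of $\Delta^{(n,\infty,cl)}$ arising from invariant measures. Ruling out such a decomposition is precisely the statement that $\mu_a$ is ergodic, so the extremality route as written is circular. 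Your fallback via $\lim_n v_n/n$ could in principle work, but as you yourself note the increments form a Markov chain rather than an i.i.d.\ sequence, and you do not supply the argument that would replace the classical strong law. The paper closes exactly this gap with a device you do not mention: Lemma \ref{lem compl mon} shows every tail invariant probability measure on $B_\infty$ is encoded by a completely monotonic sequence; Hausdorff's theorem (Theorem \ref{thm-Hausdorff}) identifies such sequences affinely and bijectively with finite positive measures on $[0,1]$; and the measure corresponding to $\mu_a$ is $\frac{1}{a}\,\delta_{a/(a+1)}$, a point mass, which admits no nontrivial convex decomposition --- hence neither does $\mu_a$. Without this (or a completed probabilistic argument) the ergodicity of $\mu_a$ is not established. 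A minor final point: your concern about $a=0$ is resolved by the paper's standing convention that measures are non-atomic; $\mu_0$ would be the Dirac mass on the single vertical path through vertex $1$, so the range $0<a<\infty$ is deliberate and widening it to $0\le a$ would conflict with that convention.
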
 

\begin{proof}
Assume that
\be\label{eq-lin comb}
\mu_{a} = \lambda  \mu + (1 - \lambda)  \rho,
\ee
where 
$0 < \lambda < 1$, and $\mu$ and $\rho$ are tail 
invariant probability measures on $X_{B_\infty}$. It follows 
from Lemma \ref{lem compl mon} that 
$\mu$ and $\rho$ are determined by some completely monotonic 
positive vectors $\overline{ p}^{1} = \langle p^{(1)}_1, 
p^{(1)}_2 \ldots \rangle$ and
$\overline{w}^{1} = \langle w^{(1)}_1, w^{(1)}_2, 
\ldots\rangle$, respectively. Then, we have 
$$
\frac{a^{k - 1}}{{(a + 1)}^{k}} = 
\lambda p^{(1)}_k + (1 - \lambda) w^{(1)}_k
$$ 
for each $k = 1, 2, \ldots$.

Use Theorem \ref{thm-Hausdorff} and denote by $\theta_{a}$, $\theta_{\mu}$, and $\theta_{\rho}$ 
the corresponding finite measures on the interval $[0, 1]$ 
defined by the sequences
${\overline{y}}_1(a), \overline{p}^{(1)}$, and
$\overline{w}^{(1)}$.
Then we have 
$$\frac{a^{k - 1}}{{(a + 1)}^{k}} =
\int_{0}^{1} x^{k} \theta_{a}(dx), \ \  
p^{(1)}_k =
\int_{0}^{1} x^{k} \theta_{\mu}(dx), \ \  
w^{(1)}_k =
\int_{0}^{1}x^{k} \theta_{\rho}(dx),
$$ 
and for all $k \in \N$,
$$\int_{0}^{1} x^{k} \theta_{a}(dx) = 
\lambda 
\int_{0}^{1} x^{k} \theta_{\mu}(dx) + 
(1 - \lambda) 
\int_{0}^{1} x^{k} \theta_{\rho}(dx). 
$$
This means that 
$\theta_{a} = \lambda \theta_{\mu} +(1 -\lambda)\theta_{\rho}$. 
To finish the proof, we observe that   
$$\theta_{a} = \frac{1}{a} \delta_{\frac{a}{a + 1}}
$$ 
is a delta-measure concentrated at  $\frac{a}{a + 1}$.
Indeed, 
$$
\int_{0}^{1} x^{k} \theta_{a}(dx) =
\frac{a^{k - 1}}{{(a + 1)}^{k}} = 
\frac{1}{a}  \int_{0}^{1} x^{k} \delta_{\frac{a}{a + 1}}(dx)
$$
for every $k$. Therefore, relation \eqref{eq-lin comb} 
is impossible, and this means that
$\mu_{a}$ is an ergodic measure.    
\end{proof}

\subsection{Subdiagrams of \texorpdfstring{$B_\infty$}{B_\infty}}
Now we present some natural 
subdiagrams of the generalized Bratteli diagram $B_\infty$. 
Let $k$ be a fixed natural number; take the sequence of vertices  
$W_{1} = \{k\},\ W_{2} = \{k,k+1\},\ \ldots, W_{n} = \{k, k+1, \ldots, k+n-1\}, \ldots$. 
We will use this sequence to define two subdiagrams of $B_\infty$.
They are the vertex subdiagram $\ol B(W, k)$ of $B_\infty$, supported
by the sequence $\{W_n\}$ and an edge subdiagram $\ol B_k$ which is defined by the sequence of incidence matrices $\ol F_n = (\ol f^{(n)}_{ij}) $, $n \in \N$, where  
\begin{equation}
\ol f^{(n)}_{ij} =
\begin{cases}
    1, & \textrm{if}\  j= i \ \textrm{or}\  j = i-1 \ \textrm{and} \ 
    i = k, \ldots, k+n -1 \\
    0, & \textrm{otherwise}
\end{cases}
\end{equation}

We first consider the edge subdiagram $\ol B_k$.  It is not hard
to see that, by definition, $\ol B_k$ is isomorphic to the 
\textit{classical Pascal graph} because for every $n$ the vertex 
$j \in W_n$ is the source for exactly two edges connecting $j$ with the vertices $j$ and $j+1$ from $W_{n+1}$.

It is well known (see, e.g. \cite{MelaPetersen2005} or 
Section \ref{sect Bratteli-Pascal}) that 
the path space $X_{\ol B_k}$ of $\ol B_k$ supports uncountably 
many non-atomic ergodic tail invariant measures $\nu_{p}$,
$0 < p < 1$, such that for a cylinder set $[\ol e]$ with 
the source in $k$ and range in $i \in W_{n+1}$, we have
$$
\nu_{p} ([\overline{e}])= p^{n +k-i} (1 - p)^{i - k},
$$
where $i = k, k+1, \ldots, k + n$. 

Our goal is to find out whether the measure extension 
$\wh \nu_p$ on $\wh X_{\ol B_k} \subset X_{B_\infty}$ is finite or
infinite. For this, we use 
criterion (\ref{add3_8}) from Proposition \ref{prop-edge meas ext}. 
Denoting by $F'$ the incidence matrix of $B_\infty$, we observe that the entries $\wt f^{(n)}_{ij}$ of the matrix $\wt F_n = F' - \ol F_n$ 
are identified with the edges that were deleted from $B_\infty$
to produce the Pascal graph. It can be seen that
$\wt f^{(n)}_{ij} =1$ if and only if  $i = k, \ldots k +n -1$ and 
$ 1 \leq j \leq i-2$. 
Hence, it follows from (\ref{add3_8}) that
\be\label{eq=meas ext}
\wh \nu_p(\wh X_{\ol B_k}) < \infty \ \Longleftrightarrow \ 
 I= \sum_{n\geq 1} \sum_{i \in V_{n+1}} \sum_{j \in V_n} 
\wt f^{(n)}_{ij} H_j^{(n)} p^{n+k-i}(1 - p)^{i-k} < \infty, 
\ee
where $H_j^{(n)}$ is the height of the $j$-th tower in $B_\infty$.

\begin{prop}
(1) The extension $\wh \nu_p$ of the measure $\nu_p$ is infinite. 

(2) The path space $\wh X_{\ol B_k}$ has zero measure for any
ergodic tail invariant probability measure on $B_\infty$.
\end{prop}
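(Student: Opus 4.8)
The plan is to verify the equivalence in \eqref{eq=meas ext} by computing the series $I$ explicitly, using the formula $H_j^{(n)} = S_j^{(n-1)} = \binom{j+n-2}{n-1}$ from \eqref{eq=H=S} and the description $\wt f^{(n)}_{ij} = 1$ iff $i \in \{k, \dots, k+n-1\}$ and $1 \le j \le i-2$. First I would fix $n$ and, for each admissible $i = k+r$ with $0 \le r \le n-1$, sum over $j$ from $1$ to $i-2$ the quantity $H_j^{(n)} p^{n+k-i}(1-p)^{i-k} = p^{n-r}(1-p)^r \sum_{j=1}^{k+r-2} \binom{j+n-2}{n-1}$. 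Using the hockey-stick identity $\sum_{j=1}^{m} \binom{j+n-2}{n-1} = \binom{m+n-1}{n}$, the inner sum becomes $\binom{k+r+n-3}{n}$, which grows polynomially in $n$ (of degree $k+r-3$) while the factor $p^{n-r}$ decays geometrically. Summing over $r$ from $0$ to $n-1$ and then over $n \ge 1$, I would show that the dominant contribution comes from $r$ near $n-1$, i.e. from $i$ near the ``boundary'' vertex $k+n-1$, where the factor $(1-p)^{i-k}$ is largest but $p^{n-r}$ is only a bounded power of $p$; a careful bookkeeping should reveal that the terms do not decay fast enough, so $I = \infty$. This proves part (1).

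Actually, I expect the cleaner route for part (1) is to bound $I$ from below by retaining only the single term $j = i-2$ (or a fixed finite block of $j$'s near the top) for each $(n,i)$, since $H_{i-2}^{(n)} = \binom{i+n-4}{n-1}$ already grows like $n^{i-2}$ for fixed $i$... but $i$ is not fixed here, it ranges up to $k+n-1$. So instead I would isolate the contribution of vertices $i = k+n-1$ (the rightmost vertex reachable), for which $\wt f^{(n)}_{k+n-1,j} = 1$ for $1 \le j \le k+n-3$, the measure weight is $p^{1}(1-p)^{n-1}$, and $\sum_{j=1}^{k+n-3} H_j^{(n)} = \binom{2n+k-3}{n}$. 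Now $\binom{2n+k-3}{n} (1-p)^{n-1}$ behaves like $C^n (1-p)^n$ up to polynomial factors where $C = 4$ (from $\binom{2n}{n} \sim 4^n/\sqrt{\pi n}$), and since $4(1-p) > 1$ whenever $p < 3/4$, this already forces divergence for such $p$. For $p \ge 3/4$ one would symmetrically use a different family of vertices, say $i = k$, where the weight is $p^n$ and $\sum_{j=1}^{k-2} H_j^{(n)} = \binom{k+n-3}{n}$, growing polynomially — that alone is not enough, so the $p$ close to $1$ regime needs the intermediate vertices. The honest approach: show that $\sum_{i} \wt f^{(n)}_{ij}$-weighted sum equals a quantity comparable to $\sum_{r=0}^{n-1} p^{n-r}(1-p)^r \binom{n+k+r-3}{n}$ and prove this sequence does not tend to $0$ (let alone is summable) for any $p \in (0,1)$, by comparing with the $r = n-1$ term and using Stirling. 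The main obstacle will be handling all values of $p \in (0,1)$ uniformly: the binomial $\binom{2n}{n} \approx 4^n$ kills small $p$ easily, but as $p \to 1$ one must track the precise balance between the polynomial growth of the height sums and the geometric decay $(1-p)^r$, choosing the optimal index $r^*(n) \approx \theta n$ via a Stirling/large-deviations estimate of $\binom{n+k+r-3}{n}(1-p)^r$ and verifying the maximum over $\theta$ exceeds $1$.

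For part (2), I would argue as follows. Let $\mu$ be any ergodic tail invariant probability measure on $B_\infty$; by Theorem \ref{thm measures for Binfty}, $\mu = \mu_a$ for some $0 < a < \infty$, so on a cylinder $[\ol e]$ with $r(\ol e) = j \in V_n$ one has $\mu_a([\ol e]) = \dfrac{a^{j-1}}{H(n,a)(a+1)^j}$ with $H(n,a) = (1+a)^{n-1}$. The set $\wh X_{\ol B_k}$ is the $\mathcal R$-saturation of the path space of the Pascal subdiagram $\ol B_k$ whose vertices at level $n$ lie in $W_n = \{k, \dots, k+n-1\}$. Using $\wh X_{\ol B_k} = \bigcup_m \wh X_{\ol B_k}^{(m)}$ and the analogue of \eqref{extension_method}, it suffices to show $\sum_{j \in W_n} H_j^{(n)} \mu_a([\ol e_j]) \to 0$ as $n \to \infty$, i.e. $\sum_{r=0}^{n-1} \binom{k+r+n-2}{n-1} \dfrac{a^{k+r-1}}{(1+a)^{n-1}(1+a)^{k+r}} \to 0$. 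But the full sum $\sum_{j=1}^\infty H_j^{(n)} \mu_a([\ol e_j]) = \mu_a(X_{B_\infty}) = 1$ for every $n$, so I instead must show the \emph{tail} beyond $j = k+n-1$ captures almost all the mass; equivalently that $\mu_a$-a.e.\ path eventually leaves every $W_n$. Concretely, $X_{B_\infty} \setminus \wh X_{\ol B_k}^{(m)}$ consists of paths passing through a vertex $j \notin W_m$ at level $m$; I would show $\mu_a(\wh X_{\ol B_k}^{(m)}) \le \sum_{j \in W_m} \mu_a(X_j^{(m)})$ and that this $\to 0$. This should follow from the explicit formula $\mu_a(X_j^{(m)}) = \dfrac{a^{j-1}}{(a+1)^{m+j-1}}\binom{m+j-2}{m-1}$ together with a dominated-convergence / law-of-large-numbers argument in the spirit of the proof of Theorem \ref{t5}: under $\mu_a$, the ``vertex at level $m$'' is concentrated (after normalizing by $m$) near $\frac{a}{a+1}\cdot$(something), pushing it out of the band $[k, k+m-1]$... wait, $W_m$ contains $\{k,\dots,k+m-1\}$ which has width $m$, so I need the vertex index to grow faster than linearly or the weights to concentrate away from the left edge. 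The correct observation is that $\mu_a(\wh X_{\ol B_k})$ is a \emph{tail-invariant} measurable set with $\mu_a$-measure either $0$ or $1$ by ergodicity; and it cannot be $1$ because $\wh X_{\ol B_k}$ is contained in the saturation of a \emph{standard} Bratteli subdiagram whose own ergodic measures are the $\nu_p$, which are mutually singular with (the restrictions of) $\mu_a$ — indeed if $\mu_a(\wh X_{\ol B_k}) = 1$ then $\mu_a$ restricted and normalized on $X_{\ol B_k}$ would be a tail-invariant probability measure, hence some $\nu_p$, forcing $\mu_a([\ol e]) = c\, \nu_p([\ol e])$ on cylinders ending in $W_n$, which contradicts the formulas ($\mu_a$ has an extra $(1+a)^{-(n-1)}$ normalization factor that $\nu_p$ lacks). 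So the clean proof of (2) is: ergodicity gives the $0$--$1$ law, and the explicit cylinder formulas rule out measure $1$. The obstacle here is minor — just making the ``restrict and normalize'' step rigorous, i.e.\ checking that $\wh X_{\ol B_k}$ having full measure would make $X_{\ol B_k}$ itself carry positive (hence, after normalization, probability) mass, which is immediate since $\mu_a(\wh X_{\ol B_k} \setminus X_{\ol B_k}) $ would then be accounted for by tail-invariance from $X_{\ol B_k}$ and the part (1) computation shows that extension is infinite, contradicting $\mu_a(\wh X_{\ol B_k}) = 1 < \infty$.
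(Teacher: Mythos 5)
Your setup for part (1) is the right one --- the description of the deleted edges $\wt f^{(n)}_{ij}$, the hockey-stick evaluation $\sum_{j=1}^{i-2}H_j^{(n)}=\binom{n+i-3}{n}$, and the reduction to the divergence of $\sum_n\sum_r p^{n-r}(1-p)^r\binom{n+k+r-3}{n}$ all match the paper's computation. But the step you yourself flag as ``the main obstacle'' is a genuine gap: your plan for $p\ge 3/4$ (optimize $r^*(n)\approx\theta n$ by Stirling and verify the resulting exponential rate is nonnegative for every $p\in(0,1)$) is left entirely undone, and it is the only hard part of the proof. The paper sidesteps it completely: interchange the order of summation to get $J=\sum_{i\ge0}(1-p)^{i+1}\sum_{n\ge0}p^n\binom{n+2i+k-1}{k+i-2}$, bound the inner binomial below by $\binom{n+i+k-2}{k+i-2}$, and apply the generating-function identity $\sum_{n\ge 0}\binom{n+s}{s}p^n=(1-p)^{-(s+1)}$; the double sum then collapses to $\sum_{i\ge0}(1-p)^{2-k}=\infty$ for every $p\in(0,1)$. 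If you keep your per-level route you must actually carry out the maximization over $\theta$; the summation interchange makes the uniformity in $p$ automatic and is the ingredient you are missing.

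For part (2) you arrive at a genuinely different argument. The paper computes $\mu_a(X_{\ol B_k})=\lim_n\mu_a(Z_n)$ directly: since $\ol B_k$ is the classical Pascal graph its inner heights are $\binom{n}{j}$, the binomial theorem gives $\mu_a(Z_n)=(\tfrac{a}{a+1})^k(1+2a)^n(1+a)^{-2n}\to0$ because $1+2a<(1+a)^2$, and the $\mathcal R$-saturation of a null set is null. Your route --- $\wh X_{\ol B_k}$ is $\mathcal R$-invariant, so by ergodicity its measure is $0$ or $1$; if it were $1$, the normalized restriction to $X_{\ol B_k}$ would be a tail-invariant probability measure on the Pascal graph whose extension is infinite by part (1), contradicting finiteness of $\mu_a$ --- can be made to work, but it costs more: you need part (1) for \emph{all} ergodic measures of $\ol B_k$ together with the de Finetti--type ergodic decomposition on the Pascal graph, plus the uniqueness of the tail-invariant extension from a subdiagram, and it makes (2) depend on (1), which the paper's proof does not. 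Also be careful with your first attempted contradiction (the ``extra $(1+a)^{-(n-1)}$ factor''): a cylinder $[\ol e]$ of $B_\infty$ is not contained in $X_{\ol B_k}$ even when $\ol e$ is a finite path of $\ol B_k$, so you cannot directly equate $\mu_a([\ol e])$ with a multiple of $\nu_p$ of the corresponding subdiagram cylinder; only the extension-is-infinite version of your argument survives that objection.
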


\begin{proof}
(1) We will show that the series $I$ in \eqref{eq=meas ext} is 
divergent.

We fix some $n$ and, using \eqref{eq=prop of S},
\eqref{eq=H=S}, and \eqref{eq-numbers S}, obtain 
$$
\ba 
I_n := &\   \sum\limits_{i \in  V_{n + 1}}{\sum\limits_{j \in  V_{n}}
{\wt f}_{ij}^{(n)}} \ H_{j}^{(n)} {(1 - p)}^{i - k} p^{n + k - i}\\ 
= &\ p^{n} H_{k}^{(n)} + \sum\limits_{i = k + 1}^{k + n} p^{n + k - i}  {(1 - p)}^{i - k} 
 \sum\limits_{j = 1}^{i - 2} H_{j}^{(n)}   \\
= &\ p^n H_{k}^{(n)} +  \sum\limits_{i = k + 1}^{k + n} p^{n + k - i} (1 - p)^{i - k} \cdot H_{i - 2}^{(n + 1)} \\ 
= &\  p^{n}   
\left( \begin{array}{c}
n + k  - 2 \\
n - 1 \\
\end{array} \right) +
\sum\limits_{i = k + 1}^{k + n} p^{n + k - i} (1 - p)^{i - k} 
\left( \begin{array}{c}
n + i - 3 \\
n \\
\end{array} \right). \\ 
\ea 
$$
Changing the index of summation, we get 
$$
\ba
I_n = &\   p^{n}  
\left( \begin{array}{c}
n + k  - 2 \\
n - 1 \\
\end{array} \right) +
\sum\limits_{i = 0}^{n - 1} p^{n - i - 1} (1 - p)^{i + 1}  
\left( \begin{array}{c}
n + i + k - 2 \\
n \\
\end{array} \right) \\ 
= &\ p^{n} 
\left( \begin{array}{c}
n + k  - 2 \\
n - 1 \\
\end{array} \right) + p^{n} 
\sum\limits_{i = 0}^{n - 1} \left( \frac{1 -p}{p} \right)^{i + 1} 
\left( \begin{array}{c}
n + i + k - 2 \\
n \\
\end{array} \right). \\
\ea 
$$
Then, the measure ${\widehat{\nu}}_{p}$ is infinite if the series
$$
I= \sum\limits_{n = 1}^{\infty} p^{n} \cdot 
\left\lbrack  
\left(\begin{array}{c}
n + k - 2 \\
n - 1 \\
\end{array} \right) +
\sum\limits_{i = 0}^{n - 1} \left( \frac{1 - p}{p} \right)^{i + 1} 
\cdot 
\left(\begin{array}{c}
n + i + k - 2 \\
n  \\
\end{array} \right) 
\right\rbrack
$$
is divergent. For this, it suffices to show that 
 $$
 J= \sum\limits_{n = 1}^{\infty} p^{n} \cdot 
\left\lbrack  
\sum\limits_{i = 0}^{n - 1} \left( \frac{1 - p}{p} \right)^{i + 1} 
\left(\begin{array}{c}
n + i + k - 2 \\
n  \\
\end{array} \right) 
\right\rbrack
= \infty
$$
The series $J$ can be represented in the following form and then 
estimated from below:
$$
\ba 
J = &\ 
\sum\limits_{i = 0}^{\infty} \left( \frac{1 - p}{p} \right)^{i + 1} 
\sum_{n = i + 1}^{\infty}
p^{n} 
\left(\begin{array}{c}
n + i + k - 2 \\
n \\
\end{array} \right)\\
= &\ 
 \sum\limits_{i = 0}^{\infty} (1 - p)^{i + 1}
\cdot 
\sum\limits_{n = 0}^{\infty} p^n \cdot
\left( \begin{array}{c}
n + 2 i + k -1 \\
k + i - 2 \\
\end{array} \right)\\
\geq &\ 
 \sum\limits_{i = 0}^{\infty} (1 - p)^{i + 1}
\cdot 
\sum\limits_{n = 0}^{\infty} p^n \cdot
\left( \begin{array}{c}
n +  i + k -2 \\
k + i - 2 \\
\end{array} \right)
\ea
$$
Next, we use the equality
$$
\sum\limits_{n = 0}^{\infty}\left( \begin{array}{c}
n + s \\
s \\
\end{array} \right)
 \cdot  p^{n} =
\frac{1}{{(1 - p)}^{s + 1}}
$$ 
whenever $|p| < 1$ and $s = 0, 1, 2,\ldots$
Then,  
$$\sum\limits_{n = 0}^{\infty}\left( \begin{array}{c}
n + i + k - 2 \\
i + k - 2 \\
\end{array} \right)
\cdot p^{n} = 
\frac{1}{{(1 - p)}^{i + k-1}}.
$$
Finally, we conclude that 
$$J \geq
\sum\limits_{i = 0}^{\infty} (1 - p)^{i+1} 
\frac{1}{(1 - p)^{i + k-1}} 
= \sum\limits_{i = 0}^{\infty} \frac{1}{{(1 - p)}^{k - 2}} = \infty.
$$
In this way we have proved that the extension ${\widehat{\nu}}_{p}$
of the measure $\nu_{p}$ is infinite. 

(2)
Now we prove that
$\mu_{a}(\widehat{X}_{\overline{B}_{k}}) = 0$ for every
$0 < a < \infty$. For this, it is enough to show that 
$\mu_{a}(X_{\overline{B}_{k}}) = 0$. Let $Y^{(n)}_i$ be the set of all
paths $x = (x_n)$ from $X_{\ol B_\infty}$ such that the finite
path $(x_1,...,x_{n})$ lies in $\ol B_k$ and $s(x_n) =i \in W_{n}$. 
Then 
\be\label{eq=path space sbdg}
X_{\ol B_k} = \bigcap_{n=1}^\infty \bigcup_{i \in W_n} Y^{(n)}_i
\ee 
and
$$
\mu_{a}(\widehat{X}_{\overline{B}_{k}}) = 
\lim\limits_{n \to \infty} \mu_a \left(\bigcup_{i \in W_n} Y^{(n)}_i
\right)
$$
where $\mu_a(Y^{(n)}_i)$ can be found in Proposition \ref{prop mu_a}.
To find the height of every tower $Y^{(n)}_i$, we use also the fact that $\ol B_k$ is the Pascal graph.
Therefore,
$$
\ba 
\mu_{a}(\widehat{X}_{\overline{B}_{k}}) = &\ 
\lim\limits_{n \to \infty} \left( \frac{a^{k}}{{(1 + a)}^{n + k}}
\cdot \left\lbrack 
\left( \begin{array}{c}
n \\
0 \\
\end{array} \right) + 
\left( \begin{array}{c}
n \\
1 \\
\end{array} \right) 
 \left( \frac{a}{a + 1} \right) + \ldots +
\left( \begin{array}{c}
n \\
n \\
\end{array} \right)  
\left( \frac{a}{a + 1} \right)^{n} \right\rbrack \right)\\
= &\ 
\lim\limits_{n \to \infty}
\left\lbrack \frac{a^{k}}{{(1 + a)}^{n + k}} \right\rbrack
\cdot \left( 1 + \frac{a}{a + 1} \right)^{n} \\
= &\ 
{\left\lbrack \frac{a}{a + 1} \right\rbrack}^{k} \cdot  
\lim\limits_{n \to \infty}  \frac{(1 + 2a)^{n}}{{(1 + a)}^{2n}} \\
= &\ 0. 
\ea
$$ 
\end{proof}

\begin{remark}
Let a subdiagram $\ol B$ of $B_\infty$
be defined by the sequence $\ol W_n = \{1, \ldots, k\}$. 
The incidence matrix $\ol F$ is the low triangular $k\times k$ matrix 
such that $\ol f_{ij} = 1$ if $i \leq j$ and zero otherwise. 
Then the path space of $\ol B$ is countable and 
the only tail invariant measure is the delta measure supported by the infinite path going through vertex 1. 
Indeed, the ``normalized'' vectors  
${\overline{y}}_{i}^{(n,m)}$ of the rows of the
matrix ${\ol G'}^{(n,m)} = \ol {F'}^m$ have the form 
$$
{\overline{y}}_{i}^{(n,m)} = \frac{1}{S_{i}^{(m)}} \cdot 
\left\langle S_{i}^{(m - 1)}, S_{i - 1}^{(m - 1)}, \ldots, S_{1}^{(m - 1)}, 0, \ldots, 0 \right\rangle,\ i = 1, \ldots, k.
$$
We can easily show that  
$\overline{y}_{i}^{(n,\infty)} =
\lim_{m \to \infty}{\overline{y}}_{i}^{(n,m)} =
\left\langle  1, 0,  \ldots, 0\right\rangle$ because 
$$\lim_{m \to \infty} \frac{S_{i}^{(m - 1)}}{S_{i}^{(m)}} = 1,
\quad
\lim_{m \to \infty} \frac{S_{i}^{(m - 1)}}{S_{i+1}^{(m)}} = 0,\  i=1,  \ldots, k.$$
\end{remark}

\subsection{Values of the measures \texorpdfstring{$\mu_a$}{\mu_a} on subdiagrams \texorpdfstring{$B(W, k)$}{B(W, k)}}
\label{sub_values_measures_mu_a}
We consider now the vertex subdiagram $\ol B(W, k)$ of $B_{\infty}$
that was defined at the beginning of this subsection. 
Recall that $k$ is fixed and $W_n = \{k,k+1, \ldots, k+n-1\}$. 
For the reader's convenience, we include Figure 3:

\begin{figure}[htb!]\label{fig B(W,k)}
\begin{center}
\includegraphics[scale=0.75]{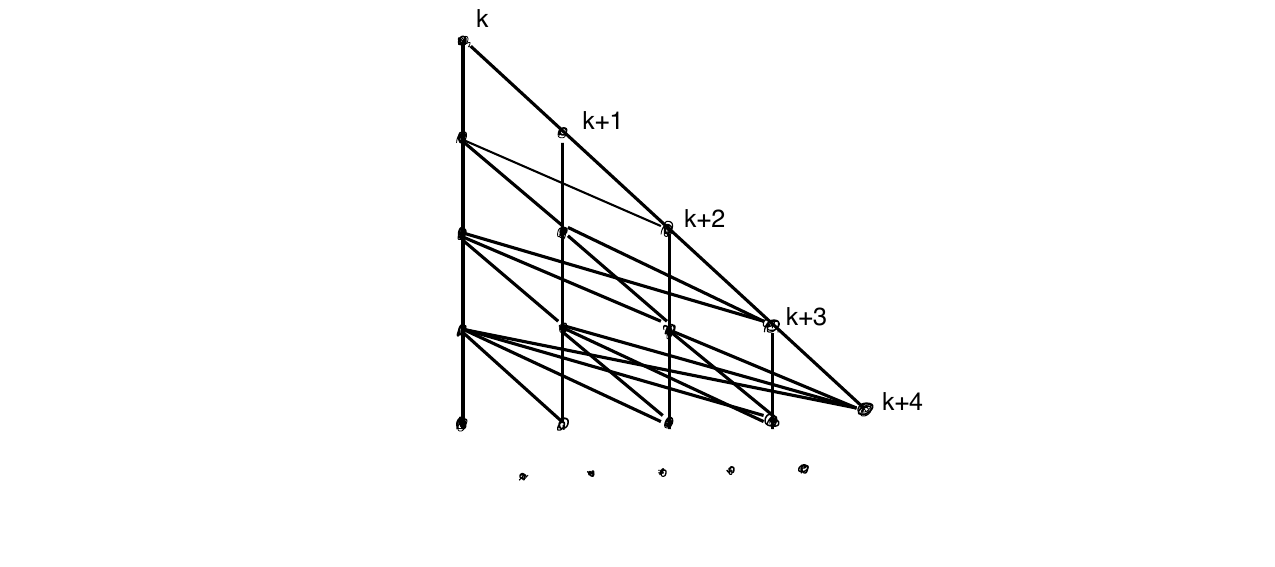}
\caption{The subdiagram $\ol B(W,k)$.}
\end{center}
\end{figure}

Let $h_{i}^{(n)}$ be the heights of the towers 
$\ol X_{i}^{(n)}$, $i = k, k+1, \ldots, k + n - 1$, 
inside $\overline B(W, k)$ where $n \geq 1$.
It is evident that $h_{k}^{(n)} = 1$ for all $n$.
Moreover, from the definition of the subdiagram $\overline B(W, k)$,
we get the relation
 \begin{equation}\label{eq=heights h}
h_{i}^{(n + 1)} =  h_{k}^{(n)} + \ldots + h_{i}^{(n)},
\end{equation} 
 whenever  $i = k, \ldots, k + n - 1$, and  
$h_{k + n}^{(n + 1)} =  h_{k + n - 1}^{(n + 1)}$. 

We recall that $H_{i}^{(n)}$ denotes the height of the tower $X_{i}^{(n)}$ in the diagram $B_{\infty}$, $n, i = 1, 2, \ldots$. 
We know that
$H_{i}^{(n)} = 
\left( \begin{array}{c}
i + n - 2 \\
n - 1 \\
\end{array} \right)$, see \eqref{eq-numbers S}, \eqref{eq=H=S}. 

\begin{lemma}\label{lem heights h}
For $n = 2, 3, \ldots$ 
and $i = k + 1, \ldots, k + n - 1$, the heights of the towers 
$\ol X_{i}^{(n)}$ and $X_{i}^{(n)}$ satisfy the relation
\begin{equation}\label{eq2strars}
h_{i}^{(n)} =
H_{i - k + 1}^{(n)} - H_{i - k}^{(n + 1)}.
\end{equation}

\end{lemma}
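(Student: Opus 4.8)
I would prove the slightly stronger assertion that the identity
\[
h_i^{(n)} = H_{i-k+1}^{(n)} - H_{i-k}^{(n+1)}
\]
holds for \emph{every} $n \geq 1$ and \emph{every} vertex $i \in W_n = \{k, k+1, \ldots, k+n-1\}$; the statement of the lemma (the range $i = k+1, \ldots, k+n-1$) is then a special case, and the omitted value $i=k$ is the trivial one, since $H_1^{(n)} - H_0^{(n+1)} = \binom{0}{0} - \binom{n-1}{n} = 1 - 0 = h_k^{(n)}$. Write $\tilde h_i^{(n)} := H_{i-k+1}^{(n)} - H_{i-k}^{(n+1)}$; the goal is to show that $\tilde h$ satisfies exactly the same initial values and recursions as $h$ (namely \eqref{eq=heights h} together with the boundary rule $h_{k+n}^{(n+1)} = h_{k+n-1}^{(n+1)}$) and conclude by induction on $n$.

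The two facts about the numbers $H_i^{(n)}$ that I would use repeatedly are: (a) $H_i^{(n+1)} = \sum_{j=1}^{i} H_j^{(n)}$, which is immediate from \eqref{eq=prop of S} and \eqref{eq=H=S} because $H_i^{(n)} = S_i^{(n-1)}$ and $S_i^{(n)} = S_1^{(n-1)} + \cdots + S_i^{(n-1)}$; and (b) $H_0^{(n+1)} = \binom{n-1}{n} = 0$ for all $n \geq 1$. The base case $n=1$ is the single equality already noted above. For the inductive step, assume the identity at level $n$ and take $i \in W_{n+1}$. If $i = k, k+1, \ldots, k+n-1$, then the recursion \eqref{eq=heights h} applies and, inserting the induction hypothesis and substituting $m = l-k$,
\[
h_i^{(n+1)} = \sum_{l=k}^{i} h_l^{(n)} = \sum_{m=0}^{i-k} H_{m+1}^{(n)} - \sum_{m=0}^{i-k} H_m^{(n+1)} = \sum_{m=1}^{i-k+1} H_m^{(n)} - \sum_{m=1}^{i-k} H_m^{(n+1)},
\]
where the $m=0$ term in the last sum was dropped using (b). Applying (a) to each of the two remaining sums gives $h_i^{(n+1)} = H_{i-k+1}^{(n+1)} - H_{i-k}^{(n+2)} = \tilde h_i^{(n+1)}$, as desired.

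The only remaining case is the new top vertex $i = k+n$ at level $n+1$. Here the boundary rule gives $h_{k+n}^{(n+1)} = h_{k+n-1}^{(n+1)}$, and the computation of the previous paragraph with $i = k+n-1$ (which is covered, since $k+n-1 \in \{k,\ldots,k+n-1\}$) shows $h_{k+n-1}^{(n+1)} = H_n^{(n+1)} - H_{n-1}^{(n+2)}$. Thus the claim for $i = k+n$ reduces to the purely binomial identity $H_{n+1}^{(n+1)} - H_n^{(n+2)} = H_n^{(n+1)} - H_{n-1}^{(n+2)}$, i.e.
\[
\binom{2n}{n} - \binom{2n}{n+1} = \binom{2n-1}{n} - \binom{2n-1}{n+1},
\]
which follows from Pascal's rule applied to the two binomials on the left together with the symmetry $\binom{2n-1}{n-1} = \binom{2n-1}{n}$. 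This closes the induction. The whole argument is essentially bookkeeping via the telescoping identities (a) and (b); the one spot that is not mechanical, and which I expect to be the main (minor) obstacle, is matching the boundary rule $h_{k+n}^{(n+1)} = h_{k+n-1}^{(n+1)}$ against the binomial coefficients in this last step.
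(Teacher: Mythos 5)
Your proof is correct and follows essentially the same route as the paper's: induction on $n$ via the recursion \eqref{eq=heights h}, telescoping with $H_1^{(n)}+\cdots+H_j^{(n)}=H_j^{(n+1)}$, and a separate binomial verification of $\binom{2n}{n}-\binom{2n}{n+1}=\binom{2n-1}{n}-\binom{2n-1}{n+1}$ for the top vertex $i=k+n$ (the paper checks this by direct factorial manipulation rather than Pascal's rule, which is immaterial). Your only deviation is the cosmetic one of extending the identity to $i=k$ via the convention $H_0^{(n+1)}=0$, which cleanly absorbs the ``$1+$'' term that the paper carries separately.
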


\begin{proof} It is obvious that (\ref{eq2strars}) holds 
for $n = 2$ and $i = k + 1$. 
Assume that (\ref{eq2strars}) is true for some $n$. 
Then, using (\ref{eq=heights h}), we get that, for $i = k + 1, \ldots, k + n - 1$,
$$
\ba 
h_{i}^{(n + 1)} = &\ 1 + h_{k + 1}^{(n)} + \ldots + h_{i}^{(n)} \\
= &\ \left\lbrack  1 + H_{2}^{(n)} + \ldots + H_{i - k + 1}^{(n)} \right\rbrack -
\left\lbrack H_{1}^{(n + 1)} + \ldots + H_{i - k}^{(n + 1)} \right\rbrack \\
= &\ H_{i - k + 1}^{(n + 1)} - H_{i - k}^{(n + 2)}.
\ea 
$$ 
It remains to show that (\ref{eq2strars}) holds for $i = k + n$: 
$$
\ba
h_{k + n}^{(n + 1)} =   h_{k + n - 1}^{(n + 1)} = &\ 
H_{n}^{(n + 1)} - H_{n - 1}^{(n + 2)}\\
= &\  
\left( \begin{array}{c}
2n - 1 \\ 
n \\
\end{array} \right) - 
\left( \begin{array}{c}
2n - 1 \\
n + 1 \\
\end{array} \right) \\
 = &\  
\frac{(2n - 1)! \cdot 2}{(n + 1)! \cdot (n - 1)!} = \frac{(2n)!}{(n + 1)! \cdot n!}\\
= &\  
\left( \begin{array}{c}
2n \\
n \\
\end{array} \right) - 
\left( \begin{array}{c}
2n \\
n + 1 \\
\end{array} \right)\\
= &\ 
H_{(n + 1)}^{(n + 1)} - H_{(n)}^{(n + 2)}.
\ea
$$ 
\end{proof}

For $0 < a <\infty$, let $\mu_a$ be the ergodic probability measure
defined in Theorem \ref{thm measures for Binfty}. 
We use \eqref{eq=path space sbdg} to describe the path space 
$\ol X_{\overline{B}_{k}}$ of the subdiagram $\ol B(W, k)$ (we write 
$X_{\overline{B}_{k}}$ instead of $\ol X_{\overline{B}(W, k)}$ for convenience).

\begin{prop}\label{prop pos measure mu_a} 
Let $Z_n:= \bigcup_{i \in W_n} Y^{(n)}_i$ and let 
 $Y_i^{(n)}$ denote the inner tower of the subdiagram $\ol B_k$
 corresponding to the vertex $i \in W_n$. Set  
$\mu_{n, a} = \mu_a(Z_n)$.  Then
\be\label{eq=path meas}
\mu_{a}( \ol {X}_{\overline{B}_{k}}) =
 \lim_{n\to \infty}\mu_{n, a},
\ee
and $\mu_{a}( \ol {X}_{\overline{B}_{k}}) > 0$ if and only if $0< a < 1$.
\end{prop}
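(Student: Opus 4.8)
The plan is to compute $\mu_{n,a} = \mu_a(Z_n)$ explicitly using the heights $h_i^{(n)}$ of the inner towers $Y_i^{(n)} = \ol X_i^{(n)}$ of $\ol B(W,k)$ together with the values $\mu_a(X_i^{(n)}) = \frac{a^{i-1}}{(a+1)^{n+i-1}}\binom{n+i-2}{n-1}$ of the measure on cylinder sets of $B_\infty$, obtained in Proposition \ref{prop mu_a}. Since $Z_n = \bigcup_{i\in W_n} Y_i^{(n)}$ is a disjoint union, and since the value of $\mu_a$ on any path segment ending at vertex $i\in V_n$ equals $\mu_a([\ol e]) = \frac{a^{i-1}}{H(n,a)(a+1)^i}$, we have
$$
\mu_{n,a} = \sum_{i=k}^{k+n-1} h_i^{(n)} \cdot \frac{a^{i-1}}{H(n,a)(a+1)^i}
= \frac{1}{(1+a)^{n-1}}\sum_{i=k}^{k+n-1} h_i^{(n)} \frac{a^{i-1}}{(a+1)^i},
$$
using $H(n,a) = (1+a)^{n-1}$ from Lemma \ref{lem:H(n,a)}. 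First I would substitute the formula $h_i^{(n)} = H_{i-k+1}^{(n)} - H_{i-k}^{(n+1)}$ from Lemma \ref{lem heights h} (handling the boundary term $i = k$ where $h_k^{(n)} = 1 = H_1^{(n)}$ and the top term $i = k+n-1$ separately, as in that lemma's proof), and reindex by $j = i-k+1$ so that the sum becomes a combination of sums of the form $\sum_j H_j^{(n)} \bigl(\frac{a}{a+1}\bigr)^{j}$ and $\sum_j H_j^{(n+1)}\bigl(\frac{a}{a+1}\bigr)^j$, each of which is a finite (hence convergent) truncation of the series that defines $H(n,a)$ and $H(n+1,a)$ respectively.

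The key computational step is that $\sum_{j\geq 1} H_j^{(n)} \frac{a^{j-1}}{(a+1)^j} = H(n,a) = (1+a)^{n-1}$, but here the sums are truncated at $j = n$ or so, so I would keep the tail as an explicit remainder. The cleanest route is to recognize, using $H_j^{(n)} = \binom{j+n-2}{n-1}$ and the generating-function identity $\sum_{n\geq 0}\binom{n+s}{s}p^n = (1-p)^{-(s+1)}$ already used in the preceding proof, that $\mu_{n,a}$ telescopes: the difference $H_{i-k+1}^{(n)} - H_{i-k}^{(n+1)}$ together with the geometric weights produces a collapsing sum whose value I expect to be of the form $\bigl(\frac{a}{a+1}\bigr)^k \cdot \frac{(1+2a)^{n-1} + (\text{lower order})}{(1+a)^{2(n-1)}}$ or a similar closed expression — compare the analogous telescoping computation in part (2) of the preceding Proposition, where $\mu_a(X_{\ol B_k})$ was evaluated and the factor $\frac{(1+2a)^n}{(1+a)^{2n}}$ appeared. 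Indeed $\ol B(W,k)$ differs from the Pascal subdiagram $\ol B_k$ only in that it retains \emph{all} downward edges rather than two, so $Z_n \supset$ (the level-$n$ part of $\ol B_k$'s path space) and I anticipate $\mu_{n,a}$ sits between the Pascal value $\bigl(\frac{a}{a+1}\bigr)^k\frac{(1+2a)^n}{(1+a)^{2n}}$ and $\bigl(\frac{a}{a+1}\bigr)^k$ times something growing only polynomially faster.

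Once $\mu_{n,a}$ is in closed form, the limit is immediate: the dominant behavior is governed by the ratio of the largest geometric factor $\bigl(\frac{a}{a+1}\bigr)^j$ summed against binomial coefficients versus the normalization $(1+a)^{n-1}$, and the threshold $a = 1$ emerges from comparing $\frac{(1+2a)}{(1+a)^2}$ with $1$: one has $1 + 2a \lessgtr (1+a)^2 = 1 + 2a + a^2$ always, so that particular ratio always tends to $0$, but the truncation tail contributes a term whose growth rate crosses $1$ exactly at $a = 1$. Concretely I expect to reduce $\lim_n \mu_{n,a}$ to evaluating $\lim_n c^n$ for $c = c(a)$ with $c(a) < 1 \iff a < 1$, $c(1) = 1$, and $c(a) > 1 \iff a > 1$ (in the last case $\mu_{n,a}$ would be unbounded, which is impossible for a probability measure restricted to nested sets, forcing the limit interpretation to give $\mu_a(\ol X_{\ol B_k}) = 0$ via $\mu_a(Z_n)\to$ something — so in fact the correct reading for $a\geq 1$ is that $Z_n$ shrinks and $\mu_a(\ol X_{\ol B_k}) = \lim \mu_{n,a} = 0$, while for $a<1$ the limit is a positive constant). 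The main obstacle is the bookkeeping in the telescoping/reindexing step: getting the boundary terms at $i=k$ and $i = k+n-1$ exactly right and correctly identifying which partial binomial sum survives after cancellation. I would model this carefully on the computation in part (2) of the preceding proposition, where essentially the same manipulation with $\binom{n}{\ell}$ and $\bigl(\frac{a}{a+1}\bigr)^\ell$ summed to $\bigl(1 + \frac{a}{a+1}\bigr)^n = \bigl(\frac{1+2a}{1+a}\bigr)^n$, and then track how the extra retained edges of $\ol B(W,k)$ (versus $\ol B_k$) modify that geometric sum into one that converges to a positive limit precisely when $a < 1$.
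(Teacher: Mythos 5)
Your setup is correct and matches the paper's: $Z_n$ is a decreasing sequence, so $\mu_a(\ol X_{\ol B_k})=\lim_n\mu_{n,a}$, and the first display $\mu_{n,a}=\frac{1}{(1+a)^{n-1}}\sum_{i=k}^{k+n-1}h_i^{(n)}\frac{a^{i-1}}{(a+1)^i}$ is exactly the paper's starting formula (with $z=\frac{a}{a+1}$ and $I_n=\sum_j h_{k+j-1}^{(n)}z^{j-1}$). The idea of exploiting the height formula of Lemma \ref{lem heights h} and a telescoping recursion is also the right instinct. But from there the proposal has a genuine gap: the mechanism you propose for the threshold at $a=1$ --- reducing $\lim_n\mu_{n,a}$ to $\lim_n c(a)^n$ with $c(a)\lessgtr 1$ according as $a\lessgtr 1$ --- cannot produce the stated conclusion. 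If $c(a)<1$ that limit is $0$, not the positive constant you need for $0<a<1$ (the true value is $(\frac{a}{a+1})^{k-1}(1-a)$, which is not of the form $\lim c^n$); and since $\mu_{n,a}$ is a bounded decreasing sequence, the case $c(a)>1$ never arises, so the patch you append for $a\geq 1$ does not follow from the computation you sketch. The ratio $\frac{1+2a}{(1+a)^2}$ that you import from the Pascal subdiagram computation is always $<1$, as you note, so it cannot detect $a=1$; and the vague appeal to ``the truncation tail whose growth rate crosses $1$ exactly at $a=1$'' is not substantiated --- nothing in your outline identifies a quantity that actually changes behavior at $a=1$.

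The missing ingredient is the closed form of the telescoping decrement. The recursion $I_{n+1}=\frac{1}{1-z}\bigl[I_n-z^{n+1}h_{k+n-1}^{(n+1)}\bigr]$ gives
$\mu_{n,a}-\mu_{n+1,a}=\left(\frac{a}{a+1}\right)^{k}\frac{1}{n+1}\binom{2n}{n}\left(\frac{a}{(a+1)^2}\right)^{n}$,
and the crucial observation is that $h_{k+n-1}^{(n+1)}=\frac{1}{n+1}\binom{2n}{n}=C_n$ is the $n$-th Catalan number. Summing the decrements then requires the generating function $C(x)=\frac{1-\sqrt{1-4x}}{2x}$ evaluated at $x=\frac{a}{(a+1)^2}$; since $1-4x=\frac{(a-1)^2}{(a+1)^2}$, the square root equals $\frac{|a-1|}{a+1}$, and this absolute value is precisely where the dichotomy at $a=1$ enters: the total mass lost, $\sum_n C_n x^n$, equals all of $\mu_{1,a}$ when $a\geq 1$ and strictly less when $0<a<1$, leaving the residue $(\frac{a}{a+1})^{k-1}(1-a)$. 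Without identifying the Catalan structure (or some equivalent exact evaluation of the truncated sums $\sum_{j\leq n}H_j^{(n)}z^{j-1}$ against $(1-z)^{-n}$, which amounts to a negative-binomial tail estimate), the limit and the location of the threshold cannot be extracted from your outline.
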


\begin{proof}
    
Since the sequence of sets $Z_n:= 
\bigcup_{i \in W_n} Y^{(n)}_i$ is decreasing, we conclude that 
\eqref{eq=path meas} holds. 

Further, we note that $\mu_{1, a} = \dfrac{a^{k-1}}{(a+1)^k}$.
Using the definition of $Z_n$, we can write 
$$
\mu_{n, a} = \frac{1}{{(a + 1)}^{n}}  \left\lbrack 
h_{k}^{(n)}  \left( \frac{a}{a + 1} \right)^{k - 1} + 
h_{k + 1}^{(n)}  \left( \frac{a}{a + 1} \right)^{k} + \ldots +
h_{k + n - 1}^{(n)}  \left( \frac{a}{a + 1} \right)^{k + n - 2}
\right\rbrack.
$$
Denoting $z= \frac{a}{a + 1}$ 
and 
$I_{n} = h_{k}^{(n)} + h_{k + 1}^{(n)} z + \ldots +
h_{k + n - 1}^{(n)} z^{n - 1}$, we can write 
 $$
 \mu_{n, a} = \frac{a^{k - 1}}{{(a + 1)}^{n + k - 1}} I_{n}.
 $$ 

We want to represent $I_{n + 1}$ in terms of $I_{n}$. Note that
the following relation holds:

$$
\ba 
I_{n + 1} = &\ 
\sum_{j =0}^{n} h_{k +j}^{(n + 1)} z^j \\
= &\ \sum_{j=0}^{n-1} \left( \sum_{l=0}^j h^{(n)}_{k+l}  \right) 
z^j  + \left( h_{k}^{(n)}+\ldots + h_{k + n - 1}^{(n)} \right) z^{n}\\
= &\ \sum_{l=0}^{n-1} h^{(n)}_{k+l}z^l (1 + z + \ldots + z^{n-l})\\
= &\ \frac{1}{1-z}\sum_{l=0}^{n-1} h^{(n)}_{k+l}z^l (1 - z^{n-l+1})\\
=& \ \frac{1}{1-z}\left[  \sum_{l=0}^{n-1} h^{(n)}_{k+l}z^l - z^{n+1}  
\sum_{l=0}^{n-1} h^{(n)}_{k+l}\right] \\
= &\ \frac{1}{1-z}\left[ I_n - z^{n+1} h^{(n+1)}_{k+n-1}\right]. \\
\ea
$$

Because $\frac{1}{1 - z} = a + 1$, we obtain that 
$$
\ba
\mu_{n + 1, a} = & \ 
\frac{a^{k - 1}}{{(a + 1)}^{n + k}} \cdot I_{n + 1} \\
= &\ \frac{a^{k - 1} }{{(a + 1)}^{n + k -1}} \cdot
\left\lbrack I_{n} - {\left( \frac{a}{a + 1} \right)}^{n + 1}
h_{k + n - 1}^{(n + 1)}
\right\rbrack\\
=&\ \mu_{n, a} - \frac{a^{k + n}}{{(a + 1)}^{2n + k}} 
h_{k + n - 1}^{(n + 1)}\\
\ea
$$

It follows from  (\ref{eq2strars}) (see also 
\eqref{eq-numbers S} and \eqref{eq=H=S}) that
$$
\ba
h_{k + n - 1}^{(n + 1)} = & \ 
H_{n}^{(n + 1)} - H_{n - 1}^{(n + 2)}\\ 
= &\ \left( 
\begin{array}{c}
2n - 1 \\
n \\
\end{array} 
\right) 
- 
\left( 
\begin{array}{c}
2n - 1 \\
n + 1 \\
\end{array} 
\right)\\
=&\ \frac{(2n - 1)! \cdot 2}{(n + 1)! \cdot (n - 1)!}\\
=& \ \frac{(2n)!}{(n + 1)! \cdot n!}\\
= &\ 
\frac{1}{n + 1} 
\left( 
\begin{array}{c}
2n \\
n \\
\end{array} 
\right)\\
\ea 
$$
Thus, we obtain  that, for all $n \in \N$,
$$
\ba 
\mu_{n, a} - \mu_{n + 1, a} = &\ 
\frac{a^{k + n}}{{(a + 1)}^{2n + k}} \frac{1}{(n + 1)} 
\left( 
\begin{array}{c}
2n \\
n \\
\end{array} 
\right) \\
=&\ \left( \frac{a}{a + 1} \right)^{k} \frac{1}{n + 1} 
\left( 
\begin{array}{c}
2n \\
n \\
\end{array} 
\right) 
\left( \frac{a}{{(a + 1)}^{2}} \right)^{n}
\ea
$$
It follows from this relation that
$$
\mu_{1, a} - \mu_{m + 1, a} = 
\left( \frac{a}{a + 1} \right)^{k} \sum_{n = 1}^{m} \frac{1}{n + 1} 
\left( \begin{array}{c}
2n \\
n \\
\end{array} \right)
 \left( \frac{a}{{(a + 1)}^{2}} \right)^{n} 
$$

Recall that $C_n = \dfrac{1}{n+1} \left( \begin{array}{c}
2n \\
n \\
\end{array} \right)$ is called the Catalan number, and the generating function for the corresponding series is well known, see e.g. \cite{Stanley2015}
\be\label{eq=Catalan}
C(x) =  \sum_{n = 0}^{\infty}  \frac{1}{n + 1} 
\left( \begin{array}{c}
2n \\
n \\
\end{array} \right) x^n = \frac{1 - \sqrt{1 - 4x}}{2x}, \quad |x| < \frac{1}{4} .
\ee 
Using this fact, we can substitute $x = \dfrac{a}{{(a + 1)}^{2}}$ in \eqref{eq=Catalan} and find that 
$$
\ba 
\mu_{a}(\widehat{X}_{{\overline{B}}_{k}}) = &\  
\lim\limits_{m \to \infty} \mu_{m + 1, a} \\
= &\ \mu_{1,a} - \left( \frac{a}{a + 1} \right)^{k} 
\sum_{n = 1}^{\infty}  
\frac{1}{n + 1} 
\left( \begin{array}{c}
2n \\
n \\
\end{array} \right)
 \left( \frac{a}{{(a + 1)}^{2}} \right) ^{n} \\
 =&\ \left( \frac{a}{a + 1} \right)^{k} 
\left\lbrack\ \frac{1}{a} - \sum_{n = 1}^{\infty} 
 \frac{1}{n + 1}  
\left( \begin{array}{c}
2n \\
n \\
\end{array} \right)\
\left( \frac{a}{{(a + 1)}^{2}} \right)^{n} 
\right\rbrack\\
= &\ \left( \frac{a}{a + 1} \right)^{k} 
\left\lbrack\ \frac{1}{a} - C\left(\frac{a}{{(a + 1)}^{2}}\right) +1
\right\rbrack.
\ea
$$
By a direct computation, we obtain that 
$$
C\left(\frac{a}{{(a + 1)}^{2}}\right) = 
\frac{(a+1)[(a +1) - |a-1|]}{2a} = 
\begin{cases}
    \dfrac{a+1}{a} \ & \ \mbox{if} \ a \geq 1\\
    a +1  \ & \ \mbox{if} \ 0< a <1. 
\end{cases} 
$$
Finally, 
$$
\mu_{a}(\widehat{X}_{{\overline{B}}_{k}})  = \begin{cases}
    0, \ &\ \mbox{if} \ a \geq 1\\
    
    \left(\dfrac{a}{a + 1}\right)^{k-1}(1-a),\ & \ \mbox{if} \ 0< a <1. 
\end{cases}
$$
\end{proof}

\begin{remark}
We consider the generalized Bratteli diagram $B_\infty$ and the 
tail invariant measure $\mu_a$ on the path space $X_{B_\infty}$.
Recall that the measure $\mu_a$ defines the probability distribution
on the vertices $i\in V_1$ of the first level: $\mu_a(\{i\}) = \dfrac{a^{i-1}}{(1+a)^i}, i = 1, 2, \ldots$. Define a Markov kernel
$P = (p_{ij})$ by setting 
$$
p_{ij} = \frac{a^{j-i}}{(1+a)^{j-i+1}}, \qquad j \geq i,
$$
and $p_{ij} = 0$ if $j< i$. 

Hence, $\mathrm{Prob}(i \to i) = \frac{1}{1 +a}$ and 
$$
\mathrm{Prob}(i \to \{j | j > i\}) = \sum_{j >i} p_{ij} = \frac{a}
{1 +a}.
$$

It follows from Proposition \ref{prop pos measure mu_a} that
the following result holds.
\end{remark}

\begin{corol}

The following are equivalent:

(i) $\mathrm{Prob}(i \to i) > \mathrm{Prob}(i \to \{j\ |\ j > i\})$,

(ii) $\mu_{a}(\widehat{X}_{{\overline{B}(W, i)}}) > 0$,

(iii) $0 < a <1$.
\end{corol}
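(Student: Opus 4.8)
The plan is to observe that both nontrivial equivalences have essentially already been established in the text, so the proof amounts to assembling them. First I would treat the equivalence (i) $\Leftrightarrow$ (iii). Using the explicit form of the Markov kernel $P = (p_{ij})$ introduced in the preceding Remark, one has $\mathrm{Prob}(i \to i) = p_{ii} = \tfrac{1}{1+a}$ and $\mathrm{Prob}(i \to \{j : j > i\}) = \sum_{j > i} p_{ij} = \tfrac{a}{1+a}$. Since $a > 0$, both quantities are positive with the common positive denominator $1+a$; hence $\mathrm{Prob}(i \to i) > \mathrm{Prob}(i \to \{j : j > i\})$ holds if and only if $1 > a$, that is, if and only if $0 < a < 1$.

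Next I would handle (ii) $\Leftrightarrow$ (iii). This is a direct restatement of Proposition \ref{prop pos measure mu_a} applied with the parameter $k = i$: that proposition shows $\mu_{a}(\widehat{X}_{\overline{B}(W, i)}) > 0$ if and only if $0 < a < 1$, and in fact gives the explicit value $\left(\tfrac{a}{a+1}\right)^{i-1}(1-a)$ in that range and $0$ for $a \ge 1$. Combining the two equivalences produces the chain (i) $\Leftrightarrow$ (iii) $\Leftrightarrow$ (ii), which is exactly the assertion of the corollary. One small point to state carefully is the notational identification $\overline{B}_{k} = \overline{B}(W, k)$ adopted at the start of Subsection \ref{sub_values_measures_mu_a}, so that Proposition \ref{prop pos measure mu_a} indeed refers to the vertex subdiagram appearing in the corollary.

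The only genuine content therefore sits inside Proposition \ref{prop pos measure mu_a}, whose proof passes through the recursion \eqref{eq=heights h} for the inner tower heights, the identification $h^{(n+1)}_{k+n-1} = \tfrac{1}{n+1}\binom{2n}{n}$ as a Catalan number, and the closed form $C(x) = \tfrac{1-\sqrt{1-4x}}{2x}$ of the Catalan generating function evaluated at $x = \tfrac{a}{(a+1)^2}$, which produces the case split between $a \ge 1$ and $0 < a < 1$. If any step deserves to be called the main obstacle, it is the telescoping computation of $\mu_{n,a} - \mu_{n+1,a}$ together with the observation that $\tfrac{a}{(a+1)^2} < \tfrac14$ for all $a > 0$, so that the generating function identity applies on the whole relevant range. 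Since that work has already been carried out, the corollary itself follows with no further effort, and I would present it simply as the synthesis of the Remark and Proposition \ref{prop pos measure mu_a}.
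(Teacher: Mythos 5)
Your proposal is correct and matches the paper's own (implicit) argument exactly: the equivalence (i) $\Leftrightarrow$ (iii) is read off from the kernel computation $\mathrm{Prob}(i\to i)=\tfrac{1}{1+a}$ versus $\tfrac{a}{1+a}$ in the preceding Remark, and (ii) $\Leftrightarrow$ (iii) is Proposition \ref{prop pos measure mu_a} with $k=i$. Nothing further is needed.
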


\subsection{Tail invariant probability measures on the subdiagram 
\texorpdfstring{$B(W, k)$}{B(W, k)}}

We consider again the vertex subdiagrams $B(W, k)$, $k\in \N,$
defined in subsection \ref{sub_values_measures_mu_a}.
We recall that the heights of the 
towers in this 
subdiagram have been found in Lemma \ref{lem heights h} by the formula:
\be\label{eq=heights}
h_{i}^{(n)} =  H_{i - k + 1}^{(n)} - H_{i - k}^{(n + 1)}, \ 
i = k + 1, \ldots, k + n - 1,
\ee
where 
$$
H_{i}^{(n)} = 
\left( 
\begin{array}{c}
i + n - 2 \\
n - 1 \\
\end{array} 
\right), 
\quad n, i = 1, 2, \ldots.
$$

Let the matrices ${G'}^{(n, m)}= \{ {g'}_{ij}^{(n, m)} \}$ ($m \geq 1$) 
be defined as above, that is they are the product of the incidence matrices
between the levels $V_{n}$ and $V_{n+m}$, 
and let $G^{(n, m)}= \{ g_{ij}^{(n, m)} \} $ denote the corresponding stochastic matrices, where 
$i = k,  \ldots, k + n + m - 1,\ j = k,  \ldots, k + n - 1$ and 
$n, m = 1, 2, \ldots$.

\begin{lemma} The following formula holds:
\be\label{eq=entries}
{g'}_{ij}^{(n, m)} = \begin{dcases}
S_{i - j + 1}^{(m - 1)}, &  i = k,  \ldots, k + n - 1,
\quad j = k, \ldots, i,\\
S_{i - j + 1}^{(m - 1)}, & i = k + n,\quad j = k, \ldots, k + n - 1,\\
S_{i - j + 1}^{(m - 1)} - S_{m + n + k - j + 1}^{(i - k - n - 1)}, & i = k + n + 1, \ldots, k + n + m - 1,\quad j = k, \ldots, k + n - 1.
\end{dcases}
\ee
For $i= n + m +k$ and $j=k, k+1, \ldots, k + n -1$, we have the equality 
${g'}_{n+m+k,j}^{(n, m)} = {g'}_{n+m+k-1,j}^{(n, m)}$.
\end{lemma}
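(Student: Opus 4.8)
The plan is to prove the formula \eqref{eq=entries} by induction on $m$, tracking carefully which finite paths in $B_\infty$ between levels $V_n$ and $V_{n+m}$ stay inside the support sequence $(W_\ell)$ of the subdiagram $\ol B(W,k)$. Recall that in $B_\infty$ the entries of ${F'}^m$ are ${f'}^{(m)}_{ij}=S^{(m-1)}_{i-j+1}$ for $1\le j\le i$ and $0$ otherwise, by \eqref{eq-F' for x2}, while $H^{(n)}_i = S^{(n-1)}_i = \binom{i+n-2}{n-1}$ by \eqref{eq=H=S}. The key point is that ${g'}^{(n,m)}_{ij}$ counts paths from $j\in W_n$ to $i\in W_{n+m}$ that remain inside $\ol B(W,k)$, i.e. that pass through $W_{n+1},\dots,W_{n+m-1}$, where $W_\ell=\{k,k+1,\dots,k+\ell-1\}$. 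For $i\le k+n$ the cone of vertices reachable from $j\ge k$ is automatically contained in all the relevant $W_\ell$ (since at level $n+r$ the reachable vertices lie in $\{j,\dots,i\}\subset\{k,\dots,k+n+r-1\}$ is guaranteed when $i$ itself does not exceed the ceiling), which is why in the first two cases one gets the same value $S^{(m-1)}_{i-j+1}$ as in $B_\infty$ with no correction. When $i>k+n$, some of the $B_\infty$-paths from $j$ to $i$ leave the subdiagram by overshooting the upper boundary $k+\ell-1$ at some intermediate level $\ell$, and these must be subtracted.

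First I would set up the combinatorial description: a path in $B_\infty$ from $j$ to $i$ over $m$ steps is a non-decreasing sequence $j=j_0\le j_1\le\cdots\le j_m=i$ with $j_r\in V_{n+r}$, and the number of such paths is the number of ways to distribute the total increment $i-j$ among $m$ steps, which recovers $S^{(m-1)}_{i-j+1}=\binom{i-j+m-1}{m-1}$. The path stays in $\ol B(W,k)$ iff $j_r\le k+n+r-1$ for all $r=0,\dots,m$, i.e. iff it never overshoots the moving ceiling. The ceiling constraint is vacuous until the partial increment would force $j_r>k+n+r-1$; since $j_r=j+\sum_{s\le r}(j_s-j_{s-1})$ and the ceiling at step $r$ is $k+n+r-1$, the constraint first bites when $i-j$ is large relative to $m$. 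I would then use a reflection/first-passage argument: the number of bad paths (those that do cross the ceiling) equals, by a standard bijection on lattice paths with a linear boundary, a shifted binomial coefficient, which after translating back into the $S^{(\cdot)}_{\cdot}$ notation is exactly $S^{(i-k-n-1)}_{m+n+k-j+1}$. This explains the third case of \eqref{eq=entries}. I would verify the base case $m=1$ directly (where $G'^{(n,1)}=F'$ and the formula reduces to ${f'}^{(n)}_{ij}$, with the third case degenerating appropriately), and then carry out the inductive step using ${G'}^{(n,m)} = F'\cdot {G'}^{(n,m-1)}$ together with the column relation $h^{(n)}_i$-recursion and the Pascal-type identity $S^{(k+1)}_i = \sum_{\ell\le i} S^{(k)}_\ell$ from \eqref{eq=prop of S}.

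The last sentence of the lemma, ${g'}^{(n,m)}_{n+m+k,j}={g'}^{(n,m)}_{n+m+k-1,j}$, is the boundary identity: the topmost vertex $i=k+n+m$ of $W_{n+m}$ has, inside $\ol B(W,k)$, exactly the same in-paths as $i=k+n+m-1$, because in the subdiagram the unique way to reach the top vertex at each level is to follow the upper boundary, so the top two vertices of the last level are fed by identical families of paths through $W_{n+m-1}$. I would derive this from the incidence structure of $\ol B(W,k)$ directly — at level $n+m-1$ only the vertex $k+n+m-2$ connects to $k+n+m$ inside the subdiagram, while $k+n+m-2$ also connects to $k+n+m-1$ — together with the recursion \eqref{eq=heights h} $h^{(n+1)}_{k+n}=h^{(n+1)}_{k+n-1}$ already established in Lemma \ref{lem heights h}, applied at the appropriate level; alternatively it follows by plugging $i=k+n+m$ into the third case formula and simplifying with $S^{(r)}_i$-identities.

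The main obstacle I expect is pinning down the exact form of the correction term $S^{(i-k-n-1)}_{m+n+k-j+1}$ via the reflection argument: one has to choose the reflecting line and the reflected endpoint correctly so that the count of overshooting paths comes out as precisely this binomial coefficient (and in particular vanishes when $i=k+n$, matching the seamless transition between the second and third cases). The bookkeeping with the three different index ranges — especially the precise superscript $i-k-n-1$ and subscript $m+n+k-j+1$, which encode "how far above the ceiling" and "residual increment" respectively — is where all the care is needed; everything else is a routine induction using the Cesàro-difference identities \eqref{eq=prop of S}, \eqref{eq-numbers S} already recorded in the paper.
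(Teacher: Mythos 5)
Your inductive skeleton is the same as the paper's: base case $m=1$, the row-sum recursion ${\ol g}'^{(n,m+1)}_{s}=\sum_{i\le s}{\ol g}'^{(n,m)}_{i}$ coming from ${G'}^{(n,m+1)}=\ol F'_{n+m}\,{G'}^{(n,m)}$, and the telescoping identity $S^{(0)}_{l}+S^{(1)}_{l}+\dots+S^{(u)}_{l}=S^{(u)}_{l+1}$ from \eqref{eq=prop of S}. One point worth noting: if you actually run that induction, the correction term $S^{(i-k-n-1)}_{m+n+k-j+1}$ is \emph{produced} by the recursion rather than needing to be guessed --- summing the inductive hypothesis over the rows $i=k+n+1,\dots,s$ gives $\sum_i S^{(i-k-n-1)}_{m+n+k-j+1}=S^{(s-k-n-1)}_{m+n+k-j+2}$ by the very same Ces\`aro identity, which is exactly the correction at step $m+1$. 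So the reflection/first-passage bijection you flag as ``the main obstacle'' is not needed at all; it is an alternative, purely combinatorial derivation of the third case (and the $\binom{N}{a}-\binom{N}{b}$ shape of the answer does suggest it would work), but you would have to construct the bijection explicitly, and the paper simply never does. If you keep the induction, drop the reflection argument; if you keep the reflection argument, you owe the bijection.

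There is one concrete error: your justification of the boundary identity ${g'}^{(n,m)}_{n+m+k,j}={g'}^{(n,m)}_{n+m+k-1,j}$ is wrong as stated. You claim that ``only the vertex $k+n+m-2$ connects to $k+n+m$ inside the subdiagram.'' In $\ol B(W,k)$ a vertex $i$ of $W_{\ell+1}$ receives exactly one edge from every $j\in W_{\ell}$ with $j\le i$; since every $j\in W_{\ell}$ satisfies $j\le k+\ell-1$, \emph{both} the top vertex $k+\ell$ and the next one $k+\ell-1$ of $W_{\ell+1}$ receive an edge from \emph{all} of $W_{\ell}$. That is the actual reason the two top rows of the incidence matrix, and hence of ${G'}^{(n,m)}$, coincide --- if only a single vertex fed the top, the two rows would in general differ. (Your fallback of ``plug $i$ into the third-case formula'' does not work either, since that formula is only asserted for $i\le k+n+m-1$.) This is a small fix, but as written the argument for the last sentence of the lemma does not stand.
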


\begin{proof}
For $m = 1$, ${G'}^{(n,1)} = F_n$ is the matrix of $(k +n) \times (k+n-1)$
size, and therefore we can write 
$$
{G'}^{(n,1)} = 
\left\lbrack 
\begin{array}{rrrrrrrrrrrrr}
1 & 0 & 0 & 0  & \ldots &\ldots &\ldots   & 0 \\
1 & 1 & 0      &  0    & \ldots    & \ldots     & \ldots    & 0\\
1 & 1 & 1      &  0          & \ldots    & \ldots  &\ldots   & 0\\
\ldots &\ldots & \ldots   &\ldots     &\ldots & \ldots & \ldots & \ldots \\
1 & 1 & 1      & 1        &\ldots   &\ldots &\ldots & 1 \\
1 & 1 & 1      & 1      & \ldots    &\ldots &\ldots & 1\\
\end{array} 
\right\rbrack,
$$
where the entries of the last two rows equal 1. The rows are indexed by $i = k,
k + 1, \ldots, k + n$, and the columns are indexed by $j = k,  \ldots, 
k + n - 1$.  

Writing the rows of the matrices ${G'}^{(n, m)}$ and ${G'}^{(n, m + 1)}$
as vectors, we obtain the following relations:
$$
{\overline{g}'}_{s}^{(n,m + 1)} =
\sum_{i = 1}^{s}{\overline{g}'}_{i}^{(n,m)}, \quad s = k,  \ldots, 
k + n + m - 1,
$$
and 
\begin{equation}\label{1star}
{\overline{g}'}_{k + n + m}^{(n, m + 1)} = {\overline{g}'}_{k + n + m - 1}^{(n, m + 1)},\quad  
n, m = 1, 2, \ldots.
\end{equation}
Using these relations and the induction assumption, we get that for
$s = k,  \ldots , k + n + m - 1, \ j = k,  \ldots, k + n - 1$ 
$$
{g'}_{sj}^{(n,m + 1)} =  
\sum_{i = j}^{s}{g'}_{i j}^{(n, m)} =
\sum_{i = j}^{s} S_{i - j + 1}^{(m - 1)}
$$ 
if $s = k,  \ldots, k + n$, and
$$
{g'}_{sj}^{(n, m + 1)} = 
\sum_{i = j}^{s} S_{i - j + 1}^{(m - 1)} - 
\sum_{i = k + n + 1}^{s} S_{m + n + k - j + 1}^{(i - k - n - 1)},
$$
if $s = k + n + 1, \ldots, k + n + m - 1$.
Further, we use 
$$
S_{l}^{(0)} + S_{l}^{(1)}  + \ldots + S_{l}^{(u)} = S_{l + 1}^{(u)}.
$$
and get that 
$$
\sum_{i = j}^{s}S_{i - j + 1}^{(m - 1)} = 
\sum_{i = 1}^{s - j + 1}S_{i}^{(m - 1)} = 
S_{s - j + 1}^{(m)},\quad s = k,  \ldots, k + n + m - 1
$$
and
$$
\sum_{i = k + n + 1}^{s} S_{m + n + k - j + 1}^{(i - k - n - 1)} = 
\sum_{u = 0}^{(s - k - n - 1)} S_{m + n + k - j + 1}^{(u)} 
= S_{m + n + k - j + 2}^{(s - k - n - 1)}.
$$
Therefore, we proved that 
$$
{g'}_{sj}^{(n, m + 1)} = S_{s - j + 1}^{(m)}, \quad
s = k,  \ldots, k + n,
$$
and
$$
{g'}_{sj}^{(n, m + 1)} = S_{s - j + 1}^{(m)} - 
S_{m + n + k - j + 2}^{(s - k - n - 1)}, \quad s = k + n + 1, \ldots, 
k + n + m - 1.
$$
relation \eqref{eq=entries} holds. 
\end{proof}

We will now transform formulas \eqref{eq=heights} and 
\eqref{eq=entries} and write them in a more convenient form. For this,
\begin{equation}\label{eq=new form of h}
\ba 
h_{i}^{(n)} = &\ 
\left( 
\begin{array}{c}
n + i - k - 1 \\
n - 1 \\
\end{array} 
\right) -
\left( 
\begin{array}{c}
n + i - k - 1 \\
n \\
\end{array} 
\right) \\
= &\ 
\left( 
\begin{array}{c}
n + i - k - 1 \\
n - 1 \\
\end{array} 
\right) 
\cdot 
\left\lbrack  
1 - \frac{(n + i - k - 1)!}{n! \cdot (i - k - 1)!}
\cdot 
\frac{(n - 1)! \cdot (i - k)!}{(n + i - k - 1)!} 
\right\rbrack \\
= &\ 
\left( 
\begin{array}{c}
n + i - k - 1 \\
n - 1 \\
\end{array} 
\right) 
\cdot  
\left\lbrack  
1 - \frac{i - k}{n} 
\right\rbrack \\
= &\ 
H_{i - k + 1}^{(n)} \cdot  
\left\lbrack  1 - \frac{i - k}{n} \right\rbrack .
\ea
\end{equation}

Similarly, we calculate for $i = k + n + 1, \ldots, k + n + m -1$, 
$j = k, \ldots, k + n - 1$ 
$$
{g'}_{ij}^{(n, m)} =
S_{i - j + 1}^{(m - 1)} - S_{(m + n + k - j + 1)}^{(i - k - n - 1)} 
= S_{i - j + 1}^{(m - 1)} \cdot  
\left \lbrack  
1 - \frac{(i - k - n) \cdot \ldots \cdot (i - j)}{m \cdot \ldots \cdot 
(m + n + k - j)} 
\right\rbrack.
$$
Setting 
$$
r_{ij} = \frac{(i - k - n) \cdot \ldots \cdot (i - j)}{m \cdot \ldots \cdot (m + n + k - j)}, \quad i = k + n + 1, \ldots, k + n + m -1,\  j = k, \ldots, k + n - 1,
$$
and  $r_{ij} = 0$ for  $i = k,  \ldots, k + n - 1$, 
$j = k, \ldots, i$, 
or for $i = k + n$, $j = k, \ldots, k + n - 1$, we obtain that  
\begin{equation}\label{eq=2a}
{g'}_{ij}^{(n, m)} =
S_{i - j + 1}^{(m - 1)} \cdot \left\lbrack  1 - r_{ij} \right\rbrack,
\end{equation}

\begin{prop}\label{prop meas nu_a}
The standard Bratteli diagram $\ol B(W, k)$ admits uncountable 
many  tail invariant probability measures $\nu_a$, $0 \leq a \leq 1$. 
\end{prop}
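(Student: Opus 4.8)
Since $|W_n|=n<\infty$ for all $n$, the subdiagram $\ol B(W,k)$ is a \emph{standard} Bratteli diagram, so by Theorem~\ref{BKMS_measures=invlimits} (together with the concluding remarks of Section~\ref{sect_inverse limits}) the set of its tail invariant probability measures is the nonempty compact convex set $\varprojlim(\Delta_1^{(n)},F_n^T)$, all transition maps are continuous, $\Delta^{(n,\infty,cl)}=\Delta^{(n,\infty)}$, and Theorem~\ref{thm 4.11 edited} applies with no further hypotheses. It therefore suffices to produce an uncountable family $\{\nu_a:a\in(0,1)\}$ of pairwise distinct such measures (the two boundary values $a=0,1$ will be included at the end by a limiting argument). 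The plan is to reproduce, for $\ol B(W,k)$, the chain of computations carried out for $B_\infty$ in Subsection~\ref{ssect measures on B_infty} (Lemmas~\ref{lem limi vectors p_a}, \ref{lem:H(n,a)} and Proposition~\ref{prop mu_a}), now using the explicit data already assembled here: the heights $h_i^{(n)}=H_{i-k+1}^{(n)}\bigl(1-\tfrac{i-k}{n}\bigr)$ of \eqref{eq=new form of h} (and Lemma~\ref{lem heights h}, \eqref{eq=heights}, \eqref{eq=heights h}), and the entries ${g'}_{ij}^{(n,m)}=S_{i-j+1}^{(m-1)}(1-r_{ij})$ of \eqref{eq=entries}, \eqref{eq=2a}.

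First I would identify the limit set $L^{(n)}(\{\ol g_i\})$. Given a sequence $i_m\in W_{n+m}$, one passes to a subsequence along which $i_m/m\to a$ for some $a\in[0,1]$; the bound $i_m\le k+n+m-1$ is exactly why the parameter cannot exceed $1$ here, in contrast with the $\mathbb N$- and $\mathbb Z$-IPB diagrams. Using $S_\ell^{(m-1)}=\binom{\ell+m-2}{m-1}$ one checks the elementary limits $r_{i_m j}\to a^{\,k+n-j+1}$ and $S_{i_m-j+1}^{(m-1)}/S_{i_m-k+1}^{(m-1)}\to\bigl(\tfrac{a}{1+a}\bigr)^{j-k}$ (the same computation as in Lemma~\ref{lem limi vectors p_a}), so the normalized rows $\ol y_{i_m}^{(n,m)}$ converge to the explicit vector $\ol y_a^{(n,\infty)}$ whose $j$-th entry is proportional to $\bigl(\tfrac{a}{1+a}\bigr)^{j-k}\bigl(1-a^{\,k+n-j+1}\bigr)$, $j=k,\dots,k+n-1$. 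The side conditions \eqref{eq-conditions P(n)}, \eqref{eq limit vectors p(infty)} of Theorem~\ref{thm 4.11 edited} hold automatically because $W_n$ is finite; feeding $\ol y_a^{(n,\infty)}$ into \eqref{e24} then gives the vectors $\ol q_a^{(n,\infty)}\in L^{(n)}$, equivalently the probability vectors $p_{a,j}^{(n,\infty)}=q_{a,j}^{(n,\infty)}/h_j^{(n)}\propto \bigl(\tfrac{a}{1+a}\bigr)^{j-k}\bigl(1-a^{\,k+n-j+1}\bigr)$.

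Next I would verify that, for each fixed $a\in(0,1)$, the sequence $\{\ol p_a^{(n,\infty)}\}_n$ (equivalently $\{\ol q_a^{(n,\infty)}\}_n$) is consistent, i.e.\ it satisfies the analogue of identity \eqref{eq8.12} with the incidence matrices of $\ol B(W,k)$ in place of $F'$; by Theorem~\ref{BKMS_measures=invlimits} this yields a tail invariant probability measure $\nu_a$ on $X_{\ol B(W,k)}$. Distinctness of the $\nu_a$ for distinct $a\in(0,1)$ is then immediate: already at level $2$ one computes $\nu_a(X_k^{(2)})=\tfrac{1+a+a^2}{(1+a)^2}$, which is strictly decreasing in $a$ on $(0,1)$. (A stronger statement — that the $\nu_a$ are pairwise \emph{mutually singular}, and hence the natural candidates for the ergodic measures — would follow from a strong-law argument in the spirit of the proof of Theorem~\ref{t5}: for $\nu_a$-a.e.\ path $x=(x_n)$ the normalized position $\tfrac1n\bigl(s(x_n)-k\bigr)$ of the vertices inside $W_n$ converges to a value that is a strictly monotone function of $a$, so the sets carrying the $\nu_a$ are pairwise disjoint.) Finally, the boundary parameters are treated separately: $a=0$ (taking $i_m$ bounded) yields $\ol y_0^{(n,\infty)}=\langle 1,0,\dots,0\rangle$, i.e.\ the atomic measure supported on the single path through vertex $k$, and $a=1$ is obtained by a more delicate limit (where $S_{\cdot}^{(m-1)}\to\infty$ while $1-r_{ij}\to 0$), completing the family $\{\nu_a:0\le a\le 1\}$.

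The main obstacle I anticipate is the consistency verification, i.e.\ the analogue of identity \eqref{eq8.12}: with the staircase incidence matrices of $\ol B(W,k)$ and the two interacting ingredients — the Cesàro numbers $S_{\cdot}^{(m-1)}$ and the correction factor $1-r_{ij}$ (equivalently the height factor $1-\tfrac{i-k}{n}$ of \eqref{eq=new form of h}) — this reduces to a telescoping identity that must be combined with the Vandermonde-type relations \eqref{eq=prop of S}, \eqref{eq-numbers S} and handled with care; the $a=1$ boundary limit is the most delicate point of that analysis. If one moreover wants to show that the $\nu_a$ exhaust \emph{all} tail invariant probability measures on $\ol B(W,k)$, an extra rigidity step is needed — that any consistent sequence of extreme points of the $\Delta^{(n,\infty)}$ has the same level-wise parameter $a$ at every level — which one proves, as in Proposition~\ref{prop inv meas PB}(2), by testing the relation $F_n^T\ol q^{(n+1)}=\ol q^{(n)}$ against the extreme vertex $j=k$ of $W_n$, whose incoming-edge set is a singleton.
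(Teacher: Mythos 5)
Your plan follows essentially the same route as the paper's proof: compute the entries $g_{ij}^{(n,m)}$ from the heights $h_j^{(n)}$ and the correction factors $1-r_{ij}$, pass to the limit along subsequences with $i_m/m \to a \in [0,1]$ to obtain the explicit vectors with $p_j^{(n,\infty,k)}$ proportional to $\bigl(\tfrac{a}{1+a}\bigr)^{j-k}\bigl(1-a^{\,n+k-j+1}\bigr)$, and then verify the consistency relation $\sum_{j=l}^{k+n}p_j^{(n+1,\infty,k)}=p_l^{(n,\infty,k)}$ by a direct geometric-series computation — which is exactly what the paper does. Your caution about the boundary value $a=1$ is warranted: the paper's own subsequent remark shows that $\nu_1$ is in fact an infinite measure, so the uncountable family of probability measures really lives on $0\le a<1$; this does not affect the conclusion that there are uncountably many of them.
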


\begin{proof}
We first calculate the entries $g_{ij}^{(n, m)}$ 
of the stochastic matrices 
$G^{(n, m)}$. Then we find the probability vectors  
${\overline{q}}^{(n, \infty, k)} =
\left\langle q_{j}^{(n, \infty, k)} \ | \ j = k,  \ldots, k + n - 1 \right\rangle$ 
as the limit points of vectors ${\overline{g}}_{i}^{(n, m)}= 
\left\langle g_{ij}^{(n, m)} \ |\ j =k,  \ldots, k + n - 1 \right\rangle$ 
as $m \to \infty$.

Using \eqref{eq_g(n,m) and g'(n,m) e21}, we have 
$$
\ba 
g_{ij}^{(n, m)} = &\  {g'}_{ij}^{(n, m)}
\cdot 
\frac{h_{j}^{(n)}}{h_{i}^{(n + m)}}  \\
= &\
S_{i - j + 1}^{(m - 1)} \cdot 
\left\lbrack  1  -  r_{ij} \right\rbrack 
\cdot 
\frac{h_{j}^{(n)} }
{ H_{i - k + 1}^{(n + m)} \cdot \left\lbrack 1 - \frac{i - k}{n + m} \right\rbrack } =\\
= &\ 
\left\lbrack 
S_{i - j + 1}^{(m - 1)} \cdot \frac{1}{H_{i}^{(n + m)}}
\right\rbrack 
\cdot 
\left( 1 - r_{ij} \right)
\cdot
\frac{H_{i}^{(n + m)}}{H_{i - k + 1}^{(n + m)}}
\cdot 
h_j^{(n)}
\cdot 
\left\lbrack
\frac{n + m}{n + m + k - i}
\right\rbrack
\ea .$$

Further, we calculate
$$\frac{H_{i}^{(n + m)}}{H_{i - k + 1}^{(n + m)}} = 
\left( 
\begin{array}{c}
n + m + i - 2 \\
n + m - 1 \\
\end{array} 
\right) 
\cdot
\left( 
\begin{array}{c}
n + m + i - k - 1 \\
n + m - 1 \\
\end{array} 
\right)^{-1} =$$
$$
\frac{(n +  m + i - 2)!}{(n + m - 1)! \cdot (i - 1)!}
\cdot  
\frac{(n + m - 1)! \cdot (i - k)!}{(n + m + i - k - 1)!} = 
\frac{(n + m + i - k) \cdot \ldots \cdot (n + m + i - 2)}{(i - k + 1) \cdot \ldots \cdot (i - 1)} 
$$

Now assume that $m \to \infty$ and $\dfrac{i_{m}}{m} \to a$. 
We remark that $0 \leq a \leq 1$ because
$k \leq {i}_{m} \leq m + n + k$. In what follows, we assume that
$ 0 < a <1$. 

We compute successively (see the proofs of Lemma \ref{lem:H(n,a)} and 
Proposition \ref{prop mu_a}):
$$
p_{j}^{(n, \infty)} = 
{\lim\limits_{m \to \infty} \left\lbrack S_{i - j + 1}^{(m - 1)} \cdot \frac{1}{H_{i}^{(n + m)}} \right\rbrack} =  
\frac{a^{j - 1}}{{(1 + a)}^{n + j - 1}},
$$ 
$$\lim\limits_{m \to \infty} {\frac{H_{i}^{(n + m)}}{H_{i - k + 1}^{(n + m)}}} =
\lim\limits_{m \to \infty} \frac{(n + m - i - k) \cdot \ldots \cdot 
(n + m + i - 2)}{(i - k + 1) 
\cdot \ldots \cdot (i - 1)} = 
\left(\frac{1 + a}{a } \right)^{k - 1},
$$
$$
{\lim\limits_{m \to \infty}}{(1 - r_{ij}}) =
\lim\limits_{m \to \infty} 
\left\lbrack 
1 - \frac{(i - k - n) \cdot \ldots \cdot (i - j)}{m \cdot \ldots \cdot (m + n + k - j)} 
\right\rbrack 
= 
\left\lbrack 1 - a^{n + k - j + 1} \right\rbrack,$$
and
$$\lim\limits_{m \to \infty}
\frac{n + m}{n + m + k -i} = \frac{1}{1 - a}.
$$

As a consequence of the above formulas, we have
\be\label{eq= p^n}
p_{j}^{(n, \infty, k)} =  
\frac{a^{j - k}}{{(1 + a)}^{n + j - k}}
\cdot 
\frac{\left\lbrack 1 - a^{n + k - j + 1} \right\rbrack}{1 - a},
\ee
and 
$$q_{j}^{(n, \infty, k)} =  h_j^{(n)} p_{j}^{(n, \infty, k)}
$$
for $j = k, k + 1, \ldots, k + n - 1$.

It remains to check that the sequence of vectors
$$
{\overline{p}}^{(n,\infty,k)} = \left\langle p_{j}^{(n,\infty,k)}
\ |\ j = k, \ldots, k + n - 1 \right\rangle
$$
satisfies relation (\ref{eq:formula_p_n}) of Theorem \ref{BKMS_measures=invlimits}. 
Recall that ${F'}_{n} = {G'}^{(n,1)}$.

We should verify that, for every $l = k, \ldots, k + n - 1$,
the relation 
$$\sum\limits_{j = l}^{k + n} p_{j}^{(n + 1, \infty, k)}= p_{l}^{(n, \infty, k)}$$ 
holds.  For this, 
$$\sum\limits_{j = l}^{k + n} p_{j}^{(n + 1, \infty, k)} =
\frac{1}{{(1 + a)}^{n + 1} (1 - a)} 
\sum\limits_{j = l}^{k + n} \frac{a^{j - k}}{{(1 + a)}^{j - k}} 
(1 - a^{n + k - j + 2} )
$$
$$
= \frac{1}{{(1 + a)}^{n + 1} (1 - a)} \left\lbrack
\left( \frac{a}{1 + a} \right)^{l - k} 
\sum\limits_{j = 0}^{n + k - l} 
\left( \frac{a}{1 + a} \right)^{j} - a^{n + 2} 
\left(\frac{1}{1 + a} \right)^{l - k} 
\sum\limits_{j = 0}^{n + k - l} \left( \frac{1}{1 + a} \right)^{j}
\right\rbrack
$$
$$= \frac{1}{ (1 + a)^{n + 1} \cdot (1 - a)} \times 
$$
$$
\times \left\{
(1+a)   \left( \frac{a}{1 + a} \right)^{l - k} 
\left\lbrack
1 - \left( \frac{a}{1 + a} \right)^{n + k + 1 - l} 
\right\rbrack
- \left( \frac{1 + a}{a} \right) \cdot 
\frac{a^{n + 2}}{{(1 + a)}^{l - k}} \cdot 
\left\lbrack 
1 - \left( \frac{1}{1 + a} \right)^{n + k + 1 - l} 
\right\rbrack\right\}
$$
$$
= \frac{1}{{(1 + a)}^{n + k - l}  (1 - a)} \cdot   
\left\lbrack
a^{l - k} - \frac{a^{n + 1}}{ {(1 + a)}^{n + k + 1 - l} } -
{a}^{n + 1} + \frac{ a^{n + 1} }{ {(1 + a)}^{n + k + 1 - l} }
\right\rbrack 
$$ 
$$ 
= \frac{a^{l - k}}{(1 + a)^{n + k - l} (1 - a)}
\cdot   \left[ 1 - a^{n + k - l + 1}  \right] = 
p_{l}^{(n, \infty, k)}
$$
as follows from \eqref{eq= p^n}. 

We have proved that the sequence of vectors
$\{ {\overline{p}}^{(n, \infty, k)},\ n = 1, 2, \ldots \}$ 
determines an invariant probability measure $\nu_{a}$ on the subdiagram
${\overline{B}}(W, k)$ for every $0 \leq a \leq 1$. 
\end{proof}

We discuss the ergodicity of $\nu_a$ in the next proposition.

\begin{lemma}\label{lem p defines nu}
Let $\nu$ be a tail invariant probability measure on the
subdiagram $\overline{B}(W,k)$ and let
$$
{\overline{p}}^{(n, \infty, k)} =  \left\langle p_{j}^{(n, \infty, k)}  \ | \ k \leq j \leq k+n-1 \right\rangle
$$ 
be the sequence of vectors defining the measure $\nu$ as in Theorem 
\ref{BKMS_measures=invlimits}. 
Then the measure $\nu$ is completely determined by a sequence
of numbers $\{ p_{n},\ n =1, 2, \ldots \}$, where 
$p_{n} = p_{k + n - 1}^{(n, \infty, k)}$.
\end{lemma}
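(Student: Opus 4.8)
The plan is to invert the Kolmogorov‑consistency relations satisfied by the vectors $\overline{p}^{(n,\infty,k)}$ that define $\nu$, and to show by induction on $n$ that each such vector is recovered, entry by entry, from the diagonal numbers $p_{1},\dots ,p_{n}$ alone. Once this is done, the statement follows from Theorem~\ref{BKMS_measures=invlimits}: a tail invariant measure is uniquely determined by its values on cylinder sets, and for $\overline{B}(W,k)$ those values are exactly the entries $p_{j}^{(n,\infty,k)}$.

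First I would recall, from the proof of Proposition~\ref{prop meas nu_a}, that the relation $(\overline{F}'_{n})^{T}\overline{p}^{(n+1,\infty,k)}=\overline{p}^{(n,\infty,k)}$ reads
\[
p_{l}^{(n,\infty,k)}=\sum_{j=l}^{k+n}p_{j}^{(n+1,\infty,k)},\qquad l=k,k+1,\dots ,k+n-1,
\]
because in $B_{\infty}$ a vertex $l\in V_{n}$ is joined to precisely the vertices $j\geq l$ of the next level, and $W_{n+1}=\{k,\dots ,k+n\}$. Subtracting the equation for $l+1$ from the one for $l$ (admissible exactly when $l$ and $l+1$ both lie in $W_{n}$, i.e. for $l=k,\dots ,k+n-2$) gives the backward relations
\[
p_{l}^{(n+1,\infty,k)}=p_{l}^{(n,\infty,k)}-p_{l+1}^{(n,\infty,k)},\qquad l=k,\dots ,k+n-2 ,
\]
while the equation for the top vertex $l=k+n-1$, together with the identity $p_{k+n}^{(n+1,\infty,k)}=p_{n+1}$ (which holds by the very definition $p_{m}=p_{k+m-1}^{(m,\infty,k)}$ applied with $m=n+1$), yields $p_{k+n-1}^{(n+1,\infty,k)}=p_{k+n-1}^{(n,\infty,k)}-p_{n+1}$.

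Then I would carry out the induction. The base case is immediate: $W_{1}=\{k\}$, so $\overline{p}^{(1,\infty,k)}=\langle p_{1}\rangle$ is a function of $p_{1}$ only. For the step, assume $\overline{p}^{(n,\infty,k)}$ is determined by $(p_{1},\dots ,p_{n})$; then, given in addition the number $p_{n+1}$, the three displayed relations recover the entries of $\overline{p}^{(n+1,\infty,k)}$ at the positions $k,\dots ,k+n-2$, at the position $k+n-1$, and at the top position $k+n$, respectively, so $\overline{p}^{(n+1,\infty,k)}$ is determined by $(p_{1},\dots ,p_{n+1})$. Hence the sequence $\{p_{n}:n\geq 1\}$ determines all the vectors $\overline{p}^{(n,\infty,k)}$, i.e. all values $\nu([\overline{e}])$, and by Theorem~\ref{BKMS_measures=invlimits} it determines $\nu$.

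I do not expect a real obstacle here: the argument is just a finite triangular linear recursion. The only points demanding care are the index bookkeeping at the top of each level $W_{n}$ — in particular matching $p_{k+n}^{(n+1,\infty,k)}$ with the diagonal datum $p_{n+1}$ — and checking that the subtraction of consecutive consistency equations stays inside $W_{n}$. If desired, one can solve the recursion explicitly and write $p_{k+n-1-i}^{(n,\infty,k)}$ as a finite integer combination of $p_{n-i},\dots ,p_{n}$, but this is not needed for the statement and I would leave it to the reader.
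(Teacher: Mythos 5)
Your proposal is correct and follows essentially the same route as the paper's own (sketched) proof: both invert the consistency relations $p_{l}^{(n,\infty,k)}=\sum_{j=l}^{k+n}p_{j}^{(n+1,\infty,k)}$ by subtracting consecutive equations to get the triangular recursion $p_{l}^{(n+1,\infty,k)}=p_{l}^{(n,\infty,k)}-p_{l+1}^{(n,\infty,k)}$ together with the top-entry identification $p_{k+n}^{(n+1,\infty,k)}=p_{n+1}$. Your version merely packages as a clean induction what the paper illustrates with the first few levels, so there is nothing to correct.
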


\begin{proof} (Sketch)
We know that, for every $n = 1, 2, \ldots$ and for $l = k, k+1, 
\ldots, k+n-1$, the following relation holds
\begin{equation}\label{eq=p_n}
\sum\limits_{j = l}^{k + n} p_{j}^{(n + 1, \infty, k)} = p_{l}^{(n, \infty, k)}.
\end{equation}
It follows directly from \eqref{eq=p_n} that 
$p_{k + n - 1}^{(n + 1, \infty, k)} = p_{k + n - 1}^{(n, \infty, k)} - 
p_{k + n}^{(n + 1, \infty, k)} = p_{n} - p_{n + 1}$ where 
$n \in \N$. Using \eqref{eq=p_n} and beginning with the first level, we consequently find that 
$p_k^{(k,\infty, 1)} = p_1$, $p_k^{(k,\infty, 2)} = p_1 - p_2$, 
$p_{k+1}^{(k+1,\infty, 2)} = p_2$, and so on. In particular, 
the vector $\ol p^{(k+1,\infty, 4)}$ has the entries 
$\langle p_1 - 3p_2 + p_3, p_2 - 3p_3, p_3 - p_4, p_4 \rangle$.

This computation shows that, for every vector 
$\ol p^{(n, \infty, k)}$, all coordinates $p_j^{(n, \infty, k)}$ 
are represented as a linear combination of the terms of the 
sequence $\{p_n\}$. Moreover, to write  $p_j^{(n, \infty, k)}$ 
as a linear combination, we use only numbers $p_1, \ldots, p_n$.
\end{proof}

\begin{lemma}\label{lem compl mon 1}
For $\nu$ as in Lemma \ref{lem p defines nu}, the sequence of numbers $\{p_{n}\}$ where 
$p_n = p_{k + n - 1}^{(n, \infty, k)}$ is completely monotonic. 
\end{lemma}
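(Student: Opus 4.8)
The plan is to follow the scheme of the proof of Lemma~\ref{lem compl mon}, but with an extra algebraic ingredient that is needed because the levels of $\overline{B}(W,k)$ are finite: the iterated differences of $\{p_n\}$ are no longer literally the coordinates of the vectors $\overline{p}^{(n,\infty,k)}$, they carry positive correction terms, and these must be controlled.

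First I would fix notation. Write $p_v^{(n)}:=\nu([\overline{e}])$ for a finite path $\overline{e}$ in $\overline{B}(W,k)$ with $r(\overline{e})=v\in W_n$, so that $\overline{p}^{(n,\infty,k)}=\langle p_v^{(n)}:v\in W_n\rangle$, and recall that $(\overline{F}'_n)^T\overline{p}^{(n+1)}=\overline{p}^{(n)}$ reads $p_v^{(n)}=\sum_{w\in W_{n+1},\,w\ge v}p_w^{(n+1)}$; subtracting the instances for $v$ and $v+1$ (when both lie in $W_n$) gives $p_v^{(n)}-p_{v+1}^{(n)}=p_v^{(n+1)}$, while for the top vertex $v=k+n-1$ of $W_n$ one has $p_v^{(n)}=p_v^{(n+1)}+p_{v+1}^{(n+1)}$. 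Now set $a_j^{(m)}:=p_{k+j-1}^{(j+m)}$ for $j\ge 1$, $m\in\N_0$; each $a_j^{(m)}$ is the $\nu$-measure of a cylinder set, hence $a_j^{(m)}\ge 0$, and $a_j^{(0)}=p_j$. The two displayed relations translate into $a_j^{(1)}=a_j^{(0)}-a_{j+1}^{(0)}$ and $a_j^{(m+1)}=a_j^{(m)}-a_{j+1}^{(m-1)}$ for $m\ge 1$.

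Next I introduce the polynomials $\phi_0=1$, $\phi_1(x)=1-x$, $\phi_{m+1}(x)=\phi_m(x)-x\phi_{m-1}(x)$ (a Fibonacci/Chebyshev-type family), and, writing $E$ for the shift operator on sequences indexed by $j$, I would prove by induction from the recursions above that, as sequences in $j$, $\langle a_j^{(m)}\rangle=\phi_m(E)\langle a_j^{(0)}\rangle$ for every $m\in\N_0$. Since the difference operator is $\Delta=I-E=\phi_1(E)$, the sequence of $m$-th differences of $\overline{p}=\langle p_j\rangle=\langle a_j^{(0)}\rangle$ equals $\phi_1(E)^m\langle a_j^{(0)}\rangle$. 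The heart of the argument is the purely algebraic claim that $\phi_1(x)^m$ is a \emph{nonnegative}-coefficient combination of the products $\{x^t\phi_s(x):s,t\in\N_0\}$; this I would prove by induction on $m$, the inductive step reducing to the identity
\[
\phi_1(x)\phi_s(x)=x\,\phi_{s-1}(x)+\phi_{s+2}(x)\qquad(s\ge 1),
\]
together with $\phi_1\phi_0=\phi_1$. The identity itself is immediate from the defining recursion, since $\phi_1\phi_s=\phi_s-x\phi_s=\phi_s-(\phi_{s+1}-\phi_{s+2})$ and $\phi_s-\phi_{s+1}=x\phi_{s-1}$. Granting the claim, $[\Delta^m(\overline{p})]_i=\sum_{s,t}\lambda_{m,s,t}\,a_{i+t}^{(s)}$ with all $\lambda_{m,s,t}\ge 0$, whence $[\Delta^m(\overline{p})]_i\ge 0$ for all $i$ and $m$; and because $\nu$ is a full measure this combination contains a term with positive coefficient and strictly positive factor, so in fact $[\Delta^m(\overline{p})]_i>0$, i.e.\ $\{p_n\}$ is completely monotonic.

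I expect the only genuine obstacle to be locating the right algebraic objects in the third step---recognising that the polynomials $\phi_s$ and the contraction identity $\phi_1\phi_s=x\phi_{s-1}+\phi_{s+2}$ are exactly what makes the alternating sums $[\Delta^m\overline p]_i$ manifestly nonnegative. Everything else (the translation between the $a_j^{(m)}$ and coordinates of the vectors $\overline{p}^{(n,\infty,k)}$, the induction identifying $\langle a_j^{(m)}\rangle$ with $\phi_m(E)\langle a_j^{(0)}\rangle$, and the induction on $m$ for the algebraic claim) is routine once the setup is in place, though some care is needed about which vertices lie in which $W_n$.
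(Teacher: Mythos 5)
Your proof is correct, and it takes a genuinely different route from the paper. The paper first verifies complete monotonicity for the explicit family $\nu_a$, using the closed form $p_n(a)=a^{n-1}/(1+a)^{2n-2}$ to compute $(\Delta^l p(a))_n = a^{n-1}(1+a+a^2)^l/(1+a)^{2n+2l-2}>0$ directly, and then transfers the conclusion to an arbitrary invariant probability measure $\nu$ by writing $p_n(\nu)=\int_0^1 p_n(a)\,d\rho(a)$ via the ergodic decomposition; this step leans on the fact, established earlier through the inverse-limit machinery, that every ergodic measure on $\overline{B}(W,k)$ lies in $\{\nu_a\}$. You instead work purely with the consistency relations $(\overline{F}'_n)^T\overline{p}^{(n+1)}=\overline{p}^{(n)}$: your identification $a_j^{(m)}=p_{k+j-1}^{(j+m)}$ and the recursions $a_j^{(1)}=a_j^{(0)}-a_{j+1}^{(0)}$, $a_j^{(m+1)}=a_j^{(m)}-a_{j+1}^{(m-1)}$ are exactly the relations behind the paper's ``triangle'' in the sketch of Lemma \ref{lem p defines nu}, your polynomials satisfy $\phi_m(E)\overline p=\langle a_j^{(m)}\rangle$ (e.g. $\phi_3=1-3x+x^2$ reproduces the entry $p_1-3p_2+p_3$), and the contraction identity $\phi_1\phi_s=x\phi_{s-1}+\phi_{s+2}$ does make $(1-x)^m$ a nonnegative combination of the $x^t\phi_s$ (check: $\phi_1^2=x\phi_0+\phi_3$). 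This buys a self-contained combinatorial proof using only nonnegativity of cylinder measures, with no appeal to the classification of ergodic measures or to any closed formula; the paper's route is shorter given the machinery already in place and yields strict positivity for free on the ergodic pieces. One caveat: your argument gives $[\Delta^m\overline p]_i\ge 0$ unconditionally, whereas the paper's definition of ``completely monotonic'' demands strict inequality; your appeal to fullness of $\nu$ is an extra hypothesis not stated in the lemma. This is harmless for the intended application (Hausdorff's moment theorem only needs the non-strict version), and the paper's own proof has the same boundary subtlety, but you should either weaken the conclusion to $\ge 0$ or justify positivity of the specific cylinder values $a_{j+t}^{(s)}$ that occur with nonzero coefficient.
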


\begin{proof}
Now we prove that for every tail invariant probability measure 
$\nu$ on the subdiagram ${\overline{B}}_{k}$, the sequence 
$\{ p_{n}\ |\ n = 1, 2, \ldots \}$ 
defined above is completely monotonic. 
 Let $\nu = \nu_a$.

By \eqref{eq= p^n}, we have 
$$p_n(a)= p_{n} = p^{(n,\infty, k)}_{k+n-1} =
\frac{a^{n - 1}}{{(1 + a)}^{2n - 1}} \frac{(1 - a^{2})}{(1 - a)} =
\frac{a^{n - 1}}{{(1 + a)}^{2n - 2}}.
$$
Therefore, 
$$
(\Delta p)_{n} = p_{n} - p_{n + 1} = 
\frac{a^{n - 1}}{{(1 + a)}^{2n - 2}}- \frac{a^{n}}{{(1 + a)}^{2n}} = 
\frac{a^{n - 1} (1 + a + a^{2})}{ {(1 + a)}^{2n} },$$
Similarly, we find
$$
 ( \Delta^{2} p)_{n} =
\frac{a^{n - 1} \cdot  \left(1 + a + a^{2} \right)^{2} }{ {(1 + a)}^{2n + 2} },
$$
and, in general,
$$
 (\Delta^{l} p)_{n} =
\frac{a^{n - 1}  {(1 + a + a^{2})}^{l}}{ {(1 + a)}^{2n + 2l - 2}},
\quad 
n = 1, 2, \ldots,\ l = 1, 2, \ldots.
$$

We have proved that $p_n(a)$   is a completely monotonic sequence. 
Now take any invariant probability measure  $\nu$  on
$\overline{B}(W,k)$. Because  the set $M_1 (X_{\ol B(W,k)})$ of all ergodic measures  on $\overline{B}(W,k)$  is contained in  $\{ \nu_a : 0 < a <1 \}$, 
then  $\nu$  is an integral over the measures $\nu_a$,  i.e., 
for every continuous function  $f \colon X_{\overline{B}(W,k)} \to \mathbb{R}$,
$$
\int\limits_{X_{\overline{B}(W,k)}}  f(x) \; d\nu(x)  =  \int\limits_{0}^{1}\left[ \int\limits_{X_{\overline{B}(W,k)}} f(x) \; d\nu_a(x)\right] d\rho(a),
$$  
where  $\rho$ is a probability measure on $(0, 1)$.  

In particular, 
$p_n (\nu)= \int\limits_{0}^{1}  p_n (a)\; d\rho(a)$, $n \in \N$.
This equality implies that the following relations hold
$$
(\Delta^l p_n(\nu))_n = \int\limits_{0}^{1} (\Delta^l p(a))_n  \rho(da)$$ 
for $n, l =1, 2, \ldots$. This means that the
sequence $\{ p_n (\nu) \}$  is completely monotonic. 
\end{proof}

\begin{prop} \label{prop nu_a ergodc}
Every measure $\nu_a$ defined in Proposition \ref{prop meas nu_a}   
is ergodic. 
\end{prop}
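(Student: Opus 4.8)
The plan is to follow the scheme of the proof of Theorem~\ref{thm measures for Binfty}, with the generalized diagram $B_\infty$ replaced by the standard subdiagram $\ol B(W,k)$ and with Lemmas~\ref{lem p defines nu} and \ref{lem compl mon 1} playing the role of Lemma~\ref{lem compl mon}. Since for a countable Borel equivalence relation a tail invariant probability measure is ergodic exactly when it is an extreme point of the convex set of all such measures, it suffices to show that any decomposition $\nu_a=\lambda\mu+(1-\lambda)\rho$ with $0<\lambda<1$ and $\mu,\rho$ tail invariant probability measures on $X_{\ol B(W,k)}$ is trivial, i.e. $\mu=\rho=\nu_a$. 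Throughout I take $0<a<1$; the endpoint $a=1$ is handled by the same argument with $x_1=1/4$, and $a=0$ is immediate.

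First I would use Lemma~\ref{lem p defines nu} to record that each of $\mu,\rho,\nu_a$ is completely determined by the numbers $p_n:=p^{(n,\infty,k)}_{k+n-1}$ — written $p_n(\mu),p_n(\rho),p_n(a)$ to indicate the measure — and that these sequences are completely monotonic by Lemma~\ref{lem compl mon 1}. Because $p_n$ is the value of the measure on one fixed cylinder set, it is affine in the measure, so $p_n(a)=\lambda p_n(\mu)+(1-\lambda)p_n(\rho)$ for all $n$. Applying the Hausdorff moment theorem (Theorem~\ref{thm-Hausdorff}) to the three completely monotonic sequences produces finite positive measures $\theta_a,\theta_\mu,\theta_\rho$ on $[0,1]$ with $p_n(\cdot)=\int_0^1 x^n\,d\theta_\cdot(x)$; by the uniqueness in that correspondence, $\theta_a=\lambda\theta_\mu+(1-\lambda)\theta_\rho$. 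The decisive input, already computed in Lemma~\ref{lem compl mon 1}, is that $p_n(a)=\dfrac{a^{n-1}}{(1+a)^{2n-2}}=x_a^{\,n-1}$ with $x_a:=\dfrac{a}{(1+a)^2}\in(0,\tfrac14)$, which is a geometric sequence; hence by uniqueness $\theta_a=\tfrac1{x_a}\,\delta_{x_a}$ is a single point mass. As $\theta_\mu,\theta_\rho\ge0$ and $\lambda\theta_\mu+(1-\lambda)\theta_\rho$ is carried by $\{x_a\}$, both $\theta_\mu$ and $\theta_\rho$ are supported at $x_a$, so $p_n(\mu)=c_\mu x_a^n$ and $p_n(\rho)=c_\rho x_a^n$ for constants $c_\mu,c_\rho>0$. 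Using $p_1(\mu)=p_1(\rho)=1$ (the mass of the whole path space, since $W_1=\{k\}$ is a single vertex) forces $c_\mu=c_\rho=1/x_a$, so $p_n(\mu)=p_n(\rho)=p_n(a)$ for every $n$, and then $\mu=\rho=\nu_a$ by Lemma~\ref{lem p defines nu}, which is the desired conclusion.

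The only non-routine step, exactly as in the $B_\infty$ case, is the identification of the Hausdorff measure attached to $\{p_n(a)\}$ as a Dirac mass — an extreme ray of the cone of finite positive measures on $[0,1]$ — which is precisely what transports extremality back to $\nu_a$. The remaining ingredients (affineness of $p_n$ in the measure, reconstruction of $\nu_a$ from $\{p_n(a)\}$, and the explicit value of $p_n(a)$) are all contained in Lemmas~\ref{lem p defines nu} and \ref{lem compl mon 1} and only need to be assembled.
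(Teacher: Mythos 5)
Your proposal is correct and follows essentially the same route as the paper: reduce to the completely monotonic sequence $\{p_n\}$ via Lemmas~\ref{lem p defines nu} and \ref{lem compl mon 1}, apply the Hausdorff correspondence (Theorem~\ref{thm-Hausdorff}), and observe that the representing measure for $\nu_a$ is the point mass $\tfrac{(1+a)^2}{a}\,\delta_{a/(1+a)^2}$, which forces any convex decomposition to be trivial. Your explicit normalization step using $p_1(\mu)=p_1(\rho)=1$ is a useful detail the paper leaves implicit, but it is not a different argument.
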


\begin{proof}  We will show that every measure $\nu_a$ 
described in Proposition \ref{prop meas nu_a} is ergodic.
The proof is similar to the proof of Theorem \ref{thm measures for Binfty}.
Each invariant probability measure $\nu$ on
${\overline{B}}(W, k)$ determines a completely monotonic sequence
$\{p_n(\nu) | \ n = 1, 2, \ldots \}$ as shown in Lemmas \ref{lem p defines nu} and \ref{lem compl mon 1}. By Hausdorff theorem 
(Theorem \ref{thm-Hausdorff}) there exists 
a unique probability measure $m$ on the interval $[0, 1]$ such
that 
$\int\limits_{0}^{1} x^{n} m(dx) = p_n(\nu)$, $n \in \N$. For 
$\nu = \nu_a$, we have $m = m_a$ and
$$
\int\limits_{0}^{1} x^{n} m_{a}(dx) =
\frac{a^{(n - 1)}}{ (1 + a)^{2n - 2}} =
\frac{{(1 + a)}^{2}}{a} \cdot 
\int\limits_{0}^{1} x^{n} \delta_{\frac{a}{{(1 + a)}^{2}}} (dx)
$$ 
where $n \in \N$. This implies that
$$
m_{a} = \frac{{(1 + a)}^{2}}{a} \cdot \delta_{\frac{a}{{(1 + a)}^{2}}}.$$

Now, we use the same arguments as in Theorem \ref{thm measures for Binfty} to prove that
$\nu_{a}$ cannot be represented as a linear convex combination  
$\nu_{a} = \lambda  \nu^{(1)} + (1- \lambda)  \nu^{(2)}$, 
where $0 < \lambda < 1$ and 
$\nu^{(1)}$ and $\nu^{(2)}$ are different invariant finite measures 
on ${\overline{B}}_{k}$. Therefore each measure $\nu_{a}$ 
is an ergodic measure.
\end{proof}

\begin{remark}
We observe that for $a = 1$ the measure $\nu_1$ is infinite. 
Indeed, it follows from \eqref{eq= p^n} that, for $a = 1$, 
$$
\ol p^{(n, \infty, k)} = \left\langle \frac{n+1}{2^n}, 
\frac{n}{2^{n - 1}}, \ldots, 1 \right\rangle
$$
and therefore 
$\ol q^{(n, \infty, k)}$ has the following form:
$$
\ol q^{(n, \infty, k)} =  \left\langle h_{k}^{(n)}
\frac{n + 1}{2^{n}}, \ h_{k+1}^{(n)} \frac{n}{2^{n - 1}}, \ldots, h_{k+n -1}^{(n)} \right\rangle.
$$
Since $h^{(n)}_j \to \infty$ as $n \to \infty$, we conclude that 
$\nu_1$ is infinite.
\end{remark}

\begin{prop}
The extension $\widehat{\nu}_{a}$ of each measure 
$\nu_a$ onto the set $\mc R(X_{\ol B(W, k)})$ is finite, $0< a <1$. Moreover, the measures 
$\widehat{\nu}_{a}$ are pairwise singular, $0 < a <1$. 
\end{prop}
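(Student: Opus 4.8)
The plan is to prove finiteness of $\widehat{\nu}_a$ via the vertex-subdiagram extension criterion of Theorem~\ref{TheoremIV_1}, and then to derive pairwise singularity from the structure of the supports. First I would recall that $\ol B(W,k)$ is the vertex subdiagram of $B_\infty$ supported by $W_n = \{k, k+1, \ldots, k+n-1\}$, so that $W_n' = V_n \setminus W_n = \{1, \ldots, k-1\} \cup \{k+n, k+n+1, \ldots\}$. Since $\nu_a$ is defined on $\ol B(W,k)$ by the vectors $\ol p^{(n,\infty,k)}$ with entries computed in \eqref{eq= p^n}, namely $p_j^{(n,\infty,k)} = \dfrac{a^{j-k}}{(1+a)^{n+j-k}} \cdot \dfrac{1 - a^{n+k-j+1}}{1-a}$, the plan is to apply criterion (iii) of Theorem~\ref{TheoremIV_1}: $\widehat{\nu}_a(\widehat{X}_{\ol B(W,k)}) < \infty$ if and only if
$$
\sum_{n=0}^{\infty} \sum_{v \in W_{n+1}} \widehat{\nu}_a\!\left(X_v^{(n+1)}\right) \sum_{w \in W_n'} f_{vw}^{(n)} < \infty.
$$
For the incidence matrix $F'$ of $B_\infty$, the vertex $v = j \in W_{n+1}$ receives edges from all $w \le j$ in $V_n$; among these, the ones in $W_n' = \{1, \ldots, k-1\}$ number exactly $k-1$ (when $j \ge k$, which always holds), and $v$ has no edge from $\{k+n, k+n+1, \ldots\}$. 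Since $H_v^{(n+1)} = \sum_{w \in V_n} f'^{(n)}_{vw} H_w^{(n)}$ with $H_w^{(n)} = \binom{w+n-2}{n-1}$ and the stochastic entries are $f^{(n)}_{vw} = f'^{(n)}_{vw} H_w^{(n)} / H_v^{(n+1)}$, the quantity $\sum_{w \in W_n'} f_{vw}^{(n)}$ equals $\left(\sum_{w=1}^{k-1} H_w^{(n)}\right) / H_v^{(n+1)} = H_{k-1}^{(n+1)}/H_v^{(n+1)}$ using the hockey-stick identity $\sum_{w=1}^{k-1} S_w^{(n-1)} = S_{k-1}^{(n)}$. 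So the series becomes $\sum_n H_{k-1}^{(n+1)} \sum_{j \in W_{n+1}} p_j^{(n+1,\infty,k)}$, because $\widehat{\nu}_a(X_j^{(n+1)}) = H_j^{(n+1)} p_j^{(n+1,\infty,k)}$ and the factor $H_j^{(n+1)}$ cancels. I expect $\sum_{j \in W_{n+1}} p_j^{(n+1,\infty,k)}$ to be comparable to $\left(\tfrac{a}{1+a}\right)^{?}$-type geometric expressions, and the key estimate will be that this inner sum decays geometrically in $n$ (for $a < 1$) at a rate beating the polynomial growth $H_{k-1}^{(n+1)} = \binom{n+k-2}{n}= O(n^{k-2})$ — so the series converges.

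After establishing convergence, the cleaner route may actually be to invoke Theorem~\ref{Theorem_IV_2}: if $\sum_n \sup_{v \in W_{n+1}} \sum_{w \in W_n'} f_{vw}^{(n)} < \infty$, then the extension of every probability measure on $\ol B(W,k)$ is finite. From the computation above, $\sum_{w \in W_n'} f_{vw}^{(n)} = H_{k-1}^{(n+1)}/H_v^{(n+1)}$, and this is maximized (for fixed $n$) by taking $v$ as small as possible, i.e. $v = k \in W_{n+1}$, giving $H_{k-1}^{(n+1)}/H_k^{(n+1)} = \binom{n+k-2}{n} / \binom{n+k-1}{n} = \dfrac{k-1}{n+k-1}$, which sums to $+\infty$ — so Theorem~\ref{Theorem_IV_2} does \emph{not} apply directly, and one genuinely needs the measure-dependent criterion (iii). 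I would therefore carry out the estimate of $\sum_{j \in W_{n+1}} p_j^{(n+1,\infty,k)}$ explicitly. Writing $z = \tfrac{a}{1+a}$ (so $0 < z < \tfrac12$ when $0<a<1$) and $j = k + i$ with $i = 0, \ldots, n$, one gets $p_{k+i}^{(n+1,\infty,k)} = \dfrac{z^i}{1+a}\cdot\dfrac{1-a^{\,n-i+2}}{1-a}\cdot\dfrac{1}{(1+a)^{?}}$ — after tidying the powers of $(1+a)$ this is of the form $(\text{const depending on }a)\cdot z^{i} \cdot (1 - a^{n-i+2})$, and summing over $i$ from $0$ to $n$ yields a quantity bounded by a constant times $z^{0}$, \emph{but} crucially the whole vector $\ol p^{(n+1,\infty,k)}$ is getting pushed into large indices: the issue is that $p_k^{(n+1,\infty,k)}$ itself has a factor $(1+a)^{-(n+1)}$, so the dominant term $i=0$ is $\asymp (1+a)^{-(n+1)}$. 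Hence $\sum_{j\in W_{n+1}} p_j^{(n+1,\infty,k)} \le C_a\, (1+a)^{-(n+1)} \sum_{i=0}^n (a/(1+a))^i \cdot (1+a)^{i} \le C_a' (1+a)^{-(n+1)} \cdot a^n \cdot (n+1)$... I need to track this carefully, but the upshot is geometric decay at rate $\le \max(a, \tfrac{1}{1+a})^{n} < 1$, which multiplied by $H_{k-1}^{(n+1)} = O(n^{k-2})$ gives a convergent series. The main obstacle is getting the bookkeeping on the powers of $a$ and $(1+a)$ right so that the geometric decay is manifest; I expect the correct bound to be $\mu_a$-style, i.e. the inner sum is $\le C_a (1-a)^{-1}(\tfrac{1}{1+a})^{n+1}\cdot O(1)$ when $a \le 1$, which decays exponentially.

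For the pairwise singularity of $\{\widehat{\nu}_a : 0 < a < 1\}$, I would use the same strategy-by-ergodic-decomposition that appears in the proof of Theorem~\ref{thm measures for Binfty} and Proposition~\ref{prop nu_a ergodc}. By Proposition~\ref{prop nu_a ergodc} each $\nu_a$ is ergodic on $\ol B(W,k)$, so by Hausdorff's theorem (Theorem~\ref{thm-Hausdorff}) $\nu_a$ corresponds to the delta measure $m_a = \frac{(1+a)^2}{a}\,\delta_{a/(1+a)^2}$ on $[0,1]$; distinct values of $a \in (0,1)$ give distinct points $a/(1+a)^2$ in $(0, 1/4)$ (the map $a \mapsto a/(1+a)^2$ is a bijection from $(0,1)$ onto $(0,1/4)$), hence distinct ergodic measures $\nu_a \ne \nu_b$ for $a \ne b$. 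Two distinct ergodic tail-invariant probability measures on a Bratteli diagram are mutually singular, so $\nu_a \perp \nu_b$ on $X_{\ol B(W,k)}$, realized by disjoint tail-invariant Borel sets $A_a, A_b \subset X_{\ol B(W,k)}$ with $\nu_a(A_a) = 1 = \nu_b(A_b)$. The extension $\widehat{\nu}_a$ is supported on the tail-saturation $\widehat{X}_{\ol B(W,k)} = \mc R(X_{\ol B(W,k)})$, and it is canonically defined by pushing $\nu_a$ along the tail relation; thus $\widehat{\nu}_a$ is concentrated on $\mc R(A_a)$, the tail saturation of $A_a$, which remains disjoint from $\mc R(A_b)$ since $A_a, A_b$ are already tail-invariant \emph{within} $X_{\ol B(W,k)}$ and the saturation of disjoint tail-invariant sets inside a common tail-invariant set stays disjoint. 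Therefore $\widehat{\nu}_a \perp \widehat{\nu}_b$ for all $a \ne b$ in $(0,1)$. The only delicate point here is verifying that the saturations stay disjoint; this follows because if $y \in \mc R(A_a) \cap \mc R(A_b)$ then $y$ is tail-equivalent to some $x_a \in A_a$ and to some $x_b \in A_b$, so $x_a \mc R x_b$, and since $A_a, A_b$ are tail-invariant (as subsets of the tail-invariant set $X_{\ol B(W,k)}$, by ergodicity each is $\mc R$-invariant mod $\nu_a$-, resp. $\nu_b$-, null sets) we can, after discarding a null set, assume $x_a \in A_b$, contradicting $A_a \cap A_b = \emptyset$; one runs this argument on the full-measure invariant cores. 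I expect this part to be routine given the ergodic-decomposition machinery already deployed in the section.
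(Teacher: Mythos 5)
Your finiteness argument has a genuine gap, and it sits exactly where the proof is delicate. You claim that a vertex $v=j\in W_{n+1}$ ``has no edge from $\{k+n, k+n+1,\ldots\}$''. This is false for the top vertex $v=k+n$ of $W_{n+1}$: in $B_\infty$ the vertex $k+n\in V_{n+1}$ receives an edge from $w=k+n\in V_n$, and $k+n\notin W_n=\{k,\ldots,k+n-1\}$, so $w=k+n\in W_n'$. Consequently the extension criterion of Theorem~\ref{TheoremIV_1} contains, besides the $\{1,\ldots,k-1\}$ contribution you estimate, the extra terms $H_{k+n}^{(n)}\,p_{k+n}^{(n+1,\infty,k)}$, i.e.\ the series
$$
\sum_{n\geq 1}\binom{2n+k-2}{n-1}\left(\frac{a}{(1+a)^2}\right)^{n},
$$
since $p_{k+n}^{(n+1,\infty,k)}=a^n/(1+a)^{2n}$ by \eqref{eq= p^n}. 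This is the hard half of the proof: the heights $H_{k+n}^{(n)}$ grow like $4^n$ up to polynomial factors, and convergence holds precisely because $a/(1+a)^2<1/4$ for $a\neq 1$ (the paper handles this via the radius of convergence $1/4$ of the Catalan-type generating function). The borderline case $a=1$, where $\nu_1$ fails to extend finitely, lives entirely in this term; the part you do treat, $\sum_n H_{k-1}^{(n+1)}\sum_{j\in W_{n+1}}p_j^{(n+1,\infty,k)}$, converges for every $a>0$ (indeed by the consistency relation \eqref{eq=p_n} the inner sum telescopes to $p_k^{(n,\infty,k)}\leq (1-a)^{-1}(1+a)^{-n}$, so that piece is polynomial times geometric, as you guessed). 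Your side observation that Theorem~\ref{Theorem_IV_2} does not apply (the sup over $v$ gives $(k-1)/(n+k-1)$, not summable) is correct, but with the missing edge restored that computation also changes for $v=k+n$.

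The singularity argument is fine and is essentially the paper's: the $\nu_a$ are distinct ergodic measures (via the injectivity of $a\mapsto a/(1+a)^2$ on $(0,1)$ and the Hausdorff correspondence), hence mutually singular on $X_{\ol B(W,k)}$, and the tail saturations of disjoint $\mathcal R|_{X_{\ol B(W,k)}}$-invariant carriers remain disjoint in $X_{B_\infty}$. Your care about taking invariant cores before saturating is a point the paper's remark glosses over, and your argument there is sound. To repair the proposal you only need to correct the description of $s(r^{-1}(k+n))\cap W_n'$ and add the bound on $\sum_n\binom{2n+k-2}{n-1}\bigl(a/(1+a)^2\bigr)^n$.
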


\begin{proof}
The proof follows from condition (ii) of Theorem \ref{TheoremIV_1}. 
Indeed, for $k \geq 2$, we have
$$
\sum_{n = 1}^{\infty} \left\lbrack \sum\limits_{i = k}^{k + n}
\sum\limits_{j = 1}^{k - 1} {f'}_{ij}^{(n)}  H_{j}^{(n)} 
p_{i}^{(n + 1, \infty, k)} + H_{k + n}^{(n)} \ 
p_{k + n}^{(n + 1, \infty, k)} \right\rbrack
$$
$$
= \sum\limits_{n = 1}^{\infty}\left\lbrack
\sum\limits_{i = k}^{k + n} p_{i}^{(n + 1, \infty, k)} \cdot 
\left( \sum\limits_{j = 1}^{k - 1}
H_{j}^{(n)}\right) + H_{k + n}^{(n)} 
p_{k + n}^{(n + 1, \infty, k)} \right\rbrack
$$
$$
= \sum\limits_{n = 1}^{\infty} \left\lbrack 
\frac{1}{{(1 + a)}^{n}} 
\sum\limits_{i = k}^{k + n}
\frac{a^{i - k}}{{(1 + a)}^{i - k}} \cdot 
\frac{\left(1 - a^{n + k - i + 2} \right)}{(1 - a)} 
\sum\limits_{j = 1}^{k - 1} 
H_{j}^{(n)} + H_{k + n}^{(n)} 
\frac{a^{n}}{(1 + a)^{2n}} %(1 - a^{2})}{{(1 + a)}^{2n}(1 - a)}
\right\rbrack 
$$
$$
=\sum\limits_{n = 1}^{\infty}\frac{1}{{(1 + a)}^{n}} \cdot
\sum\limits_{i = 0}^{n}
\left(\frac{a}{1 + a} \right)^{i} \cdot \frac{\left(1 - a^{n -i + 2} \right) }{(1 - a)} \cdot 
\sum\limits_{j = 1}^{k - 1} H_{j}^{(n)} +
\sum\limits_{n = 1}^{\infty} H_{k + n}^{(n)}  \frac{a^n}
{(1 + a)^{2n}}
$$
$$
< \frac{k-1}{1-a}\cdot 
\sum\limits_{n = 1}^{\infty} {H_{k - 1}^{(n)}} \frac{1}{(1 + a)^{n}} \ 
\sum\limits_{i = 0}^{n} \left(\frac{a}{1 + a} \right)^{i} +
 \sum\limits_{n = 1}^{\infty}
{H_{k + n}^{(n)} \ \frac{a^{n}}{{(1 + a)}^{2n}}}
$$
$$
< \frac{k - 1}{1 - a} \cdot \sum\limits_{n = 1}^{\infty} 
\left( 
\begin{array}{c}
n + k - 3 \\
n - 1 \\
\end{array} 
\right)
\frac{2}{{(1 + a)}^{n}} + \sum\limits_{n = 1}^{\infty}
\left( 
\begin{array}{c}
2n + k - 2 \\
n - 1 \\
\end{array} 
\right)
\cdot \left( \frac{a}{(1 + a)^{2}} \right)^{n} 
$$
$$
< \frac{k - 1}{1 - a}\cdot 
\sum\limits_{n = 1}^{\infty} {(n + k - 3)}^{k - 2} 
\frac{2}{{(1 + a)}^{n}} + 
 \sum\limits_{n = 1}^{\infty}
\left( 
\begin{array}{c}
2n + k - 2 \\
n - 1 \\
\end{array} 
\right)
\cdot \left( \frac{a}{(1 + a)^{2}} \right)^{n}.
$$
Note that both power series are convergent. 
The series 
$$\sum\limits_{n = 1}^{\infty} {(n + k - 3)}^{(k - 2)} 
\frac{1}{{(1 + a)}^{n}} < \infty$$
because his radius of convergence is $1$ and $\frac{1}{a + 1} < 1$. 
Similarly, the series 
$$\sum\limits_{n = 1}^{\infty}
\left( 
\begin{array}{c}
2n + k - 2 \\
n - 1 \\
\end{array} 
\right)
\cdot 
 \left(\frac{a}{(1 + a)^{2}} \right)^{n} < \infty 
 $$
because his radius of convergence is $\frac{1}{4}$ and $\frac{a}{(1 + a)^{2}} < \frac{1}{4}$ for  $0 \leq a < 1$.

\end{proof}

\ignore{
We can also find a formula for the value of the measure 
$\wh{\ol \nu}_a$ on every cylinder set $[\ol e]$. Let $r(\ol e) = j 
\in V_n$. Then using (\ref{eq: value of ext neas on e}) we have
$$
\ba
\widehat{\overline{\nu}}_{a}([\overline{e}]) = &
\lim\limits_{m \to \infty}{\sum\limits_{i = k}^{n + m + k - 1} {g'}_{ij}^{(n,m)}} \cdot 
p_{i}^{(n + m,\infty,k)} = \\
= &
\lim\limits_{m \to \infty}
\left\lbrack 
\frac{1}{(1 - a)  (1 + a)^{(n + m)}} 
\sum\limits_{i = 0}^{n + m - 1} S_{i + k - j + 1}^{(m - 1)} \cdot 
\left( \frac{a}{1 + a} \right)^{i} 
\left(1 - a^{n + m - i + 1}\right) 
\right\rbrack = \\
= &
\frac{1}{(1 - a)  (1 + a)^{n}} \cdot 
\left\lbrack
\lim\limits_{m \to \infty} 
\left\lbrack
\frac{1}{(1+a)^m}
\sum_{i = 0}^{n + m - 1} S_{i + k - j + 1}^{(m - 1)} \cdot 
\left( \frac{a}{1 + a} \right)^{i} 
\right\rbrack
\right.
+ \\
 &\qquad  - 
\left.
\lim\limits_{m \to \infty}
\left\lbrack 
\frac{a^{ n+ m + 1}}{(1 + a)^{m}} \cdot 
\sum\limits_{i = 0}^{n + m - 1} S_{i + k - j + 1}^{(m - 1)} \cdot \left( \frac{1}{1 + a} \right)^{i}
\right\rbrack \right\rbrack.
\ea
$$
\end{remark}
}

\textit{\textbf{Question.}} The generalized Bratteli diagram 
$B_\infty$ supports two collections of tail invariant measures:
$\{\mu_a \ |\ 0 < a < \infty\}$ and $\{\wh \nu_a \ |\ 0 < a < 1\}$.
How are the measures from the two collections related?

\begin{remark}
We note that the extended measures $\widehat{\nu}_{a}$ are pairwise singular. Indeed, we know that the measures  $\nu_{a}$  are pairwise singular, hence they are supported by pairwise disjoint sets 
$Y_{a}$. The set $\widehat{Y}_{a}$ consists of all infinite paths 
tail equivalent to a path from $Y_a$. Then the sets $\widehat{Y}_{a}$ are pairwise disjoint and this means that $\widehat{\overline{\nu}}_{a}$ are pairwise singular.                  \end{remark}

%%%%%%%%%%%%%%%%%%%%%%%%%%%%%%%%%%%%%%%%%%%%%%%%%%%%%%%%%%%%%%%%%%

\section{Generalized Bratteli-Vershik systems}\label{sect VershikMapPascal}

In this section, we discuss orders and the corresponding Vershik maps for generalized Bratteli diagrams. In particular, we consider the cases of $\mathbb{Z}$-infinite and $\mathbb{N}$-infinite Pascal-Bratteli diagrams (see Section~\ref{sect Bratteli-Pascal}) and
the diagram $B_\infty$ studied in Section \ref{ssect measures on B_infty}.

\subsection{Vershik maps on infinite Pascal-Bratteli diagrams}

In this subsection, we consider infinite Pascal-Bratteli-Vershik dynamical systems. First, we define a so-called natural order on $\mathbb{Z}$-infinite and $\mathbb{N}$-infinite Pascal-Bratteli diagrams. 

First, let $B$ be a $\mathbb{Z}$-infinite Pascal-Bratteli diagram.  
Let $\overline{t} = (\ldots t_{-1}, t_{0}, t_{1}, \ldots)$ be a vertex in $V_{n}$ for $n \geq 2$. 
Recall that we denote $I_{\ov t} = \{i_1, \ldots, i_l\}$, where $i_1 < i_2 < \ldots < i_l$, the set of indexes such that $\ov t$ has nonzero entries exactly at positions $\{i_j\}_{j = 1}^l$. Then we have $\ov t = \sum_{j = 1}^l t_{i_j} \ov e^{(i_j)} $ and $\sum_{j = 1}^{l}t_{i_{j}} = n$. Then $s(r^{-1}(\ov t)) = \{\ov s \in V_{n-1}: \ov s  = \ov s (\ov t, i)= \ov t - \ov e^{(i)},\; i \in I_{\ov t}\}$ and the set $r^{-1}(\ov t)$ is in a one-to-one correspondence with the set of vertices $s(r^{-1}(\ov t))$. Define an order on $r^{-1}( \overline{t})$ as follows: for any two edges $e,f \in r^{-1}( \overline{t})$ with $s(e) = \ov s (\ov t, i)$ and $s(f) = \ov s (\ov t, j)$, where $i,j \in I_{\ov t}$, we have $e < f$ if $i < j$. The order on $\mathbb{Z}$-IPB diagram defined by this rule is called the
\textit{natural order}. In the same manner, we define the natural order on $\mathbb{N}$-IPB diagram.

It is convenient to present any
$\overline{t} \in  V_{n}$ as a pair 
$( I_{\overline{t}}, c(\overline{t}) )$, 
where $c(\overline{t}) =  \left\langle  t_{i_{1}}, \ldots, t_{i_{l}} \right\rangle$.
Let $x = ( {\overline{t}}^{(1)}, {\overline{t}}^{(2)}, \ldots )$ 
be a path of $X_{B}$, where 
${\overline{t}}^{(1)}, {\overline{t}}^{(2)}, \ldots$ are the 
vertices,
${\overline{t}}^{(n)} \in V_{n}$, $n = 1, 2, \ldots$
Then 
$I_{\overline{t}^{(1)}} \subset 
I_{\overline{t}^{(2)}} \subset \ldots$
and
$c({\overline{t}}^{(n + 1)})$ is obtained from
$c({\overline{t}}^{(n)})$ by adding the number "1" 
either to a component of $c({\overline{t}}^{(n)})$
(then $I_{\overline{t}^{(n)}} = I_{\overline{t}^{(n+1)}}$) 
or to an additional position.

Now we are able to determine the sets $X_{\max}$ and $X_{\min}$
for both versions of IPB diagrams. 
The sets $X_{\max}$ and $X_{\min}$ are described as follows.

\begin{remark}
(1) The set $X_{\max}$ consists of all infinite paths $x = ({\overline{t}}^{(1)}, {\overline{t}}^{(2)}, \ldots)$ such that for every $n = 1,2,\ldots$ either $c( {\overline{t}}^{(n + 1)}) = \left\langle t_{i_{1}}, \ldots, t_{i_{l}} + 1 \right\rangle$ or $c({\overline{t}}^{(n + 1)}) = \left\langle t_{i_{1}}, \ldots, t_{i_{l}}, 1 \right\rangle$, where the number "1"  is at position $i_{l + 1} > i_{l}$ and $c({\overline{t}}^{(n)}) = \left\langle t_{i_{1}}, \ldots, t_{i_{l}} \right\rangle$.

(2) Similarly, the set $X_{\min}$ consists of all infinite paths $x = ({\overline{t}}^{(1)}, {\overline{t}}^{(2)}, \ldots)$ such that for every $n = 1,2,\ldots$ either $c( {\overline{t}}^{(n + 1)}) = \left\langle t_{i_{1}}+1, \ldots, t_{i_{l}} \right\rangle$ or $c({\overline{t}}^{(n + 1)}) = \left\langle 1,t_{i_{1}}, \ldots, t_{i_{l}} \right\rangle$, where the number "1"  is at position $i_{0} < i_{1}$ and $c({\overline{t}}^{(n)}) = \left\langle t_{i_{1}}, \ldots, t_{i_{l}} \right\rangle$.
\end{remark}

Moreover, we can represent the set ${X}_{\max}$ as a disjoint union 
of two subsets $X^{u}_{\max}$ and $X^{c}_{\max}$, where
\begin{enumerate}
    \item $X^{u}_{\max} = \{x = ({\overline{t}}^{(1)}, {\overline{t}}^{(2)}, \ldots ) \in  X_{\max}$  such that  for infinitely many  $n$, if 
$c( {\overline{t}}^{(n)}) = \left\langle t_{i_{1}}, \ldots, t_{i_{l}} \right\rangle$  then $c({\overline{t}}^{(n + 1)}) = \left\langle t_{i_{1}}, \ldots, t_{i_{l}}, 1 \right\rangle \}$
    \item $X^{c}_{\max} = 
\{ x = ({\overline{t}}^{(1)}, {\overline{t}}^{(2)}, \ldots ) \in  X_{\max}$ for which there exists $n$ with $c\left( {\overline{t}}^{(n)} \right) = \left\langle t_{i_{1}}, \ldots, t_{i_{l}} \right\rangle$ such that for $m = 1, 2, \ldots$
we have $c\left( {\overline{t}}^{(n + m)} \right) = \left\langle t_{i_{1}}, \ldots, t_{i_{l}} + m \right\rangle 
\}$.
\end{enumerate}

In other words, the set $X_{\max}^c$ consists of all infinite maximal paths such that, starting from some level, we obtain the next vertex of the path by adding ``1'' at the same coordinate $i_l$. The set $X_{\max}^u$ is the complement of the set $X_{\max}^c$.
In a similar way, we divide $X_{\min}$ into the union of the sets $X^{u}_{\min}$ and $X^{c}_{\min}$.

\begin{lemma}
    For both versions of infinite Pascal-Bratteli diagrams, the set ${X}^{u}_{\max}$ is uncountable and the sets ${X}^{c}_{\max}$ and ${X}^{c}_{\min}$ are infinite countable. For $\mathbb{Z}$-infinite Pascal-Bratteli diagram the set ${X}^{u}_{\min}$ is uncountable, and for $\mathbb{N}$-infinite Pascal-Bratteli diagram the set ${X}^{u}_{\min}$ is empty.
\end{lemma}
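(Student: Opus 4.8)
The plan is to translate the combinatorial descriptions of $X_{\max}$, $X_{\min}$ and of their pieces $X^u_\bullet$, $X^c_\bullet$ into a description of admissible ``moves'' on the coordinate tuples $c(\overline{t}^{(n)})$, and then to count. In the natural order the maximal (resp.\ minimal) edge into a vertex $\overline{t}$ removes $\overline{e}^{(i)}$ with $i=\max I_{\overline{t}}$ (resp.\ $i=\min I_{\overline{t}}$); hence a maximal path is exactly a sequence of vertices in which each step either increments the last entry of $c(\overline{t}^{(n)})$ or appends a new ``$1$'' at a position $i_{l+1}>i_l$, and it lies in $X^c_{\max}$ iff from some level on only ``increment'' steps occur, and in $X^u_{\max}$ iff ``append'' steps occur infinitely often. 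The symmetric statement holds for $X_{\min}$ with the moves ``increment first entry'' and ``prepend a $1$ at a position $i_0<i_1$''. I would first record these reformulations; they are immediate from the definitions quoted above.

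For the countability of $X^c_{\max}$: given $x\in X^c_{\max}$, let $n(x)$ be the least level from which the path only increments, and send $x\mapsto(n(x),\overline{t}^{(1)},\dots,\overline{t}^{(n(x))})$. This is injective, since the tail of $x$ is then determined. Because for each fixed $n$ there are only countably many choices at each of the $n$ stages of a finite maximal path, the set of finite maximal paths of length $n$ is countable, so $X^c_{\max}$ embeds into a countable union of countable sets and is countable. It is infinite because for each index $i$ the path with $\overline{t}^{(k)}=k\,\overline{e}^{(i)}$ is maximal (each such vertex past level $1$ has a single incoming edge) and stabilizes already at level $1$, and these paths are pairwise distinct. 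The same reasoning, with ``prepend'' in place of ``append'', shows $|X^c_{\min}|=\aleph_0$.

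For the uncountability of $X^u_{\max}$ (both versions) I would exhibit an injection from the set of strictly increasing sequences of indices: given $j_1<j_2<\dots$, the path with $\overline{t}^{(k)}=\overline{e}^{(j_1)}+\dots+\overline{e}^{(j_k)}$ is maximal, because at step $k$ one appends $\overline{e}^{(j_{k+1})}$ with $j_{k+1}=\max I_{\overline{t}^{(k+1)}}$, so the edge used is the maximal one; moreover every step is an ``append'' step, so the path lies in $X^u_{\max}$. Distinct sequences yield distinct paths (the vertex $\overline{t}^{(k)}$ recovers $\{j_1,\dots,j_k\}$), and the set of strictly increasing sequences in $\mathbb{N}$, equivalently the set of infinite subsets of $\mathbb{N}$, is uncountable. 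For the $\mathbb{Z}$-IPB diagram the same construction with a strictly decreasing sequence $j_1>j_2>\dots$ in $\mathbb{Z}$ produces uncountably many paths in $X^u_{\min}$, using that $\mathbb{Z}$ is unbounded below.

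Finally, for the $\mathbb{N}$-IPB diagram $X^u_{\min}=\emptyset$: for any minimal path $\min I_{\overline{t}^{(n)}}$ is a positive integer which is non-increasing in $n$ and strictly decreases at every ``prepend'' step, so only finitely many ``prepend'' steps occur, and from some level on the path only increments its first entry, i.e.\ $x\in X^c_{\min}$. This asymmetry---``prepend'' is unobstructed in the $\mathbb{Z}$-case but eventually forbidden in the $\mathbb{N}$-case---is the only genuinely non-routine point; the rest is bookkeeping. I do not anticipate a serious obstacle beyond the care needed to verify that the ``pure append/prepend'' paths are indeed extremal at vertices with several incoming edges, which is exactly what the observation about which index the extremal edge removes takes care of.
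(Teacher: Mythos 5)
Your proof is correct and follows essentially the same route as the paper: both arguments rest on the same description of maximal (resp.\ minimal) paths as sequences of ``increment''/``append'' (resp.\ ``prepend'') moves and then count by parametrizing with index sequences, the only cosmetic difference being that you exhibit an uncountable subfamily (all multiplicities equal to $1$) where the paper parametrizes all of $X^{u}_{\max}$ by the pair $(\overline{i}, c(x))$. A small bonus of your write-up is that you actually prove $X^{u}_{\min}=\emptyset$ for the $\mathbb{N}$-IPB diagram via the monotone decrease of $\min I_{\overline{t}^{(n)}}$ at each prepend step, a point the paper dismisses as ``easy to see''.
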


\begin{proof}
 First we prove that the set ${X}^{u}_{\max}$ is uncountable for both versions of IPB. Indeed, for an infinite path $x = (\ol t^{(n)}) \in X_{\max}$, we can obtain the next vertex $\ov t^{(n+1)}$ from the vertex $\ov t^{(n)}$ by adding ``1'' either to the last non-zero coordinate $i_l$ of the vector $\ov t^{(n)}$ or any of the zero coordinates after the coordinate $i_l$. Recall that each path in $X^{u}_{\max}$ is obtained by adding ``1'' to the zero coordinates infinitely many times. Thus, each path
$x \in X^{u}_{\max}$ is completely determined by two
infinite sequences 
$\overline{i} = \{ i_{1} <  i_{2} < i_{3} < \ldots \}$ 
and 
$c(x) = \{ t_{i_{1}}, t_{i_{2}}, t_{i_{3}}, \ldots \}$, 
where
$i_{1}, i_{2}, i_{3}, \ldots$ are positions in $\mathbb{Z}$ (or in $\mathbb{N}$), such that all but finitely many vertices of $x$ have non-zero coordinates at those positions, and each natural number $t_{i_{l}}$ is the maximal possible value of the non-zero coordinate $i_l$ for a vertex of $x$. 
To find a vertex ${\overline{t}}^{(n)}$ of the path $x$, 
we choose a number $l \geq 1$ such that
$t_{i_{1}} + \ldots + t_{i_{l-1}} < n \leq
t_{i_{1}} + \ldots + t_{i_{l}}$ 
and then set
$I_{{\overline{t}}^{(n)}} = \{ i_{1}, \ldots, i_{l} \}$ and 
$c \left( {\overline{t}}^{(n)} \right) = 
\left\langle t_{i_{1}}, \ldots, t_{i_{l-1}}, n - ( t_{i_{1}} + \ldots + t_{i_{l-1}}) \right\rangle$. Therefore, the set ${X}^{u}_{\max}$ is uncountable. Similarly, for $\mathbb{Z}$-infinite Pascal-Bratteli diagram, the set $X_{\min}^u$ is uncountable. It is easy to see that for $\mathbb{N}$-infinite Pascal-Bratteli diagram, the set $X_{\min}^u$ is empty.

Now we prove that the set $X^{c}_{\max}$ is countable. Each $x \in X^{c}_{\max}$ is completely determined 
by two finite sequences $\overline{i} = \{ i_{1} < \ldots < i_{s} \}$ and $c(x) = \{ t_{i_{1}}, \ldots, t_{i_{(s - 1)}}, t_{i_{s}} = \infty \}$, where $i_{1}, \ldots, i_{s}$ are positions in $\mathbb{Z}$ (or in $\mathbb{N}$) such that all but finitely many vertices of $x$ have non-zero coordinates at those positions, each number $t_{i_{l}}$, for $1 \leq l \leq s - 1$ is a natural number which is the maximal possible value of the non-zero coordinate $i_l$ for a vertex of $x$, and $t_{i_{s}} = \infty$. To find a vertex ${\overline{t}}^{(n)}$ of the path $x$, 
we repeat the procedure above whenever $n \leq t_{i_{1}} + \ldots + t_{i_{s - 1}}$. For $n > t_{i_{1}} +\ldots + t_{i_{s - 1}}$ we have $I_{{\overline{t}}^{(n)}} = \{ i_{1}, \ldots, i_{s} \}$ and $c({\overline{t}}^{(n)}) =  \left\langle t_{i_{1}}, \ldots, t_{i_{s - 1}}, n - (t_{i_{1}} + \ldots + t_{i_{s - 1}}) \right\rangle$. Thus, the set $X^{c}_{\max}$ is countable.
Similarly, the set $X^{c}_{\min}$ is countable. 

\end{proof}

\begin{remark}\label{star}

(1) For both IPB, the sets $X^{c}_{\max}$ and $X^{c}_{\min}$ contain a special subset $X^{s}$ consisting of those paths $x = x(i)$ such that $\overline{i}= \{ i \}$ and $c(x) =\{ \infty \}$ for $i \in  \mathbb{Z}$ or $i\in  \mathbb{N}$. Then for $x = (\ol t^{(n)})_{n \in \mathbb{N}} \in X^s$, we have $I_{{\overline{t}}^{(n)}} = \{ i \}$, and $c({\overline{t}}^{(n)}) = \left\{ n \right\}$. The paths of $X^{s}$ are maximal and minimal simultaneously. Moreover, $X^{s} = X_{\max} \cap X_{\min}$. 

(2) For $\mathbb{N}$-IPB diagram, we have $X^{u}_{\min} =  \emptyset$ which implies $X_{\min} = X^{c}_{\min}$.

(3)  For any ${\overline{t}}^{(n)} \in V_n$, it is easy to find a unique finite minimal path and a unique finite maximal path that join ${\overline{t}}^{(n)}$ with $V_1$. Namely,
let ${\overline{t}}^{(n)}$ be defined by a pair $\left( {I}_{\overline{t}^{(n)}},  c({\overline{t}}^{(n)}) \right)$, where
$$I_{{\overline{t}}^{(n)}} = \left\{ i_{1} < i_{2} < \ldots < i_{l} \right\} \mbox{ and }
c( {\overline{t}}^{(n)} ) = \left\langle  t_{i_{1}}, \ldots, t_{i_{l}} \right\rangle.$$

Let
$x_{\min} = ( {\overline{t}}^{(1)}, \ldots, {\overline{t}}^{(n)} )$ 
and 
$x_{\max} = ( {\overline{s}}^{(1)}, \ldots, {\overline{s}}^{(n)} )$, where  ${\overline{s}}^{(n)} = {\overline{t}}^{(n)}$,
be the minimal and the maximal paths between $V_{1}$ and ${\overline{t}}^{(n)}$. 
Then
$$
c({\overline{t}}^{(n - 1)}) =  \left\langle t_{i_{1}} - 1, \ldots, t_{i_{l}} \right\rangle
$$
and
$$
I_{{\overline{t}}^{(n-1)}} = \begin{cases}
    \left\{ i_{1} < \ldots < i_{l} \right\} & \text{if } \left( t_{i_{1}} - 1 \right) > 0\\
    \left\{ i_{2} < \ldots < i_{l} \right\} & \text{if } \left( t_{i_{1}} - 1 \right) = 0.\\
\end{cases}
$$

To determine the next vertices
${\overline{t}}^{(n - 2)}, \ldots, {\overline{t}}^{(1)}$ 
one must subtract in turn the number "1" from the first component of the
vectors ${\overline{t}}^{(n - 1)}, \ldots, {\overline{t}}^{(2)}$
and then determine the sets 
$I_{{\overline{t}}^{(n - 2)}}, \ldots, I_{{\overline{t}}^{(1)}}$ 
as above.
In the end, we will have
$I_{{\overline{t}}^{(1)}} = \{ i_{l} \}$,
$c( {\overline{t}}^{(1)} ) = \left\langle 1 \right\rangle$.

Similarly, we determine $x_{\max}$. 
We have
$c ({\overline{s}}^{(n - 1)} ) = 
\left\langle t_{i_{1}}, \ldots, t_{i_{l}} - 1  \right\rangle$
and
$$
I_{{\overline{s}}^{(n-1)}} = \begin{cases}
    \left\{ i_{1} < \ldots < i_{l} \right\} & \text{if } \left( t_{i_{l}} - 1 \right) > 0\\
    \left\{ i_{1} < \ldots < i_{l-1} \right\} & \text{if } \left( t_{i_{l}} - 1 \right) = 0.\\
\end{cases}
$$

To determine the next vertices
${\overline{s}}^{(n - 2)}, \ldots, {\overline{s}}^{(1)}$ 
one must subtract in turn the number ``1'' from the last component of the vectors
${\overline{s}}^{(n - 1)}, \ldots, {\overline{s}}^{(2)}$ 
and then determine the sets 
$I_{{\overline{s}}^{(n - 2)}}, \ldots, I_{{\overline{s}}^{(1)}}$ 
as above. We will have
$I_{{\overline{s}}^{(1)}} = \{ i_{1}\}$ and
$c( {\overline{t}}^{(1)} )= \left\langle 1 \right\rangle.$

\end{remark}

In Theorem \ref{A}, we describe the sets $Succ(x)$ and $Pred(y)$ for $\mathbb{Z}$-IPB and $\mathbb{N}$-IPB diagrams (see Subsection~\ref{ssect Vmap} for definitions).

\begin{theorem}\label{A}
Let $B$ be a $\mathbb{Z}$- or $\mathbb{N}$-infinite Pascal-Bratteli diagram and the sets $X^{c}_{\min}$, $X^{u}_{\min}$, $X^{c}_{\max}$, $X^{u}_{\max}$ be as above. Then

(i) if $x \in X^{u}_{\max}$ then $Succ(x) = \emptyset$;

(ii) if $x = ({\overline{t}}^{(1)}, {\overline{t}}^{(2)}, \ldots ) \in X^{c}_{\max}$ 
such that
$$
I_{{\overline{t}}^{(n)}} = \{ i_{1} < \ldots < i_{l} \} \mbox{ and }
c( {\overline{t}}^{(n)} ) = \{ t_{i_{1}}, \ldots, t_{i_{(l - 1)}}, t_{i_{l}} = \infty \}$$ 
then
$Succ(x) = \{ x(i_{l}) \} \subset X^s$ (see Part (1) of Remark \ref{star});

(iii) if $y \in X^{u}_{\min}$ then $Pred(y) = \emptyset$.

(iv) if $y =( {\overline{s}}^{(1)}, {\overline{s}}^{(2)}, \ldots ) \in X^{c}_{\min}$ 
such that 
$$
I_{{\overline{s}}^{(n)}} = \{ i_{1} > \ldots > i_{l} \} \mbox{ and } c( {\overline{s}}^{(n)} ) = \{ t_{i_{1}}, \ldots, t_{i_{(l - 1)}}, t_{i_{l}} = \infty \}
$$ 
then
$Pred(y) = \{ x(i_{l}) \} \subset X^{s}$.
\end{theorem}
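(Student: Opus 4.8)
The statement is about the sets $Succ(x)$ and $Pred(y)$ for maximal, respectively minimal, infinite paths under the natural order. The plan is to unwind the definition of $Succ(x)$ from Subsection~\ref{ssect Vmap} and combine it with the explicit description of $X_{\max}^u$, $X_{\max}^c$, $X_{\min}^u$, $X_{\min}^c$ and $X^s$ obtained in the preceding lemma and Remark~\ref{star}. Recall that $y \in Succ(x)$ precisely when, writing $\ol v = (v_n)$ and $\ol w = (w_n)$ for the source-vertex sequences of $x$ and $y$, for infinitely many $n$ there is a vertex $z \in V_{n+1}$ with edges $e, e' \in r^{-1}(z)$ such that $s(e) = v_n$, $s(e') = w_n$, and $e'$ is the immediate successor of $e$ in the linear order on $r^{-1}(z)$. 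In the natural order on an IPB diagram, the successor of the edge corresponding to $\ol s(\ol t, i_j) = \ol t - \ol e^{(i_j)}$ at the vertex $\ol t$ is the edge corresponding to $\ol s(\ol t, i_{j+1}) = \ol t - \ol e^{(i_{j+1})}$, where $i_{j+1}$ is the next index in $I_{\ol t}$; such a successor exists only when $i_j$ is not the largest index of $I_{\ol t}$. So the combinatorial content is: $v_n$ must be $\ol t - \ol e^{(i_j)}$ with $i_j$ not maximal in $I_{\ol t}$, and then $w_n$ is forced to be $\ol t - \ol e^{(i_{j+1})}$.

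First I would prove (i). If $x \in X_{\max}^u$, then for infinitely many $n$ the passage from $v_n$ to $v_{n+1}$ adds ``1'' in a brand-new position $i_{l+1} > i_l$, i.e.\ $v_n = v_{n+1} - \ol e^{(i_{l+1})}$ where $i_{l+1}$ is the \emph{maximal} index of $I_{v_{n+1}}$; this edge has no successor. For the remaining (finitely many, or cofinitely many depending on $x$) levels, $v_n = v_{n+1} - \ol e^{(i_l)}$ where $i_l$ is again maximal in $I_{v_{n+1}}$ because $x$ is maximal. Hence at \emph{every} level the edge from $v_n$ to $v_{n+1}$ is the top edge of $r^{-1}(v_{n+1})$, so it has no successor at any level, and $Succ(x) = \emptyset$. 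Next, (ii): if $x \in X_{\max}^c$ with $I_{\ol t^{(n)}} = \{i_1 < \cdots < i_l\}$ stabilizing and $t_{i_l} \to \infty$, then for all large $n$ we have $v_{n+1} \in V_{n+1}$ with $I_{v_{n+1}} = \{i_1 < \cdots < i_l\}$, and $v_n = v_{n+1} - \ol e^{(i_l)}$, which again is the top edge — \emph{except} that now $i_l$ is not the largest available index in the ambient $\mathbb Z$ or $\mathbb N$, but it \emph{is} the largest index in $I_{v_{n+1}}$, so in the order on $r^{-1}(v_{n+1})$ it is still maximal and has no successor there either. So one cannot read off $Succ(x)$ from a single level; instead one must verify directly from the definition that the path $x(i_l) \in X^s$ defined in Part~(1) of Remark~\ref{star} — the path through the vertices $(n \cdot \ol e^{(i_l)})_n$ — is a successor of $x$, and that it is the only one. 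That $x(i_l) \in Succ(x)$: for each large $n$, take $z = v_{n+1} + \ol e^{(i_l)}$... more carefully, one should compare $x$ with $x(i_l)$ at a \emph{common} range vertex; the right choice is to observe that for infinitely many $n$ the vertex $\ol t \in V_{n+1}$ with $I_{\ol t} = \{i_1 < \cdots < i_l\}$ and a single unit moved to position $i_l$ has predecessors including both $\ol t - \ol e^{(i_{l-1})}$ (on the $x$-side, up to the bookkeeping that $v_n$ equals this when $x$ has not yet entered its stationary regime — this needs the non-stationary tail of $x$) and $\ol t - \ol e^{(i_l)}$ which lies on $x(i_l)$; and these two are consecutive in the natural order. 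One then argues uniqueness: any $y \in Succ(x)$ must, at the infinitely many levels where an honest successor is taken, have $w_n = \ol t - \ol e^{(i_{l})}$ where $\ol t - \ol e^{(i_{l-1})}$ is the $x$-predecessor; since the indices appearing in the tail of $x$ stabilize to $\{i_1, \ldots, i_l\}$, the only index ever produced as the successor of the $x$-edge is $i_l$, forcing $w_n$ to lie on the path $x(i_l)$, hence $y = x(i_l)$. Parts (iii) and (iv) are the mirror images under reversing the linear orders (predecessor instead of successor), and I would state them as ``the same argument applied to $X_B$ with reversed order'', noting that for the $\mathbb N$-IPB diagram $X_{\min}^u = \emptyset$ so (iii) is vacuous there.

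The key auxiliary facts I would invoke are: the explicit form of the natural order and its successor/predecessor maps on $r^{-1}(\ol t)$; the decompositions $X_{\max} = X_{\max}^u \sqcup X_{\max}^c$, $X_{\min} = X_{\min}^u \sqcup X_{\min}^c$ and the structure of $X^s = X_{\max} \cap X_{\min}$ from Remark~\ref{star}(1); and the description of the finite minimal/maximal paths to a given vertex in Remark~\ref{star}(3), which is what lets me identify exactly which predecessor vertex sits on $x$ versus on the candidate successor.

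\textbf{Main obstacle.} The delicate point is case (ii) (and its mirror (iv)): the ``naive'' level-by-level test for a successor fails because the $x$-edge into $v_{n+1}$ is maximal \emph{within} $r^{-1}(v_{n+1})$ at every level, so $x$ has no successor in the crude sense — yet $Succ(x)$ is nonempty because the definition only asks for infinitely many levels where \emph{some} $z \in V_{n+1}$ admits consecutive edges with sources $v_n$ and $w_n$, and such $z$ need not be $v_{n+1}$. I will need to locate, for infinitely many $n$, the correct range vertex $z$ (obtained from $v_n$ by adding a unit at position $i_l$ rather than at the last position actually used by $x$ at that level), check that the $x$-side edge into $z$ and the $x(i_l)$-side edge into $z$ are genuinely consecutive in the natural order, and that this can be done infinitely often. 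Getting the bookkeeping of indices right in the non-stationary initial segment of $x$ versus its eventually-stationary tail — and making sure the ``infinitely many $n$'' in the definition is actually achieved — is where the real care is required; the uniqueness half is comparatively routine once the successor relation on indices is pinned down.
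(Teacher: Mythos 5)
Your argument for part (i) has a genuine gap. You conclude $Succ(x)=\emptyset$ from the observation that, at every level, the edge of $x$ from $v_n$ into $v_{n+1}$ is the maximal element of $r^{-1}(v_{n+1})$. But the definition of $Succ(x)$ does not require $z=v_{n+1}$: it asks whether \emph{some} $z\in V_{n+1}$ with $s^{-1}(v_n)\ni e$, $r(e)=z$, admits a successor edge whose source is $w_n$, and in an IPB diagram $v_n$ is the source of infinitely many edges, almost all of which are \emph{not} maximal in their own fibre $r^{-1}(z)$ (take $z=v_n+\ol e^{(i)}$ with $i$ below the top index of $I_z$). You explicitly acknowledge exactly this point in your ``main obstacle'' paragraph for case (ii), but you do not notice that it invalidates your proof of (i). A second symptom of the problem: your argument for (i) uses only the maximality of $x$, never the defining feature of $X^u_{\max}$ (that new positions are adjoined infinitely often), so if it were correct it would equally give $Succ(x)=\emptyset$ for $x\in X^c_{\max}$, contradicting part (ii) of the very statement you are proving.

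The paper's actual argument for (i) is of a different nature and is the idea your proposal is missing. Suppose $y\in Succ(x)$; at each of the infinitely many good levels one has $z=\ol t^{(n)}+\ol e^{(i)}$ with $i<i_l$, and the successor edge forces $w_n=\ol t^{(n)}+\ol e^{(i)}-\ol e^{(i_k)}$ for the next index $i_k\in I_z$ after $i$. Now one uses that $y$ is a \emph{minimal} path, so its initial segment up to level $n$ must be the unique minimal finite path from $V_1$ to $w_n$; by Remark~\ref{star}(3) that finite path starts in $V_1$ at the vertex indexed by the largest element of $I_{w_n}$, which is $i_l$ or at least $i_{l-1}$. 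For $x\in X^u_{\max}$ these indices tend to infinity with $n$, so the starting vertex of the fixed path $y$ cannot exist --- contradiction. The same mechanism (minimal initial segments to $w_n$, read off via Remark~\ref{star}(3), agreeing with the vertical path $x(i_l)$ on segments of length $n-(t_{i_1}+\cdots+t_{i_l})\to\infty$) is what carries the paper's proof of (ii); your treatment of (ii) correctly isolates this as the delicate step but leaves it as a plan rather than executing it, and your proposed uniqueness argument (``the only index ever produced as the successor of the $x$-edge is $i_l$'') does not substitute for it. Parts (iii) and (iv) inherit the same gaps under the order reversal.
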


\begin{proof}
(i) Let $x = ({\overline{t}}^{(1)}, {\overline{t}}^{(2)}, \ldots ) \in X^{u}_{\max}$ with $I_{{\overline{t}}^{(n)}} = \left\{ i_{1} < \ldots < i_{l} \right\}$ and $c\left( {\overline{t}}^{(n)} \right) = \left\langle t_{i_{1}}, \ldots, t_{i_{l}} \right\rangle$, 
where
$\left\{ i_{1} < \ldots < i_{l} \right\}$ 
and
$\left\langle t_{i_{1}}, \ldots, t_{i_{l}} \right\rangle$ 
depend on $n$ and $i_{l} \to \infty$ as $n \to \infty$. 
Assume that $y = ( {\overline{s}}^{(1)}, {\overline{s}}^{(2)}, \ldots ) \in Succ(x)$,
$y \in  X_{\min}$. Then
for infinitely many $n$, there are $\overline{z} \in V_{n + 1}$ and edges
$e_{\overline{z}{\overline{t}}^{(n)}}^{(n)}, e_{\overline{z}{\overline{s}}^{(n)}}^{(n)} 
\in {r}^{-1}(\overline{z})$ with $s(e_{\overline{z}{\overline{t}}^{(n)}}^{(n)}) = \overline{t}^{(n)}$ and $s(e_{\overline{z}{\overline{s}}^{(n)}}^{(n)}) = \overline{s}^{(n)}$
such that $e_{\overline{z}{\overline{s}}^{(n)}}^{(n)}$ is the successor of
$e_{\overline{z}{\overline{t}}^{(n)}}^{(n)}$. 
Then 
$$
\overline{z} = {\overline{t}}^{(n)} + \ov{e}^{(i)} = {\overline{s}}^{(n)} + \ov{e}^{(j)}
$$ 
for some $i,j \in \mathbb{Z}$
(or $i, j \in \mathbb{N}$).

Note that $i < i_{l}$ because otherwise
$e_{\overline{z}{\overline{t}}^{(n)}}^{(n)}$ would be the maximal edge of
${r}^{( -1)}(\overline{z})$ and would have no successor.
Thus, $i_{k - 1} \leq i < i_{k}$ for some $k \in \{1, \ldots, l\}$. 
If $i = i_{k - 1}$, since $e_{\overline{z}{\overline{s}}^{(n)}}^{(n)}$ is the successor of
$e_{\overline{z}{\overline{t}}^{(n)}}^{(n)}$, we have
$$
I_{{\overline{s}}^{(n)}} = \left\{ i_{1} < \ldots <  i_{l} \right\} \mbox{ and }
c\left( {\overline{s}}^{(n)} \right) = \left\langle t_{i_{1}}, \ldots, t_{i_{k - 1}} + 1,  t_{i_{k}} - 1, \ldots, t_{i_l} \right\rangle
$$
If $i > i_{k - 1}$, we have
$$
I_{{\overline{s}}^{(n)}} = \left\{ i_{1} < \ldots < i_{k - 1} < i <  \ldots < i_{l} \right\} \mbox{ and }
c\left( {\overline{s}}^{(n)} \right) = \left\langle t_{i_{1}}, \ldots, t_{i_{k - 1}}, 1, t_{i_{k}} - 1, \ldots, t_{i_l} \right\rangle.
$$
Let $y^{(n)}_{\min}$ be the minimal path between
${\overline{s}}^{(n)}$ and $V_{1}$. According to the Part (3) of Remark \ref{star}, the path
$y^{(n)}_{\min}$ starts from the position $i_{l}$ or $i \geq i_{l - 1}$ of $V_{1}$.
Since ${y^{(n)}}_{\min}$ is a part of $y$, the minimal path $y$ starts from the same position of $V_{1}$. Thus, there is no a such $y$ because both $i_{l} \to \infty$ and $i_{l - 1} \to \infty$ as $n \to \infty$.

(ii) Let $x = ({\overline{t}}^{(1)}, {\overline{t}}^{(2)}, \ldots ) \in X_{\max}^{c}$ 
and let $N$ be such that
$c\left( {\overline{t}}^{(N)} \right) = \left\langle t_{i_{1}}, \ldots, t_{i_{l}} \right\rangle$
and for $m = 1, 2, \ldots$ we have
$c\left( {\overline{t}}^{(N + m)} \right) = \left\langle t_{i_{1}}, \ldots, t_{i_{l}} + m \right\rangle$.
Assume that $y = ({\overline{s}}^{1}, {\overline{s}}^{2}, \ldots ) \in Succ(x)$, 
$y \in X_{\min}$. Choose infinitely many $n > N$, such that there are $\overline{z} \in V_{n + 1}$ and $e_{\overline{z}{\overline{t}}^{(n)}}^{(n)}, e_{\overline{z}{\overline{s}}^{(n)}}^{(n)} \in 
{r}^{-1}(\overline{z})$
such that $e_{\overline{z}{\overline{s}}^{(n)}}^{(n)}$ 
is the successor of $e_{\overline{z}{\overline{t}}^{(n)}}^{(n)}$. 
Then $\overline{z} = {\overline{t}}^{(n)} + \ov e^{(i)}$ for some $i \in  \mathbb{Z}$ (or $i \in  \mathbb{N}$). 
Let us observe that $i < i_{l}$ 
and by the same arguments as before we get:
if $i = i_{k-1}$,
$$
I_{{\overline{s}}^{(n)}} = \left\{ i_{1} <  \ldots < i_{l} \right\} \mbox{ and }
c\left( {\overline{s}}^{(n)} \right) = \left\langle t_{i_{1}}, \ldots, t_{i_{k - 1}} + 1,  t_{i_{k}} - 1,\ldots, t_{i_{l}} + (n - N) \right\rangle.
$$
If $i_{k - 1} < i < i_k$ then
$$
I_{{\overline{s}}^{(n)}} = \left\{ i_{1} < \ldots < i_{k - 1} < i < \ldots < i_{l} \right\} \mbox{ and } c\left( {\overline{s}}^{(n)} \right) = \left\langle t_{i_{1}},\ldots t_{i_{k - 1}}, 1, t_{i_{k}} - 1, \ldots, t_{i_{l}} + (n - N) \right\rangle.
$$
Form the minimal path $y^{(n)}_{\min}$ between
${\overline{s}}^{(n)}$ and $V_{1}$ according to the procedure from Part (3) of Remark \ref{star}. Then the vertices of this path between the levels $1$ and $n- (t_{i_{1}} + \ldots + t_{i_{l}})$, coincide with
the vertices between the same levels of the minimal path $x(i_{l}) \in  X^{s}$. Since $(n-(t_{i_{1}} + \ldots + t_{i_{l}})) \to \infty$ as $n \rightarrow \infty$, we have $Succ(x) = \{ x(i_{l})\}$. 

(iii) and (iv) In a similar way, we determine the sets $Pred(y)$ for $y \in X_{\min}$. 
\end{proof}

Theorem~\ref{Theorem B)} concerns Vershik maps on $\mathbb{Z}$- and $\mathbb{N}$-infinite Pascal Bratteli diagrams and properties of the sets $X_{\min}$, $X_{\max}$.

\begin{theorem}\label{Theorem B)}

(1) The Vershik map $\varphi_{B} \colon X \setminus X_{\max} \rightarrow X \setminus X_{\min}$
can be extended to a continuous surjection
$\varphi \colon \lbrack ( X \setminus X_{\max} ) \cup X^{c}_{\max} \rbrack 
\rightarrow  \lbrack (X \setminus X_{\min}\ ) \cup X^{s} \rbrack$ 
such that $\varphi = \varphi_{B}$ on  $X \setminus X_{\max}$ and 
$\varphi(x) = x(i_{l})$, where $x$ and $x(i_{l})$ are described in the proof of Theorem (\ref{A}).
Similarly, the inverse map 
$\varphi_{B}^{-1} \colon  X \setminus X_{\min} \rightarrow X \setminus X_{\max}$ 
can be extended to a surjection
$\psi \colon \lbrack (X \setminus X_{\min}) \cup X^{c}_{\min} \rbrack
\rightarrow \lbrack (X \setminus X_{\max}) \cup X^{s} \rbrack$ 
such that
$\psi = \varphi_{B}^{-1}$ on $X \setminus X_{\min}$ and $\psi(y) = x{(i_{l})}$, where $y$ and $x(i_{l})$ are as above.

(2) There exists a continuous one-to-one map
$f \colon X_{\max} \rightarrow X_{\min}$ for $\mathbb{Z}$-IPB diagram and a continuous surjection $f \colon X_{\max} \rightarrow X_{\min}$ for $\mathbb{N}$-IPB diagram.

(3)
The sets $X_{\max}$ and $X_{\min}$ are nowhere dense in $X_{B}$ in both cases of $\mathbb{Z}$-IPB and $\mathbb{N}$-IPB.

(4) Let $\overline{d}= \langle d_{1}, d_{2}, d_{3}, \ldots \rangle$ be a probability vector and
$\mu_{d\overline{}}$ be an invariant measure on $\mathbb{Z}$-IPB or $\mathbb{N}$-IPB described in Section~\ref{sect Bratteli-Pascal}. Then $\mu_{\overline{d}}(X_{\max}) = \mu_{\overline{d}}(X_{\min}) = 0$ whenever $d_{i} < 1 $ for each $i = 1, 2, \ldots$.
If $\overline{d}$ is a such vector that $d_{j} = 1$ and
$d_{i} = 0$ for $i \neq j$, $j \in \mathbb{Z}$ or $j \in \mathbb{N}$ then $\mu_{\overline{d}}$ is a $\delta$-measure concentrated on the path ${x\overline{}}{(j)} = ( {\overline{s}}^{(1,j)}, {\overline{s}}^{(2,j)}, \ldots)$,
where $s_{i}^{(n,j)}= n$ for $i = j$ 
and $s_{i}^{(n,j)} = 0$ for $i \neq j$, $i, n= 1, 2, \ldots$. 
Moreover,
${x\overline{}}{(j)}$ is both a maximal and a minimal path.

\end{theorem}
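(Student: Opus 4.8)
The four assertions are largely independent, and the plan is to establish them in the order (3), (4), (2), (1), since (1) is the most delicate; the recurring tools will be the combinatorial description of $X_{\max}$, $X_{\min}$, $X^c_{\max}$, $X^u_{\max}$, $X^s$ recorded above, Remark~\ref{star}(3) on the unique minimal and maximal finite paths to a prescribed vertex, and Theorems~\ref{(3.6a)} and~\ref{A}. For (3), I would first note that $X_{\max}$ and $X_{\min}$ are closed, since $X_{\max}=\bigcap_n\{x: x_0,\dots,x_{n-1}\text{ maximal}\}$ is an intersection of clopen sets. To see $X_{\max}$ has empty interior, take any cylinder $[\ol e]$ with $r(\ol e)=\ol v\in V_n$; appending to $\ol e$ the edge from $\ol v$ to $\ol v+\ol e^{(i)}$ for an index $i$ strictly below all indices occurring in $\ol v$ (for $\mathbb N$-IPB one may need to descend one more level first, which is harmless) gives a subcylinder of $[\ol e]$ disjoint from $X_{\max}$, because that edge is not maximal into $\ol v+\ol e^{(i)}$. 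Hence $X_{\max}$, being closed with empty interior, is nowhere dense; the argument for $X_{\min}$ is symmetric, using an index above all those occurring in $\ol v$.

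For (4): if $d_j=1$ and $d_i=0$ for $i\ne j$, then for a finite path $\ol e$ with $r(\ol e)=\ol s=(s_i)$ we have $\mu_{\ol d}([\ol e])=\prod_i d_i^{s_i}$, which vanishes unless $\ol s=n\,\ol e^{(j)}$ and equals $1$ in that case; hence $\mu_{\ol d}$ is the point mass at the path through the vertices $n\,\ol e^{(j)}$, that is $x(j)$, and $x(j)\in X^s=X_{\max}\cap X_{\min}$ by Remark~\ref{star}(1). If instead $d_i<1$ for all $i$, I would write $X_{\max}=\bigcap_n A_n$, where $A_n$ is the disjoint union over $\ol s\in V_n$ of the cylinders determined by the unique maximal finite path to $\ol s$; since, by Theorem~\ref{BKMS_measures=invlimits}, $\mu_{\ol d}$ of a cylinder ending at $\ol s$ equals $\prod_i d_i^{s_i}$ irrespective of the chosen path, we get $\mu_{\ol d}(A_n)=\sum_{\ol s\in V_n}\prod_i d_i^{s_i}$, which is exactly the $n$-th homogeneous term of $\prod_i(1-d_i)^{-1}$. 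The condition $\sum_i d_i=1$ with all $d_i<1$ forces $\sup_i d_i<1$, so this product converges and its terms tend to $0$; therefore $\mu_{\ol d}(X_{\max})=\lim_n\mu_{\ol d}(A_n)=0$, and the same computation with minimal paths yields $\mu_{\ol d}(X_{\min})=0$.

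For (2): in the $\mathbb Z$-IPB case the reflection $i\mapsto -i$ of the index set induces a self-homeomorphism $R$ of $X_B$ (sending a vertex $(s_i)$ to $(s_{-i})$ and an edge to the edge with reflected range); since the order on $r^{-1}(\ol t)$ is read off the indices, $R$ interchanges maximal and minimal edges, so $R(X_{\max})=X_{\min}$ and $f:=R|_{X_{\max}}$ is the desired continuous bijection. In the $\mathbb N$-IPB case no such reflection exists ($X^u_{\min}=\emptyset$, so $X_{\min}=X^c_{\min}$ is countable while $X_{\max}$ is not, and $f$ cannot be injective); here I would instead use that $X_{\max}$ is a nonempty zero-dimensional Polish space (closed in $X_B$) in which every nonempty clopen set is non-compact (a vertex has infinitely many maximal continuations, pairwise at a fixed distance), so by the Alexandrov--Urysohn characterization $X_{\max}$ is homeomorphic to the Baire space $\mathbb N^{\mathbb N}$, which admits a continuous surjection onto the nonempty Polish space $X_{\min}$.

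For (1): I would define the extension on $X^c_{\max}$ by $\varphi(x):=x(i_l)$, where $i_l$ is the eventual branching position of $x$; by Theorem~\ref{A}(ii) $Succ(x)=\{x(i_l)\}$, and by Theorem~\ref{(3.6a)} this is the only value a continuous extension could take, so the definition is forced. The substance is continuity of $\varphi$ at the points of $X^c_{\max}$: if $x'$ agrees with $x$ up to a large level $L$, then $\varphi_B(x')$ leaves the first non-maximal edge of $x'$ (which occurs at level $\ge L$) and its tail unchanged while replacing everything below it by the minimal finite path to the relevant vertex, and by Remark~\ref{star}(3) together with the computation already made in the proof of Theorem~\ref{A}(ii) that minimal path agrees with $x(i_l)$ up to a level tending to $\infty$ with $L$; hence $\varphi_B(x')\to x(i_l)$. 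Surjectivity onto $(X\setminus X_{\min})\cup X^s$ is immediate, since $\varphi_B$ exhausts $X\setminus X_{\min}$ and every path in $X^s$ equals $x(j)$ for some $j$ with $x(j)\in X^c_{\max}$ fixed by $\varphi$; the map $\psi$ is handled identically via Theorem~\ref{A}(iv). The main obstacle, and where I expect the proof to require the most care, is exactly this continuity estimate for $\varphi$ and $\psi$ along the countable sets $X^c_{\max}$ and $X^c_{\min}$: one has to control, uniformly over the approximating paths, how deep the Vershik map moves a path before checking that the inserted minimal path really tracks $x(i_l)$ on an initial segment growing with the level of agreement.
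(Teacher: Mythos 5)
Your treatments of parts (2), (3) and (4) are correct. For (3) you supply the ``closed with empty interior'' argument that the paper leaves as an exercise. For (4) you replace the paper's recursive estimate $\mu_{\overline d}(X_{\max}^{(n+1)})\le d^{*}\,\mu_{\overline d}(X_{\max}^{(n)})$ by the observation that $\sum_n\mu_{\overline d}(A_n)=\prod_i(1-d_i)^{-1}<\infty$, which is clean and equally valid. For (2) in the $\mathbb Z$-case your global reflection $i\mapsto -i$ is essentially the paper's construction (the paper reflects about the first index $i_1$ of the given path, which changes nothing of substance); in the $\mathbb N$-case you depart from the paper entirely: the paper writes down an explicit truncated reflection, whereas you invoke the Alexandrov--Urysohn characterization of Baire space together with the universality of $\mathbb N^{\mathbb N}$ among nonempty Polish spaces. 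Your abstract argument is sound (you verify the hypotheses correctly, and $X_{\min}=X^{c}_{\min}$ is indeed closed, hence Polish), but it is non-constructive, while the paper's explicit map carries more information.

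The gap is in part (1), exactly at the continuity step you flag. You claim that if $x'$ agrees with $x\in X^{c}_{\max}$ up to a large level $L$, then the minimal finite path inserted by $\varphi_B(x')$ agrees with $x(i_l)$ up to a level tending to infinity with $L$. This fails. By Remark~\ref{star}(3) the minimal path to a vertex $\overline w$ enters $V_1$ at the position $\max I_{\overline w}$, and the vertex $\overline w=s(y_m)$ at which $\varphi_B$ roots the new initial segment can satisfy $\max I_{\overline w}>i_l$: take $x'$ agreeing with $x$ up to level $k$, then adding twice at position $i_l+1$ (both edges are maximal) and only then once at position $i_l$ (this is the first non-maximal edge). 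The successor edge removes one copy of $i_l+1$, so $\overline w$ retains a unit at $i_l+1$, $\max I_{\overline w}=i_l+1$, and $\varphi_B(x')$ passes through $\overline e^{(i_l+1)}$ in $V_1$, hence stays at a distance bounded below from $x(i_l)$, uniformly in $k$. Such $x'$ lie in the stated domain and converge to $x$, so the extension $\varphi(x)=x(i_l)$ is not sequentially continuous at $x$; by Theorems~\ref{(3.6a)} and~\ref{A} no other value is available. The computation you borrow from the proof of Theorem~\ref{A}(ii) controls only approximants whose $n$-th vertex differs from that of $x$ by a single transfer $+\overline e^{(i)}-\overline e^{(j)}$, not $\varphi_B(x')$ for arbitrary nearby $x'$. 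The paper itself gives no proof of (1) beyond citing Theorem~\ref{A}, so your argument does not close this gap; what one actually gets along well-chosen approximating sequences is only the $p$-continuity of Subsection~\ref{ssect Vmap}, not continuity on the whole domain.
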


\begin{proof}

Part (1) is a consequence of Theorem \ref{A}.

(2) First consider a $\mathbb{Z}$-IPB diagram. 
Let $x = ({\overline{t}}^{(1)}, {\overline{t}}^{(2)}, \ldots ) \in  X_{\max}$ with $I_{{\overline{t}}^{(n)}} = \{ i_{1} < \ldots < i_{l} \}$ 
and
$c( {\overline{t}}^{(n)} ) = \left\langle t_{i_{1}}, \ldots, t_{i_{l}} \right\rangle$, where $l$ depends on $n$.
Define a path 
$y = ({\overline{s}}^{(1)}, {\overline{}\overline{s}}^{(2)}, \ldots )$, 
such that
$c ( {\overline{s}}^{(n)} ) = \left\langle t_{i_{1}}, \ldots, t_{i_{l}} \right\rangle$
and $I_{{\overline{s}}^{(n)}} = \{ i^{'}_{1} > i^{'}_{2} > \ldots > i^{'}_{l} \}$, 
where 
$$
i^{'}_{1} =  i_{1}, \;
i^{'}_{2} = 2 i_{1} - i_{2}, \ldots,
i^{'}_{l} = 2 i_{1} - i_{l},
$$  
for $n = 1, 2, \ldots$. 
It is obvious that $y \in  X_{\min}$. 
We set $f(x) = y$. 
Then it is not hard to see that 
$f \colon X_{\max} \rightarrow X_{\min}$ is a continuous one-to-one mapping. 
Moreover, $f( X_{\max}^{u} ) = X_{\min}^{u}$, 
$f( X_{\max}^{c} ) =  X_{\min}^{c}$ 
and $f$ is equal to the identity on $X^{s}$.

Similarly we define a continuous surjection $f \colon X_{\max} \rightarrow X_{\min} = X_{\min}^{c}$ 
for $\mathbb{N}$-IPB diagram. 
We define vertices ${\overline{s}}^{(n)}$, $n \geq 1$ in the same manner if $i^{'}_{l} \geq 1$. 
If $i^{'}_{l} < 1 < i^{'}_{l - 1}$ then we determine ${\overline{s}}^{(n)}$ by the sets $I_{{\overline{s}}^{(n)}} = \{ i^{'}_{1} > i^{'}_{2} > \ldots > i^{'}_{l - 1} > 1 \}$ 
and $c ( {\overline{s}}^{(n)} ) = \left\langle t_{i_{1}}, \ldots, t_{i_{l}} \right\rangle$.

(3) It is easy to prove the property (3) if we take into consideration the
structure of the sets $X_{\max}$ and $X_{\min}$ .

(4) Assume that $B = (V, E)$ is a $\mathbb{Z}$-IPB diagram. 
Denote
$$
X_{\max}^{(n)} = \bigcup_{\ov s \in V_n} [E_{\max}(V_1,\overline{s})],
$$
where $[E_{\max}(V_1,\overline{s})]$ is a cylinder set corresponding to the maximal path $E_{\max}(V_1, \ov s)$ of $E(V_1, \overline{s})$ for $\overline{s} \in V_{n}$.
Then $X_{\max}^{(n)}$ is an open set for every $n \in \mathbb{N}$ and 
$$
X_{\max} = \bigcap_{n \in \mathbb{N}} X_{\max}^{(n)}.
$$
Let 
$\overline{d}=  \langle d_{i},\ i\in \mathbb{Z} \rangle$ 
be a probability vector such that 
$d^{*} = \sup_{i \geq 1}d_{i} < 1$. We have
$$
\mu_{\overline{d}}(X_{\max}^{(n)}) =
\mu_{\overline{d}} \left(\bigcup_{\overline{s} \in V_{n}} E_{\max}({\overline{s}}^{(0)},\overline{s}) \right) =
\sum_{\overline{s}\in V_{n}} \prod_{i \in \mathbb{Z}} d_{i}^{s_{i}}.
$$
We also have
$$
\mu_{\overline{d}}\left( X_{\max}^{(n + 1)}\right) =
\sum_{\overline{s}\in V_{n + 1}} \prod_{i \in \mathbb{Z}} d_{i}^{s_{i}}
\leq d^{\ast} \cdot
\sum_{\overline{s} \in V_{n}} \prod_{i \in \mathbb{Z}} d_{i}^{s_{i}}
= d^{\ast} \cdot \mu_{\overline{d}}\left(X_{\max}^{(n)}\right).
$$
Hence 
$$
\mu_{\overline{d}}(X_{\max}) =
\mu_{\overline{d}} \left( \bigcap_{n = 1}^{\infty}X_{\max}^{(n)}\right) = 0.
$$
Similarly,
$\mu_{\overline{d}} (X_{\min}) = 0$.
The same properties are true for  $\mathbb{N}$-IPB diagram and can be proved similarly. The proof of the last part of (4) is obvious.

\end{proof}

\begin{remark}\label{2stars} 
The map $g \colon X_{B} \rightarrow X_{B}$ 
such that $g = \varphi_{B}$ on $X \setminus X_{\max}$ and 
$g = f$ on $X_{\max}$ is not continuous on the entire space $X_{B}$. 
It is an one-to-one Borel mapping if $({X}_{B}, E)$ is a $\mathbb{Z}$-IPB diagram and it
is a Borel surjection if $({X}_{B}, E)$ is a $\mathbb{N}$-IPB diagram.
\end{remark}

\subsection{The Vershik map on the generalized Bratteli diagram 
\texorpdfstring{$B_\infty$}{B_\infty}}

In this subsection, we consider Vershik maps defined on the path 
spaces of the generalized Bratteli diagram $B_\infty$ and its 
subdiagram $\ol B(W, k)$ where $k \geq 1$ is fixed and 
$W = (W_n), W_n = \{k, \ldots, k+n-1\}$. 
We denote by $X_{B_\infty}$ and 
$X_k := X_{\ol B(W, k)}$ the corresponding path spaces. 
For both diagrams, $B_{\infty}$ and $\overline{B}(W,k)$, the set of 
all orders is uncountable and we do not consider all possible
orders.  We present here a few examples of orders such 
that the corresponding Vershik maps have different properties.

Let $\omega$ be the left-to-right order for every vertex $v \in V_n$,
$n > 1$. For the standard Bratteli diagram $\ol B(W, k)$, we can 
easily find the sets $X_{\max}(\om)$ and $X_{min(\om)}$ of infinite 
maximal and minimal paths: $X_{min(\om)}$ is a singleton containing
of the vertical path $X_{\min}$ through vertices $k$ for every level,
$X_{\max}(\om) $ is a countable set $\{y_1, y_2, \ldots, y_\infty\}$ where 
$y_i$ is the path going through the vertices $k, k+1, \ldots k+i, 
k+i, \ldots$ for every $i \geq 1$ and $y_\infty$ is the rightmost
infinite path. In other words, $y_i$ is a slanting path 
connecting $k$ and $k+i$ and then going vertically through the vertices $k+i$.

Let $C$ be the countable set of eventually vertical paths in 
$X_k$. The set $C$ contains $X_{\max}\setminus  \{y_\infty\} $.

\begin{lemma}
(1) The Vershik map $\varphi_B : X_k \setminus X_{\max}(\om) \to
X_k \setminus X_{\min}(\om) $ admits a continuous surjective 
extension $\varphi$ by setting 
$\varphi (y_i) = X_{\min}$ for all $i = 1, 2, \ldots, \infty$.

(2) Let $\mathcal R$ denote the tail equivalence relation. Then
$C = \bigcup_{i \geq 1} \mathcal R(y_i)$ where $y_i \in 
X_{\max}(\om)$.
\end{lemma}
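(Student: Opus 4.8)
The plan is to verify the two statements directly from the combinatorial structure of the standard Bratteli diagram $\ol B(W,k)$ with the left-to-right order $\om$. First I would recall that with this order, at every vertex $j \in W_{n+1}$ the incoming edge from $j-1 \in W_n$ is the minimal edge and the incoming edge from $j \in W_n$ is the maximal edge (these are the only two incoming edges, since $\ol B(W,k)$ is the classical Pascal graph). Hence a finite path is minimal iff it always takes the "slanting left" edge, i.e. it goes through $k, k, \dots, k$, and it is maximal iff it always takes the "vertical" edge, i.e. once it reaches a vertex $j$ it stays at $j$. This immediately confirms that $X_{\min}(\om)$ is the single vertical path through the vertices $k$, and that $X_{\max}(\om) = \{y_1, y_2, \dots\} \cup \{y_\infty\}$, where $y_i$ slants from $k$ up to $k+i$ and then proceeds vertically through $k+i$ forever, and $y_\infty$ is the rightmost path (slanting at every level, going through $k, k+1, k+2, \dots$).

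For part (1), I would apply the general criterion of Theorem~\ref{(3.6a)} together with the definition of $p$-continuity and a direct check. The key point is to compute $Succ(y_i)$ for each $i$. Given $y_i = (\ol e^{(1)}, \dots, \ol e^{(m)}, \dots)$, the successor construction works level by level: at level $n$ the vertex of $y_i$ is $k$ for $n \le i$ and $k+i$ for $n > i$; the minimal path below a vertex $v$ in $\ol B(W,k)$ is always the vertical path through $k$, because the source of the minimal incoming edge of $j$ is $j-1$, and iterating drives the index down to $k$. Thus for infinitely many $n$ (namely all $n > i$) there is an edge $e'$ with source $k+i-1$ whose range $z = k+i$ equals the range of the (vertical, maximal) edge $e$ of $y_i$ with source $k+i$, and $e'$ is indeed the successor of $e$; the successor path below $s(e') = k+i-1$ is the minimal (vertical-through-$k$) path. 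Taking $n \to \infty$ shows every convergent sequence of the form required in the definition of $Succ$ limits to $X_{\min}$, so $Succ(y_i) = \{X_{\min}\}$ for all $i$, and also $Succ(y_\infty) = \{X_{\min}\}$ by the same reasoning with $i$ replaced by an index tending to infinity. By Theorem~\ref{(3.6a)}, any continuous extension must send $y_i$ into $Succ(y_i) = \{X_{\min}\}$; conversely, setting $\varphi(y_i) = X_{\min}$ for all $i$ yields a well-defined surjection onto $X_k \setminus X_{\min}(\om)$ together with the single point $X_{\min}$, and continuity at each $y_i$ follows because a path close to $y_i$ agrees with it on a long initial segment, hence its $\varphi_B$-image agrees with $X_{\min}$ on a correspondingly long initial segment (the successor construction is "local" away from the maximal edges, and near $y_i$ one only ever modifies edges at levels $> i$). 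The main subtlety here is continuity at $y_\infty$: one must check that paths $x$ close to $y_\infty$ but not maximal have $\varphi_B(x)$ close to $X_{\min}$, which again reduces to the observation that the minimal path below any vertex is the vertical path through $k$, so the $\varphi_B$-image shares a long initial vertical segment. I expect this continuity-at-$y_\infty$ check to be the only real obstacle; everything else is bookkeeping.

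For part (2), I would argue as follows. Recall $C$ is the set of \emph{eventually vertical} paths, i.e. paths $x = (\ol s^{(n)})$ such that $s^{(n)}$ is constant, say equal to some $j \ge k$, for all $n \ge N$. Any such path is tail equivalent to the path that reaches $j$ along the slanting route $k, k+1, \dots, j$ and then stays at $j$ — but that path is exactly $y_{j-k}$ (when $j > k$) or $X_{\min}$ (when $j = k$), after possibly re-reading indices. More precisely, two eventually-vertical paths stabilizing at the same vertex $j$ differ only in a finite initial segment ending at $j$, hence are tail equivalent; and $y_{j-k}$ is one such path; therefore $\mathcal R(y_i)$ is precisely the set of eventually-vertical paths that stabilize at $k+i$. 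Since every eventually-vertical path stabilizes at some vertex $k+i$ with $i \ge 0$ (the case $i=0$ being $X_{\min}$, which lies in $\mathcal R(y_1)$ because $X_{\min}$ stabilizes at $k$ and $y_1$ also passes through $k$ before slanting — one checks $X_{\min} \in \mathcal R(y_1)$ directly by comparing initial segments), we get $C = \bigcup_{i \ge 1} \mathcal R(y_i)$. The inclusion $X_{\max}(\om) \setminus \{y_\infty\} = \{y_1, y_2, \dots\} \subset C$ is immediate since each $y_i$ is by construction eventually vertical, while $y_\infty$ is not eventually vertical and hence not in $C$. I would finish by remarking that the $\mathcal R(y_i)$ are pairwise disjoint since distinct $y_i$ stabilize at distinct vertices, giving the union a natural partition structure, though disjointness is not required by the statement.
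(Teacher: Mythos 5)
Your overall plan---a direct continuity check for part (1) and the identification of $\mathcal R(y_i)$ with the eventually-vertical paths stabilizing at $k+i$ for part (2)---is the right one, and it is essentially what the paper leaves to the reader. But several of your supporting claims are false, and one of them would break the argument if taken at face value. The subdiagram $\ol B(W,k)$ is \emph{not} the classical Pascal graph; that is the edge subdiagram $\ol B_k$. The vertex subdiagram $\ol B(W,k)$ keeps \emph{all} edges of $B_\infty$ between the sets $W_n$, so a vertex $j\in W_{n+1}$ receives one edge from every $i\in W_n$ with $i\le j$, and with the left-to-right order the minimal incoming edge of $j$ has source $k$, not $j-1$. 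This matters: in the Pascal graph with this order $X_{\min}(\om)$ would not be a singleton (any path that eventually slants right forever is minimal there), so your premises would fail. With the correct structure, the fact you actually need does hold---the minimal finite path from $V_1$ to any $v\in W_l$ is vertical through $k$ for its first $l-2$ edges and then jumps from $k$ to $v$---and this is exactly what yields $d(\varphi_B(x),X_{\min})\le 2^{-(l-2)}$ for a non-maximal $x$ whose first non-maximal edge sits at level $l$. But your stated justification (``the source of the minimal incoming edge of $j$ is $j-1$, and iterating drives the index down to $k$'') produces a staircase, not a vertical path, and contradicts the conclusion you draw from it. Your computation of $Succ(y_i)$ is also wrong: the paper's definition requires an edge $e'$ with source $w_n=k$ that is the \emph{successor} of $e$, i.e.\ with source $s(e)+1$; the vertical edge from $k+i$ into $k+i$ is the maximal element of $r^{-1}(k+i)$ and has no successor, and the edge from $k+i-1$ you invoke is its \emph{predecessor}. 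In fact $Succ(y_i)=\emptyset$, so Theorem~\ref{(3.6a)} is vacuous here and cannot be used to force $\varphi(y_i)=X_{\min}$. Since the lemma only asserts that this assignment \emph{is} a continuous surjective extension, the $Succ$ detour should simply be deleted; the direct continuity estimate is the whole proof.

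In part (2) the claim $X_{\min}\in\mathcal R(y_1)$ is false: tail equivalence means the two paths agree from some level \emph{onward}, whereas $X_{\min}$ passes through $k$ at every level and $y_1$ passes through $k+1$ at every level $\ge 2$; your phrase ``by comparing initial segments'' has the definition backwards. The correct bookkeeping is that $\mathcal R(y_i)$ is exactly the set of paths eventually vertical at $k+i$ (there is a single edge from $k+i$ to $k+i$, so two such paths coincide from some level on), while the only path eventually vertical at $k$ is $X_{\min}$ itself (a path sitting at $k$ at level $n$ must sit at $k$ at all earlier levels, since edges only go from $i$ to $j\ge i$). Hence $C=\{X_{\min}\}\sqcup\bigsqcup_{i\ge1}\mathcal R(y_i)$: either $X_{\min}$ is excluded from $C$ by convention or the stated equality is off by this one point, but it cannot be repaired by absorbing $X_{\min}$ into $\mathcal R(y_1)$.
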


The \textit{proof} is obvious. One only checks that this map 
$\varphi$ is continuous. 
\vskip 0.3cm 

The set $D = X_k \setminus C$ is characterized by the property:
$$
x = (x_n) \in D \ \Longleftrightarrow\ |\{ n \in \N : r(x_n) >
s(x_n)\}| = \infty.
$$

We can show that the tail equivalence relation is minimal up to 
a countable set.

\begin{prop}\label{prop Vm minimal}
Let $x \in D$. Then $\mathcal{R}(x)$ is dense in $X_k$. 
\end{prop}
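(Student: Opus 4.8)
The topology on $X_k := X_{\ol B(W,k)}$ has a base consisting of the cylinder sets $[\ol e]$ attached to finite paths $\ol e$, so it suffices to show that $\mathcal R(x)$ meets every nonempty cylinder set. Fix a finite path $\ol e$ in $\ol B(W,k)$ and put $w = r(\ol e) \in W_n$ (so $\ol e$ runs from level $1$ to level $n$); I must exhibit a path $y \in \mathcal R(x)$ with $y \in [\ol e]$. The first thing I would record is the elementary monotonicity forced by the shape of the diagram: since the incidence matrix of $B_\infty$ (hence of the vertex subdiagram $\ol B(W,k)$) is lower triangular with unit entries on and below the diagonal, an edge in $\ol B(W,k)$ from a vertex $j \in W_\ell$ to a vertex $i \in W_{\ell+1}$ exists if and only if $j \le i$. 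Consequently the source vertices $v_m := s(x_m) \in W_m$ of $x$ satisfy $v_1 \le v_2 \le v_3 \le \cdots$, with a strict inequality $v_m < v_{m+1}$ precisely when $r(x_m) > s(x_m)$. As $x \in D$ this happens for infinitely many $m$, so $v_m \to \infty$ as $m \to \infty$.

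Next I would prove the reachability fact that makes the splicing work: for vertices $w \in W_n$ and $v \in W_m$ with $n < m$ and $w \le v$, one has $E(w,v) \neq \emptyset$. This is a one-line induction on $m-n$: for $m = n+1$ the single edge from $w$ to $v$ exists because $w \le v \le k+n$; for the step, set $v' = \min(v,\, k+m-2) \in W_{m-1}$, observe $w \le v'$ (using $w \le k+n-1 \le k+m-2$), join $w$ to $v'$ by the inductive hypothesis, and append the edge from $v'$ to $v$, which exists since $v' \le v \le k+m-1$.

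Now I would choose, using $v_m \to \infty$, an index $m > n$ with $v_m \ge w$, and pick a finite path $\ol e'$ from $w \in W_n$ to $v_m = s(x_m) \in W_m$ as just provided. Then
$$ y := (\ol e,\, \ol e',\, x_m,\, x_{m+1},\, x_{m+2}, \ldots) $$
is a well-defined infinite path of $X_k$; it agrees with $x$ from the edge $x_m$ on, hence $y\,\mathcal R\,x$, and it agrees with $\ol e$ up to level $n$, hence $y \in [\ol e]$. Therefore $\mathcal R(x) \cap [\ol e] \neq \emptyset$ for every nonempty cylinder set, i.e.\ $\mathcal R(x)$ is dense in $X_k$. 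I do not anticipate a genuine obstacle: the only points requiring care are the monotonicity of the source vertices along $x$ and the combinatorial reachability lemma, both immediate from the triangular form of the incidence matrix; and it is worth noting explicitly that the hypothesis $x\in D$ is essential, since for an eventually vertical $x$ the vertices $v_m$ stabilize and $\mathcal R(x)$ then lies in the proper closed set of paths that are eventually equal to that vertical tail.
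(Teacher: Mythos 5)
Your proof is correct and follows essentially the same route as the paper's: given a cylinder set $[\ol e]$ with $r(\ol e)=w\in W_n$, use the fact that $x\in D$ forces the vertices along $x$ to tend to infinity, pick a level $m>n$ where the vertex of $x$ dominates $w$, connect $w$ to that vertex by a finite path guaranteed by the triangular structure of the diagram, and splice. You merely supply the details (monotonicity of the source vertices along $x$ and the reachability induction) that the paper's shorter argument leaves implicit.
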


\begin{proof}
We show that for every fixed $x \in D$ and every cylinder
set $[\ol e] $, one has $\mathcal{R}(x) \cap [\ol e]  \neq \emptyset$.
Let $r(\ol e) = j \in V_n$. Take $m >n$ such that $r(x_m) = l > j $.
By definition of the subdiagram $\ol B(W, k)$, there exists
a finite path connecting the vertices $j$ and $l$. 
This means that that the $\mathcal R$-orbit of $x$ will visit 
$[\ol e]$. We also note that $\mathcal{R}(x_\infty)$ is a dense
orbit. 
\end{proof}

\begin{remark}
    (1) A similar result holds for the diagram $B_\infty$. As in 
the case of $\ol B(W, k)$, we have countably many maximal
infinite paths and the unique minimal path through the leftmost 
vertex. The extension of the Vershik map is continuous and surjective.

(2) Proposition \ref{prop Vm minimal} is also true for the 
diagram $B_\infty$. 
\end{remark}

\begin{lemma}
(1) For the generalized Bratteli diagram $B_{\infty}$ and 
its subdiagram $\overline{B}(W,k)$ there exist orders $\tau$ and 
$\tau(k)$, respectfully,  such that 
the Vershik map is an essentially minimal homeomorphism of the path space with a unique fixed point.

(2) There exists an order $\omega$ of $B_{\infty}$ such that 
both sets
$X_{\max}$ and $X_{\min}$ are countable and $Succ(x) = \emptyset$
for every $x \in X_{\max}$ and $Pred(x) = \emptyset$ for every $x \in X_{\min}$.
\end{lemma}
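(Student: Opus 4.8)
The goal is to construct, for the generalized Bratteli diagram $B_\infty$ and its standard subdiagram $\ol B(W,k)$, two different orders: in part (1) an order $\tau$ (resp. $\tau(k)$) whose Vershik map is an essentially minimal homeomorphism of the whole path space with a single fixed point; in part (2) an order $\om$ of $B_\infty$ for which both $X_{\max}$ and $X_{\min}$ are countable and no maximal path has a successor and no minimal path has a predecessor. The strategy is constructive throughout: I will write down explicit orders level by level, identify $X_{\max}$ and $X_{\min}$ by inspection, and then verify either continuity of the extended Vershik map (part (1)) or emptiness of the $Succ$/$Pred$ sets (part (2)) using Theorem~\ref{(3.6a)} and Remark~\ref{(3.6b)}.

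\textbf{Part (1).} For $\ol B(W,k)$ recall that each vertex $i\in W_{n+1}$ has exactly two incoming edges (it is the classical Pascal graph on the window $W$), except for the boundary vertex $i=k$ which has one incoming edge and $i=k+n$ which has one incoming edge. I would choose $\tau(k)$ so that, at each vertex $i$ with $r^{-1}(i)$ of size two, the edge coming from the "left" vertex $i-1$ is minimal and the edge from $i$ is maximal — but with a twist at the right boundary so that there is a \emph{unique} infinite maximal path and a unique infinite minimal path. Concretely: make the vertical edge (from $i$ to $i$) maximal everywhere \emph{except} at the rightmost available vertex of each level, where the slanting edge is declared maximal; symmetrically arrange minimality so that the minimal path is forced to drift to the right edge. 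With such a choice $X_{\max}(\tau(k))$ and $X_{\min}(\tau(k))$ are both singletons, say $\{x_{\max}\}$ and $\{x_{\min}\}$; I then set $\varphi(x_{\max})=x_{\min}$ and check continuity directly from the metric on $X_B$: one shows that paths converging to $x_{\max}$ from below map under $\varphi_B$ to paths converging to $x_{\min}$, which by Theorem~\ref{(3.6a)} is exactly the requirement $\varphi(x_{\max})\in Succ(x_{\max})=\{x_{\min}\}$. Essential minimality then follows from Proposition~\ref{prop Vm minimal} (the tail relation is minimal on the complement of the countable set of eventually-vertical paths), and the single fixed point is the path $x_{\max}=x_{\min}$ if the order is arranged so these coincide — alternatively, one obtains a system conjugate to an odometer-like map with one fixed point. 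The same template works for $B_\infty$: each vertex $i\in V_{n+1}$ has incoming edges from $1,2,\ldots,i$, i.e.\ $r^{-1}(i)$ has size $i$; I order these so the edge from $1$ is minimal and the edge from $i$ is maximal, with a boundary correction guaranteeing unique maximal/minimal infinite paths. The verification of continuity is the only nonroutine point and is handled by the same limiting argument.

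\textbf{Part (2).} Here I want an order $\om$ on $B_\infty$ for which $X_{\max}$ and $X_{\min}$ are countable yet $Succ(x)=\emptyset$ and $Pred(y)=\emptyset$ for all maximal/minimal paths. The idea is to exploit the triangular structure: at vertex $i\in V_{n+1}$ the edges come from $1,\dots,i$. Choose $\om$ so that the \emph{maximal} incoming edge at $i$ is always the one from vertex $1$ and the \emph{minimal} one is the edge from vertex $i$ (the "diagonal" edge). Then an infinite maximal path must, above each level, jump back to vertex $1$, so it is eventually the vertical path through vertex $1$; there are only countably many such paths (indexed by the level at which the path first reaches vertex $1$, together with its finite initial segment, and these are countable). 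Symmetrically $X_{\min}$ consists of paths that eventually travel along diagonal edges, again a countable set. To see $Succ(x)=\emptyset$ for $x\in X_{\max}$: a successor $y$ would require, for infinitely many $n$, a vertex $z\in V_{n+1}$ and an edge $e'\in r^{-1}(z)$ that is the $\om$-successor of the edge $e$ of $x$ ending at $z$. But $e$ is the maximal edge of $r^{-1}(z)$ (since $x$ is maximal), and a maximal edge has no successor — contradiction. The point is that once the \emph{whole} path $x$ is maximal, every edge of $x$ is the maximal edge at its range, so literally no successor exists at any level, forcing $Succ(x)=\emptyset$. The identical argument gives $Pred(y)=\emptyset$. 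The remaining work — showing $X_{\max},X_{\min}$ are nonempty and countable — is a direct combinatorial count analogous to the computation of $X^c_{\max}$ in Section~\ref{sect VershikMapPascal}.

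\textbf{Main obstacle.} The delicate step is in part (1): verifying that the one-point extension of $\varphi_B$ across $x_{\max}$ is genuinely continuous (and its inverse across $x_{\min}$). One must check that approximating paths, which agree with $x_{\max}$ up to a high level and then differ, are sent by $\varphi_B$ to paths agreeing with $x_{\min}$ up to a comparably high level; this requires the boundary correction in the order to be chosen so that the "carry" in the Vershik algorithm propagates only boundedly far. Getting the order right so that both $X_{\max}$ and $X_{\min}$ are singletons \emph{and} the Vershik map stays continuous is the crux; the essential-minimality and fixed-point assertions then follow from the already-established Proposition~\ref{prop Vm minimal}.
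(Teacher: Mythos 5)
Your proposal has genuine gaps in both parts.

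\textbf{Part (1).} You never actually produce the order; you describe a ``boundary correction'' that is left unspecified and you hedge on the one point that matters, namely whether the unique maximal path and the unique minimal path coincide (without that coincidence there is no obvious fixed point, and the statement asks for one). Note also that the rightmost vertex of each level of $\overline{B}(W,k)$ has only \emph{one} incoming edge, so ``declaring the slanting edge maximal there'' changes nothing, and with the vertical edge maximal everywhere else the set $X_{\max}$ is the countably infinite family of eventually-vertical right-boundary paths, not a singleton. The idea you are missing is much simpler: the paper takes $\tau$ to be the left-to-right order on $r^{-1}(v)$ for $v$ in odd levels and the right-to-left order for $v$ in even levels. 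Alternating the orientation forces a maximal path to take the vertical edge at one level and the edge issuing from the first vertex at the next, which pins it to the vertical path $z$ through the first vertex (resp.\ through $k$); the same computation gives $X_{\min}=\{z\}$, so $X_{\max}=X_{\min}=\{z\}$ and the extension $\varphi(z)=z$ yields the unique fixed point, with essential minimality supplied by Proposition~\ref{prop Vm minimal}.

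\textbf{Part (2).} Your verification that $Succ(x)=\emptyset$ misreads the definition. In the definition of $Succ(x)$ the vertex $z\in V_{n+1}$ is \emph{not} required to lie on the path $x$: one needs edges $e,e'\in r^{-1}(z)$ with $s(e)=v_n$ and $s(e')=w_n$, where only the \emph{source} of $e$ is prescribed to be the vertex $v_n$ of $x$. So $e$ need not be an edge of $x$ and need not be maximal in $r^{-1}(z)$. If your argument (``$e$ is the maximal edge of $r^{-1}(z)$ since $x$ is maximal, hence has no successor'') were valid, it would show $Succ(x)=\emptyset$ for every maximal path on every ordered diagram, contradicting Theorem~\ref{A}(ii), where $Succ(x)=\{x(i_l)\}$ is nonempty. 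For your particular order the conclusion happens to survive, but for a different reason: you chose the edge from vertex $1$ to be maximal in $r^{-1}(z)$ for \emph{every} $z$, and the unique maximal path sits at $v_n=1$ for all $n$, so every admissible $e$ is maximal by construction of the order, not by maximality of $x$. (Your count of $X_{\max}$ also betrays a slip: a maximal path must have \emph{every} edge maximal, so with your order $X_{\max}$ is a single path, not a countable family with arbitrary initial segments.) The paper instead uses the stationary cyclic order $\omega_i=\{(i-1)<1<2<\cdots<(i-2)<i\}$, which makes $X_{\max}$ the countably infinite set of all vertical paths and $X_{\min}$ an explicit countably infinite family of eventually slanting paths, and then checks $Succ=\emptyset$, $Pred=\emptyset$ directly from the definition.
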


\begin{proof} (1) 
We define an order $\tau$ on $B_\infty$ by the following rule:
$\tau$ is the left-to-right order for every vertex $v \in V_{2n +1}$
and $\tau$ is the right-to-left order for every vertex 
$v \in V_{2n}$, $n = 0, 1, \ldots$. Similarly, we consider the case
of the subdiagram $\ol B(W, k)$. Then, it is obvious that  
$X_{\max} = X_{\min} = \{ z\}$, where $z$ is the vertical path 
that goes through the first vertex of $B_\infty$ (or the vertex
$k$ for $\ol B(W, k)$.

(2) Now we define an order $\omega$ on the diagram $B_{\infty}$ 
that satisfies statement (2).  
For each $i > 2$, let $\omega_{i}$ be defined as follows:
$$
\omega_i = \{ (i-1) <1 < 2 <\ldots < (i-2) < i\}
$$
and $\omega_2 = \{ 1 < 2\}$ (note that $|r^{-1}(1)| =1$).  
Then, for every level $V_{n}$, $n \geq 1$ and for
every vertex $i$, the cycle $\omega_i$ defines a stationary 
linear order on 
$r^{-1}(i)$. In such a way the diagram $B_\infty$ is supplied with 
the stationary order $\omega$. It follows from this definition that the set 
 $X_{\max}(\omega)$ of all infinite maximal paths consists
of all vertical paths $x_{i}$, that is $x_i$ passes through the 
vertex $i$ for all levels $V_n$, $i\in \N$. The set 
$X_{\min}(\omega)$ is formed by the infinite path $x_1$ and two
sequences of infinite paths, $\{y_{i} = (y_i(n): i \geq 1\}$ and 
$\{z_{m}  = (z_m(n)) : m \geq 2\}$, where $s(y_i(n)) = i+n -1,
r(y_i(n)) = i+n$ and $s(z_m(n)) = r(z_m(n)) =1$ for $1 \leq n < m$
and $s(z_m(n)) = n -m + 1, r(z_m(n)) = n -m +2$ for $n \geq m$.
In other words, $y_i$ is the right-slanting pass that begins 
at $i$ and goes through the vertices $i+1, i+2, ...$, and 
$z_m$ is the path that goes vertically through the vertex 1 
up to the level $V_m$ and then is parallel to $y_i$. It can be 
easily checked that for this order $\omega$ we have  
 $Succ(x) = \emptyset$ and 
$Pred(y) = \emptyset$ for $x \in X_{\max}$ or for 
$y \in X_{\min}$ respectively.   
\end{proof}

\begin{corol}
For the the ordered Bratteli diagram $({B}_{\infty}, \omega)$, 
the Vershik map $\varphi \colon X \setminus X_{\max} \to X \setminus X_{\min}$ cannot be extended 
to a continuous map of the entire space $X_{B_\infty}$. 
However, there exists a one-to-one Borel extension of
$\varphi$ acting on the space $X_{B_\infty}$.  
\end{corol}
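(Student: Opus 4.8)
The plan is to derive both halves of the statement from the preceding Lemma, which produces the order $\omega$ on $B_\infty$ together with the facts that $X_{\max}$ and $X_{\min}$ are countably infinite and that $Succ(x)=\emptyset$ for every $x\in X_{\max}$ while $Pred(y)=\emptyset$ for every $y\in X_{\min}$. The two emptiness properties will be exactly the obstruction to a continuous extension, whereas the countability is exactly what makes a Borel extension available.

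For the first assertion I would begin by observing that $X_{\max}$, being a countable closed subset of the perfect Polish space $X_{B_\infty}$, is nowhere dense, so $X_{B_\infty}\setminus X_{\max}$ is dense. Consequently a continuous extension $\varphi$ of $\varphi_B$, if it existed, would be uniquely determined by its restriction $\varphi_B$ to this dense set: for each $x\in X_{\max}$ one would be forced to put $\varphi(x)=\lim_n\varphi_B(x^{(n)})$ along any sequence $x^{(n)}\to x$ with $x^{(n)}\notin X_{\max}$. Such sequences always exist here — perturb the vertical maximal path $x$ at an arbitrarily deep level, letting one edge become non-maximal — so continuity of $\varphi$ at $x$ would in particular require $\lim_n\varphi_B(x^{(n)})$ to exist and to be independent of the approximating sequence.

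The crucial step is to show that this is impossible, and this is where the preceding Lemma enters. Following the mechanism in the proof of Theorem~\ref{(3.6a)}, one analyzes $\varphi_B(x^{(n)})$: it is obtained by replacing the first non-maximal edge of $x^{(n)}$, located at some level $m_n\to\infty$, by its successor and by resetting all edges below level $m_n$ to the unique minimal path ending at the source of that successor edge. Passing to the limit, any limit point of $\{\varphi_B(x^{(n)})\}$ is seen to satisfy the defining condition of $Succ(x)$, so $Succ(x)=\emptyset$ forces the limit to fail to exist; concretely one exhibits, for a single fixed maximal path of $(B_\infty,\omega)$, two sequences of non-maximal paths converging to it whose $\varphi_B$-images converge to two different minimal paths. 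I expect this explicit computation — tracing how the reset to the minimal path below level $m_n$ depends on which (possibly arbitrarily large) vertex the perturbed path branches into — to be the main obstacle. The same reasoning with $Pred(y)=\emptyset$ shows $\varphi_B^{-1}$ admits no continuous extension either.

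The existence of a one-to-one Borel extension is then routine. The map $\varphi_B\colon X_{B_\infty}\setminus X_{\max}\to X_{B_\infty}\setminus X_{\min}$ is a Borel isomorphism (indeed a homeomorphism) between its domain and its range. By the preceding Lemma, $X_{\max}$ and $X_{\min}$ are countably infinite Borel subsets of $X_{B_\infty}$, so there is a bijection $\psi\colon X_{\max}\to X_{\min}$; its graph is a countable subset of $X_{B_\infty}\times X_{B_\infty}$, hence $\psi$ and $\psi^{-1}$ are Borel. Then $\Phi:=\varphi_B\cup\psi$ is a Borel bijection of $X_{B_\infty}$ onto itself extending $\varphi_B$, and in particular a one-to-one Borel extension. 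The only point to check is measurability of the glued map, which is clear since it restricts to a Borel map with Borel image on each piece of the Borel partition $X_{B_\infty}=(X_{B_\infty}\setminus X_{\max})\sqcup X_{\max}$.
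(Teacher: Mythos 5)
Your second half (the one-to-one Borel extension) is correct and is exactly what the derivation from the preceding lemma requires: both $X_{\max}$ and $X_{\min}$ are countably infinite, any bijection between two countable Borel subsets of a Polish space is Borel, and gluing such a bijection to the homeomorphism $\varphi_B$ along the Borel partition $X_{B_\infty}=(X_{B_\infty}\setminus X_{\max})\sqcup X_{\max}$ yields a Borel bijection extending $\varphi_B$. The architecture of your first half — a continuous extension is forced to take the value $\lim_n\varphi_B(x^{(n)})$ at each maximal path, and that limit is governed by $Succ(x)$ — is also the intended route.

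However, the step you yourself flag as ``the main obstacle'' cannot be carried out, and this matters. Do the computation for the order $\omega$: the minimal edge into a vertex $j$ has source $j-1$, so the minimal finite path ending at a vertex $j$ of level $m$ begins with $m-j$ vertical edges through vertex $1$; and the $\omega$-successor of the edge from $i$ into $z>i$ has source $1$, $i+1$ or $i+2$ — bounded independently of $z$ and of the level. Consequently, for every maximal path $x_i$ and every sequence of non-maximal paths $x^{(n)}\to x_i$, the images $\varphi_B(x^{(n)})$ all converge to the single path $x_1$. There are no two approximating sequences with distinct limits; the forced value $\varphi(x_i)=x_1$ exists for every $i$ and actually defines a continuous (non-injective) extension of $\varphi_B$ to all of $X_{B_\infty}$. (This also shows $x_1\in Succ(x_i)$ for $i\ge 2$, so the emptiness claim of the preceding lemma cannot be invoked as a black box.) The genuine obstruction is not existence or continuity of the limit but bijectivity: the unique continuous extension collapses the infinite set $X_{\max}$ onto $\{x_1\}$ and its range misses the infinite set $X_{\min}\setminus\{x_1\}$, so no continuous one-to-one (in particular, no continuous Vershik-map) extension exists. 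Your first half should be rewritten to argue via injectivity and surjectivity of the uniquely determined continuous extension rather than via non-existence of the limit.
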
  

\medskip
\textbf{Acknowledgments.} 
We are very grateful to our colleagues, 
especially, P. Jorgensen, T. Raszeja, and S. Sanadhya for the numerous valuable discussions. S.B. and O.K. are also grateful to the Nicolas Copernicus 
University in Toru\'n for its hospitality and support. S.B. acknowledges the hospitality of AGH University of Krakow during his visit. O.K. is supported by the NCN (National Science Center, Poland) Grant 2019/35/D/ST1/01375 and by the program ``Excellence Initiative - Research University'' for the AGH University of Krakow. 

\bibliographystyle{alpha}
\bibliography{bibliographyPascal}

\end{document}

%%%%% Red text

%%%%%%%%%%%%%%%%%%%%%%%%%% 27/03/2024

\tcb{The text in red was mostly used above. It will be deleted 
later. }

{\color{red}
The proven results describing the internal measures 
$\nu_a$ on the subdiagram 
$\ol B(W,k)$ lead to the following questions and remarks.
\begin{enumerate}
\def\labelenumi{(\arabic{enumi})}
\item
The set of all ergodic invariant  probability measures  of Bratteli subdiagram    ${\overline{B}}_{k}$ is contained in 
$\{ \nu_{a},\ 0 \leq a < 1 \}$. 
We will show that  each measure $\nu_{a}$ is ergodic what implies that  the family
$\{ \nu_{a},\ 0 \leq a <1 \}$
is the set of all ergodic measures of $\overline{B}_k$.
\item
We will prove  that the measures  
$\widehat{\overline{\nu}}_{a}$ 
are pairwise orthogonal. 
The next question is  whether the extended measures
$\widehat{\overline{\nu}}_{a}$ 
are ergodic. 
If yes then are the equalities
$\widehat{\overline{\nu}}_{a} = \mu_{a}$,
$0 \leq a < 1$ true?

\item
Let $\nu$ be an invariant probability measure on the
subdiagram ${\overline{B}}_{k}$ and let
${\overline{p}}^{(n, \infty, k)} =  \left\langle p_{j}^{(n, \infty, k)} \right\rangle$ 
the sequence of vectors determining $\nu$. 
Then the measure $\nu$ is completely determined by a sequence
$\{ p_{n},\ n =1, 2, \ldots \}$, where 
$p_{n} = p_{(k + n - 1)}^{(n, \infty, k)}$.

We know that for every $n = 1, 2, \ldots$ and for $l = k, k+1, \ldots, k+n-1$
it holds

\begin{equation}\tag{*}
\sum\limits_{j = l}^{k + n} p_{j}^{(n + 1, \infty, k)} = p_{l}^{(n, \infty, k)}.
\end{equation}

We have 
$p_{k}^{(1, \infty, k)} = p_{1} = 1$ 
and

$p_{k + n - 1}^{(n + 1, \infty, k)} + p_{k + n}^{(n + 1, \infty, k)} = p_{k + n - 1}^{(n, \infty, k)}$ 
what implies

$p_{k + n - 1}^{(n + 1, \infty, k)} = p_{k + n - 1}^{(n, \infty, k)} - 
p_{k + n}^{(n + 1, \infty, k)} = p_{n} - p_{n + 1}$,
$n = 1, 2, \ldots$.

Further the equalities (*) imply

\begin{equation}\tag{**}
p_{k + 1}^{(n + 1, \infty, k)} = p_{k}^{(n, \infty, k)} - p_{k + 1}^{(n, \infty, k)}, \ldots,
p_{k + n - 2}^{(n + 1, \infty, k)} = p_{k + n - 2}^{(n, \infty, k)} - p_{k + n - 1}^{(n, \infty, k)},
\end{equation}

$p_{k + n - 1}^{(n + 1, \infty, k)} = p_{n} - p_{n + 1}$,
$p_{k + n}^{(n + 1, \infty, k)} = p_{n + 1}.$

It follows from the above equalities that the numbers
$p_{i}^{(n + 1, \infty, k)},\ i = k, k+1, \ldots, k+n$ 
are finite linear combinations of numbers $p_{i}^{(n, \infty, k)}$, 
$i = k, k+1, \ldots, k + n - 1$. 
Notice that 
$p_{k}^{(1, \infty, k)} = p_{1} = 1$
and 
$p_{k}^{(2, \infty, k)} = p_{1} - p_{2}$,
$p_{k + 1}^{(2, \infty, k)} = p_{2}$. 

Using (**) we get by induction that every number 
$p_{i}^{(n, \infty, k)}$, 
is a linear combination of numbers 
$p_{1}, p_{2}, p_{3}, \ldots$. 
The numbers $p_{i}^{(n, \infty, k)}$, $n = 1, 2, \ldots,\ i = k, k + 1, \ldots, k + n - 1$
form the following triangle

$\begin{array}{lllll}
p_{k}^{(1, \infty, k)}\\

p_{k}^{(2, \infty, k)}, & p_{k + 1}^{(2, \infty, k)} \\

p_{k}^{(3, \infty, k)}, & p_{k + 1}^{(3, \infty, k)}, & p_{k + 2}^{(3, \infty, k)} \\

p_{k}^{(4, \infty, k)}, & p_{k + 1}^{(4, \infty, k)}, & p_{(k + 2)}^{(4, \infty, k)}, & p_{(k + 3)}^{(4, \infty, k)}\\
\ldots & \ldots & \ldots & \ldots & \ldots  \\
\end{array}.
$

Then replacing the numbers $p_{i}^{(n,\infty,k)}$ by its linear combinations we get the triangle

$\begin{array}{llllll}
p_{1} = 1,\\

p_{1} - p_{2}, & p_{2}\\

p_{1} - 2p_{2}, & p_{2} - p_{3}, & p_{3}\\

p_{1} - 3p_{2} + p_{3}, & p_{2} - {2p}_{3}, & p_{3} - p_{4}, & p_{4}\\

p_{1} - 4p_{2} + 3p_{3}, & p_{2} - {3p}_{3} + p_{4}, & p_{3} - 2p_{4}, & p_{4} - p_{5}, & p_{5}\\

\ldots & \ldots & \ldots & \ldots & \ldots   & \ldots \\
\end{array}.
$

\item
Now we show that for every invariant probability measure 
$\nu$ on the subdiagram ${\overline{B}}_{k}$, 
the sequence $\{ p_{n},\ n = 1, 2, \ldots \}$ 
described above is completely monotonic. 

First let $\nu=\nu_a,\ 0 \leq a <1$. Then we have
$p_{n} = \frac{a^{(n - 1)}}{{(1 + a)}^{2n - 1}} \cdot  
\frac{(1 - a^{2})}{(1 - a)} =
\frac{a^{(n - 1)}}{{(1 + a)}^{2n - 2}}$,

$${( \mathrm{\Delta} p)}_{n} = p_{n} - p_{(n + 1)} = 
\frac{a^{(n - 1)}}{{(1 + a)}^{2n - 2}} - \frac{a^{n}}{{(1 + a)}^{2n}} = 
\frac{a^{(n - 1)} \cdot (1 + a + a^{2})}{ {(1 + a)}^{2n} },$$

$${ ( {\mathrm{\Delta}}^{2} p)}_{n} =
\frac{a^{(n - 1)} \cdot  \left(1 + a + a^{2} \right)^{2} }{ {(1 + a)}^{(2n + 2)} },$$

$${ ( {\mathrm{\Delta}}^{3} p ) }_{n} = 
\frac{a^{(n - 1)} \cdot {(1 + a + a^{2})}^{3}}{ {(1 + a)}^{(2n + 4)} },$$

and in general

$${ ( {\mathrm{\Delta}}^{l} p)}_{n} =
\frac{a^{(n - 1)} \cdot {(1 + a + a^{2})}^{l}}{ {(1 + a)}^{(2n + 2l - 2)}},\ 
n = 1, 2, \ldots,\ l = 1, 2, \ldots.$$

We have proved that $\nu_{a}$ is a completely monotonic measure.
\tcb{We proved that $(p_n)$ is completely monotonic. Then there is 
a measure $\theta$ on $[0,1]$ such that $p_k = \int_0^1 x^k \theta(dx)$. What measures do we consider here?}
Then it is evident that the same property is valid if
$\nu$ is a finite convex combination of some measures
$\nu_{a}$. Now take any invariant probability measure
$\nu$ on ${\overline{B}}_{k}$. Because the set of all ergodic
measures is contained in $\{ \nu_{a} \}$, then $\nu$ is
a limit (in the weak topology) of a sequence of measures 
$\nu(s),\ s =1, 2, \ldots$ 
such that each $\nu(s)$ is a finite convex combination of some measures $\nu_{a}$.
It is easy to conclude that the measure $\nu$ is completely monotonic.

\item
It follows from the property (4) that each measure $\nu_{a}$
is ergodic i.e . the set of all invariant probability measures on
${\overline{B}}_{k}$ coincides with the set 
$\{ \nu_{a},\  0 \leq a < 1 \}$. 
The proof is similar to the proof of Theorem 7.15.

Each invariant probability measure $\nu$ on
${\overline{B}}_{k}$ determine the sequence
$\{ {{(p}_{\nu})}_{n},\ n = 1, 2, \ldots \}$ and 
a unique finite measure $m_{\nu}$ on the interval $[0, 1]$ such
that 

$\int\limits_{0}^{1} x^{n} m_{\nu}(dx) = {{(p}_{\nu})}_{n}$, 
$n = 1, 2, \ldots$. 

Let us denote

$m_{a} = m_{\nu}$, $\nu = \nu_{a}$.

Then $\int\limits_{0}^{1} x^{n} m_{a}(dx) =
\frac{a^{(n - 1)}}{ (1 + a)^{2n - 2}} =
\frac{{(1 + a)}^{2}}{a} \cdot 
\int\limits_{0}^{1} x^{n} \delta_{\frac{a}{{(1 + a)}^{2}}} (dx)$ 

for $n = 1, 2, \ldots$ 
what implies

$m_{a} = \frac{{(1 + a)}^{2}}{a} \cdot \delta_{\frac{a}{{(1 + a)}^{2}}}$. 

Now we use the same arguments as in Theorem \ref{thm measures for Binfty} to prove that
$\nu_{a}$ can't be of a form 
$\nu_{a} = \lambda \cdot  \nu^{(1)} + (1- \lambda) \cdot \nu^{(2)}$, 
where
$0 < \lambda < 1$, 
$\nu^{(1)}$ and $\nu^{(2)}$ are different invariant finite measures 
on ${\overline{B}}_{k}$. Therefore each measure $\nu_{a}$ 
is an extreme measure.
\end{enumerate}

%%%%%%%%%%%%%%%%%%%%%%%%%%%
%%%%%%%%%%%%%%%%%%%%%%%%

\subsection{Vershik maps on infinite Pascal-Bratteli diagrams}

In this subsection, we consider infinite Pascal-Bratteli-Vershik dynamical systems. First, we define a so-called natural order on $\mathbb{Z}$-infinite and $\mathbb{N}$-infinite Pascal-Bratteli diagrams. 

Let $B$ be a $\mathbb{Z}$-infinite Pascal-Bratteli diagram.  
Let $\overline{t} = (\ldots t_{(-1)}, t_{0}, t_{1}, \ldots)$ be a vertex in $V_{n}$ for $n \geq 2$. 
Recall that we denote $I_{\ov t} = \{i_1, \ldots, i_l\}$, where $i_1 < i_2 < \ldots < i_l$, the set of indexes such that $\ov t$ has nonzero entries exactly at positions $\{i_j\}_{j = 1}^l$. Then we have $\ov t = \sum_{j = 1}^l t_{i_j} \ov e^{(i_j)} $ and $\sum_{j = 1}^{l}t_{i_{j}} = n$. We have $s(r^{-1}(\ov t)) = \{\ov s \in V_{n-1}: \ov s  = \ov s (\ov t, i)= \ov t - \ov e^{(i)},\; i \in I_{\ov t}\}$ and the set $r^{-1}(\ov t)$ is in a one-to-one correspondence with the set of vertices $s(r^{-1}(\ov t))$. Define an order on $r^{-1}( \overline{t})$ as follows : for any two edges $e,f \in r^{-1}( \overline{t})$ with $s(e) = \ov s (\ov t, i)$ and $s(f) = \ov s (\ov t, j)$, where $i,j \in I_{\ov t}$, we have $e < f$ if $i < j$. The order on $\mathbb{Z}$-IPB diagram defined above we will call the natural one. In the same manner, we define the natural order on $\mathbb{N}$-IPB diagram.

Now we are able to determine the sets $X_{\max}$ and $X_{\min}$ of all infinite maximal paths and all infinite minimal paths for both versions of IPB diagrams. 
First consider a $\mathbb{Z}$-IPB diagram.  
Let 
\begin{equation}\label{e1''}
(\overline{i,d}) = \{i_{1}, d_{1}, i_{2}, d_{2}, \ldots{} \}
\end{equation}
be a set of (finite or infinite) sequences
such that $i_{1} < i_{2} < \ldots{}$ are integers and  $d_{1}, d_{2}, \ldots$ can take values in the set $\mathbb{N} \cup \{\infty\}$.
Moreover, if 
$(\overline{i,d}) = \{ i_{1}, d_{1}, i_{2}, d_{2}, \ldots, i_{l}, d_{l} \}$ 
is a finite sequence then $d_{l} = \infty$ and all numbers $d_1, \ldots, d_{l-1}$ are finite. 
Every sequence $(\overline{i,d})$ determines a sequence of vertices
${\overline{s}}^{(n)}\in  V_{n},\ n = 1, 2, \ldots$ and thus an infinite path 
$\overline{x}= \overline{x}(\overline{i,d})$ in $X_B$
as follows:
\begin{itemize}
\item ${\overline{s}}^{(1)} = \ov e^{(i_1)}$ is a vertex of $V_{1}$ having number ''$1$'' at  position $i_{1}$;
\item define levels 
$\ n_{1} = 1 < n_{2} = n_{1} + d_{1} < n_{3} = n_{2} + d_{2} < \ldots n_{s + 1} = n_{s} + d_{s} < \ldots$,
\item ${\overline{s}}^{(n + 1)} = {\overline{s}}^{(n)}+\ov e^{(i_1)}$ 
if $n = n_{1}, \ldots, n_{2}-1$ 
and in general
${\overline{s}}^{(n + 1)} = {\overline{s}}^{(n)}+\ov e^{(i_l)}$ 
if $n = n_{l}, \ldots, n_{l + 1}-1$, $l = 1, 2, \ldots$,

\item if $(\overline{i,d})$ is a finite sequence and $d_{l} = \infty$
then $n_{l + 1} = \infty$ .
\end{itemize}

It is not hard to remark that

$$X_{\max} =\{ \overline{x} [(\overline{id})], (\overline{id}) \textrm{ runs
all over sequences described by (\ref{e1''})} \}.$$

In a similar way, we can determine the set $X_{\min}$. 
In this case $(\overline{id})$ is a sequence of the form

\begin{equation}\label{e2''}
\begin{split}
(\overline{id}) = \{ i_{1}, d_{1}, i_{2}, d_{2}, \ldots{} \}, \\
\textrm{ where } d_{1}, d_{2}, d_{3}, \ldots 
\textrm{ are natural numbers and } i_{1} > i_{2} > i_{3} >  \ldots \textrm{ are integers.}
\end{split}   
\end{equation}
 
The sequence
$(\overline{id})$ can be finite or infinite. 
Then we define an infinite path 
$\overline{y}=\overline{y}\lbrack(\overline{id})\rbrack$ in the same way as above
and
$$X_{\min} = \{ \overline{y} \lbrack (\overline{id})\rbrack,\quad (\overline{id}) \textrm{ runs
all over sequences of the form (\ref{e2''})} \}.$$

Let us remark that for $\mathbb{Z}$-(IPB) diagram both sets $X_{\min}$ and
$X_{\max}$ are uncountable because both sets of all sequences
($\overline{id}$) satisfying (\ref{e1''}) and the set of all sequences
($\overline{id}$) satisfying (\ref{e2''}) are uncountable.

For an $\mathbb{N}$-(IPB) diagram the set $X_{\max}$ is uncountable, however, the
set $X_{\min}$ is countable. To determine those sets we also consider
sequences

$(\overline{id}) = \{ i_{1}, d_{1}, i_{2}, d_{2}, \ldots \}$ such that all numbers
$d_{1}, d_{2}, d_{3} \ldots$ and $i_{1}, i_{2}, i_{3}, \ldots$ are natural. To find the set $X_{\max}$ we need
finite or infinite sequences $\overline{id}) = \{i_{1}, d_{1}, i_{2}, d_{2}, \ldots \}$ such that $i_{1} < i_{2} < i_{3} < \ldots$. 
Of course, there are uncountable many such sequences. To determine the set $X_{\min}$ we need sequences

($\overline{id})  = \{ i_{1}, d_{1}, i_{2}, d_{2}, \ldots \}$ such that 
$i_{1} > i_{2} > i_{3} > \ldots$. 
Thus each such sequence is finite and the set of all such sequences is countable. 
That is why the set $X_{\min}$ is countable.

%%%%%%%%%%%%%%%%%%%%%%%%%%%%%%%%%%%%%%%%%%%%%%%%%%%%%%%%%%%%%%%%%%%%%%%%%%%%%%%%%%%%%%%%%%%%%%%%%%%%%%%%%%%%%%%%%%%%%%%
%
%%%%%%%%%%%%%%%%%%%%%%%%%%%%%%%%%%%%%%%%%%%%%%%%%%%%%%%%%%%%%%%%%%%%%%%%%%%%%%%%%%%%%%%%%%%%%%%%%%%%%%%%%%%%%%%%%%%%%%%
Now, we discuss the question of whether exists or not the
Vershik action of $\mathbb{Z}$-(IPB) or $\mathbb{N}$-(IPB) diagrams. First, we remind some
definitions or notions from the papers [SB, RY] and [SB, JK, RY].
Let $B=(V, E, \omega)$ be an ordered Bratteli diagram, where
$\omega$ is an order of $B$. We say that

$\varphi = \varphi_{\omega} \colon X_{B} \rightarrow X_{B}$ 
is (a continuous) Vershik map if it satisfies the following conditions:

\begin{enumerate}\def\labelenumi{(\roman{enumi})}
\item
  $\varphi$ is a homeomorphism of the set $X_{B}$,
\item
  $\varphi(X_{\max}) = X_{\min}$ ,
\item
  $\varphi \colon (X \setminus X_{\max}) \rightarrow (X \setminus X_{\min})$ is the usual Vershik map i.e. if 
  $x = (x_{1}, x_{2}, \ldots)$ is not in $X_{\max}$, $x_{1}, x_{2}, \ldots$ are edges, then 
  $\varphi(x) =(x_{1}^{(0)}, \ldots, x_{(k - 1)}^{(0)}, x_{k}^{*}, x_{(k + 1)}, \ldots.)$, 
  where $k = \min\{ n \geq 1, x_{k} \textrm{ is not maximal} \}$, 
  $x_{k}^{*}$ is the $\omega$-successor of $x_{k}$ in $r^{(-1)}(r(x_{k}))$, 
  and ${(x}_{1}^{(0)}, \ldots, x_{(k - 1)}^{(0)})$ is the minimal paths in $E(v_{0}, s(x_{k}^{*})$.
\end{enumerate}

The other notions that we need are sets 
$Succ(x)$, $x \in X_{\max}$ and $Pred(y)$, $y \in X_{\min}$ 
called the Successor and the Predsuccessor of the paths $x$ and $y$ respectively. 
Let
$x = (v_{0}, v_{1}, v_{2}, \ldots) \in X_{\max}$ 
and
$y = (v_{0}^{'}, v_{1}^{'}, v_{2}^{'}, \ldots) \in X_{\min}$, 
where 
$v_{0}, v_{1}, v_{2}, \ldots$
and 
$v_{0}^{'}, v_{1}^{'}, v_{2}^{'}, \ldots$ are vertices. 
We say that 
$y \in Succ(x)$ if for infinity many $n$ there exist
$z \in V_{(n + 1)}$ and edges $e, e' \in  r^{(-1)}(z)$
such that $s(e) = v_{n}$, $s(e') = v_{n}^{'}$ 
and $e'$ is the successor of $e$ in the set $r^{( - 1)}(z)$. 
In a similar way, we define the set $Pred(y)$, $y \in X_{\min}$. 
Namely, $x \in Pred(y)$ iff $y \in Succ(x)$.

In a sequel we prove the following properties for $\mathbb{Z}$-(IPB) and $\mathbb{N}$-(IPB)
diagrams:

\begin{enumerate}\def\labelenumi{\Alph{enumi})}
\item
$Succ(x) = \varnothing$, $Pred(y) = \varnothing$ whenever $x \in X_{\max}$,
$y \in X_{\min}$. 
Then it does not exist any Vershik map
$\varphi \colon X_{B} \rightarrow X_{B}$.
\item
There exists a continuous one-to-one map 
$f\colon X_{\max} \rightarrow X_{\min}$ 
for each $\mathbb{Z}$-(IPB) diagram and a continuous
surjection $f\colon X_{\max} \rightarrow X_{\min}$ for each $\mathbb{N}$-(IPB) diagram.

\item
The sets $X_{\max}$ and $X_{\min}$ are nowhere dense in $X_{B}$ in both cases.

\item
Let $\overline{d}= \langle d_{1}, d_{2}, d_{3}, \ldots \rangle$ be a probability vector and
$\mu_{d\overline{}}$ an invariant measure on $\mathbb{Z}$-(IPB) or $\mathbb{N}$-(IPB)
described above. Then

$\mu_{\overline{d}}(X_{\max}) = \mu_{\overline{d}}(X_{\min}) = 0$ 
whenever $d_{i} < 1 $ for each $i = 1, 2, \ldots$.
If $\overline{d}$ is a such vector that $d_{j} = 1$ and
$d_{i} = 0$ for $i \neq j$, $j \in \mathbb{Z}$ or $j \in \mathbb{N}$ 
then
$\mu_{\overline{d}}$ is a $\delta$- measure concentrated on the 
path

${x\overline{}}^{(j)} = ( {\overline{s}}^{(1,j)}, {\overline{s}}^{(2,j)}, \ldots)$,
where $s_{i}^{(n,j)}= n$ for $i = j$ 
and $s_{i}^{(n,j)} = 0$ for $i \neq j$, $i, n= 1, 2, \ldots$. 
Moreover,
${x\overline{}}^{(j)}$ is both a maximal and a minimal path.
\end{enumerate}

Proof of A) .

We present the proof of A) for $\mathbb{Z}$-(IPB) diagram.

Let 
$x = ({\overline{s}}^{(1)}, {\overline{s}}^{(2)}, {\overline{s}}^{(3)}, \ldots)$ 
and 
$y=({\overline{t}}^{(1)}, {\overline{t}}^{(2)}, {\overline{t}}^{(3)}, \ldots)$ 
are paths of $X_{\max}$ and $X_{\min}$ respectively
such that $y$ is a successor of $x$. 
Let $m > n$ are levels used in the definition of $Succ(x)$ (or of $Pred(y)$) and let 
$\overline{z} \in V_{(n + 1)}$ be a such vertex that
$e_{\overline{z} \overline{s}}^{(n)}, e_{\overline{z} \overline{t}}^{(n)} \in {r}^{(-1)}(\overline{z})$
and 
$e_{\overline{z} \overline{t}}^{(n)}$ is the successor of 
$e_{\overline{z}\overline{s}}^{(n)}$. 
Then 
$\overline{z} = {\overline{s}}^{(n)} + 1$ on some position $i \in \mathbb{Z}$. 
Let
${\overline{s}}^{(n)} = \left( \ldots s_{i_{1}}^{(n)} \ldots s_{i_{2}}^{(n)} \ldots s_{i_{k}}^{(n)} \ldots \right)$, where $s_{i_{1}}^{(n)}, s_{i_{2}}^{(n)}, \ldots s_{i_{k}}^{(n)}$ 
are all positive components of 
${\overline{s}}^{(n)}$, $i_{1} <  i_{2} < \ldots <  i_{k}$.
Let us observe that
${r}^{(-1)}(\overline{z}) = \{ e_{\overline{z}\overline{w}}, \overline{w}= \overline{z} - 1$ 
on the positions $i_{1}, i_{2}, \ldots i_{k},\ i\}$. 
In particular,
${\overline{s}}^{(n)} = \overline{z} - 1$ on the position $i$. 
It holds $i < i_{k}$, because otherwise
$e_{\overline{z}\overline{s}}^{(n)}$ 
would be the last edge of ${r}^{(-1)}(\overline{z})$. 
Assume that
$i_{(k - 1)} \leq i < i_{k}$. 
Then 
${\overline{t}}^{(n)} = \left( \ldots s_{i_{1}}^{(n)} \ldots s_{i_{2}}^{(n)} \ldots 1 \ldots s_{i_{k}}^{(n)} - 1 \ldots \right)$

if $i > i_{(k - 1)}$ or

${\overline{t}}^{(n)} = \left( \ldots s_{i_{1}}^{(n)} \ldots s_{i_{2}}^{(n)} \ldots s_{i_{(k - 1)}}^{(n)} + 1, \ldots s_{i_{k}}^{(n)} - 1 \ldots \right)$

if $i = i_{(k - 1)}$. 
Now let us consider the vertices
${\overline{s}}^{(m)}\ $and ${\overline{t}}^{(m\ )}$. 
Because
${\overline{s}}^{(m)} \in X_{\max}$ 
and
${\overline{t}}^{(m)}\in X_{\min}$ 
then we have
${\overline{s}}^{(m)} = \left( \ldots s_{i_{1}}^{(n)}\ldots s_{i_{2}}^{(n)}\ldots s_{i_{k}}^{(n)}\ldots s_{i_{k + 1
}}^{(m)}, \ldots s_{i_{l}}^{(m)}, \ldots \right)$,

${\overline{t}}^{(m)} = \left( \ldots t_{j_{1}}^{(m)} \ldots t_{j_{f}}^{(m)} \ldots s_{i_{1}}^{(n)} \ldots s_{i_{2}}^{(n)} \ldots 1 \ldots s_{i_{k}}^{(n)} - 1 \ldots \right)$

or
${\overline{t}}^{(m)} = \left( t_{j_{1}}^{(m)} \ldots t_{j_{f}}^{(m)} \ldots s_{i_{1}}^{(n)} \ldots s_{i_{2}}^{(n)}\ldots s_{i_{(k - 1)}}^{(n)} + 1, \ldots s_{i_{k}}^{(n)} - 1 \ldots \right)$, 
where $\sum_{u = k + 1}^{l}s_{i_{u}}^{(m)} = m - n$,

$\sum_{u = 1}^{f}t_{j_{u}}^{(m)} = m - n$.

It is not hard to see that we can not find a vertex
$\overline{z} \in V_{(m + 1)}$ such that
$e_{\overline{z}\overline{t}}^{(m)}$ is the successor of
$e_{\overline{z}\overline{s}}^{(m)}$ in $r^{(-1)}(\overline{z})$.

So we have proved that $Succ(x) = \varnothing$ for every 
$x \in X_{\max}$. 
In the same way, we prove that 
$Pred(y) = \varnothing$ whenever $y \in X_{\min}$.

Proof of B).

First, we consider a $\mathbb{Z}$-(IPB) diagram. 
Take a $(\overline{i}\overline{d})$ 
sequence determining a path $\overline{x}[(\overline{i}\overline{d})] \in X_{\max}$ i.e. 
$\{ d_{1}, d_{2}, d_{3}, \ldots \}$ 
is a sequence of natural numbers and
$\{i_{1} < i_{2} < i_{3}< \ldots \}$ is a sequence of integers. 

Define a sequence

$i \overline{}' = \{ {i'}_{1} = - i_{1} > {i'}_{2} = - i_{2} > \ldots. \}$. 
Then a path
$(\overline{y})[\overline{i'}\overline{d}]$ belongs to $X_{\min}$. 
Define an one-to-one map $f \colon X_{\max} \rightarrow X_{\min}$ 
by putting 
$f( \overline{x}[(\overline{i}\overline{d})]) = \overline{y})[\overline{i'}\overline{d}]$ 
for all
$(\overline{i}\overline{d})$ sequences. 

It is easy to remark that $f$ is a continuous map.

In a similar way, we define a continuous surjection

$f \colon X_{\max} \rightarrow X_{\min}$ for $\mathbb{N}$-(IPB) diagram. 
Given a $(\overline{i}\overline{d})$ sequence we define a sequence $\overline{i'} = \{ {i'}_{1} = - i_{1} > {i'}_{2}= -i_{2} > \ldots \}$
as above if

${i'}_{1} = - i_{1}> {i'}_{2} = - i_{2}> \ldots \geq 1$ and

$\overline{i'} =\{ {i'}_{1} =- i_{1} > {i'}_{2} 
= - i_{2} > \ldots > {i'}_{l} =- i_{l} \geq 1,\ 1, \ldots \}$

if ${i'}_{l} =- i_{l} \geq 1 > - i_{(l + 1)}$ 
for some $l$. 
Then set
$f( \overline{x}[(\overline{i}\overline{d})]) = \overline{y})[\overline{i'}\overline{d}]$. 
The map $f$ is a continuous surjection.

Proof of C).

It is easy to prove the property C) if we take into consideration the
structure of the sets $X_{\max}$ and $X_{\min}$ .

Proof of D).

Again we assume that $B = (V, E)$ is a $\mathbb{Z}$-(IPB) diagram. 
The set $X_{\max}$ ($X_{\min}$) is a countable intersection of open sets
$X_{\max}^{(n)}$ ($X_{\min}^{(n)}$), where $X_{\max}^{(n)}$ ($X_{\min}^{(n)}$) 
is the union of cylinder sets $E'({\overline{s}}^{(0)}, \overline{s})$ 
being the last (the first) path of $E({\overline{s}}^{(0)}, \overline{s}), \overline{s} \in V_{n}$. 
Let 
$\overline{d}=  \langle d_{i},\ i\in \mathbb{Z} \rangle$ 
be a probability vector such that 
$d^{*} = \sup_{i \geq 1}[d_{i}]< 1$. We have

$\mu_{d\overline{}}[X_{\max}^{(n)}] =
\mu_{d\overline{}}( \lbrack U_{\overline{s} \in V_{n}}(E'({\overline{s}}^{(0)},\overline{s})] =
\sum_{\overline{s}\in V_{n}}\lbrack \times_{i \in \mathbb{Z}} d_{i}^{s_{i}}] =
= \mu_{\overline{d}}[ X_{\min}^{(n)}]$.

Next, we have

$\mu_{\overline{d}}[ X_{\max}^{(n + 1)}] =
\sum_{\overline{s}\in V_{(n + 1)}}\lbrack \times_{i \in \mathbb{Z}} d_{i}^{s_{i}}]
\leq (d^{*}) \cdot
\sum_{\overline{s} \in V_{n}}\lbrack \times_{i \in \mathbb{Z}} d_{i}^{s_{i}}]
= (d^{*}) \cdot \mu_{\overline{d}}[X_{\max}^{(n)}]$.

As a consequence we have

$\mu_{\overline{d}}[ X_{\max}] =
\mu_{\overline{d}} \lbrack \bigcap_{n = 1}^{\infty}(X_{\max}^{(n)})\rbrack =0$.

In the same way, we prove that

$\mu_{\overline{d}} [X_{\min}] =
\mu_{\overline{d}} \lbrack \bigcap_{n = 1}^{\infty}(X_{\min}^{(n)}) \rbrack = 0$.

The same properties are valid for each $\mathbb{N}$-(IPB) diagram and we prove them
in the same way.

\medskip
\textbf{Acknowledgments.} 
We are very grateful to our colleagues, 
especially, P. Jorgensen, T. Raszeja, and S. Sanadhya for the numerous valuable discussions. S.B. and O.K. are also grateful to the Nicolas Copernicus 
University in Toru\'n for its hospitality and support. S.B. acknowledges the hospitality of AGH University of Krak\'ow during his visit. O.K. is supported by the NCN (National Science Center, Poland) Grant 2019/35/D/ST1/01375 and by the program ``Excellence Initiative - Research University'' for the AGH University of Science and Technology.

\bibliographystyle{alpha}
\bibliography{bibliographyPascal}

\def\ocirc#1{\ifmmode\setbox0=\hbox{$#1$}\dimen0=\ht0 \advance\dimen0
  by1pt\rlap{\hbox to\wd0{\hss\raise\dimen0
  \hbox{\hskip.2em$\scriptscriptstyle\circ$}\hss}}#1\else {\accent"17 #1}\fi}
\begin{thebibliography}{MnRW22}

\bibitem[ABKK17]{AdamskaBKK2017}
M.~Adamska, S.~Bezuglyi, O.~Karpel, and J.~Kwiatkowski.
\newblock Subdiagrams and invariant measures on {B}ratteli diagrams.
\newblock {\em Ergodic Theory Dynam. Systems}, 37(8):2417--2452, 2017.

\bibitem[Akh21]{Akhiezer2021}
N.~I. Akhiezer.
\newblock {\em The classical moment problem and some related questions in
  analysis}, volume~82 of {\em Classics in Applied Mathematics}.
\newblock Society for Industrial and Applied Mathematics (SIAM), Philadelphia,
  PA, [2021] \copyright 2021.
\newblock Reprint of the 1965 edition [0184042], Translated by N. Kemmer, With
  a foreword by H. J. Landau.

\bibitem[BDK06]{BezuglyiDooleyKwiatkowski2006}
S.~Bezuglyi, A.~H. Dooley, and J.~Kwiatkowski.
\newblock Topologies on the group of {B}orel automorphisms of a standard
  {B}orel space.
\newblock {\em Topol. Methods Nonlinear Anal.}, 27(2):333--385, 2006.

\bibitem[BJ21]{Bezuglyi_Jorgensen_2021}
Sergey Bezuglyi and Palle E.~T. Jorgensen.
\newblock Harmonic analysis invariants for infinite graphs via operators and
  algorithms.
\newblock {\em J. Fourier Anal. Appl.}, 27(2):Paper No. 34, 46, 2021.

\bibitem[BJ22a]{BezuglyiJorgensen2022}
Sergey Bezuglyi and Palle E.~T. Jorgensen.
\newblock Harmonic analysis on graphs via {B}ratteli diagrams and path-space
  measures.
\newblock {\em Dissertationes Math.}, 574:74, 2022.

\bibitem[BJ22b]{https://doi.org/10.48550/arxiv.2210.14059}
Sergey Bezuglyi and Palle E.~T. Jorgensen.
\newblock {IFS} measures on generalized {B}ratteli diagrams, 2022.

\bibitem[BJKS23]{BezuglyiJorgensenKarpelSanadhya2023}
Sergey Bezuglyi, Palle E.~T. Jorgensen, Olena Karpel, and Shrey Sanadhya.
\newblock {B}ratteli diagrams in {B}orel dynamics.
\newblock {\em arXiv:2212.13803v3}, 2023.

\bibitem[BJS24]{BezuglyiJorgensenSanadhya2024}
Sergey Bezuglyi, Palle E.~T. Jorgensen, and Shrey Sanadhya.
\newblock Substitution-dynamics and invariant measures for infinite
  alphabet-path space.
\newblock {\em Advances in Applied Mathematics}, 156:article nr. 102687, 2024.

\bibitem[BK16]{BezuglyiKarpel2016}
S.~Bezuglyi and O.~Karpel.
\newblock Bratteli diagrams: structure, measures, dynamics.
\newblock In {\em Dynamics and numbers}, volume 669 of {\em Contemp. Math.},
  pages 1--36. Amer. Math. Soc., Providence, RI, 2016.

\bibitem[BKK15]{BezuglyiKarpelKwiatkowski2015}
S.~Bezuglyi, O.~Karpel, and J.~Kwiatkowski.
\newblock Subdiagrams of {B}ratteli diagrams supporting finite invariant
  measures.
\newblock {\em Zh. Mat. Fiz. Anal. Geom.}, 11(1):3--17, 2015.

\bibitem[BKK24]{BezuglyiKarpelKwiatkowski2024}
Sergey Bezuglyi, Olena Karpel, and Jan Kwiatkowski.
\newblock Invariant measures for reducible generalized {B}ratteli diagrams.
\newblock {\em J. Math. Phys. Anal. Geom.}, 20(1):3--24, 2024.

\bibitem[BKM09]{BezuglyiKwiatkowskiMedynets2009}
S.~Bezuglyi, J.~Kwiatkowski, and K.~Medynets.
\newblock Aperiodic substitution systems and their {B}ratteli diagrams.
\newblock {\em Ergodic Theory Dynam. Systems}, 29(1):37--72, 2009.

\bibitem[BKMS10]{BezuglyiKwiatkowskiMedynetsSolomyak2010}
S.~Bezuglyi, J.~Kwiatkowski, K.~Medynets, and B.~Solomyak.
\newblock Invariant measures on stationary {B}ratteli diagrams.
\newblock {\em Ergodic Theory Dynam. Systems}, 30(4):973--1007, 2010.

\bibitem[BKMS13]{BezuglyiKwiatkowskiMedynetsSolomyak2013}
S.~Bezuglyi, J.~Kwiatkowski, K.~Medynets, and B.~Solomyak.
\newblock Finite rank {B}ratteli diagrams: structure of invariant measures.
\newblock {\em Trans. Amer. Math. Soc.}, 365(5):2637--2679, 2013.

\bibitem[BKY14]{BezuglyiKwiatkowskiYassawi2014}
S.~Bezuglyi, J.~Kwiatkowski, and R.~Yassawi.
\newblock Perfect orderings on finite rank {B}ratteli diagrams.
\newblock {\em Canad. J. Math.}, 66(1):57--101, 2014.

\bibitem[Boc08]{Boca2008}
Florin~P. Boca.
\newblock An {AF} algebra associated with the {F}arey tessellation.
\newblock {\em Canad. J. Math.}, 60(5):975--1000, 2008.

\bibitem[Bra72]{Bratteli1972}
O.~Bratteli.
\newblock Inductive limits of finite dimensional {$C^{\ast} $}-algebras.
\newblock {\em Trans. Amer. Math. Soc.}, 171:195--234, 1972.

\bibitem[DFMV23]{DomingosFMV2022}
W.~Domingos, S.~Ferenczi, A.~Messaoudi, and G.~Valle.
\newblock Invariant measures for substitutions on countable alphabets, 2023.

\bibitem[DHS99]{DurandHostSkau1999}
F.~Durand, B.~Host, and C.~Skau.
\newblock Substitutional dynamical systems, {B}ratteli diagrams and dimension
  groups.
\newblock {\em Ergodic Theory Dynam. Systems}, 19(4):953--993, 1999.

\bibitem[Die84]{Diestel_1984}
Joseph Diestel.
\newblock {\em Sequences and series in {B}anach spaces}, volume~92 of {\em
  Graduate Texts in Mathematics}.
\newblock Springer-Verlag, New York, 1984.

\bibitem[DK19]{DownarowiczKarpel_2019}
Tomasz Downarowicz and Olena Karpel.
\newblock Decisive {B}ratteli-{V}ershik models.
\newblock {\em Studia Math.}, 247(3):251--271, 2019.

\bibitem[DP22]{DurandPerrin2022}
Fabien Durand and Dominique Perrin.
\newblock {\em Dimension groups and dynamical systems---substitutions,
  {B}ratteli diagrams and {C}antor systems}, volume 196 of {\em Cambridge
  Studies in Advanced Mathematics}.
\newblock Cambridge University Press, Cambridge, 2022.

\bibitem[Dur10]{Durand2010}
Fabien Durand.
\newblock Combinatorics on {B}ratteli diagrams and dynamical systems.
\newblock In {\em Combinatorics, automata and number theory}, volume 135 of
  {\em Encyclopedia Math. Appl.}, pages 324--372. Cambridge Univ. Press,
  Cambridge, 2010.

\bibitem[Fer06]{Ferenczi2006}
S\'{e}bastien Ferenczi.
\newblock Substitution dynamical systems on infinite alphabets.
\newblock {\em Ann. Inst. Fourier (Grenoble)}, 56(7):2315--2343, 2006.
\newblock Num\'{e}ration, pavages, substitutions.

\bibitem[FGMn22]{Frettloh_Garber_Manibo_2022}
D.~Frettl\"{o}h, A.~Garber, and N.~Ma\~{n}ibo.
\newblock Catalan numbers as discrepancies for a family of substitutions on
  infinite alphabets.
\newblock {\em arXiv:2211.02548v1}, 2022.

\bibitem[FO13]{FrickOrmes2013}
Sarah~Bailey Frick and Nicholas Ormes.
\newblock Dimension groups for polynomial odometers.
\newblock {\em Acta Appl. Math.}, 126:165--186, 2013.

\bibitem[Fog02]{Fogg2002}
N.~Pytheas Fogg.
\newblock {\em Substitutions in dynamics, arithmetics and combinatorics},
  volume 1794 of {\em Lecture Notes in Mathematics}.
\newblock Springer-Verlag, Berlin, 2002.

\bibitem[FP10]{FrickPetersen2010}
Sarah~Bailey Frick and Karl Petersen.
\newblock Reinforced random walks and adic transformations.
\newblock {\em J. Theoret. Probab.}, 23(3):920--943, 2010.

\bibitem[FPS17]{Fricketal2017}
Sarah Frick, Karl Petersen, and Sandi Shields.
\newblock Dynamical properties of some adic systems with arbitrary orderings.
\newblock {\em Ergodic Theory Dynam. Systems}, 37(7):2131--2162, 2017.

\bibitem[GMPS08]{GPS2008}
Thierry Giordano, Hiroki Matui, Ian~F. Putnam, and Christian~F. Skau.
\newblock Orbit equivalence for {C}antor minimal {$\Bbb Z^2$}-systems.
\newblock {\em J. Amer. Math. Soc.}, 21(3):863--892, 2008.

\bibitem[GMPS10]{GPS2010}
Thierry Giordano, Hiroki Matui, Ian~F. Putnam, and Christian~F. Skau.
\newblock Orbit equivalence for {C}antor minimal {$\Bbb Z^d$}-systems.
\newblock {\em Invent. Math.}, 179(1):119--158, 2010.

\bibitem[GPS95]{GiordanoPutnamSkau1995}
Thierry Giordano, Ian~F. Putnam, and Christian~F. Skau.
\newblock Topological orbit equivalence and {$C^*$}-crossed products.
\newblock {\em J. Reine Angew. Math.}, 469:51--111, 1995.

\bibitem[GPS04]{GiordanoPutnamSkau2004}
Thierry Giordano, Ian Putnam, and Christian Skau.
\newblock Affable equivalence relations and orbit structure of {C}antor
  dynamical systems.
\newblock {\em Ergodic Theory Dynam. Systems}, 24(2):441--475, 2004.

\bibitem[GW95]{GlasnerWeiss1995}
Eli Glasner and Benjamin Weiss.
\newblock Weak orbit equivalence of {C}antor minimal systems.
\newblock {\em Internat. J. Math.}, 6(4):559--579, 1995.

\bibitem[Har92]{Hardy1992}
G.~H. Hardy.
\newblock {\em Divergent series}.
\newblock \'{E}ditions Jacques Gabay, Sceaux, 1992.
\newblock With a preface by J. E. Littlewood and a note by L. S. Bosanquet,
  Reprint of the revised (1963) edition.

\bibitem[Hau21a]{Hausdorff-1_1921}
Felix Hausdorff.
\newblock Summationsmethoden und {M}omentfolgen. {I}.
\newblock {\em Math. Z.}, 9(1-2):74--109, 1921.

\bibitem[Hau21b]{Hausdorff-2_1921}
Felix Hausdorff.
\newblock Summationsmethoden und {M}omentfolgen. {II}.
\newblock {\em Math. Z.}, 9(3-4):280--299, 1921.

\bibitem[HPS92]{HPS1992}
Richard~H. Herman, Ian~F. Putnam, and Christian~F. Skau.
\newblock Ordered {B}ratteli diagrams, dimension groups and topological
  dynamics.
\newblock {\em Internat. J. Math.}, 3(6):827--864, 1992.

\bibitem[Kat03]{Katok2003}
Anatole Katok.
\newblock {\em Combinatorial constructions in ergodic theory and dynamics},
  volume~30 of {\em University Lecture Series}.
\newblock American Mathematical Society, Providence, RI, 2003.

\bibitem[Kec95]{Kechris1995}
Alexander~S. Kechris.
\newblock {\em Classical descriptive set theory}, volume 156 of {\em Graduate
  Texts in Mathematics}.
\newblock Springer-Verlag, New York, 1995.

\bibitem[Kec24]{Kechris2024}
Alexander~S. Kechris.
\newblock {\em The theory of countable Borel equivalence relations}.
\newblock preprint, 2024.

\bibitem[Kit98]{Kitchens1998}
Bruce~P. Kitchens.
\newblock {\em Symbolic dynamics}.
\newblock Universitext. Springer-Verlag, Berlin, 1998.
\newblock One-sided, two-sided and countable state Markov shifts.

\bibitem[Kri76]{Krieger1976}
Wolfgang Krieger.
\newblock On ergodic flows and the isomorphism of factors.
\newblock {\em Math. Ann.}, 223(1):19--70, 1976.

\bibitem[M\'06]{Mela2006}
Xavier M\'{e}la.
\newblock A class of nonstationary adic transformations.
\newblock {\em Ann. Inst. H. Poincar\'{e} Probab. Statist.}, 42(1):103--123,
  2006.

\bibitem[Man23]{Manibo2023}
Neil Manibo.
\newblock Substitutions and their generalisations, 2023.

\bibitem[Med06]{Medynets2006}
Konstantin Medynets.
\newblock Cantor aperiodic systems and {B}ratteli diagrams.
\newblock {\em C. R. Math. Acad. Sci. Paris}, 342(1):43--46, 2006.

\bibitem[MnRW22]{Manibo_Rust_Walton_2022}
N.~Ma\~{n}ibo, D.~Rust, and J.~Walton.
\newblock Substitutions on compact alphabets.
\newblock {\em arXiv:2204.07516v2}, 2022.

\bibitem[MP05]{MelaPetersen2005}
Xavier M\'{e}la and Karl Petersen.
\newblock Dynamical properties of the {P}ascal adic transformation.
\newblock {\em Ergodic Theory Dynam. Systems}, 25(1):227--256, 2005.

\bibitem[Mun11]{Mundici2011}
Daniele Mundici.
\newblock Revisiting the {F}arey {AF} algebra.
\newblock {\em Milan J. Math.}, 79(2):643--656, 2011.

\bibitem[Nad95]{Nadkarni1995}
M.~G. Nadkarni.
\newblock {\em Basic ergodic theory}, volume~6 of {\em Texts and Readings in
  Mathematics}.
\newblock Hindustan Book Agency, Delhi; distributed outside Asia by Henry
  Helson, Berkeley, CA, 1995.

\bibitem[Put18]{Putnam2018}
Ian~F. Putnam.
\newblock {\em Cantor minimal systems}, volume~70 of {\em University Lecture
  Series}.
\newblock American Mathematical Society, Providence, RI, 2018.

\bibitem[Que10]{Queffelec2010}
Martine Queff\'{e}lec.
\newblock {\em Substitution dynamical systems---spectral analysis}, volume 1294
  of {\em Lecture Notes in Mathematics}.
\newblock Springer-Verlag, Berlin, second edition, 2010.

\bibitem[Shi20]{Shimomura2020}
Takashi Shimomura.
\newblock Bratteli-{V}ershik models from basic sets.
\newblock {\em Preprint}, 2020.

\bibitem[Sta15]{Stanley2015}
Richard~P. Stanley.
\newblock {\em Catalan numbers}.
\newblock Cambridge University Press, New York, 2015.

\bibitem[Str11]{Strasser2011}
Gabriel Strasser.
\newblock Generalisations of the {E}uler adic.
\newblock {\em Math. Proc. Cambridge Philos. Soc.}, 150(2):241--256, 2011.

\bibitem[Ver81]{Vershik1981}
A.~M. Vershik.
\newblock Uniform algebraic approximation of shift and multiplication
  operators.
\newblock {\em Dokl. Akad. Nauk SSSR}, 259(3):526--529, 1981.

\bibitem[Ver82]{Vershik1982}
A.~M. Vershik.
\newblock A theorem on {M}arkov periodic approximation in ergodic theory.
\newblock {\em Zap. Nauchn. Sem. Leningrad. Otdel. Mat. Inst. Steklov. (LOMI)},
  115:72--82, 306, 1982.

\bibitem[Ver11]{Vershik2011}
A.~M. Vershik.
\newblock The {P}ascal automorphism has a continuous spectrum.
\newblock {\em Funktsional. Anal. i Prilozhen.}, 45(3):16--33, 2011.

\bibitem[Ver14]{Vershik2014}
A.~M. Vershik.
\newblock Intrinsic metric on graded graphs, standardness, and invariant
  measures.
\newblock {\em Zap. Nauchn. Sem. S.-Peterburg. Otdel. Mat. Inst. Steklov.
  (POMI)}, 421:58--67, 2014.

\bibitem[Wid41]{Widder1941}
David~Vernon Widder.
\newblock {\em The {L}aplace {T}ransform}, volume vol. 6 of {\em Princeton
  Mathematical Series}.
\newblock Princeton University Press, Princeton, NJ, 1941.

\end{thebibliography}

\end{document}

%FROM SECTION 7
\ignore{ 
This sequence generates the vertex 
standard subdiagram $\ol B(W, k)$ of $B_\infty$. 
We denote the incidence matrices of the diagram $\ol B(W, k)$
by $G_n$. By the choice of the sequence $\{W_n\}$, we have that 
the entries of $G_n$ satisfy the property
$g_{ij}^{(n)} = 1$ if and only if $i \geq j$, where $i = 1, \ldots k+n$ and $j = 1, \ldots k+n-1$, and $g_{ij}^{(n)} = 0$ otherwise.
In its turn  
$\ol B(W, k)$ contains an edge subdiagram $\ol B_k$ that is 
defined by the sequence of incidence matrices $\ol F_n = (\ol f^{(n)}_{ij})$
where for $i \in W_{n+1} = \{k, \ldots, k+n\}$,  
$j \in W_{n+1} = \{k, \ldots, k+n -1\}$,
$$
\ol f^{(n)}_{ij} =
\begin{cases}
    1, & \textrm{if}\  j= i \ \textrm{or}\  j = i-1\\
    0, & \textrm{otherwise}
\end{cases}
$$
This means that 
the edge subdiagram $\overline{B}_{k}$ is isomorphic to the 
\textit{classical Pascal graph} because for every $n$ the vertex 
$j \in 
W_n$ is the source for exactly two edges connecting $j$ with the 
vertices $j$ and $j+1$ from $W_{n+1}$. We remark that the 
subdiagram $\ol B_k$ is obtained as a result of two operations:
first, we take the vertex subdiagram $\ol B(W, k)$ and then we remove edges in this subdiagram to produce $\ol B_k$. 
}

\tcr{The next part is my attempt (not finished) to prove the result}
\tcb{
\begin{proof} Let $k=1$ for simplicity. 
We compute $I$ using the properties of 
$ H_j^{(n)}$ given in Section 
\ref{ssect measures on B_infty}. 
$$
\ba 
I = &\ \sum_{n\geq 1}\ \sum_{i=3}^{n+1} \ 
  p^{n+ 1-i}(1 - p)^{i-1} \left[\sum_{j=1}^i H_j^{(n)} - H_i^{(n)} 
  - H_i^{(n)}\right]\\ 
= &\ \sum_{n\geq 1}\ \sum_{i=3}^{n+1} \ 
  p^{n+ 1-i}(1 - p)^{i-1} \left[ S_i^{(n)} - S_{i-1}^{(n-1)} 
  - S_i^{(n-1)}\right].\\ 
\ea 
$$
Applying formula \eqref{eq-numbers S} and the equality 
$\begin{pmatrix}
a + 1 \\
b+1 \\
\end{pmatrix} = \begin{pmatrix}
a  \\
b \\
\end{pmatrix} + \begin{pmatrix}
a  \\
b+1 \\
\end{pmatrix}$, we have for $t = i-1$
$$
\ba
I =  & \sum_{n\geq 1}\ \sum_{i=3}^{n+1} \ 
  p^{n+ 1-i}(1 - p)^{i-1} \begin{pmatrix}
i + n -3  \\
n \\ 
\end{pmatrix}\\
= & \sum_{n\geq 1}\ \sum_{t=2}^{n} \ 
  p^{n - t}(1 - p)^{t} \begin{pmatrix}
 n + t- 2  \\
n \\ 
\end{pmatrix}\\
= & \sum_{n\geq 1}\ p^n \sum_{t=2}^{n} \ 
  \left(\frac{1-p}{p}\right)^{t} \begin{pmatrix}
 n + t- 2  \\
n \\ 
\end{pmatrix}\\
\ea
$$
\end{proof}
}

\subsection{A reducible Bratteli diagram with no probability 
tail invariant measure.}

The following example of a stationary generalized Bratteli 
diagram $B_0 = (V, E)$ was first 
considered in \cite{BezuglyiJorgensenKarpelSanadhya2023}.

It is convenient to use the following enumeration 
of vertices and levels of $B$. Define  $V_{n} = \{2, 3, 
\ldots \}$, $n \in \N_0$, and set $|E(i, i)| = i$, 
$|E(i+1, i)| = 1$ for all $i \in V_n $. In other words, 
$r^{-1}(i)$ contains exactly $i$ edges connecting the 
vertices 
$i \in V_n$ and $i \in V_{n+1}$ and one edge between 
$i+1 \in V_n$ and $i \in V_{n+1}$.
This means that the incidence matrix $F' = F'_n$ has the form
\be\label{eq-F' for IO}
F' = 
\left\lbrack \begin{array}{ccccccc}
2 & 1 & 0 & 0 &\ldots & \ldots & \ldots\\
0 & 3 & 1 & 0 & 0     & \ldots & \ldots\\
0 & 0 & 4 & 1 & 0     &  0     & \ldots\\
0 & 0 & 0 & 5 & 1     &  0     &  0    \\
\ldots & \ldots & \ldots & \ldots & \ldots & \ldots & \ldots \\
\end{array}\right\rbrack.
\ee

Fix some $i >1$ and let $W_n = \{i\}$ for all levels $n \in 
\N_0$. For every $i$,  a 
subdiagram $\overline{B}(i)$ is a stationary odometer with 
$i$ edges between consecutive levels. The unique tail 
invariant probability 
measure $\overline{\mu}(i)$ on the path space of $X_{B(i)}$
has the values  $\overline{\mu}([\ol e]) =
\frac{1}{i^{n}}$ for each path $\ol e$ ending at the level
$n$. 

\begin{lemma}
The measure extension $\wh{\ol \mu }$ of $\ol \mu$ is an
infinite measure taking finite values on cylinder sets.
\end{lemma}

\begin{proof}
Let $W'_n = V_n \setminus \{i\}$ and $\ol e$ a finite path 
in $X_{B_{(i)}}$ with $r(\ol e) \in V_{n+1}$. Then applying 
the definition of measure extension 
given in Subsection \ref{ss subdiagrams}, we can compute 
$$
\ba
\wh{\ol\mu }(\wh X_{B_{(i)}}) = &\ 1+ \sum_{n \geq 1} 
\sum_{w\in {W'}_{n}}{f'}_{iw}^{(n)}  H_{w}^{(n)} 
\overline{\mu}([\ol e])\\
>  &\ 1 + \sum_{n \geq 1} \frac{(i+1)^n}{i^{n+1}},
\ea  $$
so the measure is infinite.
\end{proof}

Then ${G'}^{(n,m)}=  (F')^m$, and we can find the entries 
${g'}_{ij}^{(n,m)} =  f_{ij}^{(m)}, i, j =2,3, ...,$ 
of this matrix. Clearly, ${f'}_{ij}^{(m)} = 0$ if $j <i$ 
and if $j > i+m.$ If $j = i, i+1, \ldots, i+m$, where 
$i = 2, 3, \ldots$, then one can see that 
\begin{equation}\label{eq entries in DIO}
{f'}_{ij}^{(m)} = \sum\limits_{k_{0} + \ldots + k_{j - i} = 
m - j + i } i^{k_{0}}  {(i + 1)}^{k_{1}}\cdot \ldots 
\cdot{j}^{k_{j - i}},
\end{equation}
where all $k$'s are non-negative. In particular,
${f'}_{ii}^{(m)} = i^{m }$, and 
$$
{f'}_{i,i + 1}^{(m)} = \sum_{k_{0}+k_{1}=m - 1}i^{k_{0}} 
\cdot (i + 1)^{k_{1}} =
(i + 1)^{m - 1} \cdot \sum\limits_{k_{0} = 0}^{m - 1} 
\left(\frac{i}{i + 1}\right)^{k_{0}}
=(i + 1)^{m} - i^{m}.
$$
The heights $H_{i}^{(m)} = 
\sum_{j = i}^{i + m}{f'}_{ij}^{(m)} $ can be determined by 
the formula
\begin{equation}
H_{i}^{(m)} 
= \sum_{s = 0}^{m - 1}{\left\lbrack\sum\limits_{{k}_{0} + 
\ldots + k_{s} = m - s} i^{k_{0}}  (i + 1)^{k_{1}} \cdot 
\ldots \cdot {(i + s)}^{k_{s}} \right\rbrack}, \ i \in V_m.
\end{equation}
The vectors of heights satisfy the following relations:
\begin{equation}\label{eq-r2''}
H_{i}^{(m + p)} = \sum_{s = i}^{i + m}{{f'}_{is}^{(m)} 
H_{s}^{(p)}},\ \ \  i = 2, 3, \ldots, m,\ p \in \N_0. 
\end{equation}
For the matrices ${G'}^{(n,m)}$, we have a similar formula:
\begin{equation}\label{eq-r3''}
{g'}_{ij}^{(n,m + p)} =
\sum_{z = i}^{i + m}{f'}_{iz}^{(m)} \cdot {g'}_{zj}^{(n,p)},\ i \in  V_{n + m + p},\ \ j \in  V_{n}.
\end{equation}

We consider now the stochastic matrices $G^{(n,m)}$ and $G^{(n,m + p)}$ and find out how the rows of these matrices 
are related. Let 
$\overline{g}_{u}^{(n,m)}, u \in  V_{n + m}$, be the 
$u$-th row of $G^{(n,m)}$ and $\overline{g}_{i}^{(n,m + p)}$ 
the $i$-th row of $G^{(n,m + p)}$, $i \in V_{n + m + p}$.
Using (\ref{eq-r2''}) and (\ref{eq-r3''}), we compute 
\be\label{eq-rows relation}
\ba
g_{iw}^{(n,m + p)} = &\ {g'}_{iw}^{(n, m + p)} \cdot \frac{H_{w}^{(n)}}{H_{i}^{(n + m + p)}}\\
= &\ \sum_{v \in  V_{n + m }}{f'}_{iv}^{(p)} {g'}_{vw}^{(n,m)} \frac{H_{w}^{(n)}}{H_{v}^{(n + m)}} \cdot
\left[ \frac{H_{v}^{(n + m )}}{\sum_{z \in  V_{n + m }} {{f'}_{iz}^{(p)}  H_{z}^{(n + m )}}}\right] \\
= &\  \sum_{v \in  V_{n + m }}{f'}_{iv}^{(p)} {g}_{vw}^{(n,m)} \cdot
\left[ \frac{H_{v}^{(n + m )}}{\sum_{z \in  V_{n + m }} {{f'}_{iz}^{(p)}  H_{z}^{(n + m )}}}\right]. \\
\ea
\ee

Relation \eqref{eq-rows relation} can be rewritten 
in the form
\begin{equation}\label{eq-r4''}
{\overline{g}}_{i}^{(n,m + p)} =
\sum_{v = i}^{i + p}{{\overline{g}}_{v}^{(n,m)} 
\cdot \left\lbrack \frac{{f'}_{iv}^{(p)} H_{v}^{(n + m )}} 
{\sum_{z = i}^{i + p}{{f'}_{iz}^{(p)} H_{z}^{(n + m)}}} \right\rbrack} = \sum_{v = i}^{i + p}\alpha_{i,v} {\overline{g}}_{v}^{(n,m)},
\end{equation}
where  $i \geq 2, \ m,n,p \in \N_0$.

Now we are in the position where one can describe the sets
$\Delta^{(n, cl, \infty)}$ for the generalized Bratteli 
diagram $B_{IO}$.

\begin{theorem}\label{t2.19}
If $B_{IO} = (V, E)$ is a generalized Bratteli diagram 
defined by (\ref{eq-F' for IO}), then
$$\Delta^{(n,cl,\infty)} = \{ (0,  \ldots, 0,\ldots) \} 
\textrm{ for every } n \in \N_0. 
$$
\end{theorem}

\begin{proof}
Let $\{ {\overline{g}}_{i}^{(n,m)}\ :\ i = i_{m}\} \in \ 
\Delta^{(n,m)}$ be a sequence of vectors such that 
${\overline{g}}_{i}^{(n,m)} \rightarrow \overline{x}$,
when $m \rightarrow \infty$. We observe that for $j = 2, 3, 
\ldots, i-1$ the $j$-th coordinates of the vector
${\overline{g}}_{i}^{(n,m)}$ is zero. Since the convergence 
$\overline{g}_{i}^{(n,m)} \rightarrow \overline{x}$ 
implies the coordinate convergence, we see that $\ol x_j =0$
for $2 \leq  j < i$.
Without loss of generality, we can assume that $i_{m} = i$ 
for every $m$. \tcr{Why?} To prove the result, we assume, on 
the contrary, that $\overline{x}\neq 0$.

Fix $m$ such that ${\overline{g}}_{i}^{(n,m)}$ is in a 
neighborhood  $\overline{x}$ and let
$\overline{g}_{i}^{(n, m + p)} \rightarrow \overline{x}$ as 
$p \rightarrow \infty$. This means that the vectors 
${\overline{g}}_{i}^{(n,m)}$ and $\overline{g}_{i}^{(n, 
m + p)}$ are can be taken sufficiently close to each other.
The equality (\ref{eq-r4''}) means that each vector
${\overline{g}}_{i}^{(n, m + p)}$ 
is a convex combination of the probability vectors 
$\overline{g}_{i}^{(n,m)}, 
\overline{g}_{i+1}^{(n,m)},\ldots, 
\overline{g}_{i + p}^{(n,m)}$. 
We note that these vectors are linearly independent so 
they form a simplex in the set $\Delta^{(n,m)}$. 
We deduce from representation (\ref{eq-r4''}) that 
the coefficient $\alpha_{i,i}$ must be close to 1, that is
$$
\frac{{f'}_{ii}^{(p)}  H_{i}^{(n + m)}}
{\sum_{z = i}^{i + p}{{f'}_{iz}^{(p)} H_{z}^{(n + m)}}} 
\longrightarrow 1 
$$
as $p\to \infty$. To get a contradiction, we will show that 
all coefficients in the decomposition
\eqref{eq-r4''} tends to zero as $p \to \infty$.

Indeed, it follows from the properties discussed above (see 
\eqref{eq entries in DIO} - \eqref{eq-r2''}) that 
${f'}_{ii}^{(p)} = i^p$ and 
${f'}_{i,i+1}^{(p)} = (i+1)^p - i^p$. Hence 
\begin{equation}
\ba
\sum_{z = i}^{i + p}{{f'}_{iz}^{(p)} H_{z}^{(n + m)}} > &\  
{f'}_{ii}^{(p)}  H_{i}^{(n + m )} + {f'}_{i,(i + 1)}^{(p)}  
H_{(i + 1)}^{(n + m)} \\
= &\ i^{p}  H_{i}^{(n + m)} + \left[ (i+1)^p - i^p \right]
 H_{i+1}^{(n + m )}\\
 > &\ \left\lbrack {i}^{p} H_{i}^{(n + m )} \right\rbrack \cdot 
\left[ {\left(1 + \frac{1}{i}\right)}^{p} -1 \right] \cdot 
\frac{H_{i + 1}^{(n + m)}}{H_{i}^{(n + m )}} \\
> &\ 
 \left\lbrack {i}^{p} H_{i}^{(n + m )} \right] \cdot
\left[ {\left(1 + \frac{1}{i}\right)}^{p} -1 \right],
\ea
\end{equation} 
because 
$$
\frac{H_{(i + 1)}^{(n + m )}}{H_{i}^{(n + m )} } > 1.
$$

Thus, we have
$$
\ba
\frac{{f'}_{ii}^{(p)}  H_{i}^{(n + m)}}
{\sum_{z = i}^{i + p}{{f'}_{iz}^{(p)} H_{z}^{(n + m)}}} 
< &\ 
\frac{i^{p}  H_{i}^{(n + m )}}{\lbrack {i}^{p} H_{i}^{(n + m)} \rbrack \cdot \lbrack \ {(1 + \frac{1}{i})}^{p}\  - 1 \rbrack} \\
= &\ 
\frac{1}{ (1 + \frac{1}{i})^{p} - 1 } \\ 
\rightarrow &\ 0, \quad p\to \infty. 
\ea
$$
In this way we have proved that $\overline{x} = 0$ and
$\Delta^{(n, cl, \infty)} = \{ (0,0,\ldots, \ldots) \}$ 
which means that there is no invariant probability measure 
on $B_{IO}$.
\end{proof}